\documentclass[12pt]{amsart}

\usepackage[latin1]{inputenc}
\usepackage{amssymb,amsmath,amsfonts, amsthm}
\usepackage{enumerate}
\usepackage{times}
\usepackage{bbding}

\usepackage{cite}

\topmargin -1.1cm\textwidth 17.5cm\textheight 23.2cm\oddsidemargin -0.4cm\evensidemargin -0.4cm

\newtheorem{theorem}{Theorem}[section]
\newtheorem{lemma}[theorem]{Lemma}
\newtheorem{proposition}[theorem]{Proposition}
\newtheorem*{assumption}{Induction Hypothesis}
\newtheorem{corollary}[theorem]{Corollary}
\numberwithin{equation}{section}


\newcommand{\fconvs}{{\mbox{\rm Conv}_{{\rm sc}}(\R^n)}} 
\newcommand{\fconvse}{{\mbox{\rm Conv}_{{\rm sc}}(\R)}} 
\newcommand{\fconvso}{{\mbox{\rm Conv}_{{\rm sc},0}(\R^n)}} 
\newcommand{\fconvsreg}{{\mbox{\rm Conv}_{{\rm rg}}(\R^n)}} 
\newcommand{\fconvx}{{\mbox{\rm Conv}(\R^n)}}
\newcommand{\fconvxE}{{\mbox{\rm Conv}(E)}}
\newcommand{\fconvxk}{{\mbox{\rm Conv}(\R^k)}}
\newcommand{\fconvf}{{\mbox{\rm Conv}(\R^n; \R)}}
\newcommand{\fconvfe}{{\mbox{\rm Conv}(\R; \R)}}
\newcommand{\fconvfk}{{\mbox{\rm Conv}(\R^k; \R)}}
\newcommand{\fconvfE}{{\mbox{\rm Conv}(E; \R)}}
\newcommand{\fconvfF}{{\mbox{\rm Conv}(F; \R)}}
\newcommand{\fconvsE}{{\mbox{\rm Conv}_{{\rm sc}}(E)}} 
\newcommand{\fconvsF}{{\mbox{\rm Conv}_{{\rm sc}}(F)}} 

\newcommand{\fconvfs}{{\mbox{\rm Conv}_{(0)}(\R^n;\R)}}
\newcommand{\fconvfsk}{{\mbox{\rm Conv}_{(0)}(\R^k;\R)}}
\newcommand{\fconvfsE}{{\mbox{\rm Conv}_{(0)}(E; \R)}}
\newcommand{\fconvfsEp}{{\mbox{\rm Conv}_{(0)}(E'; \R)}}
\newcommand{\fconvfsF}{{\mbox{\rm Conv}_{(0)}(F; \R)}}
\newcommand{\fconvfsFp}{{\mbox{\rm Conv}_{(0)}(F'; \R)}}

\newcommand{\infconv}{\mathbin{\Box}}
\newcommand{\sq}{\mathbin{\vcenter{\hbox{\rule{.3ex}{.3ex}}}}}
\newcommand{\ecup}{\sqcup}
\newcommand{\Ecup}{\bigsqcup}
\newcommand{\cP}{{\mathcal P}} 
\newcommand{\cK}{{\mathcal K}} 

\newcommand{\Lip}{\operatorname{Lip}}

\newcommand{\Abel}{\operatorname{\mathcal A}} 

\renewcommand{\star}{\circ}

\newcommand{\sn}{{\mathbb{S}^{n-1}}}
\newcommand{\sk}{{\mathbb S}^{k-1}} 
\newcommand{\Bn}{B^n}

\newcommand{\sln}{\operatorname{SL}(n)}

\newcommand{\Hess}{{\operatorname{D}}^2}
\renewcommand{\div}{\operatorname{div}}

\renewcommand{\O}{\operatorname{O}}
\newcommand{\SO}{\operatorname{SO}}

\DeclareMathOperator{\oZ}{\operatorname{Z}}
\DeclareMathOperator{\otZ}{\operatorname{\tilde Z}}

\newcommand{\oZZ}[2]{\operatorname{V}_{#1,#2}}
\newcommand{\ZZZ}[3]{\operatorname{V}_{#1,#2}^{(#3)}}

\newcommand{\R}{{\mathbb R}}
\newcommand{\N}{{\mathbb N}}

\newcommand{\spa}{\operatorname{span}}
\newcommand{\dom}{\operatorname{dom}}

\newcommand{\hm}{\mathcal H}
\newcommand{\epi}{\operatorname{epi}}
\renewcommand{\d}{\,\mathrm{d}}
\newcommand{\ind}{{\rm\bf I}}

\newcommand{\Had}[2]{D_{#1}^{#2}} 

\newcommand{\my}{\operatorname{env}} 

\begin{document}

\title{The Hadwiger Theorem on Convex Functions, I}

\author{Andrea Colesanti}
\address{Dipartimento di Matematica e Informatica ``U. Dini''
Universit\`a degli Studi di Firenze,
Viale Morgagni 67/A - 50134, Firenze, Italy}
\email{andrea.colesanti@unifi.it}

\author{Monika Ludwig}
\address{Institut f\"ur Diskrete Mathematik und Geometrie,
Technische Universit\"at Wien,
Wiedner Hauptstra\ss e 8-10/1046,
1040 Wien, Austria}
\email{monika.ludwig@tuwien.ac.at}

\author{Fabian Mussnig}
\address{School of Mathematical Sciences, Tel Aviv University, 69978 Tel Aviv, Israel}
\email{mussnig@gmail.com}

\date{}

\begin{abstract} 
A complete classification of all continuous, epi-translation and rotation invariant valuations on the space of super-coercive convex functions on $\R^n$ is established. The valuations obtained are functional versions of the classical intrinsic volumes. For their definition,  singular Hessian valuations are introduced.

\bigskip
{\noindent 2000 AMS subject classification: 52B45 (26B25, 49Q20, 52A41, 52A39)}
\end{abstract}

\maketitle

\section{Introduction and Statement of Results}

Intrinsic volumes play a fundamental role in Euclidean geometry. For a convex body $K$ (that is, a non-empty, compact convex set) in $n$-dimensional Euclidean space, $\R^n$, and $j\in\{0,\dots,n\}$, the $j$th intrinsic volume, $V_j(K)$, measures, in a translation and rotation invariant way, the $j$-dimensional \lq\lq size\rq\rq\ of $K$. In particular, $V_n(K)$ is the $n$-dimensional volume of $K$ and $V_0(K)=1$ is the Euler characteristic. For $1\le j\le n-1$, the $j$th intrinsic volume can be defined in various ways, for example, via the Steiner formula, via integral geometric formulas, or, for a convex body $K$ with smooth boundary, as
\begin{equation*}
V_j(K)=\frac1{\omega_{n-j}} \int_{\sn} [\Hess h_K(y)]_{j} \d\hm^{n-1}(y),
\end{equation*}
where $h_K$ is the support function of $K$ (see Section~\ref{convex bodies} for the definition) and $\Hess h_K$ the Hessian matrix of $h_K$. Here, we write $[A]_j$ for the $j$th elementary symmetric function of the eigenvalues of a symmetric matrix $A$ and use the convention that $[A]_0=1$. We write $\hm^{n-1}$ for $(n-1)$-dimensional Hausdorff measure and $\omega_{k}$ for the $(k-1)$-dimensional Hausdorff measure  of the unit sphere, $\sk$, in $\R^k$. 
The $j$th intrinsic volume is homogeneous of degree $j$, that is, $V_j(\lambda K)=\lambda^j V_j(K)$ for all convex bodies $K$ in $\R^n$ and all $\lambda\ge 0$. The intrinsic volumes are continuous functionals on the space of convex bodies, $\cK^n$, in $\R^n$ equipped with the standard topology induced by the Hausdorff metric
(see \cite{Schneider:CB2} for information regarding  intrinsic volumes).

One of the most important results in geometry is Hadwiger's characterization theorem  \cite{Hadwiger:V} for intrinsic volumes.
Hadwiger's theorem classifies all continuous, translation and rotation invariant valuations
on convex bodies. Here a functional $\oZ: \cK^n\to \R$ is called a valuation if
\begin{equation}\label{val_bodies}
\oZ(K)+\oZ(L)= \oZ(K\cup L)+ \oZ(K\cap L)
\end{equation}
for every $K,L\in\cK^n$ such that $K\cup L\in \cK^n$. Recall that $\oZ$ is translation invariant if $\oZ(\tau K)=\oZ(K)$ for all $K\in\cK^n$ and translations $\tau$ on $\R^n$, and  it is rotation invariant if $\oZ(\vartheta K)=\oZ(K)$ for all $K\in\cK^n$ and $\vartheta\in\SO(n)$.

\begin{theorem}[Hadwiger \cite{Hadwiger:V}]\label{hugo}
A functional $\oZ:\cK^n\to \R$ is  a continuous, translation and rotation invariant valuation if and only if  there are constants $\zeta_0, \ldots, \zeta_n\in\R$ such that 
$$\oZ(K) =  \zeta_0 \,V_0(K)+\dots+\zeta_n\,V_n(K)$$
for every $K\in\cK^n$. 
\end{theorem}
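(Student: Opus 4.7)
The forward direction that each $V_j$ is a continuous, translation and rotation invariant valuation is classical and I would take it as given. For the reverse direction, the natural strategy is induction on the dimension $n$. The base case $n=1$ is handled directly: a continuous translation invariant valuation $\oZ$ on the space of compact intervals is, by translation invariance, determined by $\oZ(\{0\})$ and the function $t \mapsto \oZ([0,t])$; the valuation property $\oZ([0,s])+\oZ([0,t])=\oZ([0,s+t])+\oZ(\{0\})$ combined with continuity forces $\oZ([0,t])=\alpha+\beta t$ for constants $\alpha,\beta$, identifying $\oZ$ with $\alpha V_0+\beta V_1$.

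For the inductive step, I would invoke McMullen's homogeneous decomposition theorem to write any continuous, translation invariant $\oZ:\cK^n\to\R$ as
\begin{equation*}
\oZ=\oZ_0+\oZ_1+\dots+\oZ_n,
\end{equation*}
where each $\oZ_j$ is continuous, translation invariant, and homogeneous of degree $j$, and inherits rotation invariance from $\oZ$ by uniqueness of the decomposition. The extreme degrees are handled classically: $\oZ_0$ is constant on singletons by translation invariance and extends to a multiple of the Euler characteristic $V_0$ by $0$-homogeneity, while $\oZ_n$ is a constant multiple of the volume $V_n$ by the characterization of continuous, translation invariant, $n$-homogeneous valuations as multiples of Lebesgue measure.

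The crux is the middle range $1\le j\le n-1$. Here I would follow Klain's approach: for each $j$-dimensional linear subspace $E\subset\R^n$, the restriction of $\oZ_j$ to convex bodies contained in $E$ is a continuous, translation and $\SO(E)$-invariant valuation on $\cK(E)$ that is homogeneous of the top degree $j=\dim E$. The inductive hypothesis applied inside $E$, combined with homogeneity, forces this restriction to be a constant multiple $c_E\, V_j|_{\cK(E)}$ of the $j$-dimensional volume on $E$. Rotation invariance of $\oZ_j$ together with the transitive action of $\SO(n)$ on the Grassmannian forces $c_E$ to be independent of $E$, so the Klain function of $\oZ_j$ is constant.

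The main obstacle is then Klain's theorem -- that an even, continuous, translation invariant valuation homogeneous of degree $j<n$ is determined by its Klain function -- which one applies to conclude $\oZ_j=c\,V_j$ for a single constant $c$. Klain's result in turn rests on Schneider's classification of simple, continuous, translation invariant valuations (those vanishing on convex bodies of dimension less than $n$): every such valuation splits as a multiple of volume plus an odd component which, under the present rotation invariance (or already evenness) hypothesis, must vanish. Establishing this simple-valuation classification is the genuine technical work; once it is in place, the induction closes and yields $\oZ=\sum_{j=0}^n\zeta_j V_j$ as required.
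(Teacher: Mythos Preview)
The paper does not prove Theorem~\ref{hugo}; it is quoted as a classical result due to Hadwiger and serves only as motivation for the functional analogues developed later. So there is no ``paper's own proof'' of this particular statement to compare against.

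That said, your sketch is a correct outline of Klain's proof, which the paper explicitly mentions (just before Section~2) as an \emph{alternate} route to Hadwiger's original argument. It is worth noting the contrast with the strategy the paper actually uses for its main results (the functional Hadwiger theorems~\ref{thm:hadwiger_convex_functions} and~\ref{dthm:hadwiger_convex_functions}): there the authors follow Hadwiger's original scheme rather than Klain's. Concretely, they (i) settle the $1$-homogeneous case via a rotational symmetrization argument (Proposition~\ref{prop:class_1-hom}), (ii) show that a valuation vanishing on orthogonal cylinder functions must be $1$-homogeneous using the canonical simplex dissection (Proposition~\ref{onehom}), and (iii) run an induction on dimension to reduce the general case to these two ingredients. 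Klain's approach, which you outline, replaces steps (i)--(ii) by the Klain map and the Klain--Schneider classification of simple valuations; it is slicker once that machinery is available, but the hard analytic input (the simple-valuation theorem) is of comparable depth to Hadwiger's symmetrization and dissection arguments. Your identification of that theorem as ``the genuine technical work'' is accurate.

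One small refinement: in your inductive step you restrict $\oZ_j$ to a $j$-plane $E$ and appeal to the inductive hypothesis ``inside $E$''. Strictly speaking the inductive hypothesis gives you the full Hadwiger theorem in dimension $j$, but what you actually use is only the top-degree consequence that a continuous, translation invariant, $\O(j)$-invariant, $j$-homogeneous valuation on $\cK(E)$ is a multiple of volume---and for that you should note that $\O(E)$-invariance (not just $\SO(E)$-invariance) of the restriction follows because any $\phi\in\O(E)$ extends to an element of $\SO(n)$ by choosing a suitable orthogonal map on $E^\perp$.
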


\noindent
Hadwiger's theorem leads to effortless proofs of numerous results in integral geometry and geometric probability (see \cite{Hadwiger:V} and \cite{Klain:Rota}). It is the first culmination of a program initiated by Blaschke and the starting point of the geometric theory of valuations, where classification theorems for valuations invariant under various groups are fundamental (see \cite[Chapter~6]{Schneider:CB2}). Interesting new valuations keep arising (see, for example, \cite{HLYZ_acta, BLYZ_cpam, Ludwig:matrix}) and see \cite{Alesker99,Alesker01,AleskerFaifman, BernigFaifman2017,Bernig:Fu, Haberl_sln, Parapatits:Wannerer, LiMa,  Ludwig:Reitzner, Ludwig:Reitzner2,Ludwig:convex,Haberl:Parapatits_centro} for some recent classification results.
\goodbreak

Currently,  a geometric theory of valuations on function spaces is developed. A functional $\oZ$ defined on a  space of (extended) real-valued functions, $X$,  is called a \emph{valuation} if
\begin{equation*}\label{val_functions}
\oZ(u) +\oZ (v) = \oZ(u\vee v)+\oZ(u\wedge v)
\end{equation*}
for all $u,v\in X$ such that the pointwise maximum $u\vee v$ and the pointwise minimum $u\wedge v$ belong to $X$.
When $X$ is the space of indicator functions of convex bodies in $\R^n$, we recover the classical notion of valuation on convex bodies.
The first classification results were obtained for $L_p$ and Sobolev spaces,  for Lipschitz functions and for  definable functions (see, for example, \cite{LiMa, Tsang:Lp, Ludwig:SobVal, Ludwig:Fisher, ColesantiPagniniTradaceteVillanueva, BaryshnikovGhristWright}). 

Spaces of convex functions play a special role because of their close connection to convex bodies. Here classification results were obtained for $\sln$ invariant valuations in \cite{ColesantiLudwigMussnig, ColesantiLudwigMussnig17, Mussnig19, Mussnig_super}, the connection to valuations on convex bodies was explored in \cite{Alesker_cf} and first structural results were established in \cite{Colesanti-Ludwig-Mussnig-4, Knoerr1, Knoerr2}. The general space of extended real-valued convex functions on $\R^n$ is defined as
\begin{equation*}
\fconvx:=\{u\colon\R^n\to\R\cup\{+\infty\}\colon u\ \mbox{ is convex and lower semicontinuous},\ u\not\equiv+\infty\}.
\end{equation*}
It is equipped with the topology induced by  epi-convergence (see Section \ref{convex functions}), and the continuity of valuations defined on $\fconvx$ (and its subsets) will always be with respect to this topology.

The aim of this paper is to establish the Hadwiger theorem on convex functions.
A valuation $\oZ$ defined on (a subset of) the space $\fconvx$ is said to be \emph{epi-translation invariant} if $\oZ(u\circ \tau^{-1}+\alpha)=\oZ(u)$ for every translation $\tau$ on $\R^n$ and $\alpha\in\R$. It is said to be \emph{rotation invariant} if $\oZ(u\circ \vartheta^{-1})=\oZ(u)$ for every $\vartheta\in\SO(n)$.
We will identify a convex function with its epi-graph, 
$$\epi(u):=\{(x,t)\in\R^{n+1}: t\ge u(x)\}.$$ 
Note that an epi-translation invariant valuation on $\fconvx$ corresponds to a translation invariant valuation on the set of epi-graphs in the sense of (\ref{val_bodies}). We say that $\oZ$ is \emph{epi-homogeneous} of degree~$j$, if the associated valuation on epi-graphs is homogeneous of degree $j$.  It was shown in \cite{Colesanti-Ludwig-Mussnig-4} that a continuous and epi-translation invariant valuation on $\fconvx$ is always constant, and the corresponding statement  was proved on the space of coercive convex functions. Thus, we look at valuations on the smaller space of \emph{super-coercive convex functions},
\begin{equation*}
\fconvs:=\left\{u\in\fconvx\colon\lim_{\vert  x\vert \to+\infty}\frac{u(x)}{\vert  x\vert }=+\infty\right\},
\end{equation*}
where $\vert\cdot\vert$ is the Euclidean norm. For $n=1$, a complete classification of continuous and epi-translation invariant valuations was obtained in  \cite{Colesanti-Ludwig-Mussnig-4}. So, let $n\ge 2$.
For  $\zeta\in C_c([0,\infty))$ and  $j\in\{0,\dots, n\}$, it follows from results in \cite{Colesanti-Ludwig-Mussnig-3} that a valuation,   
defined for $u\in\fconvs\cap C_+^2(\R^n)$ (that is, with a positive definite Hessian matrix $\Hess u$) by
\begin{equation}\label{hessian}
\oZZ{j}{\zeta}(u):=  \int_{\R^n} \zeta(|\nabla u(x)|) [\Hess u(x)]_{n-j} \d x,
\end{equation}
can be extended to a continuous, epi-translation and rotation invariant valuation that is epi-homogeneous of degree $j$ on $\fconvs$. The valuation defined in (\ref{hessian}) is a {\em Hessian valuation}, and  the extension of the integral \eqref{hessian} uses the notion of {Hessian measures}. These measures play an important role in the study of a class of fully non-linear elliptic PDEs, the so-called Hessian equations, and in this context were introduced by Trudinger and Wang \cite{TrudingerWang1997, Trudinger:Wang1999}. In the case of convex functions, these measures have been studied, for example, in \cite{ColesantiHug2000} (see Section \ref{se:hessian_measures}).
\goodbreak

As we will show, the functionals $\oZZ{j}{\zeta}$ also extend to continuous valuations on $\fconvs$ for functions $\zeta\in C((0,\infty))$ with a certain type of singularity at the origin. We thus obtain {\em singular Hessian valuations}.
Let $C_b((0,\infty))$ be the set of continuous functions on $(0,\infty)$ with bounded support. For $0\leq j \leq n-1$, let
$$
\Had{j}{n}:=\Big\{\zeta\in C_b((0,\infty))\colon  \lim_{s\to 0^+} s^{n-j} \zeta(s)=0,  \lim_{s\to 0^+} \int_s^{\infty}  t^{n-j-1}\zeta(t) \d t \text{ exists and is finite}\Big\}.
$$
In addition, let $\Had{n}{n}$ be the set of functions $\zeta\in C_b((0,\infty))$ where $\lim_{s\to 0^+} \zeta(s)$ exists and is finite and set $\zeta(0):=\lim_{s\to 0^+} \zeta(s)$. Note that $\oZZ{0}{\zeta}$ is constant on $\fconvs$ for $\zeta\in\Had{0}{n}$ (see Section \ref{section valuations}).

\goodbreak
Our first main result establishes the existence of the extension of singular Hessian valuations for functions $\zeta\in\Had{j}{n}$.

\begin{theorem}\label{main one way} 
	For $j\in\{0,\dots,n\}$ and $\zeta\in\Had{j}{n}$, 
	there exists a unique, continuous, epi-translation and 	rotation invariant valuation 
	$\oZZ{j}{\zeta}\colon\fconvs\to\R$ such that 
	\begin{equation*}
	\oZZ{j}{\zeta}(u)=  \int_{\R^n} \zeta(|\nabla u(x)|) [\Hess u(x)]_{n-j} \d x
	\end{equation*}
	for every $u\in\fconvs\cap C^2_+(\R^n)$. 
\end{theorem}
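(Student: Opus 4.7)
Uniqueness is immediate from the density of $\fconvs \cap C^2_+(\R^n)$ in $\fconvs$ under epi-convergence, so I focus on existence. The non-singular case $\zeta \in C_c([0,\infty))$ has already been handled in \cite{Colesanti-Ludwig-Mussnig-3} via Hessian measures, so the idea is to approximate $\zeta \in \Had{j}{n}$ by non-singular cut-offs. Pick a smooth $\chi_m$ with $\chi_m = 0$ on $[0, 1/(2m)]$ and $\chi_m = 1$ on $[1/m, \infty)$, and set $\zeta_m := \chi_m \zeta \in C_c([0,\infty))$. Each $\oZZ{j}{\zeta_m}$ is then a continuous, epi-translation- and rotation-invariant valuation on $\fconvs$, and the candidate extension is
\begin{equation*}
\oZZ{j}{\zeta}(u) := \lim_{m\to\infty} \oZZ{j}{\zeta_m}(u).
\end{equation*}

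\textbf{Key estimate via Legendre duality.} For $u \in \fconvs \cap C^2_+(\R^n)$, let $v := u^*$ be the Legendre conjugate. Super-coercivity of $u$ makes $v$ everywhere finite, and $\nabla u \colon \R^n \to \R^n$ is a $C^1$-diffeomorphism with inverse $\nabla v$. The matrix identity $[A^{-1}]_j \det A = [A]_{n-j}$ for positive definite $A$, applied to $A = \Hess u(x)$, together with the change of variables $y = \nabla u(x)$, yields
\begin{equation*}
\int_{\R^n} \zeta(|\nabla u(x)|) [\Hess u(x)]_{n-j} \d x = \int_{\R^n} \zeta(|y|) [\Hess v(y)]_j \d y.
\end{equation*}
Near $y = 0$, the dominant behaviour of $v - v(0)$ is the one-sided directional derivative $h := h_{\partial v(0)}$, which is positively $1$-homogeneous, so $[\Hess h(y)]_j \d y$ is an $(n-j)$-homogeneous measure with density $\sim |y|^{-j}$. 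Polar coordinates then give
\begin{equation*}
\bigl|\oZZ{j}{\zeta - \zeta_m}(u)\bigr| \le C(u) \int_0^{1/m} t^{n-j-1} |\zeta(t)| \, \d t,
\end{equation*}
which tends to $0$ as $m \to \infty$ by the defining property of $\Had{j}{n}$. Hence $\{\oZZ{j}{\zeta_m}(u)\}_m$ is Cauchy, the limit exists, and the original formula on $C^2_+$ is recovered by dominated convergence.

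\textbf{Continuity and inherited properties.} For an epi-convergent sequence $u_k \to u$ in $\fconvs$, the splitting
\begin{equation*}
\oZZ{j}{\zeta}(u_k) - \oZZ{j}{\zeta}(u) = \bigl(\oZZ{j}{\zeta_m}(u_k) - \oZZ{j}{\zeta_m}(u)\bigr) + \bigl(\oZZ{j}{\zeta - \zeta_m}(u_k) - \oZZ{j}{\zeta - \zeta_m}(u)\bigr)
\end{equation*}
gives continuity: the first bracket vanishes as $k \to \infty$ by continuity of $\oZZ{j}{\zeta_m}$, and the second is controlled uniformly in $k$ by the tail estimate, provided the constants $C(u_k)$ remain bounded---which follows since the sets of minimizers $\partial u_k^*(0)$ converge in the Hausdorff metric under epi-convergence. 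The valuation identity, epi-translation invariance, and rotation invariance pass to the limit trivially.

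\textbf{Main obstacle.} The crux is establishing the quantitative bound $\Phi_j(v, B_\epsilon) = O(\epsilon^{n-j})$ for the Hessian measure of a general $v \in \fconvf$ (not only for the smooth density $[\Hess v]_j \d y$), with a constant that is locally uniform on epi-convergent sequences. This requires the monotonicity and comparison machinery for Hessian measures developed in \cite{ColesantiHug2000}, applied with $h_{\partial v(0)}$ as the $1$-homogeneous reference convex function. The auxiliary condition $\lim_{s \to 0^+} s^{n-j} \zeta(s) = 0$ in the definition of $\Had{j}{n}$ is what kills the boundary contributions at the origin when the polar-coordinate estimate is cast in the form of an integration by parts.
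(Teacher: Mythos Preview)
Your approach has a genuine gap at the tail estimate. You write
\[
\bigl|\oZZ{j}{\zeta-\zeta_m}(u)\bigr|\le C(u)\int_0^{1/m}t^{n-j-1}\,|\zeta(t)|\,\d t
\]
and claim the right side tends to $0$ ``by the defining property of $\Had{j}{n}$''. But membership in $\Had{j}{n}$ only asks that the \emph{signed} integral $\int_s^\infty t^{n-j-1}\zeta(t)\,\d t$ have a finite limit as $s\to 0^+$; it does not force $t^{n-j-1}|\zeta(t)|$ to be integrable near $0$. A concrete obstruction: set $\phi(t)=(\log(1/t))^{-1/2}\sin(\log(1/t))$ near $0$ and $g=\phi'$; then $tg(t)\to 0$, $\int_s^1 g(t)\,\d t=\phi(1)-\phi(s)$ converges, yet $\int_0^1|g(t)|\,\d t=\infty$. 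Taking $\zeta(t)=t^{-(n-j-1)}g(t)$ (times a smooth cutoff) gives $\zeta\in\Had{j}{n}$ for which your bound is vacuous. The integration by parts you hint at in the last paragraph does not rescue this: in polar coordinates on the dual side the derivative lands on $\zeta$, and $\zeta'$ carries no control in $\Had{j}{n}$.

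This is exactly the difficulty the paper's argument is built to circumvent. Instead of cutting off $\zeta$ and estimating in the dual picture, the paper performs an integration by parts on the \emph{primal} side, over level sets $\{t_1<u\le t_2\}$ of a regularized $u$ (Proposition~\ref{prop:hessian_int_by_parts}). The derivative then falls on the curvatures $\tau_{n-j}$ of the level sets rather than on $\zeta$, and the singular density $\zeta$ is replaced by the two auxiliary functions $\rho,\eta\in C_c([0,\infty))$ of \eqref{eq:def_eta}--\eqref{eq:def_rho}. Because $\rho$ and $\eta$ are continuous up to $0$, the estimate of Lemma~\ref{estimate smooth case} is uniform and survives the conditional (non-absolute) convergence allowed by $\Had{j}{n}$. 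The passage from $\fconvsreg$ to general $\fconvs$ then goes through Moreau--Yosida envelopes and the polynomial expansion of $\ZZZ{j}{\zeta}{\lambda}$ in $\lambda$ (Lemma~\ref{vandermonde}); neither step has a counterpart in your sketch. Your estimate $\Phi_j^n(v,B_\epsilon)=O(\epsilon^{n-j})$ is correct and can indeed be obtained from the Steiner polynomial, but by itself it only yields an $L^\infty$-type comparison with the model density $r^{n-j-1}$, which is not enough when $\zeta$ is only conditionally integrable against that weight.
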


\noindent
The proof of this result will use analytic tools developed within the theory of Hessian equations (see Section \ref{section prep}
for details). 
Given a singular function $\zeta$, the integral (\ref{hessian}) converges absolutely for sufficiently regular functions. For the general extension argument, Hessian valuations, which were introduced in \cite{Colesanti-Ludwig-Mussnig-3}, will be used, but singular functions are, in general, not integrable with respect to Hessian measures. For the extension, we will use the Moreau--Yosida approximation and the existence of a homo\-geneous decomposition for epi-translation invariant valuations that was established in \cite{Colesanti-Ludwig-Mussnig-4}.

\goodbreak

Our second main result shows that  the singular Hessian valuations that were introduced above span the space of continuous, epi-translation and rotation invariant valuations on $\fconvs$. 
Let $n\ge 2$.

\begin{theorem}
\label{thm:hadwiger_convex_functions}
A functional $\oZ:\fconvs \to \R$ is a continuous, epi-translation and rotation invariant valuation if and only if there exist  functions $\zeta_0\in\Had{0}{n}$, \dots, $\zeta_n\in\Had{n}{n}$  such that
\begin{equation*}
\oZ(u)= \oZZ{0}{\zeta_0}(u) +\dots +\oZZ{n}{\zeta_n}(u)
\end{equation*}
for every $u\in\fconvs$.
\end{theorem}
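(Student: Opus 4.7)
The \lq\lq if\rq\rq\ direction is immediate from Theorem~\ref{main one way}: each $\oZZ{j}{\zeta_j}$ is a continuous, epi-translation and rotation invariant valuation on $\fconvs$, and these three properties are preserved under finite sums. The plan is therefore to prove the converse, following the Klain--Schneider route to the classical Hadwiger theorem, adapted to the space $\fconvs$.

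The first step is to apply the homogeneous decomposition for continuous, epi-translation invariant valuations on $\fconvs$ established in \cite{Colesanti-Ludwig-Mussnig-4}, writing $\oZ=\oZ_0+\dots+\oZ_n$, where each $\oZ_j$ is continuous, epi-translation invariant and epi-homogeneous of degree $j$. The decomposition is canonical and hence commutes with $\SO(n)$, so every $\oZ_j$ inherits rotation invariance, and it suffices to prove that each $\oZ_j$ equals $\oZZ{j}{\zeta_j}$ for some $\zeta_j\in\Had{j}{n}$. The boundary degrees are handled separately: $\oZ_0$ is forced to be a constant, while $\oZ_n$, being of maximal homogeneity, can be identified with an $\oZZ{n}{\zeta_n}$ by inspection on a sufficiently rich family of smooth test functions combined with continuity.

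For the middle degrees $1\le j\le n-1$, I would proceed by induction on $n$, with the one-dimensional classification of \cite{Colesanti-Ludwig-Mussnig-4} as the base case. To each $\oZ_j$ I would attach a \emph{Klain-type function} $\zeta$ of one real variable by evaluating on a one-parameter family of rotationally symmetric super-coercive test functions; rotation invariance collapses the output to a scalar function, and epi-homogeneity of degree $j$ together with the integrability and decay constraints forced by continuity places $\zeta$ in $\Had{j}{n}$. By construction, $\oZ_j$ and $\oZZ{j}{\zeta}$ agree on the test family. Restricting $\oZ_j$ to functions concentrated on a proper subspace $E$ (those equal to $+\infty$ off $E$) and applying the inductive hypothesis on $\fconvsE$ then identifies the same $\zeta$ and shows that $\oZ_j$ and $\oZZ{j}{\zeta}$ agree on all such lower-dimensional functions.

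The main obstacle is the resulting Klain-type injectivity: a continuous, epi-translation and rotation invariant, epi-homogeneous valuation of degree $j<n$ on $\fconvs$ that vanishes on every function concentrated on a proper subspace must vanish identically. This is the functional analog of Schneider's theorem on simple valuations. My approach would be to approximate $u\in\fconvs\cap C^2_+(\R^n)$ by pointwise maxima or infimal convolutions of functions concentrated on hyperplanes, and then use the valuation identity to reduce the evaluation of $\oZ_j(u)$ to contributions from the lower-dimensional pieces already handled. The Hessian-measure and Moreau--Yosida machinery from Section~\ref{se:hessian_measures} and Theorem~\ref{main one way} would be essential both to justify the approximation and to pass from $\fconvs\cap C^2_+(\R^n)$ back to all of $\fconvs$ by continuity.
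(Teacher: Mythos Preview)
Your outline shares the opening moves with the paper: the ``if'' direction via Theorem~\ref{main one way}, the homogeneous decomposition from \cite{Colesanti-Ludwig-Mussnig-4}, the separate treatment of degrees $0$ and $n$, and induction on $n$. But from there the two arguments diverge, and the divergence matters.

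The paper does \emph{not} follow the Klain--Schneider route. It follows Hadwiger's original strategy (and says so explicitly, deferring the Klain approach to a sequel). Concretely: it first gives a complete classification in degree $1$ by introducing a rotational epi-symmetrization and showing that a $1$-epi-homogeneous, rotation invariant valuation is determined by its values on the one-parameter family $u_t = t|\cdot| + \ind_{B^n}$; solving the resulting integral equation produces the density $\zeta\in\Had{1}{n}$. For $2\le l\le n-1$, the paper works in the dual picture on $\fconvf$, uses the inductive hypothesis to pin down $\oZ$ on \emph{dual orthogonal cylinder functions} $v_E+v_F$, and then invokes a Hadwiger-style dissection argument (orthogonal simplices and the canonical simplex dissection) to show that a valuation vanishing on all such cylinder functions is forced to be $1$-homogeneous. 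Since the remainder $\oZ-\oZZ{l}{\zeta}^*$ is both $l$-homogeneous and $1$-homogeneous, it vanishes.

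Your proposal instead hinges on a functional Klain injectivity statement: a rotation invariant, epi-translation invariant valuation of degree $j<n$ that vanishes on all functions with lower-dimensional domain must vanish identically. You flag this yourself as ``the main obstacle'' and then sketch an approximation by maxima or infimal convolutions of hyperplane-supported functions. This is a genuine gap. In the classical case Klain's argument rests on Schneider's characterization of simple translation invariant valuations, which is itself a substantial theorem; the analog for $\fconvs$ is not available in this paper and cannot be improvised from the valuation identity and an approximation. The paper's route avoids precisely this difficulty by trading ``simple $\Rightarrow$ zero'' for ``vanishes on orthogonal cylinders $\Rightarrow$ $1$-homogeneous'', a statement it can prove with an explicit simplex dissection, and by having the degree-$1$ case already fully classified.
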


\noindent
These theorems show that the singular Hessian valuations $\oZZ{j}{\zeta}$ with $\zeta\in\Had{j}{n}$ are functional versions of the intrinsic volumes $V_j$ and that, from the point of view of geometric valuation theory, they are the canonical functional versions of intrinsic volumes.

The space of super-coercive convex functions is related to another subspace of $\fconvx$, formed by convex functions with finite values,
$$
\fconvf:=\big\{v\in\fconvx\colon v(x)<+\infty\,\text{ for all }\,x\in\R^n\big\}.
$$
A function $v$ belongs to  $\fconvf$ if and only if its standard conjugate or Legendre transform $v^*$ belongs to $\fconvs$ (see Section \ref{convex functions}). It was proved in \cite{Colesanti-Ludwig-Mussnig-3} that $\oZ$ is a continuous valuation on $\fconvf$ if and only if $\oZ^*\colon\fconvs\to\R$,  defined by
$$
\oZ^*(u):=\oZ(u^*),
$$
 is a continuous valuation on $\fconvs$. This fact permits us to transfer results  valid for valuations on $\fconvf$ to results for valuations on 
$\fconvs$ and vice versa.  
A valuation $\oZ$ on $\fconvf$ is called \emph{homogeneous} of degree $j\in\R$ if $\oZ(\lambda v)=\lambda^j \oZ(v)
$
for all $v\in\fconvf$ and $\lambda\geq 0$. It is \emph{dually epi-translation invariant} if 
$$
\oZ(v+\ell+\alpha)=\oZ(v)
$$
for every $v\in\fconvf$ and every linear function $\ell\colon\R^n\to\R$ and $\alpha\in\R$. The valuation $\oZ$ is dually epi-translation  invariant and homogeneous of degree $j$ on $\fconvf$ if and only if $\oZ^*$ is epi-translation  invariant and epi-homogeneous of degree $j$ on $\fconvs$. 
Using results on Hessian valuations for conjugate functions, we will obtain the following extension theorem from Theorem~\ref{main one way}.

\begin{theorem}\label{dual main one way} 
For $j\in\{0,\dots,n\}$ and $\zeta\in\Had{j}{n}$, 
the functional
$\oZZ{j}{\zeta}^*\colon\fconvf\to\R$ is a continuous, dually epi-translation and 	rotation invariant valuation  such that
\begin{equation}\label{dhessian}
	\oZZ{j}{\zeta}^*(v)=  \int_{\R^n} \zeta(|x|) [\Hess v(x)]_{j} \d x
\end{equation}
for every $v\in\fconvf\cap C^2_+(\R^n)$. 
\end{theorem}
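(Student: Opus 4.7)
The plan is to transfer Theorem \ref{main one way} from $\fconvs$ to $\fconvf$ via the Legendre transform and then extract the integral representation by a change-of-variables argument followed by an approximation. For the qualitative properties, I exploit the duality framework laid out just before the theorem: the map $v\mapsto v^*$ is a continuous bijection between $\fconvf$ and $\fconvs$, and the excerpt records that $\oZZ{j}{\zeta}^*(v):=\oZZ{j}{\zeta}(v^*)$ inherits continuity and the valuation property on $\fconvf$ whenever $\oZZ{j}{\zeta}$ has them on $\fconvs$. Since for a linear $\ell$ on $\R^n$ with $\ell(x)=\langle a,x\rangle$ and $\alpha\in\R$ one computes directly that $(v+\ell+\alpha)^*(y)=v^*(y-a)-\alpha$, the epi-translation invariance of $\oZZ{j}{\zeta}$ established in Theorem \ref{main one way} transforms into the required dual epi-translation invariance of $\oZZ{j}{\zeta}^*$. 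Rotation invariance transfers directly from $(v\circ\vartheta^{-1})^*=v^*\circ\vartheta^{-1}$ for $\vartheta\in\SO(n)$.

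For the integral representation, I first treat the regular case: fix $v\in\fconvf\cap C^2_+(\R^n)$ and suppose additionally that $\nabla v\colon\R^n\to\R^n$ is a $C^1$-diffeomorphism. Then $v^*\in\fconvs\cap C^2_+(\R^n)$ with $\nabla v^*=(\nabla v)^{-1}$ and $\Hess v^*(\nabla v(x))=(\Hess v(x))^{-1}$. Applying Theorem \ref{main one way} to $u=v^*$ and performing the substitution $y=\nabla v(x)$ (with Jacobian $\det\Hess v(x)$), together with the elementary identity $[A^{-1}]_{n-j}\det A=[A]_j$ valid for any symmetric positive-definite $n\times n$ matrix $A$, gives
$$\oZZ{j}{\zeta}^*(v)=\int_{\R^n}\zeta(|\nabla v^*(y)|)[\Hess v^*(y)]_{n-j}\d y=\int_{\R^n}\zeta(|x|)[\Hess v(x)]_j\d x.$$

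To pass to an arbitrary $v\in\fconvf\cap C^2_+(\R^n)$, I approximate by $v_\varepsilon:=v+\tfrac{\varepsilon}{2}|\cdot|^2$. The gradient $\nabla v_\varepsilon$ is a local diffeomorphism (by strict positive-definiteness of the Hessian) and strongly monotone with modulus $\varepsilon$, hence injective and coercive, and therefore a global $C^1$-diffeomorphism of $\R^n$; so the previous step applies to each $v_\varepsilon$. As $\varepsilon\to 0^+$, $v_\varepsilon\to v$ uniformly on compacta and hence in epi-convergence, so the left-hand side converges to $\oZZ{j}{\zeta}^*(v)$ by continuity. On the right, $\Hess v_\varepsilon=\Hess v+\varepsilon I$ yields pointwise convergence $[\Hess v_\varepsilon(x)]_j\to[\Hess v(x)]_j$, and dominated convergence delivers the limit. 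The main obstacle is this last domination: one needs a uniform integrable majorant for $\zeta(|x|)[\Hess v_\varepsilon(x)]_j$. Bounded support of $\zeta$ together with boundedness of $\Hess v$ on that support reduce the issue to absolute integrability of $\zeta(r)r^{n-1}$ near zero; for $1\le j\le n-1$ this is extracted from $s^{n-j}\zeta(s)\to 0$, which yields $|\zeta(r)|r^{n-1}=o(r^{j-1})$, while for $j=0$ and $j=n$ it follows directly from the definition of $\Had{j}{n}$.
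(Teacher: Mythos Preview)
Your argument is correct and proceeds along the same lines as the paper: transfer continuity, the valuation property, dual epi-translation invariance and rotation invariance from $\oZZ{j}{\zeta}$ to $\oZZ{j}{\zeta}^*$ via conjugation, and obtain the integral formula through the substitution~\eqref{substitution}. The paper derives~\eqref{dhessian} directly from Theorem~\ref{main one way} together with the asserted equivalence ``$u\in\fconvs\cap C^2_+(\R^n)$ if and only if $u^*\in\fconvf\cap C^2_+(\R^n)$''. Your extra approximation step via $v_\varepsilon=v+\tfrac{\varepsilon}{2}|\cdot|^2$ is in fact a genuine refinement: the reverse implication in that equivalence fails (take $v(x)=\sqrt{1+|x|^2}$, whose conjugate has bounded domain), so for a general $v\in\fconvf\cap C^2_+(\R^n)$ one cannot apply the formula of Theorem~\ref{main one way} to $v^*$ directly. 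Since each $v_\varepsilon$ is super-coercive, one has $v_\varepsilon^*\in\fconvs\cap C^2_+(\R^n)$ and the substitution is legitimate; continuity of $\oZZ{j}{\zeta}^*$ and dominated convergence then close the argument. One minor point: for $j=0$ your domination claim is not quite correct, since $\Had{0}{n}$ only asks that the improper integral $\int_s^\infty r^{n-1}\zeta(r)\d r$ have a finite limit as $s\to 0^+$, not that $r^{n-1}|\zeta(r)|$ be integrable; but this case is trivial, as both sides of~\eqref{dhessian} reduce to the constant $\omega_n\int_0^\infty r^{n-1}\zeta(r)\d r$.
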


\goodbreak
Theorem \ref{thm:hadwiger_convex_functions} has the following dual version. Let $n\ge 2$.

\begin{theorem}
\label{dthm:hadwiger_convex_functions}
A functional $\oZ:\fconvf \to \R$ is a continuous, dually epi-translation and rotation in\-va\-riant valuation if and only if there exist  functions $\zeta_0\in\Had{0}{n}$, \dots, $\zeta_n\in\Had{n}{n}$  such that
\begin{equation*}
\oZ(v)= \oZZ{0}{\zeta_0}^*(v) +\dots +\oZZ{n}{\zeta_n}^*(v)
\end{equation*}
for every $v\in\fconvf$.
\end{theorem}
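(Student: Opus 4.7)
The plan is to derive Theorem \ref{dthm:hadwiger_convex_functions} from Theorem \ref{thm:hadwiger_convex_functions} by transfer through the Legendre transform, using the correspondence between valuations on $\fconvs$ and $\fconvf$ already recorded in the paper.

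Sufficiency is immediate from Theorem \ref{dual main one way}: each $\oZZ{j}{\zeta_j}^*$ with $\zeta_j\in\Had{j}{n}$ is already asserted there to be a continuous, dually epi-translation and rotation invariant valuation on $\fconvf$, and any finite sum of such valuations inherits all three properties.

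For necessity, assume $\oZ\colon\fconvf\to\R$ satisfies the hypotheses, and define $\widetilde\oZ\colon\fconvs\to\R$ by $\widetilde\oZ(u):=\oZ(u^*)$. Since Legendre conjugation is a bijection between $\fconvs$ and $\fconvf$ which is a homeomorphism for the epi-convergence topologies, continuity of $\widetilde\oZ$ follows from that of $\oZ$. The result of \cite{Colesanti-Ludwig-Mussnig-3} cited in the paper gives that $\widetilde\oZ$ is a valuation on $\fconvs$ if and only if $\oZ$ is a valuation on $\fconvf$, and, as the paper notes, dually epi-translation invariance of $\oZ$ corresponds precisely to epi-translation invariance of $\widetilde\oZ$. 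Rotation invariance transfers as well, because Legendre conjugation commutes with orthogonal transformations: the identity $(u\circ\vartheta^{-1})^*=u^*\circ\vartheta^{-1}$ for $\vartheta\in\SO(n)$ immediately turns rotation invariance of $\oZ$ into rotation invariance of $\widetilde\oZ$.

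Thus $\widetilde\oZ$ satisfies the hypotheses of Theorem \ref{thm:hadwiger_convex_functions}, which produces functions $\zeta_0\in\Had{0}{n},\dots,\zeta_n\in\Had{n}{n}$ such that $\widetilde\oZ=\sum_{j=0}^n \oZZ{j}{\zeta_j}$ on $\fconvs$. Since every $v\in\fconvf$ is finite-valued convex and hence continuous and lower semicontinuous, the Fenchel--Moreau identity $v^{**}=v$ holds, so for every $v\in\fconvf$,
\begin{equation*}
\oZ(v)=\oZ(v^{**})=\widetilde\oZ(v^*)=\sum_{j=0}^n \oZZ{j}{\zeta_j}(v^*)=\sum_{j=0}^n \oZZ{j}{\zeta_j}^*(v),
\end{equation*}
which is the required representation. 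I do not expect a genuine obstacle here: all of the analytic substance is already contained in Theorem \ref{thm:hadwiger_convex_functions} (the classification on $\fconvs$) and Theorem \ref{dual main one way} (the existence of the dual singular Hessian valuations), and the present statement is a purely formal translation through duality.
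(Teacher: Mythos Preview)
Your duality transfer is formally correct, but in the context of this paper it is circular: the paper does \emph{not} give an independent proof of Theorem~\ref{thm:hadwiger_convex_functions}. Read the opening paragraph of Section~\ref{se:classification_of_valuations} and the end of Section~4.4 carefully --- the substantive classification argument (rotational epi-symmetrization for the one-homogeneous case, orthogonal cylinder functions, and the inductive Proposition~\ref{prop:one_zeta}) is carried out in the \emph{dual} setting and establishes Theorem~\ref{dthm:hadwiger_convex_functions} directly; Theorem~\ref{thm:hadwiger_convex_functions} is then deduced from it ``by duality,'' i.e.\ by exactly the Legendre-transform transfer you wrote down, only in the opposite direction.

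So your argument, taken as a proof of Theorem~\ref{dthm:hadwiger_convex_functions} within this paper, assumes precisely what needs to be proved. The real content lies in Section~4: Proposition~\ref{prop:class_1-hom} (reducing the one-homogeneous case to the one-parameter family $u_t$ and solving an integral equation to find $\zeta\in\Had{1}{n}$), Proposition~\ref{onehom dual} (vanishing on dual orthogonal cylinder functions forces degree~$1$), and Proposition~\ref{prop:one_zeta} (the inductive step producing a single $\zeta\in\Had{l}{n}$ on cylinder functions). None of that can be bypassed. If you wish to prove Theorem~\ref{dthm:hadwiger_convex_functions} by transfer from Theorem~\ref{thm:hadwiger_convex_functions}, you must first supply an independent proof of the latter --- and the paper does not contain one.
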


\noindent In the proof of Theorem \ref{thm:hadwiger_convex_functions} and Theorem \ref{dthm:hadwiger_convex_functions}, we will use both the primal setting of epi-translation invariant valuations on $\fconvs$ and the dual setting of dually epi-translation invariant valuations on $\fconvf$. We follow the original approach of Hadwiger \cite{Hadwiger:V}. We introduce rotational epi-symmetrization to establish a classification of valuations that are epi-homogeneous of degree 1 on $\fconvs$ and induction on the dimension for the general case. The homogeneous decomposition of epi-translation invariant valuations established in \cite{Colesanti-Ludwig-Mussnig-4} will be an important tool in the proof.

An alternate proof of Hadwiger's theorem is due to Klain \cite{Klain95}. We will discuss elements of Klain's proof and how to apply Theorem \ref{thm:hadwiger_convex_functions} and Theorem \ref{dthm:hadwiger_convex_functions} in integral geometry in subsequent papers (see \cite{Colesanti-Ludwig-Mussnig-6,Colesanti-Ludwig-Mussnig-7,Colesanti-Ludwig-Mussnig-8}).

\section{Preliminaries}

We work in $n$-dimensional Euclidean space $\R^n$, with $n\ge 1$, endowed with the Euclidean norm $\vert \cdot\vert $ and the usual scalar product
$\langle \cdot,\cdot\rangle$. We write $e_1,\dots, e_n$ for the canonical basis vectors of $\R^n$ and use coordinates, $x=(x_1,\dots,x_n)$, for $x\in\R^n$ with respect to this basis. We write $\spa$ for linear span. Let $\Bn:=\{x\in\R^n: \vert x\vert \le 1\}$ be the Euclidean unit ball and $\sn$ the unit sphere in $\R^n$. We denote by $\kappa_n$ the $n$-dimensional volume of $\Bn$ and by $\omega_n$ the $(n-1)$-dimensional Hausdorff measure of $\sn$.

\subsection{Convex Sets}\label{convex bodies} A basic reference on convex sets is the book by Schneider \cite{Schneider:CB2}. Let $K\subset \R^n$ be a closed convex set that is non-empty and such that $K\ne \R^n$. Its {\em support function} $h_K:\R^n\to \R\cup \{+\infty\}$ is defined as
$$h_K(y):=\sup\nolimits_{x\in K}\langle x,y \rangle.$$
It is a one-homogeneous and convex function that determines $K$. If $K\subset \R^n$ is a convex body with boundary of class $C_+^2$ and $y\in\sn$, then the eigenvalues of its Hessian matrix, $\Hess h_K(y)$, are the principal radii of curvature of $K$ at $y$. Hence $[\Hess h_K(y)]_i$ is the $i$th elementary symmetric function of the principal radii of curvature of $K$ at $y$ (see \cite[Section 2.5]{Schneider:CB2}).

\subsection{Convex Functions}\label{convex functions}

We collect some properties of convex functions. Basic references are the books by Rockafellar \cite{Rockafellar} and Rockafellar \& Wets \cite{RockafellarWets}. 

Let $u\in\fconvx$. For  $t\in(-\infty,+\infty]$, its \emph{sublevel sets}
\begin{equation*}
\{u<t\}:=\{x\in\R^n:u(x)<t\},\quad\quad \{u\leq t\}:=\{x\in\R^n:u(x)\leq t\}
\end{equation*}
are convex, and the latter is closed by the lower semicontinuity of $u$. In addition, also its \emph{domain}, 
$$\dom (u):=\{x\in\R^n: u(x)<+\infty\}$$
is a convex set. We equip $\fconvx$  and its subspaces with the topology associated to epi-convergence.
Here a sequence $u_k$ of functions from $\fconvx$ is \emph{epi-convergent}  to $u\in\fconvx$ if the following conditions hold for all $x\in\R^n$:
\begin{itemize}
	\item[(i)] For every sequence $x_k$ that converges to $x$, we have $u(x) \leq \liminf_{k\to \infty} u_k(x_k)$.
			
	\item[(ii)] There exists a sequence $x_k$ that converges to $x$ such that $u(x) = \lim_{k\to\infty} u_k(x_k)$.
\end{itemize}
\goodbreak\noindent
Note that a sequence $v_k$ of functions from $\fconvf$ epi-converges to $v\in\fconvf$  if and only if $v_k$ converges pointwise to $v$ and uniformly on compact sets. We will just say that $v_k$ converges to $v$ in this case. We also require the following fact.
If $u_k$ is a sequence of functions from $\fconvx\cap C^1(\R^n)$ that epi-converges to $u\in\fconvx\cap C^1(\R^n)$, then $\nabla u_k$ converges uniformly to $\nabla u$ on compact sets (see \cite[Theorem 25.7]{Rockafellar}). 

\goodbreak

Given $u\in\fconvx$, the \emph{convex conjugate} $u^*: \R^n \to (-\infty,\infty]$ is defined by
$$
u^*(y):=\sup_{x\in\R^n}\big(\langle y,x \rangle - u(x) \big).
$$
As $u\in\fconvx$, also $u^*\in\fconvx$ and $u^{**}=u$. Moreover, $u\in\fconvs\cap C^2_+(\R^n)$ if and only if $u^*\in \fconvs\cap C^2_+(\R^n)$. Furthermore, $\fconvs\cap C^2_+(\R^n)$ is a dense subset of $\fconvf\cap C_+^2(\R^n)$. 
Given a subset $A\subset\R^n$, let $\ind_A\colon\R^n\to\R\cup\{+\infty\}$ denote the (convex) indicatrix function of $A$,
$$
\ind_A(x):=
\left\{
\begin{array}{lll}
\phantom{\,+}0\quad\quad&\mbox{if $x\in A$,}\\
+\infty\quad&\mbox{if $x\notin A$.}
\end{array}
\right.
$$
Note that for a convex body $K\subset \R^n$, we have $\ind_K\in\fconvs$.
Let $t\ge0$. We will often use the following pair of dual functions, 
\begin{equation}\label{ut def}
u_t(x):= t\,\vert x\vert +\ind_{\Bn}(x)
\end{equation}
and
\begin{equation}\label{vt def}
	v_t(x):=\begin{cases}
	0\quad\quad&\mbox{if $\,\vert x\vert \le t$,}\\
	\vert x\vert-t\quad&\mbox{if $\,\vert x\vert>t$.}
	\end{cases}
\end{equation}
Note that $u_t\in\fconvs$ and $v_t\in\fconvf$ while $u_t^*=v_t$.

For $u,v\in\fconvx$, let
$$(u\infconv v)(x) := \inf_{x=y+z} \big(u(y)+v(z)\big)$$
denote the \emph{infimal convolution} of $u$ and $v$ at $x\in\R^n$. Note, that
$$\epi (u\infconv v) = \epi u + \epi v,$$
where on the right, we have the Minkowski sum of the sets $\epi u$ and $\epi v$, and if $u\infconv v>-\infty$ pointwise, then
\begin{equation}\label{epi add dual}
(u\infconv v)^* = u^*+v^*.
\end{equation}
We define the \emph{epi-multiplication} of $u\in \fconvx$ and $\lambda>0$ by setting  
$$
\lambda \sq u(x):=\lambda\, u\left(
\frac x\lambda
\right)
$$ 
for $x\in\R^n$. Note that the epi-graph of $\lambda \sq u$ is obtained by  rescaling the epi-graph of $u$ by the factor $\lambda$.
It is easy to see that $u\in\fconvs$ implies $\lambda \sq u\in\fconvs$ for $\lambda\geq 0$.
A functional $\oZ:\fconvs\to \R$ is called \emph{epi-homogeneous} of degree $j\in\R$ if
$$
\oZ(\lambda \sq u)=\lambda^j \oZ(u)
$$
for all $u\in\fconvs$ and $\lambda> 0$. In the following, corresponding definitions will be used for $\fconvx$ and its subspaces.

Let $E$ and $F$ be orthogonal and complementary subspaces of $\R^n$ with $\dim E, \dim F\ge 1$. We can identify $E$ with $\R^k$ when $k=\dim E$ and write $\fconvxE$ for $\fconvxk$ in this case. We use corresponding definitions for subspaces of $\fconvxE$. For $x\in\R^n$, we write $x=(x_E,x_F)$ with $x_E\in E$ and $x_F\in F$.
If $u\in\fconvs$ is such that $\dom(u)\subset E$, then $v=u^*$ does not depend on $x_F$. The restriction of $v$ to $E$, which we denote by $v_E\in\fconvfE$, is equal to the Legendre transform (with respect to the ambient space $E$) of the restriction of $u$ to $E$, which we denote by $u_E\in\fconvsE$. In this case, we will identify $u$ with $u_E$ and $v$ with $v_E$. Hence, for $u_E\in\fconvsE$ and $u_F\in\fconvsF$, we can use this identification and define $u=u_E\infconv u_F\in\fconvs$ where we use the usual infimal convolution on $\R^n$. Setting $v_E=u_E^*$ and $v_F=u_F^*$ and using (\ref{epi add dual}), we obtain that
\begin{equation}\label{infconvEF}
(u_E\infconv u_F)^*= v_E^*+v_F^*.
\end{equation}

For $u\in\fconvx$ and $\lambda>0$, we define the \emph{Moreau--Yosida envelope}, $\my_\lambda u$, of $u$ as
$$\my_\lambda u := u \infconv \frac{|\cdot|^2}{2\lambda}.$$
If $u\in\fconvs$, then $\my_\lambda u \in\fconvs\cap C^1(\R^n)$, and in particular, $\dom (\my_\lambda u) = \R^n$ (see \cite[Theorem 2.26]{RockafellarWets}). If additionally, $u\in C^2(\R^n)$, then so is $\my_\lambda u$ (see \cite[Theorem 7.37]{RockafellarWets}). If $u_k\in \fconvs$ and $u_k$ epi-convergences to $u$, then $\my_\lambda u_k$ epi-convergences to $\my_\lambda u$.

\subsection{Hessian Measures}
\label{se:hessian_measures}
In this part, we briefly recall the notion of Hessian measures of convex functions. For a more detailed presentation, the reader is referred to \cite{ColesantiHug2000,Colesanti-Ludwig-Mussnig-3}.

We begin with recalling the definition of the subdifferential of  a function  $u\in\fconvx$. For  $x\in\R^n$, the subdifferential of $u$ at $x$ is defined by
$$
\partial u(x):=\{y\in\R^n\colon u(z)\ge u(x)+\langle y,z-x\rangle\;\text{ for } z\in\R^n\}.
$$\goodbreak
\noindent
We set
$$
\Gamma_u:=\{(x,y)\in\R^n\times\R^n\colon y\in\partial u(x)\},
$$
that is, $\Gamma_u$ is the graph of the subdifferential map. 

\goodbreak

The Hessian measures of $u$, that we denote by $\Theta^n_j(u,\cdot)$ for $j=0,\dots,n$, are non-negative measures defined on the Borel subsets of $\R^n\times\R^n$ that  can be introduced in the following way. Given a Borel subset $A$ of $\R^n\times\R^n$ and $s\ge0$, we consider the set,
$$
P_s(u,A):=\{x+sy\colon (x,y)\in\Gamma_u\cap A\}.
$$
Its $n$-dimensional Hausdorff measure is a polynomial in $s$; in other words, there exist $(n+1)$ non-negative coefficients $\Theta^n_0(u,A),\dots,\Theta^n_n(u,A)$, such that
\begin{equation*}
    \hm^n(P_s(u,A))=\sum_{j=0}^{n} s^j\Theta^n_{n-j}(u,A)
\end{equation*}
for every Borel set $A$ in $\R^n\times \R^n$ and $s\ge 0$. The previous relation defines the Hessian measures of $u$. They are locally finite Borel measures on $\R^n\times\R^n$.

We will frequently use marginals of Hessian measures. Given $j\in\{0,\dots,n\}$ and $u\in\fconvx$, we set
$$
\Phi^n_{j}(u,B):=\Theta^n_{n-j}(u,B\times\R^n)
\quad\quad\mbox{for every Borel subset $B\subseteq \R^n$},
$$
and 
$$
\Psi^n_{j}(u,B):=\Theta^n_{j}(u,\R^n\times B)
\quad\quad\mbox{for every Borel subset $B\subseteq \R^n$}.
$$
Note that
\begin{equation}\label{Hessian measures smooth case}
\d\Phi^n_j(u,x)=[\Hess u(x)]_{j}\d x
\end{equation}
for $u\in\fconvx\cap C^2(\R^n)$. If $B$ is a Borel set in $\R^n$ and $u\in\fconvx$, then
\begin{equation}
\label{eq:phi_0_n}
\Phi_0^n(u,B)=\hm^n(B)
\end{equation}
and
$$
\Phi_j^n(\lambda u,B)= \lambda^j \Phi_j^n(u,B)
$$
for $j\in\{0,\ldots,n\}$ and $\lambda>0$. 

Let $u_k$ be a sequence in $\fconvx$. If $u_k$ epi-converges to $u\in\fconvx$, then the sequence of measures $\Theta_j^n(u_k,\cdot)$ converges weakly to the measure $\Theta_j^n(u,\cdot)$ (see \cite[Theorem~7.3]{Colesanti-Ludwig-Mussnig-3}). In particular, we obtain the following result for $\zeta\in C_c(\R^n\times\R^n)$ with compact support in the second variable. If $B$ is a bounded Borel set in $\R^n$ and $\Theta_j^n(u, \partial B)=0$, then
\begin{equation}
\label{weak_cont}
\lim_{k\to\infty} \int_{B\times\R^n} \zeta(x, y)\d \Theta_j^n(u_k, (x,y))=\int_{B\times\R^n} \zeta(x, y)\d \Theta_j^n(u, (x,y)).
\end{equation}

The interplay of Hessian measures and convex conjugation is well understood. Let $u\in\fconvx$ and $j\in\{0, \dots, n\}$. It was shown in \cite[Theorem 8.2]{Colesanti-Ludwig-Mussnig-3} that 
$$\Theta_{j}^n(u,A)=\Theta_{n-j}^n(u^*,\hat{A})$$
for every Borel subset $A$ of $\R^n\times\R^n$ where $\hat{A}=\{(x,y)\in \R^n\times\R^n\colon (y,x)\in A\}$. As an immediate consequence, we obtain
\begin{equation}
\label{eq:int_zeta_u_is_int_zeta_v}
\int_{B} \zeta(y) \d\Psi_j^n(u,y) = \int_{B} \zeta(x) \d\Phi_j^n(u^*,x)
\end{equation}
for every $u\in\fconvs$ and  Borel subset $B\subseteq \R^n$, when $\zeta:\R^n\backslash\{0\}\to\R$ is such that one of the two integrals above and therefore both exist. In particular, 
\begin{equation}\label{substitution}
\int_{\R^n} \zeta(\nabla u(x)) [\Hess u(x)]_{n-j}\d x = \int_{\R^n} \zeta(x) [\Hess u^*(x)]_j \d x
\end{equation}
for every $u\in\fconvs\cap C^2_+(\R^n)$ and such $\zeta$.

\subsection{Valuations on Convex Functions}\label{section valuations}
The following homogeneous decomposition result was established in \cite[Theorem 1]{Colesanti-Ludwig-Mussnig-4}.
\begin{theorem}
\label{thm:mcmullen_cvx_functions}
If $\,\oZ:\fconvs\to\R$ is a continuous and epi-translation invariant valuation, then there are continuous, epi-translation invariant valuations $\oZ_i:\fconvs\to\R$ that are epi-homogeneous of degree $i$ such that $\oZ=\oZ_0+\cdots + \oZ_n$.
\end{theorem}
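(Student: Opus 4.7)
The plan is to adapt McMullen's classical proof of the homogeneous decomposition theorem for translation-invariant valuations on convex bodies to the functional setting. Concretely, the aim is to show that for every fixed $u\in\fconvs$ the map $\lambda\mapsto\oZ(\lambda\sq u)$ on $[0,\infty)$ is a polynomial of degree at most $n$, and then to define $\oZ_i(u)$ as the coefficient of $\lambda^i$. The point of departure is the identity $\epi(\lambda_1\sq u_1\infconv\cdots\infconv\lambda_m\sq u_m)=\lambda_1\epi u_1+\cdots+\lambda_m\epi u_m$, which transports ordinary Minkowski combinations on epi-graphs to epi-multiplication and infimal convolution on $\fconvs$.

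I would first prove the stronger multivariate claim that for $u_1,\dots,u_m\in\fconvs$ the quantity
$$
\oZ\bigl(\lambda_1\sq u_1\infconv\cdots\infconv\lambda_m\sq u_m\bigr)
$$
is a polynomial in $(\lambda_1,\dots,\lambda_m)\in[0,\infty)^m$ of total degree at most $n$; the diagonal specialization $u_1=\cdots=u_m=u$, $\lambda_i=\lambda/m$ then yields the single-variable polynomial behaviour of $\oZ(\lambda\sq u)$. The argument would proceed by first reducing to piecewise affine super-coercive convex functions, which are epi-dense in $\fconvs$, so continuity of $\oZ$ propagates polynomiality from a dense set to all of $\fconvs$ provided the polynomials have uniformly bounded degree. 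For piecewise affine $u_i$ one passes to a common polyhedral refinement of the domains and, by the valuation property together with inclusion-exclusion on a finite cell complex, reduces to combinatorially fixed cells on which the dependence on $(\lambda_1,\dots,\lambda_m)$ is given by an ordinary Minkowski polynomial of degree at most $n$. The main obstacle is the unboundedness of the epi-graphs, which prevents a direct application of McMullen's bounded-polytope argument; I would resolve this by exploiting super-coercivity to truncate $\epi u_i\cap\{x_{n+1}\leq c\}$ into a polytope for any large $c$, and then using epi-translation invariance together with the valuation property to show that the value of $\oZ$ is independent of the truncation height once $c$ is large enough.

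Having established the polynomiality of $\oZ(\lambda\sq u)$, I define $\oZ_i(u)$ via $\oZ(\lambda\sq u)=\sum_{i=0}^{n}\lambda^i\oZ_i(u)$, which yields the decomposition $\oZ=\oZ_0+\cdots+\oZ_n$ upon setting $\lambda=1$, and epi-homogeneity of each $\oZ_i$ of degree $i$ is immediate from the construction. To see that each $\oZ_i$ is a continuous, epi-translation invariant valuation, I would use a Vandermonde trick: the coefficients of a polynomial of degree at most $n$ can be written as fixed linear combinations of its values at $n+1$ distinct positive arguments, so continuity, the valuation identity, and epi-translation invariance transfer from $\oZ$ to each $\oZ_i$.
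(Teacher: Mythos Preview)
This theorem is not proved in the present paper; it is quoted from the authors' earlier work \cite{Colesanti-Ludwig-Mussnig-4} (see the sentence immediately preceding the statement), so there is no in-paper argument to compare against. Your overall strategy---show that $\lambda\mapsto\oZ(\lambda\sq u)$ is a polynomial of degree at most $n$, extract the coefficients, and then use a Vandermonde inversion to transfer continuity, the valuation identity, and epi-translation invariance from $\oZ$ to each $\oZ_i$---is the standard route and is essentially how such decomposition results are obtained.

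That said, your sketch of the polynomiality step has two genuine gaps. First, the truncation: the set $\epi u\cap\{x_{n+1}\le c\}$ is not an epi-graph, so $\oZ$ cannot be evaluated on it, and ``$\oZ$ is independent of the truncation height'' has no direct meaning. What one actually needs is an auxiliary translation-invariant valuation $Y$ on polytopes in $\R^{n+1}$, for instance $Y(P):=\oZ(u_P)$ with $\epi(u_P)=P+(\{0\}^n\times[0,\infty))$, together with a verification that $Y$ really is a valuation (this amounts to checking that $(P\cap Q)+C=(P+C)\cap(Q+C)$ for the vertical ray $C$ whenever $P\cup Q$ is convex, which is true but not automatic). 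Second, and more seriously, McMullen's theorem applied to $Y$ on polytopes in $\R^{n+1}$ yields degree at most $n+1$, not $n$; you need an additional argument---for example, the observation that $Y(P+\{0\}^n\times[0,h])=Y(P)$ for all $h\ge0$ forces the $(n{+}1)$-homogeneous (volume) component of $Y$ to vanish---to bring the degree down to $n$. Your alternative route via a ``polyhedral refinement of the domains'' in $\R^n$ would give degree $\le n$ directly, but the cell-wise decomposition you invoke (as in \eqref{affine}) requires $\oZ$ to be simple, which is not part of the hypothesis.
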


We say that a valuation $\oZ:\fconvs\to\R$ is epi-additive if
$$\oZ(\alpha\sq u \infconv \beta \sq v) = \alpha \oZ(u)+\beta \oZ(v)$$
for every $\alpha,\beta>0$ and $u,v\in\fconvs$. The following result is a consequence of  Theorem~\ref{thm:mcmullen_cvx_functions}.

\begin{lemma}[\!\!\cite{Colesanti-Ludwig-Mussnig-4}, Corollary 22]\label{epi-additve}
If $\,\oZ:\fconvs\to \R$ is a continuous and epi-translation invariant valuation that is epi-homogeneous of degree $1$, then $\oZ$ is epi-additive.
\end{lemma}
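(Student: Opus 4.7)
The plan is to adapt the classical McMullen argument that derives Minkowski additivity of degree-$1$ translation-invariant valuations on convex bodies. For fixed $u,v\in\fconvs$, consider the two-variable function
$$P(\alpha,\beta):=\oZ(\alpha\sq u\infconv\beta\sq v),\qquad\alpha,\beta>0.$$

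The first and central step is to show that $P$ extends from $(0,\infty)^2$ to a polynomial in $(\alpha,\beta)$. This is the functional analogue of McMullen's two-parameter polynomial expansion for translation-invariant valuations on convex bodies, and is the essential content behind the homogeneous decomposition of Theorem~\ref{thm:mcmullen_cvx_functions}. I would obtain it by reopening the inductive McMullen-type argument used in \cite{Colesanti-Ludwig-Mussnig-4}, but run it with two formal scaling parameters in place of one: the valuation identity for $\oZ$ combined with epi-translation invariance and continuity under epi-convergence yields, by induction on $n$, that $(\alpha,\beta)\mapsto P(\alpha,\beta)$ is a polynomial of total degree at most $n$.

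Next I would exploit epi-homogeneity. Since $\lambda\sq(\alpha\sq u\infconv\beta\sq v)=(\lambda\alpha)\sq u\infconv(\lambda\beta)\sq v$ for $\lambda>0$, we get $P(\lambda\alpha,\lambda\beta)=\lambda\,P(\alpha,\beta)$. A polynomial in two variables that is $1$-homogeneous is necessarily linear, hence $P(\alpha,\beta)=A\alpha+B\beta$ for some constants $A,B\in\R$. To identify these coefficients I would use a limiting argument. By super-coercivity of $v$, the function $\beta\sq v(x)=\beta v(x/\beta)$ tends to $0$ at $x=0$ and to $+\infty$ for $x\ne 0$ as $\beta\to 0^+$; a short verification of the two epi-convergence conditions (using an affine lower bound for $v$ on the one hand and the super-coercivity estimate $v(y)/|y|\to\infty$ on the other) shows that $\beta\sq v$ epi-converges to $\ind_{\{0\}}\in\fconvs$. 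Since $u\infconv\ind_{\{0\}}=u$ and $\oZ$ is continuous,
$$A=\lim_{\beta\to 0^+}P(1,\beta)=\oZ(u\infconv\ind_{\{0\}})=\oZ(u),$$
and symmetrically $B=\oZ(v)$. Therefore $P(\alpha,\beta)=\alpha\,\oZ(u)+\beta\,\oZ(v)$, which is exactly epi-additivity.

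The main obstacle is the polynomial-expansion step. Pointwise maximum and minimum of convex functions do not interact well with infimal convolution, so the map $w\mapsto\oZ(w\infconv v)$ is in general not a valuation and Theorem~\ref{thm:mcmullen_cvx_functions} cannot simply be applied to it. The technical substance therefore lies in revisiting the proof of that theorem and tracking the joint polynomial dependence on the two scaling parameters; once polynomiality is in place, the homogeneity and limiting arguments above are routine.
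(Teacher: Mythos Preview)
Your overall strategy is correct and is exactly the route the paper points to: the lemma is quoted from \cite{Colesanti-Ludwig-Mussnig-4} as a corollary of the homogeneous decomposition (Theorem~\ref{thm:mcmullen_cvx_functions}), and the mechanism is the McMullen two-parameter expansion followed by the degree-$1$ homogeneity trick, just as you describe.

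The one substantive issue is the ``main obstacle'' you flag at the end. Your claim that $w\mapsto\oZ(w\infconv v)$ is in general not a valuation is wrong, and this is precisely why no reopening of the proof of Theorem~\ref{thm:mcmullen_cvx_functions} is needed. Recall that $\epi(u\wedge v)=\epi u\cup\epi v$, $\epi(u\vee v)=\epi u\cap\epi v$, and $\epi(f\infconv g)=\epi f+\epi g$. The set identities
\[
(A\cup B)+C=(A+C)\cup(B+C),\qquad (A\cap B)+C=(A+C)\cap(B+C)\ \text{ when }A\cup B\text{ is convex},
\]
translate on epi-graphs into
\[
(u\wedge v)\infconv w=(u\infconv w)\wedge(v\infconv w),\qquad (u\vee v)\infconv w=(u\infconv w)\vee(v\infconv w)
\]
whenever $u\wedge v$ (equivalently $u\vee v$) is convex. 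Hence $w\mapsto\oZ(w\infconv v)$ \emph{is} a continuous, epi-translation invariant valuation on $\fconvs$, Theorem~\ref{thm:mcmullen_cvx_functions} applies to it directly, and $\alpha\mapsto\oZ(\alpha\sq u\infconv v)$ is a polynomial of degree at most $n$. Your homogeneity and limiting steps then go through verbatim. So the argument is simpler than you suggest: the ``technical substance'' you worry about is already contained in the one-parameter theorem.
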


For $\zeta\in \Had{0}{n}$ and $u\in\fconvs\cap C^2_+(\R^n)$, the substitution $y=\nabla u(x)$ shows that
$$\oZZ{0}{\zeta}(u)=\int_{\R^n} \zeta(\vert \nabla u(x)\vert) [\Hess u(x)]_n\d x= \int_{\R^n} \zeta(\vert y\vert)\d y$$
does not depend on $u$. Hence its extension to $\fconvs$ is constant. We have the following more general result.

\begin{theorem}[\!\cite{Colesanti-Ludwig-Mussnig-4}, Theorem 25]
\label{thm:class_0-hom}
A functional $\oZ:\fconvs \to \R$ is a continuous and epi-translation invariant valuation that is epi-homogeneous of degree $0$, if and only if $\,\oZ$ is constant.
\end{theorem}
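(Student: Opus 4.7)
The ``if'' direction is immediate, since any constant functional is continuous, epi-translation invariant, and epi-homogeneous of every degree. For the converse, the plan is to show that for every $u\in\fconvs$ the rescaled family $\lambda\sq u$ epi-converges to the indicator function $\ind_{\{0\}}$ as $\lambda\to 0^+$, so that continuity combined with degree-zero epi-homogeneity forces $\oZ(u)=\oZ(\ind_{\{0\}})$, a value independent of $u$.

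Two preparatory observations: first, $\ind_{\{0\}}\in\fconvs$, which is clear from the definition since $\ind_{\{0\}}(x)/|x|=+\infty$ for every $x\neq 0$; second, every $u\in\fconvs$ is bounded below, because super-coercivity combined with lower semicontinuity makes each sublevel set $\{u\le c\}$ compact, so $m:=\inf_{\R^n}u\in\R$.

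The main step is the epi-convergence $\lambda\sq u\to\ind_{\{0\}}$ as $\lambda\to 0^+$. For the liminf condition, fix $\lambda_k\to 0^+$ and $x_k\to x$. If $x\neq 0$, then $|x_k/\lambda_k|\to\infty$, and super-coercivity of $u$ gives $\lambda_k u(x_k/\lambda_k)=|x_k|\cdot u(x_k/\lambda_k)/|x_k/\lambda_k|\to+\infty=\ind_{\{0\}}(x)$. If $x=0$, then $\lambda_k\sq u(x_k)\ge\lambda_k m\to 0=\ind_{\{0\}}(0)$. For a recovery sequence at $x=0$, pick any $x_0\in\dom(u)$ and take $x_k:=\lambda_k x_0$; then $x_k\to 0$ and $\lambda_k\sq u(x_k)=\lambda_k u(x_0)\to 0$. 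For $x\neq 0$, the constant sequence $x_k\equiv x$ gives $\lambda_k\sq u(x)\to+\infty$ by the same super-coercivity estimate.

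With the epi-convergence in hand, continuity of $\oZ$ yields $\lim_{\lambda\to 0^+}\oZ(\lambda\sq u)=\oZ(\ind_{\{0\}})$, while degree-zero epi-homogeneity gives $\oZ(\lambda\sq u)=\oZ(u)$ for every $\lambda>0$, so $\oZ(u)=\oZ(\ind_{\{0\}})$ for every $u\in\fconvs$. The only genuine obstacle is the verification of the epi-convergence, where super-coercivity of $u$ is essential; notice that neither the valuation property nor epi-translation invariance is actually used in the argument, which is consistent with the conclusion, as both properties are automatic for constants.
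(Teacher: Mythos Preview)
Your argument is correct. The paper does not supply its own proof of this statement; it is quoted verbatim from \cite{Colesanti-Ludwig-Mussnig-4}, so there is no in-paper proof to compare against. The epi-convergence $\lambda\sq u\to\ind_{\{0\}}$ is verified correctly (super-coercivity is exactly what is needed for the case $x\neq 0$, and the lower bound $m$ handles $x=0$), and your final observation that neither the valuation property nor epi-translation invariance is used is accurate and worth noting: the result holds for any continuous functional on $\fconvs$ that is epi-homogeneous of degree~$0$.
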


\goodbreak
For the other extremal degree, we have the following classification result.

\begin{theorem}[\!\cite{Colesanti-Ludwig-Mussnig-4}, Theorem 2]
\label{thm:class_n-hom}
A functional $\oZ:\fconvs \to \R$ is a continuous and epi-translation invariant valuation that is epi-homogeneous of degree $n$, if and only if there exists $\zeta\in C_c(\R^n)$ such that
$$\oZ(u)=\int_{\dom (u)} \zeta(\nabla u(x)) \d x$$
for every $u\in\fconvs$.	
\end{theorem}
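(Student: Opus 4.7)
The two implications require quite different work.

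\emph{Sufficiency.} Fix $\zeta\in C_c(\R^n)$ and set $\oZ_\zeta(u):=\int_{\dom u}\zeta(\nabla u(x))\,\d x$, which is well defined since a convex function is $\hm^n$-a.e.\ differentiable on $\interno(\dom u)$ and $\zeta$ is compactly supported. Epi-translation invariance and epi-homogeneity of degree $n$ reduce to changes of variables, using $\nabla(\lambda\sq u)(x)=\nabla u(x/\lambda)$ and $\d x=\lambda^n\,\d y$ for the latter. I would verify the valuation property by splitting $\dom u\cup\dom v$ along $\{u\le v\}$ and $\{u\ge v\}$, on each of which $u\vee v$ and $u\wedge v$ coincide with one of $u,v$. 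For continuity I would use identity~\eqref{substitution} to write, for $u\in\fconvs\cap C^2_+(\R^n)$,
\begin{equation*}
\oZ_\zeta(u)=\int_{\R^n}\zeta(x)\,\d\Phi^n_n(u^*,x),
\end{equation*}
and then invoke the weak continuity~\eqref{weak_cont} of Hessian measures under epi-convergence together with the fact that conjugation is a homeomorphism between $\fconvs$ and $\fconvf$; Moreau--Yosida approximation then extends the identity and the continuity to non-smooth $u$.

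\emph{Necessity.} To isolate a candidate $\zeta$, I would test $\oZ$ on the family $u_{K,y}(x):=\ind_K(x)+\langle x,y\rangle$ with $K\in\cK^n$. For fixed $y\in\R^n$, the map $K\mapsto\oZ(u_{K,y})$ inherits from $\oZ$ the properties of being a continuous, translation invariant valuation on convex bodies that is homogeneous of degree $n$, hence by the classical Hadwiger-type characterization it equals a constant multiple of $\hm^n(K)$. Setting $\zeta(y):=\oZ(u_{K,y})/\hm^n(K)$ (independent of $K$ by the previous remark, and continuous in $y$ from the continuity of $\oZ$), it remains to show $\zeta\in C_c(\R^n)$ and that the integral recovers $\oZ$ on all of $\fconvs$. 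Compact support would be extracted by sending $|y|\to\infty$ along a direction $y_0$: the shift $u_{K,\lambda y_0}+\lambda h_K(-y_0)$ leaves $\oZ$ unchanged and epi-converges to the indicator of a proper face of $K$, which combined with the degree-$n$ scaling forces $\zeta(\lambda y_0)\to 0$ sufficiently fast. The upgrade from the test family to general $u\in\fconvs$ would proceed by approximating $u$ by piecewise-affine functions on a polytopal decomposition of $\dom u$ and iterating the valuation relation to reduce to the pieces $u_{K_i,y_i}$, closing by continuity of $\oZ$ and of the candidate integral.

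\textbf{Main obstacle.} The thorniest step is the piecewise-affine reduction in the necessity direction, since the pointwise minimum of two convex functions is rarely convex and one must exhibit enough legitimate applications of the valuation relation inside $\fconvs$ to reconstruct a polytopally piecewise-affine function from the pieces $u_{K,y}$. A cleaner alternative is to pass to the conjugate $\oZ^*$ on $\fconvf$, which is dually epi-translation invariant and homogeneous of degree $n$: identifying $\oZ^*(v)=\int_{\R^n}\zeta(x)\,[\Hess v(x)]_n\,\d x$ on $\fconvf\cap C^2_+(\R^n)$ via Alesker-type arguments for $n$-homogeneous valuations, and extending by continuity together with weak convergence of the Hessian measure $\Phi^n_n$, sidesteps the piecewise-affine difficulty before transferring back through conjugation.
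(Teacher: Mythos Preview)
This theorem is not proved in the present paper; it is quoted from \cite{Colesanti-Ludwig-Mussnig-4} and used as an input. There is therefore no proof here against which to compare your proposal.

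Judging your outline on its own merits: the sufficiency direction is correct and in fact follows at once from Lemma~\ref{fine zeta} (via Lemma~\ref{le:hessian_val_zeta_cont_comp_supp}), since the identity $\int_{\dom u}\zeta(\nabla u)\,\d x=\int_{\R^n}\zeta(y)\,\d\Psi_n^n(u,y)$ holds for every $u\in\fconvs$, not only the smooth ones, and no Moreau--Yosida detour is needed. For the necessity direction your test family $u_{K,y}=\ind_K+\langle\cdot,y\rangle$, the identification of $\zeta(y)$ through the volume characterization, and your proposed passage to the dual valuation $\oZ^*$ on $\fconvf$ to avoid the piecewise-affine reconstruction are all the right moves and match the strategy of \cite{Colesanti-Ludwig-Mussnig-4}.

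The one genuine gap is the compact-support step. Your epi-convergence argument (shift $u_{K,\lambda y_0}$ vertically and let $\lambda\to\infty$) only yields $\zeta(\lambda y_0)\to 0$, i.e.\ that $\zeta$ vanishes at infinity; it does not give $\zeta\in C_c(\R^n)$, and the phrase ``sufficiently fast'' hides the issue rather than resolving it. A correct argument proceeds by contradiction with a rescaling, in the spirit of the compact-support step in the proof of Proposition~\ref{prop:class_1-hom}: assuming $\zeta(y_k)\neq 0$ along a sequence $|y_k|\to\infty$, one rescales the test functions (using the degree-$n$ epi-homogeneity) so that the values of $\oZ$ stay bounded away from zero while the rescaled functions epi-converge to a function on which $\oZ$ must vanish, contradicting continuity. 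You should make this precise rather than rely on a decay statement.
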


As a simple consequence, we obtain the following result. Here, we say that $\oZ$ is \emph{reflection invariant} if $\oZ(u)=\oZ(u^-)$, where $u^-(x):=u(-x)$ for every $x\in\R^n$. 

\begin{corollary}
\label{cor:class_n-hom}
For $n\ge 2$, a functional $\oZ:\fconvs \to \R$ is a continuous, epi-translation and rotation invariant valuation that is epi-homogeneous of degree $n$, if and only if there exists $\zeta\in C_c([0,\infty))$ such that
$$\oZ(u)=\int_{\dom (u)} \zeta(|\nabla u(x)|) \d x$$
for every $u\in\fconvs$. For $n=1$, the same representation holds if we replace rotation invariance by reflection invariance.
\end{corollary}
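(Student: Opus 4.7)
The plan is to obtain this corollary as a direct symmetrization of Theorem~\ref{thm:class_n-hom}; no new machinery is needed, and the whole argument reduces to a careful choice of test functions.

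For the sufficiency direction, given $\zeta\in C_c([0,\infty))$ I would consider the radial extension $\tilde\zeta\in C_c(\R^n)$ defined by $\tilde\zeta(y):=\zeta(|y|)$ and apply Theorem~\ref{thm:class_n-hom} to it; this already yields a continuous, epi-translation invariant valuation $\oZ$ on $\fconvs$ that is epi-homogeneous of degree $n$ and has the claimed formula. Rotation invariance then comes for free from the chain rule: for $\vartheta\in\SO(n)$, $\nabla(u\circ\vartheta^{-1})(x)=\vartheta\,\nabla u(\vartheta^{-1}x)$ has norm $|\nabla u(\vartheta^{-1}x)|$, and the change of variables $y=\vartheta^{-1}x$ closes the argument. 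The same chain rule, applied with $\vartheta=-\mathrm{id}$, delivers reflection invariance when $n=1$.

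For the necessity direction, Theorem~\ref{thm:class_n-hom} produces $\zeta\in C_c(\R^n)$ with $\oZ(u)=\int_{\dom(u)}\zeta(\nabla u(x))\,\d x$; what I have to show is that the additional symmetry hypothesis forces $\zeta$ to be radial (respectively, even). My test family is
\begin{equation*}
u_c(x):=\langle c,x\rangle+\ind_{\Bn}(x),\qquad c\in\R^n,
\end{equation*}
which belongs to $\fconvs$ because $\dom(u_c)=\Bn$ is bounded, making super-coercivity automatic. Since $u_c$ is affine on $\interno(\Bn)$, one has $\nabla u_c(x)=c$ for almost every $x\in\Bn$, so evaluating the representation gives $\oZ(u_c)=\kappa_n\,\zeta(c)$. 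The crucial observation is that $u_c\circ\vartheta^{-1}=u_{\vartheta c}$ for every $\vartheta\in\SO(n)$ (and $u_c\circ(-\mathrm{id})=u_{-c}$ for $n=1$), so invariance of $\oZ$ immediately yields $\zeta(\vartheta c)=\zeta(c)$ for all $c\in\R^n$ and $\vartheta\in\SO(n)$ (respectively, $\zeta(-c)=\zeta(c)$ for $c\in\R$).

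This proves that $\zeta$ is $\O(n)$-invariant (respectively, even), so setting $\tilde\zeta(s):=\zeta(s\,e_1)$ for $s\ge 0$ defines a function in $C_c([0,\infty))$ with $\zeta(y)=\tilde\zeta(|y|)$ on $\R^n$, giving the desired representation. I do not anticipate any real obstacle here: the only points needing explicit verification are that the non-smooth test functions $u_c$ genuinely belong to $\fconvs$ and that the representation in Theorem~\ref{thm:class_n-hom} is applicable to them on the full-measure set where their gradient exists, both of which are immediate.
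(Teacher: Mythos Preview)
Your proposal is correct and follows essentially the same approach as the paper: apply Theorem~\ref{thm:class_n-hom} to obtain a representing function $\xi\in C_c(\R^n)$, then evaluate $\oZ$ on the test functions $u_c=\langle c,\cdot\rangle+\ind_{\Bn}$ to deduce $\kappa_n\,\xi(c)=\oZ(u_c)=\oZ(u_{\vartheta c})=\kappa_n\,\xi(\vartheta c)$ and hence radiality of $\xi$. Your treatment is slightly more explicit than the paper's in that you also spell out the (easy) sufficiency direction via the chain rule, but otherwise the arguments coincide.
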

\begin{proof}
Let $\oZ$ be given. By Theorem~\ref{thm:class_n-hom} there exists $\xi\in C_c(\R^n)$ such that
$$\oZ(u)=\int_{\dom (u)} \xi(\nabla u(x)) \d x$$
for every $u\in\fconvs$. Fix  $y\in\R^n$ and let $u(x)=\ind_{\Bn}(x)+\langle y,x \rangle$ for $x\in\R^n$. Note, that for $\vartheta\in\SO(n)$
$$u\circ \vartheta^{-1}(x)=\ind_{\Bn}(x)+\langle \vartheta^{-t}y, x\rangle.$$
Hence, using the $\SO(n)$ invariance of $\oZ$, we obtain
$$
\kappa_n\, \xi(y) = \oZ(u) = \oZ(u\circ \vartheta^{-1}) = \kappa_n\, \xi(\vartheta^{-t}y)
$$
for every $\vartheta\in\SO(n)$. Since $y\in\R^n$ was arbitrary, it follows that there exists $\zeta\in C_c([0,\infty))$ such that $\xi(y)=\zeta(|y|)$ for every $y\in\R^n$.
In case $n=1$, we simply need to choose $\vartheta(x)=-x$ in the last argument.
\end{proof}

Combining this corollary with Theorem \ref{thm:class_0-hom} gives the following result.

\begin{corollary}
\label{cor:1}
A functional $\,\oZ:\fconvse \to \R$ is a continuous, epi-translation and reflection invariant valuation, if and only if there exist $\zeta_0\in\Had{0}{1}$ and $\zeta_1\in\Had{1}{1}$ such that
$$\oZ(u)=\oZZ{0}{\zeta_0}(u) +\oZZ{1}{\zeta_1}(u)$$
for every $u\in\fconvs$. 
\end{corollary}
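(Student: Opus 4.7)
The plan is to combine the homogeneous decomposition (Theorem \ref{thm:mcmullen_cvx_functions}) with the degree-zero classification (Theorem \ref{thm:class_0-hom}) and the degree-$n$ classification with reflection invariance (Corollary \ref{cor:class_n-hom}), all specialized to $n=1$.

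For the sufficiency direction, first observe that for smooth $u\in\fconvse\cap C^2_+(\R)$ the substitution $y=u'(x)$ gives
$$\oZZ{0}{\zeta_0}(u)=\int_\R \zeta_0(|u'(x)|)\, u''(x) \d x = 2\int_0^\infty \zeta_0(y) \d y,$$
so the extension $\oZZ{0}{\zeta_0}$ provided by Theorem \ref{main one way} is a constant on $\fconvse$ and hence trivially a continuous, epi-translation and reflection invariant valuation. For $\zeta_1\in\Had{1}{1}$, Theorem \ref{main one way} already delivers continuity and epi-translation invariance, while reflection invariance follows from the explicit smooth formula $\oZZ{1}{\zeta_1}(u)=\int_\R \zeta_1(|u'(x)|) \d x$, which depends only on $|u'|$, together with continuity and the density of smooth super-coercive functions in $\fconvse$.

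For the necessity direction, apply Theorem \ref{thm:mcmullen_cvx_functions} to decompose $\oZ=\oZ_0+\oZ_1$, where each $\oZ_i$ is a continuous, epi-translation invariant valuation that is epi-homogeneous of degree $i$. Reflection invariance transfers to each component: since $(\lambda\sq u)^-=\lambda\sq u^-$, the identity $\oZ_0(u)+\lambda\oZ_1(u)=\oZ_0(u^-)+\lambda\oZ_1(u^-)$ holds for all $\lambda>0$, and comparing coefficients in $\lambda$ forces $\oZ_i(u)=\oZ_i(u^-)$. Theorem \ref{thm:class_0-hom} then yields $\oZ_0\equiv c$ for some $c\in\R$, and choosing any $\zeta_0\in\Had{0}{1}$ (for instance a suitable bump function supported in $(0,\infty)$) with $2\int_0^\infty \zeta_0 = c$ gives $\oZZ{0}{\zeta_0}\equiv c=\oZ_0$. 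Applying Corollary \ref{cor:class_n-hom} to $\oZ_1$ produces $\zeta\in C_c([0,\infty))$ with $\oZ_1(u)=\int_{\dom(u)}\zeta(|u'(x)|) \d x$, and its restriction $\zeta_1:=\zeta|_{(0,\infty)}$ lies in $\Had{1}{1}$.

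It remains to match $\oZ_1$ with $\oZZ{1}{\zeta_1}$ on all of $\fconvse$. On $\fconvse\cap C^2_+(\R)$ one has $\dom(u)=\R$, so both functionals reduce to $\int_\R \zeta(|u'(x)|) \d x$; since both are continuous on $\fconvse$ and smooth super-coercive functions are dense there, equality on the dense subclass extends to all of $\fconvse$. I expect this final identification to be the main obstacle: one must be confident that the concrete integral formula of Corollary \ref{cor:class_n-hom} and the abstract continuous extension of Theorem \ref{main one way} produce the same valuation. Fortunately, continuity together with the density argument reduces this to the explicit smooth calculation already recorded.
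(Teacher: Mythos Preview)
Your proof is correct and follows the same route the paper indicates in its one-sentence justification: homogeneous decomposition (Theorem~\ref{thm:mcmullen_cvx_functions}) reduces to the degree-$0$ and degree-$1$ pieces, which are handled by Theorem~\ref{thm:class_0-hom} and Corollary~\ref{cor:class_n-hom} respectively. You have filled in the details (transfer of reflection invariance to the homogeneous components, the matching of $\oZ_1$ with $\oZZ{1}{\zeta_1}$ via density) that the paper leaves implicit.

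One small remark: your appeal to Theorem~\ref{main one way} for the sufficiency direction is unnecessary and slightly anachronistic, since that theorem is proved later and its nontrivial content concerns $1\le j\le n-1$, which is vacuous for $n=1$. For $j=0$ the valuation is constant (as you note), and for $j=n=1$ with $\zeta_1\in\Had{1}{1}=C_c([0,\infty))$ the functional $u\mapsto\int_{\dom(u)}\zeta_1(|u'(x)|)\d x$ is already covered by Corollary~\ref{cor:class_n-hom} (or Lemma~\ref{fine zeta}), so no extension argument is needed.
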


\noindent
We remark that this implies that Theorem \ref{dthm:hadwiger_convex_functions} is also true in the one-dimensional case if we use the additional assumption of reflection invariance.

All of the previous results have dual versions. The following result is Theorem 4 from \cite{Colesanti-Ludwig-Mussnig-4}.

\begin{theorem}
\label{thm:mcmullen_cvx_functions dual}
If $\,\oZ:\fconvf\to\R$ is a continuous and dually epi-translation invariant valuation, then there are continuous and dually epi-translation invariant valuations $\oZ_i:\fconvf\to\R$ that are homogeneous of degree $i$ such that $\oZ=\oZ_0+\cdots + \oZ_n$.
\end{theorem}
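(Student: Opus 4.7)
The plan is to deduce the dual homogeneous decomposition directly from Theorem~\ref{thm:mcmullen_cvx_functions} by transferring everything through the Legendre transform. Given a continuous, dually epi-translation invariant valuation $\oZ\colon\fconvf\to\R$, I would define $\oZ^*\colon\fconvs\to\R$ by $\oZ^*(u):=\oZ(u^*)$. The conjugation $u\mapsto u^*$ is an involutive bijection between $\fconvs$ and $\fconvf$ that is continuous with respect to epi-convergence, and it exchanges pointwise suprema and infima of compatible pairs; the result quoted from \cite{Colesanti-Ludwig-Mussnig-3} already packages this into the statement that $\oZ$ is a continuous valuation on $\fconvf$ if and only if $\oZ^*$ is a continuous valuation on $\fconvs$.

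Next I would check that the invariance and homogeneity dictionary behaves as needed. For a translation $\tau_a(x)=x+a$ and $\alpha\in\R$, a short computation gives
\begin{equation*}
(u\circ \tau_a^{-1}+\alpha)^*(y)=\sup_{x\in\R^n}\bigl(\langle x,y\rangle - u(x-a)-\alpha\bigr)=u^*(y)+\langle a,y\rangle-\alpha,
\end{equation*}
so epi-translations of $u\in\fconvs$ correspond exactly to addition of affine functions to $u^*\in\fconvf$. Hence dual epi-translation invariance of $\oZ$ is equivalent to epi-translation invariance of $\oZ^*$. Similarly, the identity $(\lambda v)^*=\lambda\sq v^*$ for $\lambda>0$ shows that $\oZ$ is homogeneous of degree $j$ on $\fconvf$ if and only if $\oZ^*$ is epi-homogeneous of degree $j$ on $\fconvs$.

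With these dictionaries in place, I would apply Theorem~\ref{thm:mcmullen_cvx_functions} to $\oZ^*$, obtaining continuous and epi-translation invariant valuations $\oZ^*_0,\dots,\oZ^*_n\colon\fconvs\to\R$, with $\oZ^*_i$ epi-homogeneous of degree $i$ and $\oZ^*=\oZ^*_0+\dots+\oZ^*_n$. Defining $\oZ_i\colon\fconvf\to\R$ by $\oZ_i(v):=\oZ^*_i(v^*)$ and running the correspondence in reverse, each $\oZ_i$ is continuous, dually epi-translation invariant, and homogeneous of degree $i$. Summing, $\oZ(v)=\oZ^*(v^*)=\sum_{i=0}^n \oZ^*_i(v^*)=\sum_{i=0}^n \oZ_i(v)$ for every $v\in\fconvf$, which is the desired decomposition.

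There is no substantive obstacle here: once the two algebraic identities $(u\circ \tau_a^{-1}+\alpha)^*=u^*+\langle a,\cdot\rangle-\alpha$ and $(\lambda v)^*=\lambda\sq v^*$ are recorded, the statement is a purely formal consequence of the primal result. The only point requiring a little care is the simultaneous transfer of the valuation property through conjugation, but this is exactly the content invoked from \cite{Colesanti-Ludwig-Mussnig-3}.
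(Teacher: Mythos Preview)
Your proposal is correct and is precisely the duality argument the paper itself sketches in the introduction (the paragraph around \eqref{epi add dual} and the remark that $\oZ$ is dually epi-translation invariant and homogeneous of degree $j$ iff $\oZ^*$ is epi-translation invariant and epi-homogeneous of degree $j$). The paper does not give an independent proof of Theorem~\ref{thm:mcmullen_cvx_functions dual} but simply cites it as Theorem~4 of \cite{Colesanti-Ludwig-Mussnig-4}, where it is derived from the primal Theorem~\ref{thm:mcmullen_cvx_functions} by exactly the Legendre-transform dictionary you describe.
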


\begin{theorem}[\!\cite{Colesanti-Ludwig-Mussnig-4}, Theorem 25]
\label{thm:class_0-hom dual}
A functional $\oZ:\fconvf \to \R$ is a continuous and dually epi-translation invariant valuation that is homogeneous of degree $0$, if and only if $\oZ$ is constant.
\end{theorem}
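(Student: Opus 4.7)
The plan is to deduce this result from its primal counterpart, Theorem~\ref{thm:class_0-hom}, via the Legendre duality correspondence between $\fconvs$ and $\fconvf$ recalled in the introduction. The sufficiency direction requires no work, since a constant functional is trivially a continuous, dually epi-translation invariant valuation that is homogeneous of degree~$0$, so I would concentrate on necessity.

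Given a continuous, dually epi-translation invariant valuation $\oZ:\fconvf\to\R$ that is homogeneous of degree~$0$, I would consider the associated functional $\oZ^*:\fconvs\to\R$ defined by $\oZ^*(u):=\oZ(u^*)$. By the results of \cite{Colesanti-Ludwig-Mussnig-3} quoted in the introduction, $\oZ^*$ is a continuous valuation on $\fconvs$ precisely because $\oZ$ is a continuous valuation on $\fconvf$. Moreover, the duality dictionary stated in the introduction translates the assumptions on $\oZ$ into the properties that $\oZ^*$ is epi-translation invariant and epi-homogeneous of degree~$0$.

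Next, I would apply Theorem~\ref{thm:class_0-hom} to $\oZ^*$ to obtain a constant $c\in\R$ with $\oZ^*(u)=c$ for every $u\in\fconvs$. Since the Legendre transform is an involution that bijects $\fconvf$ onto $\fconvs$, every $v\in\fconvf$ may be written as $v=(v^*)^*$ with $v^*\in\fconvs$, so that
$$
\oZ(v)=\oZ((v^*)^*)=\oZ^*(v^*)=c,
$$
and $\oZ$ is constant on $\fconvf$.

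There is no substantive obstacle here: the actual classification content is already contained in Theorem~\ref{thm:class_0-hom} and the duality framework of \cite{Colesanti-Ludwig-Mussnig-3}, and the present statement is essentially a formal translation across the conjugation correspondence. The only point worth checking carefully is that epi-homogeneity of degree~$0$ on the primal side corresponds exactly to homogeneity of degree~$0$ on the dual side, which follows from the identity $(\lambda\sq u)^*=\lambda\, u^*$ for $\lambda>0$.
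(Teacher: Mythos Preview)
Your proof is correct. The paper does not actually prove this statement; both Theorem~\ref{thm:class_0-hom} and Theorem~\ref{thm:class_0-hom dual} are imported from \cite[Theorem~25]{Colesanti-Ludwig-Mussnig-4} without argument, and your reduction of the dual version to the primal one via the conjugation correspondence is exactly the intended relationship between them.
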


\begin{theorem}[\!\cite{Colesanti-Ludwig-Mussnig-4}, Theorem 5]
\label{thm:class_n-hom dual}
A functional $\oZ:\fconvf \to \R$ is a continuous and dually epi-translation invariant valuation that is homogeneous of degree $n$, if and only if there exists $\zeta\in C_c(\R^n)$ such that
$$\oZ(v)=\int_{\R^n} \zeta(x) \d \Phi^n_n(v,x)$$
for every $v\in\fconvf$.	
\end{theorem}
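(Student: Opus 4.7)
The plan is to deduce Theorem~\ref{thm:class_n-hom dual} from its primal counterpart Theorem~\ref{thm:class_n-hom} by conjugation, using the transfer identity \eqref{substitution} to translate the gradient integral representation on $\fconvs$ into the Hessian measure representation on $\fconvf$.

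For the only-if direction, define $\tilde{\oZ}\colon\fconvs\to\R$ by $\tilde{\oZ}(u):=\oZ(u^*)$. By the correspondence recorded in the excerpt, $\tilde{\oZ}$ is a continuous, epi-translation invariant valuation that is epi-homogeneous of degree $n$, so Theorem~\ref{thm:class_n-hom} supplies $\zeta\in C_c(\R^n)$ with
\[
\tilde{\oZ}(u)=\int_{\dom(u)}\zeta(\nabla u(x))\,\d x\qquad\text{for every }u\in\fconvs.
\]
For $v\in\fconvf\cap C^2_+(\R^n)$, one has $v^*\in\fconvs\cap C^2_+(\R^n)$, and \eqref{substitution} with $j=n$ together with \eqref{Hessian measures smooth case} (using the convention $[\Hess v^*]_0=1$) yields
\[
\oZ(v)=\tilde{\oZ}(v^*)=\int_{\R^n}\zeta(\nabla v^*(x))\,\d x=\int_{\R^n}\zeta(x)[\Hess v(x)]_n\,\d x=\int_{\R^n}\zeta(x)\,\d\Phi^n_n(v,x).
\]
To extend this identity to arbitrary $v\in\fconvf$, approximate $v$ by $v_k\in\fconvf\cap C^2_+(\R^n)$ with $v_k\to v$ (for instance by standard mollification combined with a small quadratic perturbation); the left-hand side is then handled by continuity of $\oZ$, while the right-hand side is handled by \eqref{weak_cont} applied on a bounded Borel set $B$ containing the support of $\zeta$ with $\Theta^n_0(v,\partial B\times\R^n)=0$.

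For the if direction, given $\zeta\in C_c(\R^n)$, set $\oZ(v):=\int_{\R^n}\zeta(x)\,\d\Phi^n_n(v,x)$. Continuity follows from \eqref{weak_cont} applied to the same kind of bounded Borel set; the valuation property is inherited from that of $v\mapsto\Phi^n_n(v,\cdot)$, established in \cite{Colesanti-Ludwig-Mussnig-3}; dual epi-translation invariance is immediate because $\Hess(v+\ell+\alpha)=\Hess v$ for every linear $\ell$ and every $\alpha\in\R$, so $\Phi^n_n(v+\ell+\alpha,\cdot)=\Phi^n_n(v,\cdot)$; and degree-$n$ homogeneity is the scaling relation $\Phi^n_j(\lambda v,B)=\lambda^j\Phi^n_j(v,B)$ recorded in the excerpt, taken at $j=n$.

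The main technical point is the approximation step in the only-if direction. Because the test function $\zeta(x)$ depends only on the first variable while \eqref{weak_cont} is stated for test functions on $\R^n\times\R^n$, one must multiply $\zeta(x)$ by a cutoff $\chi(y)$ of compact support, which requires a uniform bound on $\bigcup_k\bigcup_{x\in B}\partial v_k(x)$. Such a bound is automatic because the $v_k$ are uniformly bounded on the bounded set $B$ (the epi-convergence in $\fconvf$ is uniform on compact sets) and convex functions on $\R^n$ have locally bounded subgradients; arranging this together with the choice of $B$ as a continuity set of the limit measure is the delicate point of the argument.
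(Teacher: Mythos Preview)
The paper does not give its own proof of this theorem; it is quoted verbatim as Theorem~5 of \cite{Colesanti-Ludwig-Mussnig-4}, so there is nothing in the present paper to compare against. Your derivation by dualizing Theorem~\ref{thm:class_n-hom} via $\tilde{\oZ}(u):=\oZ(u^*)$ is exactly the mechanism the paper sets up for passing between results on $\fconvs$ and $\fconvf$, and your computation for $v\in\fconvf\cap C^2_+(\R^n)$ using \eqref{substitution} with $j=n$ and \eqref{Hessian measures smooth case} is correct.

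One simplification: in both the extension step of the only-if direction and in the if direction you work with \eqref{weak_cont} and then have to manufacture a cutoff $\chi(y)$ in the gradient variable and argue uniform subgradient bounds. This is valid, but you can bypass it entirely by citing Lemma~\ref{le:hessian_val_zeta_cont_comp_supp} with $j=0$ and the test function $(t,x,y)\mapsto\zeta(x)$: that lemma already asserts that
\[
v\longmapsto \int_{\R^n\times\R^n}\zeta(x)\,\d\Theta^n_0(v,(x,y))=\int_{\R^n}\zeta(x)\,\d\Phi^n_n(v,x)
\]
is a continuous valuation on $\fconvf$. Once both sides of your identity are known to be continuous on $\fconvf$, agreement on the dense subset $\fconvf\cap C^2_+(\R^n)$ finishes the only-if direction, and the if direction (continuity and the valuation property) is immediate from the same lemma. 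Your remaining checks of dual epi-translation invariance and degree-$n$ homogeneity are fine; note only that ``$\Hess(v+\ell+\alpha)=\Hess v$'' literally applies to smooth $v$, but since you have continuity this extends by density, or alternatively one observes directly that adding $\ell+\alpha$ translates the subgradient graph in the $y$-variable, leaving the $x$-marginal $\Phi^n_n(v,\cdot)=\Theta^n_0(v,\cdot\times\R^n)$ unchanged.
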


\begin{corollary}
\label{cor:class_n-hom dual}
For $n\ge 2$, a functional $\oZ:\fconvf \to \R$ is a continuous, dually epi-translation and rotation invariant valuation that is homogeneous of degree $n$, if and only if there is $\zeta\in C_c([0,\infty))$ such that
$$\oZ(v)=\int_{\R^n} \zeta(|x|) \d \Phi^n_n(v,x)$$
for every $v\in\fconvs$. For $n=1$, the same representation holds if we replace rotation invariance by reflection invariance.
\end{corollary}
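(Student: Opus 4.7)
My plan is to follow the same strategy as in the proof of Corollary~\ref{cor:class_n-hom} but in the dual setting. First, I apply Theorem~\ref{thm:class_n-hom dual} to obtain some $\xi\in C_c(\R^n)$ such that
$$\oZ(v)=\int_{\R^n}\xi(x)\d\Phi_n^n(v,x)$$
for every $v\in\fconvf$. The task is then to deduce from the additional $\SO(n)$-invariance of $\oZ$ that $\xi$ is radial.

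To probe $\xi$ at an arbitrary point, I would use the cone $v_y(x):=|x-y|$, which lies in $\fconvf$ for every $y\in\R^n$. Since $\partial v_y(y)=\Bn$ and $\partial v_y(x)\subseteq\sn$ for $x\neq y$, the polynomial definition of Hessian measures yields $\Phi_n^n(v_y,\cdot)=\kappa_n\,\delta_y$, and hence $\oZ(v_y)=\kappa_n\,\xi(y)$. Because each $\vartheta\in\SO(n)$ is an isometry, one has $v_y\circ\vartheta^{-1}=v_{\vartheta y}$, so rotation invariance of $\oZ$ gives $\xi(y)=\xi(\vartheta y)$ for every $\vartheta\in\SO(n)$ and $y\in\R^n$. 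Setting $\zeta(t):=\xi(t e_1)$ produces the required $\zeta\in C_c([0,\infty))$. For $n=1$, the same argument works with the reflection $x\mapsto -x$ in place of a rotation. The converse direction is immediate from Theorem~\ref{thm:class_n-hom dual} applied to $\xi(x):=\zeta(|x|)$, combined with the $\SO(n)$-equivariance of Hessian measures, which yields $\Phi_n^n(v\circ\vartheta^{-1},\vartheta B)=\Phi_n^n(v,B)$ for every Borel set $B\subseteq\R^n$.

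The main technical point is the identification $\Phi_n^n(v_y,\cdot)=\kappa_n\,\delta_y$: since $v_y\notin C_+^2(\R^n)$, the smooth formula~\eqref{Hessian measures smooth case} does not apply. I would verify it directly from the polynomial expansion $\hm^n(P_s(v_y,B\times\R^n))=\sum_{j=0}^n s^j\,\Theta_{n-j}^n(v_y,B\times\R^n)$: the only contribution to the leading coefficient $\Theta_0^n$ comes from the point $y$ itself, where the full subgradient ball $\Bn$ produces a term $\hm^n(s\Bn)=\kappa_n s^n$, while the subgradient images at any other point lie in $\sn$ and contribute only terms of order at most $s^{n-1}$. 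Apart from this computation, the argument is a straightforward dual adaptation of the one given for Corollary~\ref{cor:class_n-hom}.
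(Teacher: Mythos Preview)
Your proof is correct and is essentially the dualization of the paper's proof of Corollary~\ref{cor:class_n-hom}: the test function $v_y(x)=|x-y|$ that you use is precisely the Legendre transform of the function $u(x)=\ind_{\Bn}(x)+\langle y,x\rangle$ employed there, and your identity $\Phi_n^n(v_y,\cdot)=\kappa_n\,\delta_y$ is the dual of the computation $\oZ(u)=\kappa_n\,\xi(y)$ in that proof (compare also \eqref{eq:phi_n_n_delta}, where the paper carries out the analogous computation for the piecewise affine test function $\bar v(x)=\tfrac12\sum_i|x_i-\bar x_i|$). The paper does not spell out a separate argument for Corollary~\ref{cor:class_n-hom dual}, treating it as an immediate dual of Corollary~\ref{cor:class_n-hom}; your direct verification in the dual setting is exactly what that entails.
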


The dual to Corollary \ref{cor:1} can be written in the following way. The integrals are well-defined by the definition of $\Had{0}{1}$ together with \eqref{eq:phi_0_n}, as well as, Theorem~\ref{thm:class_n-hom dual}.

\begin{corollary}
\label{cor:1d}
If $\oZ:\fconvfe \to \R$ is a continuous, dually epi-translation and reflection invariant valuation, then there exist $\zeta_0\in\Had{0}{1}$ and $\zeta_1\in\Had{1}{1}$ such that
$$
\oZ(v)=\int_{\R} \zeta_0(\vert x\vert)\d \Phi_0^1(v,x) + \int_{\R} \zeta_1(\vert x\vert)\d\Phi_1^1(v,x)
$$
for every $v\in\fconvfe$. 
\end{corollary}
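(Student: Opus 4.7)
The plan is to dualize Corollary~\ref{cor:1} via convex conjugation. Given a continuous, dually epi-translation and reflection invariant valuation $\oZ\colon\fconvfe\to\R$, define $\oZ^*\colon\fconvse\to\R$ by $\oZ^*(u):=\oZ(u^*)$. By the duality correspondence for continuous valuations recalled in the introduction, $\oZ^*$ is a continuous valuation on $\fconvse$. A direct computation gives $(u\circ\tau^{-1}+\alpha)^*=u^*+\ell-\alpha$ for a translation $\tau$ and the associated linear function $\ell$, so the dual epi-translation invariance of $\oZ$ becomes epi-translation invariance of $\oZ^*$; reflection invariance passes over via $(u^-)^*=(u^*)^-$. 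Corollary~\ref{cor:1} then produces $\zeta_0\in\Had{0}{1}$ and $\zeta_1\in\Had{1}{1}$ with $\oZ^*(u)=\oZZ{0}{\zeta_0}(u)+\oZZ{1}{\zeta_1}(u)$ for every $u\in\fconvse$. Setting $u=v^*$ and using $v^{**}=v$ yields
$$
\oZ(v)=\oZZ{0}{\zeta_0}(v^*)+\oZZ{1}{\zeta_1}(v^*)
$$
for every $v\in\fconvfe$.

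The next step rewrites each term as an integral against a Hessian measure. For $v\in\fconvfe\cap C^2_+(\R)$, the conjugate $u=v^*$ lies in $\fconvse\cap C^2_+(\R)$, so the substitution formula~\eqref{substitution} applied to the radial test function $\zeta_j(|\cdot|)$, together with~\eqref{Hessian measures smooth case}, gives
$$
\oZZ{j}{\zeta_j}(v^*)=\int_{\R}\zeta_j(|x|)[\Hess v(x)]_{j}\d x=\int_{\R}\zeta_j(|x|)\d\Phi_j^1(v,x).
$$
The integrability requirement for~\eqref{substitution} is precisely what the $\Had{j}{1}$ conditions supply: for $j=0$ the integral reduces to $\int_\R\zeta_0(|x|)\d x=2\int_0^\infty\zeta_0(t)\d t<\infty$, and for $j=1$ the function $\zeta_1$ extends continuously to the origin and has bounded support, so $x\mapsto\zeta_1(|x|)$ lies in $C_c(\R)$.

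To conclude, both sides of the displayed identity are continuous on $\fconvfe$: the left side via continuity of $\oZZ{j}{\zeta_j}$ on $\fconvse$ (implicit in Corollary~\ref{cor:1}) composed with continuity of conjugation, and the right side via the weak convergence of Hessian measures under epi-convergence recorded in~\eqref{weak_cont} (for $j=0$ the integral is in fact independent of $v$; for $j=1$ the integrand lies in $C_c(\R)$). The density of $\fconvfe\cap C^2_+(\R)$ in $\fconvfe$ then propagates the formula to all of $\fconvfe$, completing the proof. The only genuinely nontrivial point is verifying that the integrability/continuity requirements attached to~\eqref{substitution} and~\eqref{weak_cont} are exactly what the $\Had{j}{1}$ conditions are designed to provide; once this dictionary is in hand, every other step is routine.
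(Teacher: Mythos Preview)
Your proof is correct and takes essentially the same approach as the paper: the paper states only that Corollary~\ref{cor:1d} is the dual of Corollary~\ref{cor:1}, with the integrals well-defined via~\eqref{eq:phi_0_n} and Theorem~\ref{thm:class_n-hom dual}, and you have simply spelled out this dualization in detail. One minor remark: for the continuity of $v\mapsto\int_\R\zeta_1(|x|)\d\Phi_1^1(v,x)$ you could cite Lemma~\ref{le:hessian_val_zeta_cont_comp_supp} (or Theorem~\ref{thm:class_n-hom dual}) directly rather than~\eqref{weak_cont}, since the latter as stated requires a bounded Borel set $B$ with boundary of measure zero.
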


The following was shown in \cite[Theorem 17]{Colesanti-Ludwig-Mussnig-4}.

\begin{lemma}
\label{le:hessian_val_zeta_cont_comp_supp}
Let $\zeta\in C(\R\times\R^n\times\R^n)$ have compact support with respect to the second variable. For $j\in\{0,1,\ldots,n\}$,
$$\oZ(v)=\int_{\R^n\times\R^n}\zeta(v(x),x,y)\d \Theta_j^n(v,(x,y))$$
is well-defined for every $v\in\fconvf$ and defines a continuous valuation on $\fconvf$.
\end{lemma}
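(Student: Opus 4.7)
The plan is to check the three claims---finiteness of the integral, continuity, and the valuation identity---in turn, using the properties of Hessian measures recalled in Section~\ref{se:hessian_measures}.

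For well-definedness, I would first pick $R>0$ such that $\zeta(t,x,y)=0$ whenever $|x|\ge R$. Since $v\in\fconvf$ is convex and finite on $\R^n$, it is Lipschitz on the compact ball $R\Bn$, so there exists $R'>0$ with $\partial v(x)\subset R'\Bn$ for every $x\in R\Bn$. As $\Theta_j^n(v,\cdot)$ is concentrated on the graph of the subgradient, the integrand is supported on the compact set $R\Bn\times R'\Bn$, where $(x,y)\mapsto\zeta(v(x),x,y)$ is continuous and bounded. Combined with local finiteness of $\Theta_j^n(v,\cdot)$, this gives integrability.

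For continuity, let $v_k\to v$ in $\fconvf$, that is, uniformly on compacta. Convexity and the convergence yield a uniform Lipschitz constant for the $v_k$ on $R\Bn$ for all large $k$, confining the values $v_k(R\Bn)$ to a common interval $[-M,M]$ and the subgradients $\partial v_k(R\Bn)$ to a common ball $R'\Bn$. Then uniform continuity of $\zeta$ on $[-M,M]\times R\Bn\times R'\Bn$ together with the uniform convergence of $v_k$ forces the integrands $\zeta(v_k(\cdot),\cdot,\cdot)$ to converge uniformly on $R\Bn\times R'\Bn$ to $\zeta(v(\cdot),\cdot,\cdot)$. Combining this with the weak convergence $\Theta_j^n(v_k,\cdot)\to\Theta_j^n(v,\cdot)$ from \eqref{weak_cont}, applied on a slightly enlarged product box chosen so its boundary has zero $\Theta_j^n(v,\cdot)$-mass, will then give $\oZ(v_k)\to\oZ(v)$.

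The hard part will be the valuation identity. For $v,w\in\fconvf$ with $v\vee w,v\wedge w\in\fconvf$, I would start from the pointwise identity
$$
\zeta(v(x),x,y)+\zeta(w(x),x,y)=\zeta((v\vee w)(x),x,y)+\zeta((v\wedge w)(x),x,y),
$$
which holds on $\R^n\times\R^n$ because $\{v(x),w(x)\}=\{(v\vee w)(x),(v\wedge w)(x)\}$ as unordered pairs for every $x$. On the open set $\{v<w\}$ one has $v\wedge w=v$, $v\vee w=w$, and the subgradients match, so the measures $\Theta_j^n(v\wedge w,\cdot)$ and $\Theta_j^n(v,\cdot)$ agree on $\{v<w\}^{\circ}\times\R^n$, and analogously on $\{v>w\}^{\circ}\times\R^n$ with $v,w$ swapped. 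The main obstacle is the coincidence set $\{v=w\}$, on which all four functions take the same value so the pointwise identity collapses to a tautology in $\zeta$; there it suffices to verify the bare measure identity $\Theta_j^n(v,\cdot)+\Theta_j^n(w,\cdot)=\Theta_j^n(v\vee w,\cdot)+\Theta_j^n(v\wedge w,\cdot)$ restricted to $\{v=w\}\times\R^n$, which reduces to the valuation property of the Hessian measures established in \cite{Colesanti-Ludwig-Mussnig-3}. Assembling the three contributions then yields $\oZ(v)+\oZ(w)=\oZ(v\vee w)+\oZ(v\wedge w)$.
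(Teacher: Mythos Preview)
The paper does not give its own proof of this lemma; it is quoted as \cite[Theorem~17]{Colesanti-Ludwig-Mussnig-4}, so there is no in-paper argument to compare your sketch against.

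Your outline is a correct direct approach. The well-definedness and continuity arguments are sound: the compact support in $x$ together with the local Lipschitz property of finite convex functions confines everything to a compact box in $\R^n\times\R^n$, and the combination of uniform convergence of the integrands with weak convergence of the $\Theta_j^n$ (the latter from \cite[Theorem~7.3]{Colesanti-Ludwig-Mussnig-3}) handles continuity after the standard trick of enlarging the box so its boundary carries no mass. For the valuation identity, your decomposition into $\{v<w\}$, $\{v>w\}$, and $\{v=w\}$ works, but note that the real content lies in the two facts you invoke: locality of Hessian measures on open sets where the functions coincide (which follows from the definition via $P_s$, since the subgradient graphs $\Gamma_v$ and $\Gamma_{v\wedge w}$ agree over such sets), and the measure-valued valuation property $\Theta_j^n(v,\cdot)+\Theta_j^n(w,\cdot)=\Theta_j^n(v\vee w,\cdot)+\Theta_j^n(v\wedge w,\cdot)$, which is indeed established in \cite{Colesanti-Ludwig-Mussnig-3}. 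With those two inputs your assembly is correct. This is essentially how the result is obtained in \cite{Colesanti-Ludwig-Mussnig-4} as well, so your route is not genuinely different, just spelled out here rather than cited.
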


\goodbreak
We conclude this part with a result that will be repeatedly used throughout this paper. It is a direct consequence of \eqref{eq:int_zeta_u_is_int_zeta_v} and Lemma~\ref{le:hessian_val_zeta_cont_comp_supp}.

\begin{lemma}\label{fine zeta}
    For $\zeta\in C_c([0,\infty))$ and $j\in\{0,\dots,n\}$, the functional $\oZ\colon\fconvs\to\R$, defined by
    $$
    \oZ(u)=\int_{\R^n}\zeta(|y|)\d\Psi^n_j(u,y),
    $$
    is a continuous, epi-translation and rotation invariant valuation that is epi-homogeneous of degree $j$.
\end{lemma}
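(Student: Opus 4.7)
The plan is to reduce the claim to the already-established Lemma \ref{le:hessian_val_zeta_cont_comp_supp} via convex conjugation. By \eqref{eq:int_zeta_u_is_int_zeta_v}, for every $u\in\fconvs$ one has
$$
\oZ(u)=\int_{\R^n}\zeta(|y|)\d\Psi^n_j(u,y)=\int_{\R^n}\zeta(|x|)\d\Phi^n_j(u^*,x).
$$
Setting $\tilde\zeta(t,x,y):=\zeta(|x|)$ gives a continuous function on $\R\times\R^n\times\R^n$ with compact support in the second variable, so Lemma \ref{le:hessian_val_zeta_cont_comp_supp}, applied with this $\tilde\zeta$ and $j$, produces a continuous valuation $\tilde\oZ\colon\fconvf\to\R$ given by $\tilde\oZ(v)=\int_{\R^n}\zeta(|x|)\d\Phi^n_j(v,x)$. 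As recalled before Theorem~\ref{dual main one way}, $u\mapsto u^*$ is a continuous bijection $\fconvs\to\fconvf$ under which the valuation property transfers, so $\oZ=\tilde\oZ\circ({}^*)$ is a continuous valuation on $\fconvs$.

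The remaining invariance and homogeneity properties are verified directly from the definition of the Hessian measures through the polynomial expansion $\hm^n(P_s(u,A))=\sum_j s^j\,\Theta^n_{n-j}(u,A)$. For $\tau$ a translation of $\R^n$ by $x_0$ and $\alpha\in\R$, writing $\tilde u:=u\circ\tau^{-1}+\alpha$, the identity $\partial\tilde u(x)=\partial u(x-x_0)$ gives $\Gamma_{\tilde u}=\Gamma_u+(x_0,0)$, and therefore $P_s(\tilde u,\R^n\times B)=x_0+P_s(u,\R^n\times B)$ for every Borel $B\subseteq\R^n$ and $s\ge0$. Translation invariance of $\hm^n$ combined with matching coefficients in $s$ yields $\Psi^n_j(\tilde u,B)=\Psi^n_j(u,B)$, hence $\oZ(\tilde u)=\oZ(u)$. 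For $\vartheta\in\SO(n)$ one finds analogously $\Gamma_{u\circ\vartheta^{-1}}=\{(\vartheta z,\vartheta y):(z,y)\in\Gamma_u\}$, leading to $\Psi^n_j(u\circ\vartheta^{-1},B)=\Psi^n_j(u,\vartheta^{-1}B)$, and the identity $|\vartheta z|=|z|$ finishes the argument.

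For epi-homogeneity, $\partial(\lambda\sq u)(x)=\partial u(x/\lambda)$ gives $P_s(\lambda\sq u,\R^n\times B)=\lambda\,P_{s/\lambda}(u,\R^n\times B)$, whose $n$-dimensional Hausdorff measure equals $\lambda^n\hm^n(P_{s/\lambda}(u,\R^n\times B))$; comparing coefficients of $s^j$ in the two polynomial expansions of $\hm^n(P_s(\lambda\sq u,\R^n\times B))$ gives $\Psi^n_j(\lambda\sq u,B)=\lambda^j\Psi^n_j(u,B)$, so $\oZ(\lambda\sq u)=\lambda^j\oZ(u)$. No substantial obstacle is anticipated: the statement is essentially a transcription of Lemma \ref{le:hessian_val_zeta_cont_comp_supp} through conjugation, together with the standard transformation rules for $\Psi^n_j$ obtained from the polynomial defining identity.
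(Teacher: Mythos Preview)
Your argument is correct and follows the same route the paper indicates (the paper simply says the lemma is ``a direct consequence of \eqref{eq:int_zeta_u_is_int_zeta_v} and Lemma~\ref{le:hessian_val_zeta_cont_comp_supp}''); you have merely written out the details, including the invariance and homogeneity verifications, which the paper leaves implicit. One small bookkeeping slip: since $\Phi^n_j(v,\cdot)=\Theta^n_{n-j}(v,\cdot\times\R^n)$, Lemma~\ref{le:hessian_val_zeta_cont_comp_supp} should be applied with index $n-j$ (not $j$) to obtain $\tilde\oZ(v)=\int_{\R^n}\zeta(|x|)\d\Phi^n_j(v,x)$; this does not affect the argument, as the lemma is stated for all indices.
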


\subsection{The Abel transform.}
For $\zeta\in C_b((0,\infty))$, define its \emph{Abel transform} for $t>0$ as
$$\Abel \zeta (t) :=2 \int_t^{\infty}\frac{s\,\zeta(s)\d s}{\sqrt{s^2-t^2}}=\int_{-\infty}^\infty \zeta(\sqrt{s^2+t^2})\d s.$$
If $\xi\in C_b^1((0,\infty))$, then the inverse transform is given by
$$\Abel^{-1} \xi (s) = -\frac1\pi \int_s^\infty \frac{\xi'(t) \d t}{\sqrt{t^2-s^2}}.$$
In particular, $ \Abel \zeta \equiv 0$ implies that $\zeta\equiv 0$.   More generally, we have the following fact. 
\begin{lemma}
	\label{lemma Abel}
	Let $\zeta\in C_b((0,\infty))$ and  $k\in\N\cup\{0\}$. If
	\begin{equation}\label{Abel}
	\int_0^{\infty}\zeta(\sqrt{r^2+t^2})r^k\d r=0
	\end{equation}
for every $t>0$,	then $\zeta\equiv 0$. 
\end{lemma}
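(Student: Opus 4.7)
The plan is to perform two successive substitutions that convert the hypothesis into a classical Riemann--Liouville / Abel integral equation, and then to descend in $k$ by repeated differentiation. First, I would substitute $s = \sqrt{r^2+t^2}$ (so that $s\,\d s = r\,\d r$) in (\ref{Abel}), which yields $\int_t^\infty \zeta(s)\,s\,(s^2-t^2)^{(k-1)/2}\d s = 0$ for every $t>0$. A further substitution $u = s^2$ with $\tau := t^2$ and $\eta(u) := \zeta(\sqrt u)$ converts this to
$$
\int_\tau^\infty \eta(u)(u-\tau)^{(k-1)/2}\d u = 0 \qquad \text{for every } \tau>0.
$$
Since $\zeta \in C_b((0,\infty))$ has bounded support, so does $\eta$, and all integrals involved are effectively over a bounded interval.

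For the base case $k=0$, reversing only the first substitution shows that the hypothesis is precisely $\Abel \zeta(t)=0$ for every $t>0$, so the injectivity of the Abel transform noted just before the lemma forces $\zeta \equiv 0$. For $k\ge 1$ I would descend in $\tau$ using the Leibniz identity
$$
\frac{d}{d\tau}\int_\tau^\infty \eta(u)(u-\tau)^{\alpha}\d u = -\alpha\int_\tau^\infty \eta(u)(u-\tau)^{\alpha-1}\d u,
$$
which is valid whenever $\alpha>0$, since then the boundary contribution $\eta(\tau)(u-\tau)^\alpha\vert_{u=\tau}$ vanishes. If $k = 2m+1$ is odd, differentiating $m$ times reduces the problem to $\int_\tau^\infty \eta(u)\d u = 0$, after which one further differentiation gives $\eta(\tau)=0$ for every $\tau>0$, hence $\zeta \equiv 0$. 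If $k = 2m$ is even with $m\ge 1$, differentiating $m$ times reduces the problem to $\int_\tau^\infty \eta(u)(u-\tau)^{-1/2}\d u = 0$ for every $\tau>0$; after reversing the substitutions, this is once again $\Abel\zeta(t)=0$ for every $t>0$, and injectivity finishes the argument.

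The main technical obstacle is the bookkeeping required to justify the descent: at every intermediate step the exponent of $(u-\tau)$ must be strictly positive so that the boundary term from the Leibniz rule really vanishes. A direct check shows that the exponent just before the $i$-th differentiation equals $m-(i-1)$ in the odd case and $m+\tfrac{1}{2}-i$ in the even case; both are at least $\tfrac12$ as long as $i$ is strictly smaller than the total number of differentiations performed, so the descent is legitimate. Differentiation under the integral sign at each step is in turn justified by dominated convergence using the bounded support of $\eta$ and the continuity of $\zeta$ on $(0,\infty)$.
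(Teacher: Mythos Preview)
Your proposal is correct and follows essentially the same route as the paper: both reduce via substitution to an integral of the form $\int_\tau^\infty \eta(u)(u-\tau)^{(k-1)/2}\,du=0$ and then differentiate repeatedly, with the even case ultimately invoking injectivity of the Abel transform. Your presentation is slightly more unified in that you carry out the substitution $u=s^2,\ \tau=t^2$ in both parity cases, whereas the paper does this only in the odd case and differentiates directly in $t$ in the even case.
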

\begin{proof}
By the definition of the Abel transform, we have
$$\mathcal{A}^{k+1} \zeta(t) = \int_{\R^{k+1}} \zeta(\sqrt{|x|^2+t^2}) \d x = (k+1) \kappa_{k+1} \int_0^{+\infty} \zeta(\sqrt{r^2+t^2})r^k \d r$$
for every $t>0$. Thus, it follows from \eqref{Abel} that $\mathcal{A}^{k+1}\zeta\equiv 0$, which implies that $\zeta\equiv 0$.
\end{proof}

\subsection{The Classes $\Had{j}{n}$} Let $j\in\{1,\dots,n-1\}$ and let $\zeta\in \Had{j}{n}$. We will associate with $\zeta$ two functions $\eta,\rho\in C_c([0,\infty))$ where
\begin{equation}
\label{eq:def_eta}
\eta(t)=\int_t^\infty s^{n-j-1}\zeta(s)  \d s
\end{equation}
and
\begin{equation}
\label{eq:def_rho}
\rho(t)=t^{n-j}\zeta(t)+(n-j)\eta(t)
\end{equation}
for every $t>0$. Observe that it follows from the definition of $\Had{j}{n}$ that $\eta(0)=\lim_{t\to 0^+}\eta(t)$ exists and is finite and that $\rho(0)=\lim_{t\to 0^+} \rho(t)=(n-j)\eta(0)$.

\goodbreak
The following result gives a geometric interpretation of the function $\rho$ using the function $u_t$ and $v_t$ defined in (\ref{ut def}) and (\ref{vt def}).

\begin{lemma}
	\label{le:calc_ind_bn_tx_theta_i}
	Let  $j\in\{1,\dots, n-1\}$ and $\zeta\in\Had{j}{n}$. For every $t \geq 0$,
	\begin{equation*}
	\int_{\R^n} \zeta(\vert y\vert)\d\Psi_j^{n}(u_t,y)=\int_{\R^n} \zeta(|x|) \d\Phi_j^n(v_t,x)=\kappa_n\binom{n}{j}\rho(t).
    \end{equation*}
\end{lemma}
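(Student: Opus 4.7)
The first equality is immediate from \eqref{eq:int_zeta_u_is_int_zeta_v} applied with $u=u_t$ (note $u_t^*=v_t$), granted finiteness of the relevant integrals; this will fall out of the explicit description of $\Phi_j^n(v_t,\cdot)$ produced below. The task therefore reduces to evaluating $\int_{\R^n}\zeta(|x|)\d\Phi_j^n(v_t,x)$. My plan is to compute $\Phi_j^n(v_t,\cdot)$ directly from the polynomial-expansion definition of Hessian measures, exploiting the three regimes of the subgradient:
\[
\partial v_t(x)=
\begin{cases}
\{0\} & \text{if } |x|<t,\\
\{\sigma x/t:\sigma\in[0,1]\} & \text{if }|x|=t,\\
\{x/|x|\} & \text{if }|x|>t.
\end{cases}
\]

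For a Borel set $B\subseteq\R^n$, I will expand $\hm^n(P_s(v_t,B\times\R^n))$ as a polynomial in $s$ and identify $\Phi_j^n(v_t,B)$ with its coefficient of $s^j$. The interior regime contributes $y=0$, hence the set $P_s$ is independent of $s$ and feeds only into the $s^0$-term; this gives nothing for $j\ge 1$. On $\{|x|>t\}$ the function $v_t$ is $C^2$ with $\Hess v_t(x)=|x|^{-1}(I-xx^T/|x|^2)$, whose $j$th elementary symmetric function equals $\binom{n-1}{j}|x|^{-j}$, so via \eqref{Hessian measures smooth case} this regime contributes the absolutely continuous measure $\binom{n-1}{j}|x|^{-j}\d x$ restricted to $\{|x|>t\}$. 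The delicate piece is the sphere $\{|x|=t\}$: writing $\omega=x/t\in\sn$, the image set $\{(t+s\sigma)\omega:\omega\in(B\cap t\sn)/t,\ \sigma\in[0,1]\}$ has $n$-volume $\frac{(t+s)^n-t^n}{n\,t^{n-1}}\,\hm^{n-1}(B\cap t\sn)$ by spherical coordinates, and extracting the coefficient of $s^j$ shows that this regime produces a singular measure $\frac{\binom{n}{j}\,t^{1-j}}{n}\,\hm^{n-1}$ concentrated on $t\sn$.

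Summing the two non-zero contributions and integrating $\zeta(|x|)$ yields
\[
\int_{\R^n}\zeta(|x|)\d\Phi_j^n(v_t,x)
=\binom{n-1}{j}\omega_n\int_t^{\infty} r^{n-j-1}\zeta(r)\,\d r
+\binom{n}{j}\frac{\omega_n}{n}\,t^{n-j}\zeta(t).
\]
Using $\omega_n=n\kappa_n$ and the identity $n\binom{n-1}{j}=(n-j)\binom{n}{j}$, this collapses to $\kappa_n\binom{n}{j}\bigl((n-j)\eta(t)+t^{n-j}\zeta(t)\bigr)=\kappa_n\binom{n}{j}\rho(t)$ by \eqref{eq:def_eta} and \eqref{eq:def_rho}; the endpoint $t=0$ is recovered by continuity (the singular piece vanishes since the sphere degenerates to a point). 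The main obstacle is the sphere contribution: $v_t$ fails to be $C^1$ across $\{|x|=t\}$, so the smooth formula \eqref{Hessian measures smooth case} is not directly available there, and one must argue from the polynomial-expansion definition of $\Theta_{n-j}^n$---or, equivalently, approximate $v_t$ by $C_+^2$ functions and pass to the limit using the weak convergence \eqref{weak_cont}.
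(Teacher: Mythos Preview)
Your proof is correct and follows essentially the same approach as the paper: decompose $\partial v_t$ into the three regimes $\{|x|<t\}$, $\{|x|=t\}$, $\{|x|>t\}$, use \eqref{Hessian measures smooth case} on the exterior, and read off the singular contribution on $t\sn$ from the polynomial expansion of $\hm^n(P_s(v_t,\cdot))$; the algebra with $\omega_n=n\kappa_n$ and $n\binom{n-1}{j}=(n-j)\binom{n}{j}$ then yields $\kappa_n\binom{n}{j}\rho(t)$. The only place your write-up is slightly looser than the paper is the endpoint $t=0$: the paper verifies directly that $\Phi_j^n(v_0,\{0\})=\Theta_{n-j}^n(v_0,\{0\}\times\Bn)=0$ from $P_s(v_0,\{0\}\times\R^n)=s\Bn$, rather than appealing to continuity, which avoids any question of passing to the limit through the possible singularity of $\zeta$ at $0$.
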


\begin{proof}
    Recall that $v_t=(u_t)^*$. Hence, the first equality follows from  \eqref{eq:int_zeta_u_is_int_zeta_v}.	
	For $t>0$, all Hessian measures of $v_t$, with the exception of the measure $\Theta^n_n(v_t, \cdot)$, vanish in the set
	$\{x\in\R^n\colon \vert x\vert<t\}\times \R^n$. In particular,  we have $\Theta^n_{j}(v_t,B)=0$
	for every Borel subset $B$ of $\{x\in\R^n\colon \vert x\vert <t\}\times\R^n$. Moreover, the function $v_t$ is of class $C^2$ in the set $\{x\in\R^n\colon \vert x\vert>t\}$
	and the Hessian matrix of $v_t$ at a point $x$ in this set has $(n-1)$ eigenvalues equal to $1/\vert x\vert$ and the last eigenvalue equal to zero. Hence
	$$
	[\Hess v_t (x)]_j=\binom{n-1}{j}\frac{1}{\vert x\vert^j}
	$$
	for $\vert x\vert>t$.
	The set $t\,\sn$ is the set of singular points for $v_t$. For $x\in t\,\sn$, we have
	$$
	\partial v_t(x)=\Big\{r\,\frac{x}{\vert x\vert}: r\in [0,1]\Big\}.
	$$
	Consequently, $P_s(v_t, t\,\sn\times \R^n)= \{(t+r)x: x\in \sn, r\in[0,s]\}$ for $s\geq 0$. Thus,
	$$\hm^n\big(P_s(v_t,t\,\sn\times \R^n)\big)= \kappa_n \big((t+s)^n-t^n\big)$$
	and therefore 
	$$\Phi_{j}^n(v_t,t\, \sn)= \Theta^n_{n-j}(v_t, t\,\sn\times\R^n)=\kappa_n\binom{n}{j}\,t^{n-j}.$$
	Summing up, we obtain
	\begin{align*}
	\int_{\R^n} \zeta(\vert x\vert) \d\Phi_j^n(v_t,x)&= \kappa_n \binom{n}{j} t^{n-j} \zeta(t) + \binom{n-1}{j} \int_{\{x\in\R^n\colon \vert x \vert>t\}} \zeta(\vert x \vert)\frac{1}{\vert x\vert^j} \d x\\
	&= \kappa_n \binom{n}{j} t^{n-j} \zeta(t) + n \kappa_n \binom{n-1}{j}  \int_t^{+\infty} s^{n-j-1} \zeta(s) \d s,
	\end{align*}
	which implies the result for $t>0$. Note that the set of singular points of $v_0$ is $\{0\}$ and that $\partial v_0(0)=\Bn$. Hence $\Theta^n_{n-j}(v_0, \{0\}\times \Bn)=0$ which implies that the result holds for $t=0$.
\end{proof}

\section{Singular Hessian Valuations}\label{se:singular_hessian_valuations}
This section is devoted to the proof of Theorem \ref{main one way} and Theorem \ref{dual main one way}. We start with  functions in $\fconvf\cap C^2_+(\R^n)$ and show that on this class of functions, the valuations from (\ref{dhessian}) are well-defined and finite for $\zeta\in\Had{j}{n}$ with $j\in\{0,\dots,n\}$. The dual result implies that (\ref{hessian}) is well-defined and finite for such $\zeta$ on $\fconvs\cap C^2_+(\R^n)$.  The next step is an extension to functions in $\fconvs\cap C^1(\R^n)$. For this, we collect results related to the theory of Hessian equations and establish uniform estimates for regular functions. In the last step of the proof, we use Moreau--Yosida envelopes and the polynomiality of epi-translation invariant valuations, which was proved in \cite{Colesanti-Ludwig-Mussnig-4}, to extend the valuations to general functions in $\fconvs$. In the last part, we apply Theorem \ref{main one way} to establish a representation formula for $\oZZ{j}{\zeta}$ on a certain class of functions.

\subsection{The Smooth Case}

Let $\fconvfs$ denote the set of functions $v\in\fconvf$ that are of class $C^2$ in a neighborhood of the origin. 

\begin{lemma}\label{lem:existence_v_smooth_origin} Let $j\in\{1,\dots, n\}$ and $\zeta\in \Had{j}{n}$. If $\,v\in\fconvfs$, then
\begin{equation}
\label{eq:int_lem_existence_v_smooth_origin}
\int_{\R^n}\big\vert\zeta(|x|)\big\vert \d\Phi^n_{j}(v,x)
\end{equation}
is well-defined and finite.
\end{lemma}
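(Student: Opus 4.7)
The plan is to exploit that $\zeta$ has bounded support together with the smoothness of $v$ near the origin. Since $\zeta\in C_b((0,\infty))$, there is some $R>0$ such that $\zeta\equiv 0$ on $(R,\infty)$, so $\zeta(|x|)$ vanishes for $|x|>R$. By the assumption $v\in\fconvfs$ there is also some $0<\rho\le R$ with $v\in C^2(\rho\Bn)$. I would then split the integral as
\begin{equation*}
\int_{\R^n}\bigl|\zeta(|x|)\bigr|\d\Phi^n_j(v,x) = \int_{\rho\Bn}\bigl|\zeta(|x|)\bigr|\d\Phi^n_j(v,x) + \int_{R\Bn\setminus\rho\Bn}\bigl|\zeta(|x|)\bigr|\d\Phi^n_j(v,x)
\end{equation*}
and handle the two pieces separately.

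On the annulus $R\Bn\setminus\rho\Bn$, the function $\zeta(|\cdot|)$ is continuous on a compact set avoiding the origin, hence bounded, while $\Phi^n_j(v,\cdot)$ is locally finite as a Radon measure on $\R^n$. Consequently this contribution is finite.

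On the inner ball $\rho\Bn$, formula \eqref{Hessian measures smooth case} applies and gives $\d\Phi^n_j(v,x)=[\Hess v(x)]_j\d x$; moreover $[\Hess v]_j$ is continuous on the compact set $\rho\Bn$, hence bounded by some constant $M$. Passing to polar coordinates reduces finiteness of the inner integral to proving that $\int_0^\rho|\zeta(s)|\,s^{n-1}\d s<\infty$. When $j=n$, the function $\zeta$ extends continuously to $0$ by the definition of $\Had{n}{n}$, and this is trivial. For $1\le j\le n-1$, the defining condition $\lim_{s\to 0^+} s^{n-j}\zeta(s)=0$ of $\Had{j}{n}$ yields $|\zeta(s)|\le C\,s^{-(n-j)}$ for $s$ near $0$, whence $|\zeta(s)|\,s^{n-1}\le C\,s^{j-1}$, which is integrable on $(0,\rho)$ since $j\ge 1$.

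The main (and rather minor) obstacle is identifying which part of the definition of $\Had{j}{n}$ controls absolute integrability near the origin: the second integrability requirement in the definition involves only the signed integral of $t^{n-j-1}\zeta(t)$ and is not directly useful here, whereas the pointwise decay condition $s^{n-j}\zeta(s)\to 0$ combines with the factor $s^{n-1}$ from polar coordinates to cancel the potential singularity of $\zeta$ precisely when $j\ge 1$. Once this is noticed, the estimate is routine and the argument outlined above completes the proof.
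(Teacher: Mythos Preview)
Your proof is correct and follows essentially the same approach as the paper: split the integral at a radius where $v$ is $C^2$, control the outer annulus using the boundedness of $\zeta$ away from zero together with the local finiteness of $\Phi_j^n(v,\cdot)$, and on the inner ball pass to polar coordinates via $\d\Phi_j^n(v,x)=[\Hess v(x)]_j\d x$ to reduce to integrability of $s^{n-1}|\zeta(s)|$ near $0$. The paper phrases the last step slightly differently (observing that $r^{n-1}|\zeta(r)|$ extends continuously to $r=0$), but this is exactly your bound $|\zeta(s)|s^{n-1}\le Cs^{j-1}$ in disguise.
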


\begin{proof}
Let $\delta>0$ be such that $v$ is of class $C^2$ on $\{x\in\R^n: \vert x\vert <\delta\}$. We rewrite \eqref{eq:int_lem_existence_v_smooth_origin} as
$$
\int_{\{x\colon |x|<\delta/2\}}\big\vert \zeta(|x|) \big\vert\d\Phi^n_{j}(v,x)+\int_{\{x\colon|x|\ge\delta/2\}}\big\vert\zeta(|x|) \big\vert\d\Phi^n_{j}(v,x).
$$
The second term is bounded, as $\zeta\in C_b((0,\infty))$
and $\Phi^n_{j}(v,\cdot)$ is locally finite. Concerning the first term,
let $\gamma>0$ be such that $[\Hess v(x)]_j\le \gamma$ for every $x$ such that $|x|\le \delta/2$. Using (\ref{Hessian measures smooth case}), we get
\begin{eqnarray*}
\int_{\{x\colon |x|<\delta/2\}}\big\vert\zeta(|x|)\big\vert\d\Phi^n_{j}(v,x)&=&\int_{\{x\colon|x|<\delta/2\}}\big\vert\zeta(|x|)\big\vert[\Hess v(x)]_j\d x\\
&\le& \gamma\int_{\{x\colon|x|<\delta/2\}}\big\vert\zeta(|x|)\big\vert\d x\\
&=&\omega_n\gamma\int_0^{\delta/2}r^{n-1}\big\vert\zeta(r)\big\vert\d r.
\end{eqnarray*}
As $\zeta\in \Had{j}{n}$, the function $r^{n-1}\big\vert\zeta(r)\big\vert$ can be extended to $r=0$ as a continuous function. Hence the last integral is finite.
\end{proof}
\goodbreak

\noindent
We remark that as a consequence of this lemma, we obtain that \eqref{dhessian} in Theorem~\ref{dual main one way} holds for every $v\in\fconvf\cap C^2(\R^n)$.
A special case of the dual result of Lemma~\ref{eq:int_lem_existence_v_smooth_origin} is the content of the following lemma.

\begin{lemma}\label{existence in the smooth case} Let $j\in\{1,\dots, n\}$ and $\zeta\in \Had{j}{n}$. If $\,u\in\fconvs\cap C^2_+(\R^n)$, then
$$
	\int_{\R^n} \zeta(|\nabla u(x)|) [\Hess u(x)]_{n-j} \d x
$$
is well-defined and finite.
\end{lemma}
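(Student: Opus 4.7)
The plan is to deduce this lemma from Lemma~\ref{lem:existence_v_smooth_origin} via convex duality and the change-of-variables identity \eqref{substitution}. This is the natural route because \eqref{substitution} converts integrals of $\zeta(\nabla u(x))$ against $[\Hess u(x)]_{n-j}$ into integrals of $\zeta(x)$ against $[\Hess u^*(x)]_j$, which is precisely the form handled by Lemma~\ref{lem:existence_v_smooth_origin}.

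The first step will be to observe that $u\in\fconvs\cap C^2_+(\R^n)$ forces $u^*\in\fconvf\cap C^2_+(\R^n)$, and in particular $u^*\in\fconvfs$; this was recalled in Section~\ref{convex functions}. Lemma~\ref{lem:existence_v_smooth_origin} applied to $v=u^*$ then yields that
\begin{equation*}
\int_{\R^n}\bigl|\zeta(|x|)\bigr|\d\Phi^n_{j}(u^*,x)
\end{equation*}
is finite. Next, I will apply \eqref{substitution} to the radial, non-negative function $y\mapsto|\zeta(|y|)|$ on $\R^n\setminus\{0\}$ (for which both integrals in \eqref{substitution} automatically make sense in $[0,\infty]$) and combine it with \eqref{Hessian measures smooth case} to identify $[\Hess u^*(x)]_j\d x$ with $\d\Phi^n_j(u^*,x)$, arriving at
\begin{equation*}
\int_{\R^n}\bigl|\zeta(|\nabla u(x)|)\bigr|[\Hess u(x)]_{n-j}\d x=\int_{\R^n}\bigl|\zeta(|x|)\bigr|\d\Phi^n_j(u^*,x)<\infty.
\end{equation*}
This shows that the integral in question is absolutely convergent, hence well-defined and finite.

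There is no substantial obstacle: once the dual pairing provided by \eqref{substitution} is in place, the lemma is essentially a transcription of Lemma~\ref{lem:existence_v_smooth_origin}. The only minor point worth checking is that \eqref{substitution}, whose original hypothesis asks that one of the two integrals exist, can be used freely with the non-negative integrand $y\mapsto|\zeta(|y|)|$, since both sides are then unconditionally defined in $[0,\infty]$.
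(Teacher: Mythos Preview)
Your proposal is correct and follows exactly the paper's approach: observe that $u^*\in\fconvfs$, apply Lemma~\ref{lem:existence_v_smooth_origin} to $v=u^*$, and transfer the conclusion via \eqref{substitution}. Your extra care in applying \eqref{substitution} to $|\zeta|$ and invoking \eqref{Hessian measures smooth case} is a welcome elaboration of what the paper compresses into one sentence.
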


\begin{proof}
Note that $u^*\in \fconvfs$ for $u\in\fconvs\cap C^2_+(\R^n)$. Hence, the statement follows from Lemma~\ref{lem:existence_v_smooth_origin} combined with (\ref{substitution}).
\end{proof}

\subsection{Preparatory Results for Theorem \ref{main one way}}\label{section prep}

We introduce the following class of functions,
$$\fconvsreg:=\{u\in\fconvs: u\ \mbox{ is regular, }\ 0=u(0)< u(x)\ \mbox{ for every }\ x\neq 0\},$$
where $u$ is \emph{regular} if	$u\in C^2(\R^n)$ and the boundary of its sublevel sets $\{u\le t\}$ is  of class $C^2$ with positive Gaussian curvature for every $t>0$.

Let $u\in\fconvsreg$. Note that $\nabla u(x)\ne0$ for every $x\ne0$ and that, for $t>0$, the sublevel set $\{u\le t \}$
is a convex body with non-empty interior and
$$
\partial \{u\le t \}=\{u=t\}.
$$
Given $x\ne 0$ and $j\in\{0,\dots,n-1\}$, we denote by $\tau_j(u,x)$ the $j$th elementary symmetric function of the principal curvatures at $x$ of $\{u\le t\}$ for $t=u(x)$. For every $t\ge0$, we denote by $h(u,\cdot,t)\colon\sn\to\R$, the support function of $\{u\le t \}$. If $t>0$, the regularity properties of the sublevel set $\{u\le t \}$ imply that $h(u,\cdot,t)\in C^2(\sn)$. For $y\in\sn$, let $\sigma_j(u,y,t)$ denote the $j$th elementary symmetric function of the principal radii of curvature of $ \{u= t \}$ at $\nu_t^{-1}(y)$, where $\nu_t\colon\{u=t\}\to\sn$ denotes the Gauss map of $\{u\le t \}$ and $\nu_t^{-1}: \sn\to\{u=t\}$ its inverse.

\subsubsection{Hessian Operators and Reilly-Type Lemmas}
Let $A=(a_{ij})$ be a symmetric $n\times n$ matrix and $k\in\{0,\dots,n\}$. We denote by $[A]_k$ the $k$th elementary symmetric function of the eigenvalues of $A$, that is, 
$$
[A]_k:=\sum_{1\le i_1<i_2<\dots<i_k\le n}\lambda_{i_1}\cdots\lambda_{i_k}
$$
(with the usual convention $[A]_0\equiv 1$), where $\lambda_1,\dots,\lambda_n$ are the eigenvalues of $A$. For $l,m\in\{1,\dots, n\}$, we set
\begin{equation}\label{cofactor}
[A]_k^{lm}:=\frac{\partial [A]_k}{\partial a_{lm}}.
\end{equation}
The $n\times n$ matrix formed by the entries $[A]_k^{lm}$ is called the \emph{cofactor matrix} of order $k$ of the matrix $A$. When $k=n$, this is the usual cofactor matrix. From the homogeneity of the map $A\mapsto[A]_k$, we deduce
\begin{equation}\label{fc1}
[A]_k^{lm}a_{lm}=k\,[A]_k
\end{equation}
(here and throughout this section, we adopt the convention of summation over repeated indices). 
For a $n\times j$ matrix $(b^{lm})$ with elements of class $C^1$, we define
$$\div_m((b^{lm}))$$
as the divergence of the vector field whose components are the elements of the $l$th column of the matrix $(b^{lm})$.

\goodbreak
Let $u\in C^2(\R^n)$. For $k\in\{0,\dots,n\}$, the $k$th \emph{Hessian operator} applied to $u$ and evaluated at a point $x\in\R^n$ is given by
$$
[\Hess u(x)]_k.
$$
The following result is due to Reilly \cite[Proposition 2.1]{Reilly}. 

\begin{lemma}\label{fc lemma 1} Let  $k\in\{0,1,\dots,n\}$ and $l\in\{1,\dots,n\}$. Then
	$$
	\div_m([\Hess u(x)]^{lm}_k)=0
	$$
	for every $u\in C^3(\R^n)$ and $x\in\R^n$. 
\end{lemma}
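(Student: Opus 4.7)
The plan is to prove this via the generalized Kronecker delta representation of elementary symmetric functions combined with the symmetry of third derivatives. Writing
\[
[A]_k \;=\; \frac{1}{k!}\sum \delta^{i_1\cdots i_k}_{j_1\cdots j_k}\, a_{i_1j_1}\cdots a_{i_kj_k},
\]
where $\delta^{i_1\cdots i_k}_{j_1\cdots j_k}$ is the generalized Kronecker delta (equal to the sign of the permutation if $\{i_1,\dots,i_k\}=\{j_1,\dots,j_k\}$ as multisets with distinct entries, and zero otherwise), one obtains from \eqref{cofactor} the symmetric expression
\[
[A]_k^{lm} \;=\; \frac{1}{(k-1)!}\sum \delta^{l\,i_2\cdots i_k}_{m\,j_2\cdots j_k}\, a_{i_2j_2}\cdots a_{i_kj_k}.
\]
This is the workhorse identity and what I would establish first; it follows directly from the symmetry of $A$ and Euler's relation \eqref{fc1} serves as a sanity check.

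Next, I would substitute $a_{ij}=u_{ij}=\partial_i\partial_j u$ and apply $\partial_m$, using $u\in C^3$ so that third partials $u_{ijm}$ are symmetric in all three indices. By the Leibniz rule,
\[
\partial_m [\Hess u]_k^{lm} \;=\; \frac{1}{(k-1)!}\sum_{p=2}^k \sum \delta^{l\,i_2\cdots i_k}_{m\,j_2\cdots j_k}\, u_{i_2j_2}\cdots u_{i_pj_pm}\cdots u_{i_kj_k}.
\]
For each fixed $p$, the factor $u_{i_pj_pm}$ is symmetric under the swap of $m$ and $j_p$, while the Kronecker delta $\delta^{l\,i_2\cdots i_k}_{m\,j_2\cdots j_k}$ is antisymmetric under that swap (since it is totally antisymmetric in its upper and in its lower indices separately, and we are permuting two lower indices). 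Summing over the repeated indices therefore yields zero for each~$p$.

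The main obstacle is bookkeeping, namely verifying that the antisymmetry of the generalized Kronecker delta pairs up correctly with the symmetry of the third derivative after the Leibniz expansion. Once the representation of $[A]_k^{lm}$ via the generalized delta is in hand, the cancellation is essentially automatic. The cases $k=0$ (where $[A]_0\equiv 1$ so $[A]_0^{lm}=0$) and $k=1$ (where $[A]_1^{lm}=\delta^{lm}$ and $\partial_m\delta^{lm}=0$) are trivial and serve as a check; the interesting content is $2\le k\le n$, which is handled uniformly by the argument above.
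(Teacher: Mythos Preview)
Your argument is correct: the generalized Kronecker delta representation of $[A]_k$ and its derivative $[A]_k^{lm}$ is standard, and the cancellation you describe---antisymmetry of the delta in the lower indices $m$ and $j_p$ against the symmetry of the third derivative $u_{i_p j_p m}$---is exactly what makes each term in the Leibniz expansion vanish after summation. The edge cases $k=0,1$ are handled as you note.

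The paper itself does not prove this lemma; it simply attributes the result to Reilly \cite[Proposition 2.1]{Reilly}. Your proof is essentially Reilly's original argument, so there is no genuine difference in approach to discuss.
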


We will also need a corresponding result for functions defined on the unit sphere. Let $h\in C^2(\sn)$. Given $x\in\sn$, for $i,j\in\{1,\dots,n-1\}$ we  denote by $h_i$ and $h_{ij}$ the first and second covariant derivatives of $h$ with respect to a local orthonormal frame defined in a neighborhood of $x$. Let $\delta_{ij}$ be the usual Kronecker symbols. 
The following result was proved  in \cite{Cheng-Yau} in the case $m=n-1$ and then extended to the general case in \cite[Lemma 3.1]{Colesanti-Saorin}. 

\begin{lemma}\label{fc lemma 2} Let  $k\in\{0,\dots, n-1\}$ and $l\in\{1,\dots, n-1\}$. Then
	$$
	\div_m\left(
	[h(x)_{lm}+h(x)\delta_{lm}]^{lm}_k
	\right)=
	\sum_{m=1}^{n-1}\left(
	[h(x)_{lm}+h(x)\delta_{lm}]^{lm}_k
	\right)_m=0
	$$
	for every $h\in C^3(\sn)$ and $x\in\sn$, where $\left([h(x)_{lm}+h(x)\delta_{lm}]^{lm}_k\right)_m$ denotes the derivative of $[h(x)_{lm}+h(x)\delta_{lm}]^{lm}_k$ with respect to the $m$th variable of a local orthonormal frame defined in a neighborhood of $x$. 
\end{lemma}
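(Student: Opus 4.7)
The plan is to adapt the Euclidean argument of Lemma~\ref{fc lemma 1} to the sphere by replacing the symmetry of third partial derivatives, which fails on $\sn$, with the Ricci identity on the unit sphere. The geometric content is that $\sn$ has constant sectional curvature $1$, so the commutator of covariant derivatives of $h$ produces explicit correction terms proportional to the gradient of $h$; these should cancel the contribution arising from the extra $h\delta_{lm}$ summand in $A_{lm}=h_{lm}+h\delta_{lm}$.

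The steps are as follows. First, I express the cofactor using the generalized Kronecker delta,
\[
[A]_k^{lm}=\frac1{(k-1)!}\,\delta^{l\,i_2\cdots i_k}_{m\,j_2\cdots j_k}\,A_{i_2 j_2}\cdots A_{i_k j_k},
\]
where $\delta^{\cdots}_{\cdots}$ is totally antisymmetric in both its upper and its lower indices. Differentiating covariantly in a local orthonormal frame at a point of $\sn$ and summing over $m$ yields an expression with two kinds of summands: a Hessian-derivative contribution involving $h_{i_2 j_2 m}$, and a zero-order contribution involving $h_m\delta_{i_2 j_2}$.

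For the Hessian-derivative contribution, I split $h_{i_2 j_2 m}$ into its $(j_2,m)$-symmetric and $(j_2,m)$-antisymmetric parts. The symmetric part is annihilated by $\delta^{\cdots}_{m\,j_2\,\cdots}$, which is antisymmetric in its lower indices $(m,j_2)$. The antisymmetric part, by the Ricci identity on the unit sphere (which reads $h_{ijk}-h_{ikj}=h_j\delta_{ik}-h_k\delta_{ij}$), reduces to explicit scalar terms of the form $h_p\delta_{qr}$. Using the contraction identity
\[
\sum_p\delta^{p\,a_2\cdots a_k}_{p\,b_2\cdots b_k}=(n-k)\,\delta^{a_2\cdots a_k}_{b_2\cdots b_k},
\]
together with the antisymmetry swap needed to bring the contracted indices into the same slot of the generalized Kronecker delta, each remaining summand becomes a scalar multiple of $\sum_m\delta^{l\,i_3\cdots}_{m\,j_3\cdots}h_m A_{i_3 j_3}\cdots$, and it remains to verify that the coefficients add up to zero.

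The main obstacle will be the sign and coefficient bookkeeping: the Ricci identity has to be applied with the correct sign convention on the unit sphere, and the antisymmetry swap needed to invoke the contraction formula introduces an extra minus sign whose origin has to be tracked. Once these combinatorial details are in place, the cancellation is a short algebraic computation. Conceptually, the identity expresses the divergence-freeness of the Newton tensor associated with the spherical Hessian $h_{ij}+h\delta_{ij}$, which is the infinitesimal form of the classical Minkowski integration-by-parts identities for mixed curvature integrals of the convex body with support function $h$.
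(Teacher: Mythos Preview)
The paper does not actually prove this lemma: it merely cites Cheng--Yau for the case $k=n-1$ and Colesanti--Saor\'{\i}n~G\'{o}mez for the general case. Your proposal supplies a direct proof, and it is the standard one --- indeed, it is essentially the argument found in those references. The computation goes through as you describe: writing $[A]_k^{lm}$ via the generalized Kronecker delta, the covariant divergence produces a term $h_m\delta_{i_2j_2}$ from the $h\delta$ part of $A$, which after contraction yields $(n-k)$ times a reduced expression, while the Ricci identity on $\sn$ applied to the antisymmetric part of $h_{i_2 j_2 m}$ produces two pieces each contributing $-\tfrac12(n-k)$ times the same reduced expression; these cancel exactly. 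Your caution about sign conventions is warranted but there is no hidden obstacle --- once you fix the Ricci identity in the form $h_{ijk}-h_{ikj}=h_j\delta_{ik}-h_k\delta_{ij}$ (curvature $+1$, orthonormal frame), the coefficients balance.
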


\goodbreak
As a consequence, we obtain the following facts.

\begin{corollary}\label{towards self adjoint} Let $k\in\{0,\dots, n-1\}$. If $f, g, h\in C^2(\sn)$, then
	$$
	\int_{\sn}f\ [h_{lm}+h\delta_{lm}]^{lm}_k
	g_{lm}\d\hm^{n-1}=
	-\int_{\sn}[h_{lm}+h\delta_{lm}]^{lm}_k
	f_{l}\,g_m\d\hm^{n-1}.
	$$
\end{corollary}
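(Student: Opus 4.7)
The plan is to view this as an integration-by-parts identity on the closed manifold $\sn$, with Lemma \ref{fc lemma 2} supplying the essential vanishing of a divergence term. Abbreviate $B^{lm}:=[h_{lm}+h\delta_{lm}]^{lm}_k$ for convenience. The first observation is that $B^{lm}=B^{ml}$, because $B^{lm}$ is (up to a constant factor) the entry of the cofactor matrix of order $k$ of the symmetric matrix $h_{lm}+h\delta_{lm}$, and cofactor matrices of symmetric matrices are symmetric. This symmetry will be needed at the very end to transform the right-hand side into the desired shape.

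Next I would, for each fixed $l$, consider the vector field whose $m$-th component with respect to the local orthonormal frame is $V^m:=fB^{lm}g_l$ (here $g_l$ is a scalar function for fixed $l$). Using the Leibniz rule for covariant derivatives on $\sn$, we obtain
\begin{equation*}
(V^m)_m \;=\; f_m B^{lm} g_l \;+\; f\,(B^{lm})_m\, g_l \;+\; f\,B^{lm}\,g_{lm}.
\end{equation*}
Summing over $m$, the middle term $f g_l \sum_m (B^{lm})_m$ vanishes by Lemma \ref{fc lemma 2}. Thus, after summing over $l$ and integrating over $\sn$, the divergence theorem on the closed manifold $\sn$ yields
\begin{equation*}
0\;=\;\int_{\sn}\bigl(B^{lm}f_m\,g_l\;+\;f\,B^{lm}\,g_{lm}\bigr)\d\hm^{n-1}.
\end{equation*}
Finally, using the symmetry $B^{lm}=B^{ml}$ to relabel indices in the first summand (swap $l\leftrightarrow m$), it becomes $B^{lm}f_l\,g_m$, and rearranging produces the claimed identity.

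There is essentially no serious obstacle here: the proof is a routine two-step argument. The only subtlety to watch is that the subscripts denote covariant derivatives in a local orthonormal frame, so the product-rule expansion and the divergence theorem must be applied in that intrinsic sense on $\sn$. Both of these are standard and compatible with the conventions already fixed by Lemma \ref{fc lemma 2}. One could also note that the identity expresses the self-adjointness of the linearized $k$-th curvature operator on $\sn$, which is why the corollary is labeled \emph{towards self adjoint}; this perspective is not needed for the proof but motivates the arrangement of terms.
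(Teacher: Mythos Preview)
Your argument is essentially the same integration-by-parts scheme the paper uses, but there is a genuine regularity gap. You invoke Lemma \ref{fc lemma 2} to kill the middle term $f\,g_l\,(B^{lm})_m$, yet that lemma is stated for $h\in C^3(\sn)$, not $C^2$. This is not a technicality: with only $h\in C^2$ the cofactor entries $B^{lm}=[h_{lm}+h\delta_{lm}]^{lm}_k$ involve second covariant derivatives of $h$ and are merely continuous, so the pointwise expression $(B^{lm})_m$ need not exist, and your Leibniz expansion of $(V^m)_m$ is not justified as written.

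The paper closes this gap by first carrying out exactly your computation under the stronger assumption $h\in C^3(\sn)$, and then passing to general $h\in C^2(\sn)$ by approximation: both sides of the claimed identity depend continuously on $h$ in the $C^2$ topology (the left side is a polynomial in second derivatives of $h$ integrated against continuous functions, and likewise for the right side), so approximating $h$ by smooth functions yields the result. You should add this approximation step explicitly. Apart from this, your proof and the paper's coincide; the symmetry relabelling $B^{lm}=B^{ml}$ you single out is also used implicitly in the paper.
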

\begin{proof} When $h\in C^3(\sn)$, the statement follows from the divergence theorem on the sphere and Lemma \ref{fc lemma 2}:
	\begin{eqnarray*}
		0&=&\int_{\sn}\sum_{m=1}^{n-1}\left(f\ [h_{lm}+h\delta_{lm}]^{lm}_kg_{l}\right)_m\d\hm^{n-1}\\
		&=&\int_{\sn}f\ [h_{lm}+h\delta_{lm}]^{lm}_k
		g_{lm}\d\hm^{n-1}+\int_{\sn}[h_{lm}+h\delta_{lm}]^{lm}_k
		f_{l}g_m\d\hm^{n-1}.
	\end{eqnarray*}
	The general case is obtained by approximation.
\end{proof}

\begin{corollary}\label{self adjoint} Let $k\in\{0,\dots, n-1\}$. If $f, g, h\in C^2(\sn)$, then
	$$
	\int_{\sn}f\ [h_{lm}+h\delta_{lm}]^{lm}_k
	(g_{lm}+g\delta_{lm})\d\hm^{n-1}=
	\int_{\sn}g\ [h_{lm}+h\delta_{lm}]^{lm}_k
	(f_{lm}+f\delta_{lm})\d\hm^{n-1}.
	$$
\end{corollary}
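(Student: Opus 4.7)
The plan is to reduce this to the previous corollary (Corollary \ref{towards self adjoint}) by expanding the bracket and exploiting a symmetry argument. Write $A = (h_{lm}+h\delta_{lm})$ so that the claim becomes
$$\int_{\sn} f\,[A]^{lm}_k(g_{lm}+g\delta_{lm})\,\d\hm^{n-1} = \int_{\sn} g\,[A]^{lm}_k(f_{lm}+f\delta_{lm})\,\d\hm^{n-1}.$$
Splitting the integrand on the left, one gets
$$\int_{\sn} f\,[A]^{lm}_k g_{lm}\,\d\hm^{n-1} + \int_{\sn} f g\,[A]^{lm}_k\delta_{lm}\,\d\hm^{n-1},$$
and similarly on the right with the roles of $f$ and $g$ interchanged. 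The second summand is manifestly symmetric in $f$ and $g$, so it suffices to prove symmetry of the first summand.

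For the first summand, I would apply Corollary \ref{towards self adjoint} directly to rewrite
$$\int_{\sn} f\,[A]^{lm}_k g_{lm}\,\d\hm^{n-1} = -\int_{\sn} [A]^{lm}_k f_l g_m\,\d\hm^{n-1}.$$
The key observation is that the cofactor matrix of order $k$ associated to a symmetric matrix is itself symmetric, i.e.\ $[A]^{lm}_k = [A]^{ml}_k$. This is clear from the coordinate-free description of $[A]_k$ as an elementary symmetric function of eigenvalues, but can also be read off directly from the formula (\ref{cofactor}) together with the fact that $h \in C^2(\sn)$ has symmetric covariant Hessian, so $A$ is symmetric. Given this symmetry, relabeling $l \leftrightarrow m$ in the resulting integral yields
$$-\int_{\sn} [A]^{lm}_k f_l g_m\,\d\hm^{n-1} = -\int_{\sn} [A]^{lm}_k g_l f_m\,\d\hm^{n-1},$$
which is exactly the expression one obtains by applying Corollary \ref{towards self adjoint} to $\int_{\sn} g\,[A]^{lm}_k f_{lm}\,\d\hm^{n-1}$.

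Combining the two symmetric pieces gives the claim. The only step that requires a moment of thought is the symmetry of the cofactor matrix $[A]^{lm}_k$, and this I would justify by pointing either to the eigenvalue description of $[A]_k$ or to a direct inspection of (\ref{cofactor}) under $A = A^t$; no genuine obstacle arises, since the identity reduces to an algebraic symmetry plus a single application of the previous corollary. No smoothness beyond $C^2$ is needed since Corollary \ref{towards self adjoint} already handles the $C^2$ case by approximation.
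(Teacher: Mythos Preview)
Your proof is correct and is precisely the argument the paper has in mind: the corollary is stated without proof because it follows from Corollary~\ref{towards self adjoint} by splitting off the $\delta_{lm}$ term (symmetric in $f,g$) and using the symmetry of the cofactor matrix $[A]^{lm}_k$ to see that the remaining term $-\int_{\sn}[A]^{lm}_k f_l g_m\,\d\hm^{n-1}$ is symmetric in $f,g$.
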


\subsubsection{Geometric Statements}
The following result is Proposition 2.4 from \cite{Longinetti-Salani}. 
\begin{lemma}
	\label{le:frac_1_nabla_u_h_t}
	For $u\in\fconvsreg$ the function $h(u,\cdot,\cdot)$ is of class $C^2(\sn\times (0,\infty))$ and, for every  $(y,t)\in\sn\times (0,\infty)$,
	$$
	\frac1{|\nabla u(\nu_t^{-1}(y))|}= \frac{\partial}{\partial t} h(u,y,t).
	$$
\end{lemma}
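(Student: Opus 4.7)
The plan is to exploit the formula for the support function of the smooth strictly convex body $\{u\le t\}$ in terms of the inverse Gauss map. For each $(y,t)\in\sn\times(0,\infty)$, set $x(y,t):=\nu_t^{-1}(y)\in\{u=t\}$. Since $\{u\le t\}$ has $C^2$ boundary with positive Gaussian curvature, the supremum defining $h(u,y,t)$ is attained uniquely at $x(y,t)$, so
\begin{equation*}
h(u,y,t)=\langle x(y,t),y\rangle.
\end{equation*}
This is the parametrization on which the whole proof rests.

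For the $C^2$ regularity, I would invoke the implicit function theorem applied to the system
\begin{equation*}
u(x)=t,\qquad \nabla u(x)=|\nabla u(x)|\,y,
\end{equation*}
which characterizes $x(y,t)$ uniquely in a neighborhood of any given $x_0=x(y_0,t_0)$. The Jacobian of the associated map is non-degenerate thanks to the positivity of the Gaussian curvature of $\partial\{u\le t\}$ (equivalently, positive-definiteness of the tangential Hessian of $u$ on the level set), and this gives the joint regularity of $(y,t)\mapsto x(y,t)$. Combined with the classical fact that $h(u,\cdot,t)\in C^2(\sn)$ for each fixed $t$, which is exactly the content of having a $C^2$ boundary with positive curvature as built into the definition of $\fconvsreg$, this yields $h\in C^2(\sn\times(0,\infty))$.

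For the key identity, I differentiate the constraint $u(x(y,t))=t$ in $t$ to obtain
\begin{equation*}
\Big\langle \nabla u(x(y,t)),\,\frac{\partial x}{\partial t}(y,t)\Big\rangle=1.
\end{equation*}
Since $y$ is the outer unit normal to $\{u\le t\}$ at $x(y,t)$, we have $\nabla u(x(y,t))=|\nabla u(x(y,t))|\,y$, so the preceding relation simplifies to
\begin{equation*}
\Big\langle y,\,\frac{\partial x}{\partial t}(y,t)\Big\rangle=\frac{1}{|\nabla u(x(y,t))|}.
\end{equation*}
Differentiating the representation $h(u,y,t)=\langle x(y,t),y\rangle$ in $t$ at fixed $y$ yields $\frac{\partial}{\partial t} h(u,y,t)=\langle\partial_t x(y,t),y\rangle$, which upon inserting the previous display produces the claimed formula.

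The identity itself is therefore a two-line chain-rule computation once the parametrization $x(y,t)=\nu_t^{-1}(y)$ is in place; the only real point requiring care is the joint $C^2$ regularity of $h$, which I would handle via the implicit function theorem argument sketched above (or, if extra smoothness were needed, by approximating $u$ by smoother functions and passing to the limit using that the identity is stable under such approximations).
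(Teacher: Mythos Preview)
The paper does not prove this lemma at all; it merely cites it as Proposition~2.4 of Longinetti--Salani. Your argument is essentially the standard derivation one would expect in that reference: represent the support function via the inverse Gauss map, $h(u,y,t)=\langle \nu_t^{-1}(y),y\rangle$, differentiate the level-set constraint $u(x(y,t))=t$, and use $\nabla u(x)=|\nabla u(x)|\,y$. The identity itself is handled cleanly and correctly.

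One point worth tightening is the $C^2$ regularity claim. Since $u$ is only $C^2$, the system you write for the implicit function theorem has $C^1$ data, so it yields $x(y,t)\in C^1$, not $C^2$. This is still enough: extending $h$ one-homogeneously in $y$, one has $\nabla_y h(\cdot,t)=x(\cdot,t)$, so the second $y$-derivatives of $h$ are the first derivatives of $x$, which are continuous; and your computation shows $\partial_t h(y,t)=1/|\nabla u(x(y,t))|$, which is $C^1$ because $\nabla u$ is $C^1$ and $x$ is $C^1$. Hence all second partials of $h$ exist and are continuous. Spelling this out (rather than suggesting the IFT gives $C^2$ directly) would close the only gap in your sketch.
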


\goodbreak
The next result follows from formulas (4.9), (5.55) and (5.56) in \cite{Schneider:CB2}.
\begin{lemma}
	\label{le:tau_2}
	Let $1\leq i \leq n-1$ and $t>0$. For $u\in\fconvsreg$, 
	$$
	\int_{\{u= t \}} \tau_{n-i-1}(u,x) \d \hm^{n-1}(x)= \alpha\, V_{i}(\{u\le t \}),
	$$ 
	and
	$$
	\int_{\sn} h(u,y,t) \sigma_{i-1}(u,y,t) \d \hm^{n-1}(y) = \alpha\, V_i(\{u\le t \}),
	$$
    where $\alpha$ is a positive constant depending only on $n$ and $i$.
\end{lemma}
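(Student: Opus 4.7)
The plan is to identify the sublevel set $K_t := \{u \le t\}$ as a smooth convex body and then invoke the classical curvature-integral representations of the intrinsic volumes. Since $u \in \fconvsreg$ and $t > 0$, the set $K_t$ is a convex body with $C^2$ boundary of positive Gaussian curvature, and $\partial K_t = \{u = t\}$. Unpacking the definitions given in the paragraph preceding the lemma: $\tau_{n-i-1}(u, x)$ is the $(n-i-1)$th elementary symmetric function of the principal curvatures of $\partial K_t$ at the point $x$; $h(u, \cdot, t)$ coincides with the support function $h_{K_t}$; and $\sigma_{i-1}(u, y, t)$ is the $(i-1)$th elementary symmetric function of the principal radii of curvature of $K_t$ at the boundary point $\nu_t^{-1}(y)$.

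With these identifications in place, each of the two integrals appearing in the lemma becomes a standard object associated to a convex body of class $C^2_+$. The first identity is precisely the curvature-integral formula of Schneider (4.9), which expresses $V_i(K)$ of such a body as a universal constant, depending only on $n$ and $i$, times $\int_{\partial K} \tau_{n-i-1} \d\hm^{n-1}$. The second identity is the dual, support-function representation obtained by combining (5.55)---which relates $V_i(K)$ to the quermassintegral $W_{n-i}(K)$---with (5.56), the support-function integral representation of $W_{n-i}$ in the smooth case. Applied with $K = K_t$ in both formulas, these yield equalities of the announced form, and the single symbol $\alpha = \alpha(n, i)$ is attached to each proportionality constant as in the statement.

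I expect no substantive obstacle: once the sublevel set is recognized as a smooth convex body and the geometric quantities $\tau_{n-i-1}$, $h(u,\cdot,t)$, and $\sigma_{i-1}$ are matched with their classical counterparts on $K_t$, the two equalities are direct specializations of two well-known integral representations of the intrinsic volumes. The bookkeeping of the multiplicative constants is purely combinatorial and can be read off from Schneider's cited formulas; alternatively, the universality of the constants may be confirmed by evaluating both sides on the Euclidean unit ball, where all three functions $\tau_{n-i-1}$, $h_{B^n}$, and $\sigma_{i-1}$ are explicit.
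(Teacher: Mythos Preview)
Your proposal is correct and matches the paper's approach exactly: the paper does not give a proof but simply states that the result ``follows from formulas (4.9), (5.55) and (5.56) in \cite{Schneider:CB2}'', which are precisely the curvature-integral and support-function representations of the intrinsic volumes for $C^2_+$ convex bodies that you invoke after identifying $K_t=\{u\le t\}$ and the quantities $\tau_{n-i-1}$, $h(u,\cdot,t)$, $\sigma_{i-1}$ with their classical counterparts.
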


\begin{lemma}
	\label{le:tau_1}
	Let  $1\leq i \leq n-1$ and $0<t_1<t_2$. For $u\in\fconvsreg$,
	$$\int_{\{t_1<u\le t_2\}} \tau_{n-i}(u,x) \d x = \alpha \big( V_{i}(\{u\le t_2\}) - V_{i}(\{u\le t_1\}) \big),$$
	where $\alpha$ is a positive constant depending only on $n$ and $i$.
\end{lemma}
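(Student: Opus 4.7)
The plan is to combine the coarea formula with the support-function representation of intrinsic volumes from the second identity in Lemma~\ref{le:tau_2}. Since $u\in\fconvsreg$ satisfies $\nabla u(x)\neq 0$ for every $x\ne 0$, the coarea formula applied on $\{t_1<u\le t_2\}$ gives
$$
\int_{\{t_1<u\le t_2\}} \tau_{n-i}(u,x)\d x = \int_{t_1}^{t_2} \int_{\{u=t\}} \frac{\tau_{n-i}(u,x)}{\vert\nabla u(x)\vert} \d \hm^{n-1}(x) \d t.
$$
It therefore suffices to show that the inner integral equals a constant (depending only on $n$ and $i$) times $\tfrac{d}{dt} V_i(\{u\le t\})$.

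Next, I would parametrize $\{u=t\}$ by the inverse Gauss map $\nu_t^{-1}\colon\sn\to\{u=t\}$, whose Jacobian gives $\d\hm^{n-1}(x)=\sigma_{n-1}(u,y,t)\d\hm^{n-1}(y)$. Since principal curvatures and principal radii of curvature are reciprocals, one has the elementary identity $\tau_{n-i}(u,\nu_t^{-1}(y))=\sigma_{i-1}(u,y,t)/\sigma_{n-1}(u,y,t)$. Combining this with Lemma~\ref{le:frac_1_nabla_u_h_t}, which identifies $\vert\nabla u(\nu_t^{-1}(y))\vert^{-1}$ with $\partial_t h(u,y,t)$, the inner integral becomes
$$
\int_{\sn} \partial_t h(u,y,t)\cdot \sigma_{i-1}(u,y,t) \d \hm^{n-1}(y).
$$

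Finally, I would differentiate the support-function identity
$$
\alpha V_i(\{u\le t\}) = \int_{\sn} h(u,y,t)\, \sigma_{i-1}(u,y,t) \d \hm^{n-1}(y)
$$
from Lemma~\ref{le:tau_2} in~$t$. The product rule produces the desired term $\int_{\sn}\partial_t h\cdot\sigma_{i-1}\d\hm^{n-1}$ plus a spurious term $\int_{\sn} h\cdot\partial_t\sigma_{i-1}\d\hm^{n-1}$. Since $\sigma_{i-1}=[h_{lm}+h\delta_{lm}]_{i-1}$, Corollary~\ref{self adjoint}, applied with $f=h(u,\cdot,t)$, $g=\partial_t h(u,\cdot,t)$, and $k=i-1$, together with the Euler homogeneity identity~\eqref{fc1}, converts the spurious term into exactly $(i-1)\int_{\sn}\partial_t h\cdot\sigma_{i-1}\d\hm^{n-1}$. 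Hence the two contributions combine to
$$
\alpha \frac{d}{dt} V_i(\{u\le t\}) = i\int_{\sn} \partial_t h\cdot \sigma_{i-1} \d \hm^{n-1},
$$
and integrating from $t_1$ to $t_2$ yields the lemma with constant $\alpha/i$.

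The main technical obstacle is justifying the differentiation under the integral sign: Lemma~\ref{le:frac_1_nabla_u_h_t} only guarantees $C^2$ joint regularity of $h(u,\cdot,\cdot)$, whereas $\partial_t\sigma_{i-1}$ a priori involves a mixed third derivative. I would circumvent this by first approximating $u$ by smooth elements of $\fconvsreg$ (e.g.\ via mollification followed by a suitable $\vert x\vert^2$-perturbation to preserve regularity), proving the formula in the smooth case where all derivatives exist classically, and then passing to the limit using the continuity of both sides under $C^2$-convergence of the sublevel-set boundaries.
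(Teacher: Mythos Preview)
Your proposal is correct and matches the paper's proof almost step for step: coarea formula, Gauss-map change of variables to the sphere, Lemma~\ref{le:frac_1_nabla_u_h_t} to identify $|\nabla u|^{-1}$ with $\partial_t h$, then the key identity $\tfrac{d}{dt}\int_{\sn}h\,\sigma_{i-1}=i\int_{\sn}\partial_t h\cdot\sigma_{i-1}$ obtained via Corollary~\ref{self adjoint} and~\eqref{fc1}, followed by integration in $t$ and Lemma~\ref{le:tau_2}. The paper performs the same formal differentiation of $\int_{\sn}h\,\sigma_{i-1}$ without commenting on the mixed third-derivative issue you raise; your proposed approximation by smoother elements of $\fconvsreg$ is a reasonable way to make that step airtight.
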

\begin{proof}
	Note that $\nabla u(x)\ne0$ for every $x\in\{t_1<u\le t_2\}$.
	By the coarea formula, we have
	$$
	\int_{\{t_1<u\le t_2\}}\tau_{n-i}(u,x)\d x=\int_{t_1}^{t_2}\int_{\{u= t \}}\frac1{|\nabla u(x)|}\,\tau_{n-i}(u,x)\d\hm^{n-1}(x)\d t.
	$$
	Using the change of variable $y=\nu_t(x)$ in the inner integral and Lemma~\ref{le:frac_1_nabla_u_h_t}, we get 
	\begin{align*}
	\int_{\{u= t \}}\frac1{|\nabla u(x)|}\,\tau_{n-i}(u,x)\d\hm^{n-1}(x)&=
	\int_{\sn}\frac1{|\nabla u(\nu_t^{-1}(y))|}\sigma_{i-1}(u,y,t)\d\hm^{n-1}(y)\\
	&=\int_{\sn}\frac{\partial}{\partial t} h(u,y,t)\, \sigma_{i-1}(u,y,t)\d\hm^{n-1}(y).
	\end{align*}
	Next, we prove that
	\begin{equation}\label{fc3}
	\frac{\d}{\d t}\int_{\sn} h(u,t,y)\,\sigma_{i-1}(u,y,t)\d\hm^{n-1}(y)=
	i\int_{\sn} \frac{\partial}{\partial t} h(u,y,t)\,\sigma_{i-1}(u,y,t)\d\hm^{n-1}(y).
	\end{equation}
	We have 
	\begin{equation}\label{equal}
	\sigma_{i-1}(u,y,t)=[h(u,y,t)_{lm}+h(u,y,t)\delta_{lm}]_{i-1}
	\end{equation}
	(see, for example, \cite[Section 2.5]{Schneider:CB2}).
	We start with the formula,
	\begin{eqnarray}\label{fc4}
	\frac{\d}{\d t}\int_{\sn} h(u,y,t)\,\sigma_{i-1}(u,y,t)\d\hm^{n-1}(y)&=&
	\int_{\sn} \frac{\partial}{\partial t}h(u,y,t)\,\sigma_{i-1}(u,y,t)\d\hm^{n-1}(y)\\
	&&+\int_{\sn} h(u,y,t)\,\frac{\partial\sigma_{i-1}}{\partial t}(u,y,t)\d\hm^{n-1}(y).\nonumber
	\end{eqnarray}
	By \eqref{equal}, \eqref{cofactor}, Corollary \ref{self adjoint}, and \eqref{equal},
	\begin{align*}
	\int_{\sn}& h(u,y,t)\frac{\partial}{\partial t}\,\sigma_{i-1}(u,y,t)\d\hm^{n-1}(y)\\
	&=\int_{\sn} h(u,y,t)[h(u,y,t)_{lm}+h(u,y,t)\delta_{lm}]_{i-1}^{lm}\big (\big(\frac{\partial}{\partial t}h (u,y,t)\big)_{lm}+\frac{\partial}{\partial t} h(u,y,t)\,\delta_{lm}\big)\d\hm^{n-1}(y)\\
	&=\int_{\sn} \frac{\partial}{\partial t}h(u,y,t)\,[h(u,y,t)_{lm}+h(u,y,t)\delta_{lm}]_{i-1}^{lm}\big(h(u,y,t)_{lm}+h(u,y,t)\delta_{lm}\big)\d\hm^{n-1}(y)\\
	&=(i-1)\int_{\sn} \frac{\partial}{\partial t}h(u,y,t)[h(u,y,t)_{lm}+h(u,y,t)\delta_{lm}]_{i-1}\d\hm^{n-1}(y)\\
	&=(i-1)\int_{\sn} \frac{\partial}{\partial t}h(u,y,t)\sigma_{i-1}(u,y,t)\d\hm^{n-1}(y).
	\end{align*}
	The last chain of equations and \eqref{fc4} imply \eqref{fc3}. Hence, by the latter relation,
	\begin{align*}
	\int\limits_{\{t_1<u\le t_2\}}&\tau_{n-i}(u,x)\d x\\[-6pt]
	&=\int_{t_1}^{t_2}\int_{\sn} \frac{\partial}{\partial t}h(u,y,t)\,\sigma_{i-1}(u,y,t)\d\hm^{n-1}(y)\d t\\
	&=\frac1i\int_{t_1}^{t_2}\frac{\d}{\d t}\int_{\sn} h(u,t,y)\,\sigma_{i-1}(u,y,t)\d\hm^{n-1}(y)\d t\\
	&=\frac1i\Big(\int\nolimits_{\sn} h(u,y,t_2)\,\sigma_{i-1}(u,y,t_2)\d\hm^{n-1}(y)
	-\int\nolimits_{\sn} h(u,y,t_1)\,\sigma_{i-1}(u,y,t_1)\d\hm^{n-1}(y)\Big)\\
	&=\alpha\,\big(V_{i}(\{u\le t_2\})-V_{i}(\{u\le t_1\})\big),
	\end{align*}
	where we have used Lemma~\ref{le:tau_2} in the last equation.
\end{proof}

\subsubsection{Integration by Parts}

We start with a lemma which can be found in \cite{BrandoliniNitschSalaniTrombetti_ARMA_2008} (see formula (11)). We write $u_k$ for the partial derivative of $u$ with respect to $x_k$.

\begin{lemma}[\!\cite{BrandoliniNitschSalaniTrombetti_ARMA_2008}]\label{lemma BNST} 
	Let $i\in\{1,\dots,n-1\}$ and $u\in\fconvsreg$. Then
	\begin{equation*}
	[\Hess u(x)]_{n-i}=|\nabla u(x)|^{n-i}\ \tau_{n-i}(u,x)+
	\frac{[\Hess u(x)]_{n-i}^{lm}u_k(x)u_l(x)u_{km}(x)}{|\nabla u(x)|^2}
	\end{equation*}
	for every $x\ne0$.
\end{lemma}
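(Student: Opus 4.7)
The plan is to work in an orthonormal basis adapted to $\nabla u(x)$ and reduce the statement to a purely algebraic identity for the $k$-th elementary symmetric function of a symmetric matrix, decomposed with respect to a distinguished direction.

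Since $u\in\fconvsreg$ and $x\ne 0$, we have $\nabla u(x)\ne 0$, so I would set $\nu=\nabla u(x)/|\nabla u(x)|$ and choose an orthonormal basis $e_1,\dots,e_n$ of $\R^n$ with $e_n=\nu$. Writing $M=\Hess u(x)$ in this basis gives the block form
\[
M=\begin{pmatrix}A & b\\ b^T & c\end{pmatrix},
\]
with $A$ the $(n-1)\times(n-1)$ upper-left block, $b_m=M_{mn}$ for $m<n$, and $c=M_{nn}$. The sublevel set $\{u\le u(x)\}$ is a convex body with $C^2$ boundary near $x$ and outer unit normal $\nu$; a short calculation (differentiating $\nabla u/|\nabla u|$ in tangent directions) identifies the shape operator at $x$, restricted to $\nu^\perp$, with $A/|\nabla u(x)|$. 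Its eigenvalues are the principal curvatures of $\{u=u(x)\}$ at $x$, whence
\[
|\nabla u(x)|^{n-i}\,\tau_{n-i}(u,x)=[A]_{n-i}.
\]

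Next, I would rewrite the second summand on the right of the claim in the adapted basis. Using $u_l=|\nabla u|\,\nu_l$ and $\sum_k u_{km}u_k=|\nabla u|\,(M\nu)_m$ gives
\[
\frac{[M]_{n-i}^{lm}\,u_k\,u_l\,u_{km}}{|\nabla u|^2}=[M]_{n-i}^{lm}\,\nu_l\,(M\nu)_m,
\]
which, since $\nu_l=\delta_{ln}$ in the adapted basis, reduces to $\sum_m [M]_{n-i}^{nm}\,M_{nm}$. The lemma is therefore equivalent to the algebraic identity
\[
[M]_{k}=[A]_{k}+\sum_{m=1}^{n}[M]_{k}^{nm}\,M_{nm}
\]
for every symmetric $n\times n$ matrix $M$ with the above block structure, with $k=n-i$.

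To establish this identity, I would split $[M]_k=\sum_{|S|=k}\det M_S$ according to whether $n\in S$. The subsets $S$ with $n\notin S$ contribute exactly $[A]_k$. For each $S$ with $n\in S$, cofactor expansion of $\det M_S$ along the row indexed by $n$ gives $\det M_S=\sum_{j\in S}M_{nj}\,C_{nj}^{(S)}$, where $C_{nj}^{(S)}=\partial(\det M_S)/\partial M_{nj}$ when the entries of $M$ are viewed as independent variables. Interchanging summations,
\[
\sum_{|S|=k,\,n\in S}\det M_S=\sum_{j=1}^{n}M_{nj}\sum_{|S|=k,\,\{n,j\}\subseteq S}C_{nj}^{(S)}=\sum_{j=1}^{n}M_{nj}\,\frac{\partial [M]_k}{\partial M_{nj}}=\sum_{j=1}^{n}[M]_k^{nj}\,M_{nj},
\]
which is precisely what is needed. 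Passing back to coordinate-free form via $\nu_l=u_l/|\nabla u|$ and $(M\nu)_m=\sum_k u_{km}u_k/|\nabla u|$ recovers the formula in the statement.

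The main obstacle, and essentially the only nontrivial ingredient, is the bookkeeping in this algebraic identity: one must view $[M]_k$ as a polynomial in all $n^2$ entries $M_{lm}$, rather than only in the independent entries of a symmetric matrix, so that $[M]_k^{nj}$ is compatible with the cofactor produced by row-expansion. This is the convention already fixed by \eqref{cofactor} and \eqref{fc1}. Once the identity is in hand, the geometric identification of $[A]_{n-i}$ with $|\nabla u|^{n-i}\tau_{n-i}(u,x)$ through the shape operator is standard.
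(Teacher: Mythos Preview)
Your proof is correct. The paper itself does not prove this lemma; it is quoted from \cite{BrandoliniNitschSalaniTrombetti_ARMA_2008} (formula~(11) there) without argument, so there is no ``paper proof'' to compare against. Your approach---passing to an orthonormal frame with $e_n=\nabla u/|\nabla u|$, identifying the tangential block $A$ of $\Hess u$ with $|\nabla u|$ times the shape operator of the level set, and then reducing to the linear-algebra identity $[M]_k=[A]_k+\sum_m[M]_k^{nm}M_{nm}$ via splitting the principal minors according to whether they contain the index $n$---is a clean self-contained derivation. The only point worth emphasising for the reader is exactly the one you flag: the cofactor convention \eqref{cofactor} treats all $n^2$ entries of $M$ as independent variables, which makes the Laplace expansion along the $n$th row match $\sum_m[M]_k^{nm}M_{nm}$ on the nose.
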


\goodbreak
We will also need the following lemma for which we refer, for instance, to \cite{Reilly}.

\begin{lemma}\label{lemma Reilly} Let $i\in\{0,\dots,n-1\}$ and $u\in\fconvsreg$. Then 
	$$
	[\Hess u(x)]^{lm}_{n-i}u_l(x) u_m(x)=|\nabla u(x)|^{n-i+1}\tau_{n-i-1}(u,x)
	$$
	for every $x\ne0$.
\end{lemma}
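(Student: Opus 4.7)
The plan is to verify the identity at a fixed $x\ne 0$ by exploiting the invariance of both sides under orthogonal changes of coordinates on $\R^n$. First I would rotate so that $e_n=\nabla u(x)/|\nabla u(x)|$; then $u_l(x)=0$ for $l<n$ and $u_n(x)=|\nabla u(x)|$, so only the term with $l=m=n$ survives in the contraction, giving
$$
[\Hess u(x)]^{lm}_{n-i}\,u_l(x)u_m(x) \;=\; [\Hess u(x)]^{nn}_{n-i}\,|\nabla u(x)|^2.
$$

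Second, I would compute the cofactor $[\Hess u(x)]^{nn}_{n-i}$ by a purely algebraic argument. Writing $[A]_{n-i}$ as the sum of all $(n-i)\times(n-i)$ principal minors of a symmetric matrix $A$, differentiation with respect to $a_{nn}$ retains only those minors whose index set contains $n$; after removing row and column $n$, each such minor becomes an $(n-i-1)\times(n-i-1)$ principal minor of the submatrix $A'$ obtained by deleting the $n$th row and column of $A$. Summing, I get $[A]^{nn}_{n-i}=[A']_{n-i-1}$, applied to $A=\Hess u(x)$ this identifies the cofactor with the $(n-i-1)$th elementary symmetric function of the eigenvalues of the tangential Hessian $(u_{\alpha\beta}(x))_{1\le\alpha,\beta\le n-1}$.

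The third step carries the geometric content. In the chosen frame the sublevel set $\{u\le u(x)\}$ has outer unit normal $e_n$ at $x$ and its tangent hyperplane there is $\spa(e_1,\dots,e_{n-1})$. Differentiating the identity $\langle\nabla u,\dot\gamma\rangle\equiv 0$ for a smooth curve $\gamma$ on the level set shows that the second fundamental form, applied to tangent vectors $X,Y$, equals $\langle\Hess u(x)\,X,Y\rangle/|\nabla u(x)|$. Hence the principal curvatures $\kappa_1,\dots,\kappa_{n-1}$ of $\{u\le u(x)\}$ at $x$ are the eigenvalues of $(u_{\alpha\beta}(x))_{1\le\alpha,\beta\le n-1}$ divided by $|\nabla u(x)|$, and therefore
$$
[A']_{n-i-1} \;=\; |\nabla u(x)|^{n-i-1}\,e_{n-i-1}(\kappa_1,\dots,\kappa_{n-1}) \;=\; |\nabla u(x)|^{n-i-1}\,\tau_{n-i-1}(u,x).
$$
Combining the three displays gives the lemma. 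The only delicate point I expect is keeping the normalization of the second fundamental form consistent with the convention used in Section~\ref{section prep}, so that for convex sublevel sets with outer normal $\nabla u/|\nabla u|$ the principal curvatures $\kappa_\alpha$ are non-negative and $\tau_{n-i-1}(u,x)$ is exactly the elementary symmetric function appearing above; this is precisely the regularity built into the definition of $\fconvsreg$.
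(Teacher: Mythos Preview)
Your argument is correct. The paper does not give its own proof of this lemma but simply cites Reilly \cite{Reilly}, so there is no ``paper's proof'' to compare line by line; what you have written is precisely the standard argument that one finds in that reference. The three steps---reducing to $l=m=n$ by rotating so that $e_n=\nabla u(x)/|\nabla u(x)|$ (using that the cofactor matrix $([A]_{n-i}^{lm})$ transforms covariantly under orthogonal conjugation, so the contraction with $\nabla u$ is invariant), the algebraic identity $[A]^{nn}_{n-i}=[A']_{n-i-1}$ from the principal-minor expansion, and the identification of the tangential Hessian with $|\nabla u(x)|$ times the second fundamental form of the level set---are all standard and fit together exactly as you describe. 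Your caution about the sign/normalization of the second fundamental form is the right thing to flag, and the convention in Section~\ref{section prep} (outer normal $\nabla u/|\nabla u|$, convex sublevel sets with positive curvature) makes the identification go through without any sign correction.
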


\begin{proposition}\label{prop int case n-2} 
	Let $i\in\{1,\dots,n-1\}$ and  $u\in\fconvsreg$. If $\gamma: (0,\infty)\to\R$ is differentiable, then 
	\begin{align*}
    \div_m\left(
	\gamma(|\nabla u(x)|)[\Hess u(x)]^{lm}_{n-i} u_l(x)
	\right)&=
	[\Hess u(x)]_{n-i}\Big((n-i)\gamma(|\nabla u(x)|)+\gamma'(|\nabla u(x)|)|\nabla u(x)|\Big)\\
	&\quad -\gamma'(|\nabla u(x)|)|\nabla u(x)|^{n-i+1}\tau_{n-i}(u,x)
	\end{align*} 
	for every $x\ne0$.
\end{proposition}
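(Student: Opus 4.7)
The plan is to expand the divergence with the Leibniz rule and to recognize each of the three terms via results already collected in the preceding subsection.

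First I would apply the product rule to the three factors $\gamma(|\nabla u(x)|)$, $[\Hess u(x)]_{n-i}^{lm}$ and $u_l(x)$, obtaining
\begin{align*}
\div_m\bigl(\gamma(|\nabla u|)[\Hess u]_{n-i}^{lm} u_l\bigr)
&= \bigl(\partial_m \gamma(|\nabla u|)\bigr)[\Hess u]_{n-i}^{lm} u_l \\
&\quad + \gamma(|\nabla u|)\bigl(\partial_m [\Hess u]_{n-i}^{lm}\bigr) u_l \\
&\quad + \gamma(|\nabla u|)[\Hess u]_{n-i}^{lm} u_{lm}.
\end{align*}
The middle term is exactly $\gamma(|\nabla u|)\, u_l\,\div_m([\Hess u]_{n-i}^{lm})$, which vanishes by Lemma~\ref{fc lemma 1} (first assuming $u\in C^3$ and then passing to $u\in\fconvsreg$ by the standard mollification argument on $\{x\ne 0\}$). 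The last term, by the homogeneity identity \eqref{fc1}, equals $(n-i)\,\gamma(|\nabla u|)\,[\Hess u]_{n-i}$, which accounts for one of the two pieces inside the parenthesis on the right-hand side.

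It remains to evaluate the first term. Using $\partial_m |\nabla u| = u_k u_{km}/|\nabla u|$ (valid at points where $\nabla u\ne 0$, which is true on $\R^n\setminus\{0\}$ for $u\in\fconvsreg$), I would rewrite the first term as
\begin{equation*}
\frac{\gamma'(|\nabla u|)}{|\nabla u|}\,[\Hess u]_{n-i}^{lm}\, u_k u_l u_{km}.
\end{equation*}
Now I would invoke Lemma~\ref{lemma BNST}, which expresses $[\Hess u]_{n-i}^{lm} u_k u_l u_{km}$ in terms of $[\Hess u]_{n-i}$ and $\tau_{n-i}(u,x)$; namely,
\begin{equation*}
[\Hess u]_{n-i}^{lm} u_k u_l u_{km}
= |\nabla u|^2 [\Hess u]_{n-i} - |\nabla u|^{n-i+2}\,\tau_{n-i}(u,x).
\end{equation*}
Substituting this yields
\begin{equation*}
\gamma'(|\nabla u|)\,|\nabla u|\,[\Hess u]_{n-i} - \gamma'(|\nabla u|)\,|\nabla u|^{n-i+1}\,\tau_{n-i}(u,x),
\end{equation*}
which supplies the remaining contribution $\gamma'(|\nabla u|)|\nabla u|$ inside the parenthesis and the final $-\gamma'(|\nabla u|)|\nabla u|^{n-i+1}\tau_{n-i}(u,x)$ correction. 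Summing the three contributions gives precisely the claimed identity.

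The only point that requires a little care is the $C^2$ versus $C^3$ regularity needed to kill the divergence of the cofactor matrix via Lemma~\ref{fc lemma 1}; I would handle it exactly as in the proof of Corollary~\ref{towards self adjoint}, approximating $u$ by smooth convex functions and passing to the limit (alternatively, noting that both sides of the final identity are continuous functions of the $C^2$-jet of $u$, so it suffices to verify the identity on a dense $C^3$ subclass). Apart from this standard issue, the argument is an algebraic manipulation using the Reilly-type identities \eqref{fc1}, Lemma~\ref{fc lemma 1} and Lemma~\ref{lemma BNST}.
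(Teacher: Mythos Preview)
Your argument is essentially the same as the paper's: apply the product rule, use Lemma~\ref{fc lemma 1} together with \eqref{fc1} to handle the terms coming from differentiating the cofactor--gradient factor, and then invoke Lemma~\ref{lemma BNST} to rewrite the term carrying $\gamma'$. The only cosmetic difference is that the paper groups your second and third terms into the single expression $\gamma(|\nabla u|)\,\div_m([\Hess u]^{lm}_{n-i}u_l)$ before applying Lemma~\ref{fc lemma 1} and \eqref{fc1}, and it does not explicitly discuss the $C^2$/$C^3$ approximation issue you flag.
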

\begin{proof}
	We have
	\begin{eqnarray*}
		\div_m\left(
		\gamma(|\nabla u|)[\Hess u]^{lm}_{n-i}u_l
		\right)=
		\gamma(|\nabla u|)\div_m([\Hess u]^{lm}_{n-i}u_l)+
		\frac{\gamma'(|\nabla u|)} {|\nabla u|}
		[\Hess u]^{lm}_{n-i}  {u_k} u_m u_{km}.
	\end{eqnarray*} 
	Note that, by Lemma \ref{fc lemma 1} and \eqref{fc1},
	$$
	\div_m([\Hess u]^{lm}_{n-i}u_l)=[\Hess u]^{lm}_{n-i} u_{lm}=(n-i)[\Hess u]_{n-i}.
	$$
	The conclusion follows immediately from Lemma \ref{lemma BNST}.
\end{proof}

Recall that given $\zeta\in\Had{i}{n}$ for $1\leq i \leq n-1$, we associate two functions $\eta,\rho\in C_c([0,\infty))$ with $\zeta$, defined by
$$
\eta(t)=\int_t^{\infty}  s^{n-i-1}\zeta(s) \d s,\qquad\rho(t)=t^{n-i} \zeta(t) + (n-i)\eta(t)
$$
for $t\in[0,\infty)$.

\goodbreak
The main result of this part is the following proposition.

\begin{proposition}\label{prop:hessian_int_by_parts}
	Let $1\leq i\leq n-1$ and  $\zeta\in\Had{i}{n}$. For every $u\in\fconvsreg$ and $t_1, t_2$ with $0<t_1<t_2$,
	\begin{align*}
	\int_{\{t_1<u\le t_2\}} \zeta(|\nabla u(x)|) [\Hess u(x)]_{n-i} \d x &= \int_{\{t_1<u\le t_2\}} \rho(|\nabla u(x)|) \tau_{n-i}(u,x) \d x\\ 
	&\quad- \int_{\{u=t_2\}} \eta(|\nabla u(x)|) \tau_{n-i-1}(u,x) \d \hm^{n-1}(x)\\
	&\quad+\int_{\{u=t_1\}} \eta(|\nabla u(x)|) \tau_{n-i-1}(u,x) \d \hm^{n-1}(x).
	\end{align*}
\end{proposition}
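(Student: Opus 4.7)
The plan is to realize the desired identity as the integrated form of a pointwise divergence formula derived from Proposition~\ref{prop int case n-2}, for a cleverly chosen function $\gamma$. The crucial observation is that the three ingredients $\zeta$, $\eta$, and $\rho$ are related by a first-order ODE, which should match the structure of Proposition~\ref{prop int case n-2}.

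Specifically, I would define
$$
\gamma(s):=\frac{\eta(s)}{s^{n-i}}\qquad \text{for } s>0.
$$
Since $\eta\in C^1((0,\infty))$ with $\eta'(s)=-s^{n-i-1}\zeta(s)$, the function $\gamma$ is $C^1$ on $(0,\infty)$. A direct computation then gives the two identities
$$
(n-i)\,\gamma(s)+s\,\gamma'(s)=-\zeta(s),\qquad -\gamma'(s)\,s^{n-i+1}=s^{n-i}\zeta(s)+(n-i)\eta(s)=\rho(s).
$$
Feeding $\gamma$ into Proposition~\ref{prop int case n-2} yields the pointwise divergence identity
$$
\div_m\!\Big(\gamma(|\nabla u|)\,[\Hess u]^{lm}_{n-i}\,u_l\Big)
=-\zeta(|\nabla u|)\,[\Hess u]_{n-i}+\rho(|\nabla u|)\,\tau_{n-i}(u,\cdot)
$$
on $\R^n\setminus\{0\}$, and in particular on $\{t_1<u\le t_2\}$, which is a compact set not containing the origin (because $t_1>0$ and $u(0)=0$).

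Next I would integrate this identity over $\{t_1<u\le t_2\}$ and apply the divergence theorem. The boundary of this region is $\{u=t_1\}\cup\{u=t_2\}$ (both of class $C^2$ since $u\in\fconvsreg$), with outward unit normal $\nabla u/|\nabla u|$ on $\{u=t_2\}$ and $-\nabla u/|\nabla u|$ on $\{u=t_1\}$. The boundary integrands become
$$
\gamma(|\nabla u|)\,[\Hess u]^{lm}_{n-i}\,u_l\,\frac{u_m}{|\nabla u|}.
$$
Lemma~\ref{lemma Reilly} (applied with the index $i$, which lies in $\{1,\dots,n-1\}\subset\{0,\dots,n-1\}$) gives
$[\Hess u]^{lm}_{n-i}u_l u_m=|\nabla u|^{n-i+1}\tau_{n-i-1}(u,\cdot)$, so these integrands simplify to $\gamma(|\nabla u|)|\nabla u|^{n-i}\tau_{n-i-1}(u,\cdot)=\eta(|\nabla u|)\tau_{n-i-1}(u,\cdot)$. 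Rearranging produces exactly the claimed formula.

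The only genuine subtlety is that $u$ is assumed merely $C^2$, so $[\Hess u]^{lm}_{n-i}$ need not be $C^1$ and the divergence theorem is not strictly applicable. I expect this to be the main obstacle, and I would handle it by a standard mollification: approximate $u$ on a neighborhood of $\{t_1\le u\le t_2\}$ by $C^3$ convex functions $u_k$ converging in $C^2$ to $u$. The identity then holds for each $u_k$, and one passes to the limit using that $|\nabla u|$ is bounded below on the region (so $\gamma(|\nabla u|)$ and its derivative are bounded there), the continuity of $\gamma$ and $\eta$, and the $C^2$-convergence of the level sets $\{u_k=t_j\}$ to $\{u=t_j\}$. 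All remaining steps are direct computation.
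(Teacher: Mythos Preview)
Your proposal is correct and follows essentially the same route as the paper: the paper chooses $\gamma(r)=-\eta(r)/r^{n-i}$ (the negative of your choice, which is immaterial), plugs it into Proposition~\ref{prop int case n-2}, applies the divergence theorem on $\{t_1<u\le t_2\}$, and uses Lemma~\ref{lemma Reilly} to convert the boundary terms. The paper does not discuss the $C^2$-versus-$C^3$ regularity issue you raise and simply applies the divergence theorem directly, so your mollification remark is a point of extra care rather than a deviation.
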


\begin{proof} For $r>0,$ we set
	\begin{equation*}
	\gamma(r):=-\frac1{r^{n-i}}\int_r^{\infty}s^{n-i-1}\zeta(s) \d s.
	\end{equation*}
	Then
	$$
	\gamma'(r)= \frac{n-i}{r^{n-i+1}} \int_r^{\infty} s^{n-i-1}\zeta(s) \d s + \frac{\zeta(r)}{r}
	$$
	and
	$$
	(n-i)\gamma(r)+r\gamma'(r)=\zeta(r)
	$$
	for every $r>0$. Moreover,
	$$
	r^{n-i+1}\gamma'(r)=(n-i)\eta(r)+ r^{n-i}\zeta(r)=\rho(r)
	$$
	for every $r>0$.
	By the previous relations, Proposition \ref{prop int case n-2} and the divergence theorem, we have
	\begin{eqnarray*}
		\int_{\{t_1 <u\le t_2 \}}\zeta(|\nabla u|)[\Hess u]_{n-i}\d x
		&=&\int_{\{t_1 <u\le t_2 \}}\big((n-i)\gamma(|\nabla u|)+|\nabla u|\gamma'(|\nabla u|)\big)[\Hess u]_{n-i}\d x\\
		&=&\int_{\{t_1 <u\le t_2 \}}|\nabla u|^{n-i+1}\gamma'(|\nabla u|)\tau_{n-i}\d x\\
		&&+\int_{\{t_1 <u\le t_2 \}} \div_m\left(\gamma(|\nabla u|)[\Hess u]^{lm}_{n-i}u_l\right)\d x\\
		&=&\int_{\{t_1 <u\le t_2 \}} \rho(|\nabla u|)\tau_{n-i}\d x+\int_{\{u=t_2\}}\frac{\gamma(|\nabla u|)}{|\nabla u|}[\Hess u]^{lm}_{n-i}u_lu_m\d\hm^{n-1}\\
		&&-\int_{\{u=t_1\}}\frac{\gamma(|\nabla u|)}{|\nabla u|}[\Hess u]^{lm}_{n-i}u_lu_m\d\hm^{n-1}.
	\end{eqnarray*}
	In the last equation, we have used the fact that the outer unit normal to the set $\{t_1< u\le t_2 \}$ at a point $x\in\{u= t_2 \}$ is given by
	$$
	\frac{\nabla u(x)}{|\nabla u(x)|},
	$$
	while at a point $x\in\{u=t_1\}$ it is given by
	$$
	-\frac{\nabla u(x)}{|\nabla u(x)|}.
	$$
	The conclusion follows from Lemma \ref{lemma Reilly}. 
\end{proof}

\subsubsection{An Estimate}

Given a real-valued function $u$ defined in a subset $V$ of $\R^n$, the Lipschitz constant of $u$ in $U\subset V$ is defined as
$$
\Lip_U (u):=\sup\left\{\frac{|u(x)-u(y)|}{|x-y|}\colon x,\, y\in U,\, x\ne y\right\}.
$$
For $u\in\fconvs$ and $t>0$, we set 
$$\Lip(u,t):=\Lip_{\{u\le t\}}(u)$$
and note that for $u\in\fconvs\cap C^1(\R^n)$,
\begin{equation}\label{Liplim}
\lim_{t\to 0+} \Lip(u,t)=0
\end{equation}
if $0=u(0)\le u(x)$ for $x\in\R^n$.

\begin{lemma}
	\label{estimate smooth case}
	Let $1\leq i\leq n-1$ and  $\zeta\in\Had{i}{n}$.
	If $u\in\fconvsreg$, then
	$$\left\lvert \int_{\{t_1<u\le t_2\}} \zeta(|\nabla u(x)|) [\Hess u(x)]_{n-i} \d x\right\rvert \leq \alpha\,V_i(\{u\le t_2\}) \Big(\max_{[0,\Lip(u,t_2)]} |\rho| + \max_{[0,\Lip(u,t_2)]} |\eta|\Big)$$
	for every $0<t_1<t_2$ where $\alpha$ is a positive constant depending only on $n$ and $i$.
\end{lemma}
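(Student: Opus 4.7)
The plan is to start from the integration-by-parts formula in Proposition~\ref{prop:hessian_int_by_parts}, which expresses the integral in question as the sum of a volume term involving $\rho(|\nabla u|)\tau_{n-i}(u,\cdot)$ and two boundary terms involving $\eta(|\nabla u|)\tau_{n-i-1}(u,\cdot)$ over the level sets $\{u=t_1\}$ and $\{u=t_2\}$. I will then estimate each of these three pieces separately, using the fact that on $\{u\le t_2\}$ every gradient magnitude satisfies $|\nabla u(x)|\le \Lip(u,t_2)$, so that both $\rho\circ |\nabla u|$ and $\eta\circ |\nabla u|$ are pointwise bounded by their maxima on $[0,\Lip(u,t_2)]$.

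The volume term is handled by pulling out $\max_{[0,\Lip(u,t_2)]}|\rho|$ and applying Lemma~\ref{le:tau_1} to bound
\[
\int_{\{t_1<u\le t_2\}}\tau_{n-i}(u,x)\d x=\alpha_1\bigl(V_i(\{u\le t_2\})-V_i(\{u\le t_1\})\bigr)\le \alpha_1 V_i(\{u\le t_2\}),
\]
with $\alpha_1$ depending only on $n,i$. For the two boundary terms, I pull out $\max_{[0,\Lip(u,t_2)]}|\eta|$ and apply Lemma~\ref{le:tau_2} to obtain
\[
\int_{\{u=t_j\}}\tau_{n-i-1}(u,x)\d\hm^{n-1}(x)=\alpha_2 V_i(\{u\le t_j\})\le \alpha_2 V_i(\{u\le t_2\}),
\]
for $j=1,2$, using monotonicity of $V_i$ on nested convex bodies (since $\{u\le t_1\}\subset\{u\le t_2\}$). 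Combining the three estimates and choosing $\alpha$ to be, say, $\alpha_1+2\alpha_2$, yields the claimed inequality.

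I should note that $\rho,\eta\in C_c([0,\infty))$ by construction (with limits at $0$ built into the definition of $\Had{i}{n}$), so both maxima are finite; the slight care is that $|\nabla u|$ is continuous on $\{u\le t_2\}\setminus\{0\}$ and tends to $0$ at the origin, so the values of $\rho\circ|\nabla u|$ and $\eta\circ|\nabla u|$ lie in the relevant closed interval. There is no real obstacle here: the lemma is essentially a bookkeeping consequence of Proposition~\ref{prop:hessian_int_by_parts} together with Lemmas~\ref{le:tau_2} and \ref{le:tau_1}. The only point to watch is that Lemma~\ref{le:tau_1} is stated for strictly positive $t_1<t_2$, which matches the hypothesis of the present lemma; this avoids dealing with the singular point at the origin where $\nabla u$ vanishes.
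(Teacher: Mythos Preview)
Your proposal is correct and matches the paper's own proof almost verbatim: it invokes Proposition~\ref{prop:hessian_int_by_parts}, bounds the volume term via Lemma~\ref{le:tau_1} together with monotonicity of $V_i$, and bounds the two boundary terms via Lemma~\ref{le:tau_2}, pulling out the sup-norms of $\rho$ and $\eta$ on $[0,\Lip(u,t_2)]$. The only cosmetic difference is that you track the constants as $\alpha_1+2\alpha_2$, whereas the paper simply writes a generic $\alpha$.
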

\begin{proof}
	By Lemma~\ref{le:tau_1} (and the monotonicity of intrinsic volumes)
	\begin{align*}
	\left\lvert\int_{\{t_1<u\le t_2\}} \rho(|\nabla u(x)|) \tau_{n-i}(u,x) \d x\right\rvert 
	&\leq \alpha \left( V_{i}(\{u\le t_2\}) - V_{i}(\{u\le t_1\}) \right) \max_{[0,\Lip(u, t_2)]} |\rho|\\
	&\leq \alpha\,  V_{i}(\{u\le t_2\}) \max_{[0,\Lip(u, t_2)]} |\rho|,
	\end{align*}
	where $\alpha>0$ only depends on $n$ and $i$. Similarly, by Lemma~\ref{le:tau_2} 
	$$\left\lvert\int_{\{u=t_j\}} \eta(|\nabla u(x)|) \tau_{n-i-1}(u,x) \d \hm^{n-1}(x)\right\rvert\leq \alpha\, V_i(\{u\le t_2\})\max_{[0,\Lip(u,t_2)]} |\eta|$$
	for $j\in\{1,2\}$. The conclusion now follows from Proposition~\ref{prop:hessian_int_by_parts}.
\end{proof}

\subsection{Proof of Theorem \ref{main one way}}
Throughout this section, fix $j\in\{1,\dots, n-1\}$ and $\zeta\in\Had{j}{n}$. We will associate the two functions $\rho,\eta\in C_c([0,\infty))$ defined in (\ref{eq:def_eta}) and (\ref{eq:def_rho})  with $\zeta$.

First, we prove that we may reduce to the case
$$
\eta(0)=\rho(0)=0.
$$
Indeed, let
$$
\eta_0:=\lim_{r\to0^+}\eta(r),
$$
and choose $\zeta_0\in C_c([0,\infty))$ such that
$$
\int_0^{\infty}s^{n-j-1}\zeta_0(s)\d s=\eta_0.
$$
Then $\bar\zeta\colon(0,\infty)\to\R$, defined by
$$
\bar\zeta(r):=\zeta(r)-\zeta_0(r),
$$
belongs to $\Had{j}{n}$. The function $\bar\eta$, defined by
$$
\bar\eta(r):=\int_r^{\infty}s^{n-j-1}\bar\zeta(s)\d s,
$$
verifies $\bar\eta(0)=0$.
Moreover, as $\zeta_0\in C_c([0,\infty))$, the functional $\oZ_{\zeta_0}\colon\fconvs\to\R$, given by
$$
\oZ_{\zeta_0}(u):=\int_{\R^n\times \R^n}\zeta_0(|y|)\d\Theta_j^n(u,y),
$$
defines, by Lemma \ref{fine zeta}, a continuous, epi-translation and rotation invariant valuation on $\fconvs$. We have then proved that $\zeta$ can be written as 
$$
\zeta=\bar\zeta+\zeta_0,
$$
where $\zeta_0$ gives rise to a continuous valuation on $\fconvs$, while $\bar\zeta\in\Had{j}{n}$, and the function $\bar\eta$ associated to $\bar\zeta$ verifies $\bar\eta(0)=0$. Finally, note that it follows from the relation
$$
\bar\rho(r)=(n-j)\bar\eta(r)+r^{n-j}\bar\zeta(r),
$$
that $\bar\rho(0)=0$.

In the following, let 
$$\fconvso := \{u\in\fconvs\colon u(0)=0\leq u(x) \text{ for } x\in\R^n\}$$
and 
$$T_j(u):=\{t>0: \Theta_j^n(u,\{u=t\}\times\R^n)=0\}.$$
Since $\Theta_j^n(u,\cdot)$ is a locally finite measure, the complement of $T_j(u)$ in $(0,\infty)$ has at most countably many points.

\begin{lemma}
	\label{le:sequence_v_int}
	Let $u\in\fconvso\cap C^1(\R^n)$. Then there exists a sequence $u_k$ of functions from $\fconvsreg$ such that $u_k$ epi-converges to $u$ and 
	$$
	\lim_{k\to\infty} \int_{\{t_1 < u_k\le  t_2\}} \zeta(|\nabla u_k(x)|) [\Hess u_k(x)]_{n-j} \d x = \int_{ \{(x,y) \colon t_1 < u(x) \leq t_2\}} \zeta(|y|) \d\Theta_{j}^n(u,(x,y))
	$$
	for all $0<t_1<t_2$ with $t_1,t_2\in T_j(u)$.
\end{lemma}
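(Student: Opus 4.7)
The plan is to approximate $u$ by regular functions $u_k\in\fconvsreg$, to identify the integral with an integral against the Hessian measure $\Theta_j^n(u_k,\cdot)$ in the smooth case, and then to pass to the limit using weak convergence of Hessian measures after isolating the singularity of $\zeta$ at the origin. For the construction, set $u_k := u*\phi_{1/k} + \tfrac{1}{k}|\cdot|^2$ with $\phi_{1/k}$ a standard mollifier of radius $1/k$; after a translation of the argument (by the unique minimizer) and subtraction of a constant we may assume that $u_k(0)=0<u_k(x)$ for $x\neq 0$. Then $u_k\in C^2(\R^n)$ is strictly convex with positive-definite Hessian, is super-coercive, and has nonzero gradient off the origin, so $u_k\in\fconvsreg$. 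Local uniform convergence $u_k\to u$ is inherited from the mollification, and \cite[Theorem 25.7]{Rockafellar} then gives $\nabla u_k\to\nabla u$ uniformly on compact sets. For smooth $u_k$, the measure $\Theta_j^n(u_k,\cdot)$ is the pushforward of $[\Hess u_k]_{n-j}\d x$ under $x\mapsto(x,\nabla u_k(x))$, so the left-hand integral equals
\[
\int_{\{(x,y)\colon t_1 < u_k(x) \le t_2\}} \zeta(|y|)\d\Theta_j^n(u_k,(x,y)).
\]

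To pass to the limit, split $\zeta=\zeta_\epsilon+\tilde\zeta_\epsilon$, where $\zeta_\epsilon\in C_c([0,\infty))$ agrees with $\zeta$ on $[\epsilon,\infty)$ and vanishes on $[0,\epsilon/2]$; both summands lie in $\Had{j}{n}$. For the continuous piece $\zeta_\epsilon(|y|)$, write $\chi_{\{t_1 < u_k \le t_2\}} = \chi_{\{u_k \le t_2\}} - \chi_{\{u_k \le t_1\}}$ and invoke \eqref{weak_cont} together with the assumption $t_1,t_2\in T_j(u)$ (which makes each level set $\{u=t_i\}$ $\Theta_j^n(u,\cdot)$-null) and the uniform convergence $u_k\to u$ to obtain that the $\zeta_\epsilon$-integral against $\Theta_j^n(u_k,\cdot)$ over the half-open strip converges to the analogous integral for $u$. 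For the singular piece $\tilde\zeta_\epsilon$, Lemma~\ref{estimate smooth case} bounds its contribution by $\alpha\,V_j(\{u_k\le t_2\})\bigl(\max|\rho_{\tilde\zeta_\epsilon}|+\max|\eta_{\tilde\zeta_\epsilon}|\bigr)$; the associated $\rho_{\tilde\zeta_\epsilon},\eta_{\tilde\zeta_\epsilon}$ are supported in $[0,\epsilon]$ with maxima of order $\|\zeta\|_\infty\epsilon^{n-j}$, while $V_j(\{u_k\le t_2\})$ is uniformly bounded in $k$ (the sublevel sets sit in a common compactum for large $k$), so this contribution vanishes as $\epsilon\to 0$ uniformly in $k$. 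On the $u$ side, since $\nabla u(x)\neq 0$ on $\{t_1 < u \le t_2\}$, the slice $\{(x,y)\colon t_1<u(x)\le t_2,\,y=0\}$ carries no $\Theta_j^n(u,\cdot)$-mass, so dominated convergence forces the $\tilde\zeta_\epsilon$-integral against $\Theta_j^n(u,\cdot)$ to zero as well. Taking first $k\to\infty$ and then $\epsilon\to 0$ yields the claim.

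The main obstacle is controlling the singularity of $\zeta$ at $y=0$ uniformly in $k$: standard weak convergence of Hessian measures applies only to bounded continuous test functions, and $\zeta(|\cdot|)$ need not extend continuously to the origin. This is exactly what the uniform estimate of Lemma~\ref{estimate smooth case} provides, an estimate whose proof relies crucially on the integration by parts of Proposition~\ref{prop:hessian_int_by_parts} and on the reduction made at the beginning of the proof to the case $\rho(0)=\eta(0)=0$.
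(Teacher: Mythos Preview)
Your construction of the approximating sequence and the identification of the smooth integral with an integral against $\Theta_j^n(u_k,\cdot)$ are fine, and the overall strategy is reasonable. But there is a genuine gap at the point where you pass to the limit for the continuous piece $\zeta_\epsilon$. The statement \eqref{weak_cont} applies only to a \emph{fixed} Borel set $B$ with $\Theta_j^n(u,\partial B\times\R^n)=0$; your domains $\{u_k\le t_i\}$ depend on $k$. Writing ``and the uniform convergence $u_k\to u$'' does not close this: weak convergence of $\Theta_j^n(u_k,\cdot)$ to $\Theta_j^n(u,\cdot)$ together with $\{u_k\le t\}\to\{u\le t\}$ does not automatically give convergence of the integrals over the moving sets. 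You would need, at minimum, a Portmanteau-type argument bounding $\Theta_j^n(u_k,\cdot)$ on the shrinking symmetric difference (using that $\{t-\delta\le u\le t+\delta\}\times\operatorname{supp}\zeta_\epsilon$ is compact and $t\in T_j(u)$), and this is not supplied.

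This is precisely the difficulty the paper's proof isolates as its step~(iii), and the paper does \emph{not} argue by Portmanteau. Instead it first proves convergence over the fixed strip $\{t_1<u\le t_2\}$ (step~(ii)), then shows the integral over the difference of $\{u\ge t\}$ and $\{u_k\ge t\}$ tends to zero by sandwiching between sublevel sets of $u_k$, applying the integration-by-parts formula of Proposition~\ref{prop:hessian_int_by_parts} on the thin shell, and invoking continuity of intrinsic volumes and weak convergence of curvature measures. This is a substantially different and more structural argument than what you sketch.

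Two smaller points. First, your splitting $\zeta=\zeta_\epsilon+\tilde\zeta_\epsilon$ and the appeal to Lemma~\ref{estimate smooth case} are unnecessary here: since $u\in C^1$ with strict minimum at $0$, one has $|\nabla u|\ge 2\alpha>0$ on $\{u\ge\bar t\}$ for any $0<\bar t<t_1$, and by $C^1$-convergence $|\nabla u_k|\ge\alpha$ on $\{t_1<u_k\le t_2\}$ for large $k$. Hence for $\epsilon<\alpha$ the $\tilde\zeta_\epsilon$-contribution is identically zero on both sides, so the ``main obstacle'' you name does not arise in this lemma at all (it matters in the later Lemmas~\ref{lemma estimate}--\ref{lem:lim_int_sup_lvl_set}, where $t_1\to 0$). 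Second, your bound ``maxima of order $\|\zeta\|_\infty\epsilon^{n-j}$'' is not meaningful, since $\zeta\in\Had{j}{n}$ may be unbounded near $0$; the correct smallness of $\max|\rho_{\tilde\zeta_\epsilon}|$ and $\max|\eta_{\tilde\zeta_\epsilon}|$ would have to come from the reduction $\rho(0)=\eta(0)=0$ and the condition $s^{n-j}\zeta(s)\to 0$.
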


\begin{proof} The proof is divided into several steps.\smallskip
	
	\noindent{(i)} There exists a sequence $u_k$ of functions from $\fconvsreg$  that epi-converges to $u$. Indeed, by a standard mollification procedure, we first find a sequence of convex functions $\bar u_k$ of class $C^2(\R^n)$ such that $0=\bar u_k(0)\le \bar u_k(x)$ for every $x\in\R^n$ and $k\in\N$, and $\bar u_k$ converges to $u$ on compact sets (in particular, $\bar u_k$ epi-converges to $u$). Then we define, for every $x\in\R^n$ and $k\in\N$,
	$$
	u_k(x):=\bar u_k(x)+\frac1k |x|^2.
	$$
	Then clearly $u_k\in\fconvso\cap C^2(\R^n)$, and $u_k$ epi-converges to $u$. Moreover, $\Hess u_k(x)>0$ for every $x\ne0$ and $k\in\N$, and this implies that the boundary of $\{u_k\le t\}$ is of class $C^{2}$ with positive curvature for every $t>0$. Moreover, as $u\in C^1(\R^n)$, the sequence $\nabla u_k$ converges to $\nabla u$ uniformly on compact sets.
	
	\medskip
	\noindent{(ii)} For $t_1, t_2\in T_j(u)$ such that $0<t_1<t_2$,
	\begin{equation}\label{limit}
	\lim_{k\to\infty} \int_{ \{(x,y) \colon t_1 < u(x) \leq t_2\}}
	\zeta(|y|) \d\Theta_{j}^n(u_k,(x,y))
	= \int_{ \{(x,y) \colon t_1 < u(x) \leq t_2\}} \zeta(|y|) \d\Theta_{j}^n(u,(x,y)).
	\end{equation}
    For $\bar t>0$ fixed, there exists $\alpha>0$ such that 
    $$
    |\nabla u(x)\vert\ge2\alpha\qquad\mbox{ for every $x$ such that $u(x)>\bar t$}.
    $$ 
    Since the sequence $u_k$ converges to $u$ in the $C^1$-norm on compact sets, we may assume that 
    $$
    |\nabla u_k(x)|\ge \alpha\qquad\mbox{ for every $x$ such that $u_k(x)>\bar t$}
    $$

    \smallskip\noindent
    for $k$ sufficiently large (independent of $x$). Let $\bar\zeta$ be a continuous function in $[0,\infty)$, which coincides with $\zeta$ in $[\alpha,\infty)$ and hence has compact support. We have
	$$
	\int_{ \{(x,y) \colon t_1 < u(x) \leq t_2\}}
	\zeta(|y|) \d\Theta_{j}^n(u_k,(x,y))=
	\int_{\{(x,y)\colon t_1 < u(x) \leq t_2\}}
	\bar\zeta(|y|) \d\Theta_{j}^n(u_k,(x,y)),
	$$
	for every $\bar t < t_1 < t_2$  and for every $k$ sufficiently large, and the corresponding statement holds for $u_k$ replaced by $u$.
    As $u_k$ epi-converges to $u$ and as $t_1,t_2\in T_j(u)$, it follows from  (\ref{weak_cont}) that 
	$$
	\lim_{k\to\infty}\int_{ \{t_1 < u \leq t_2\}\times \R^n}
	\zeta(|y|) \d\Theta_{j}^n(u_k,(x,y))=
	\int_{ \{t_1 < u \leq t_2\}\times \R^n}
	\zeta(|y|) \d\Theta_{j}^n(u,(x,y)).
	$$
   	This proves that \eqref{limit} holds for all $t_1, t_2 \in T_j(u)$ such that $t_2>t_1$ and $t_1,t_2>\bar t$. As $\bar t>0$ was arbitrary, the claim is proved.
	
	\medskip
	\noindent {(iii)} For every $t>0$,
	\begin{equation}\label{limit 2}
	\lim_{k\to\infty}\Big|\int_{\{ u\ge t\}} \zeta(|\nabla u_k(x)|)[\Hess u_k(x)]_{n-j}\d x
	-\int_{\{u_k\ge t\}}\zeta(|\nabla u_k(x)|)[\Hess u_k(x)]_{n-j}\d x
	\Big|=0.
	\end{equation}
	We first note that in the previous relation, we may replace $\zeta$ by $\bar\zeta$ (where $\bar\zeta$ is chosen as in the previous step). Let $\bar\zeta_+$ and $\bar\zeta_-$ denote the positive and negative parts of $\bar\zeta$; these are continuous functions in $[0,\infty)$, with compact support.  
	As $\bar\zeta=\bar\zeta_+-\bar\zeta_-$, it is enough to prove \eqref{limit 2} for $\bar\zeta_+$ and $\bar\zeta_-$ separately. In other words, we may reduce to prove \eqref{limit 2} under the assumption that $\zeta$ is non-negative and belongs to $C_c([0,\infty))$. 
	
	For every $k\in\N$, let 
	$$
	t_{k,1}=\max\{r>0\colon \{u_k\le r\}\subset \{u\le t \} \},
	$$
	and 
	$$
	t_{k,2}=\min\{r>0\colon \{u_k\le r\}\supset \{u\le t \} \}.
	$$
	Equivalently, $\{u_k \le t_{k,1}\}$ is the largest sublevel set of $u_k$ contained in $\{u\le t \}$, while $\{u_k \le t_{k,2}\}$ is the smallest sublevel set of $u_k$ containing $\{u\le t \}$. Clearly
	$$
	\{u_k \le t_{k,1}\}\subset \{u\le t \}\subset \{u_k \le t_{k,2}\}
	$$
	for every $k\in\N$. Moreover, by uniform convergence, we have
	$$
	\lim_{k\to\infty}\{u_k \le t_{k,1}\}=\lim_{k\to\infty}\{u_k \le t_{k,2}\}=\{u\le t \}
	$$
	(here, convergence is in the Hausdorff metric). As $\zeta$ is non-negative,
	\begin{align*}
	\int_{\R^n\setminus\{u_k \le t_{k,2}\}}\zeta(|\nabla u_k(x)|)[\Hess u_k(x)]_{n-j}\d x &\le \int_{\R^n\setminus \{u \le t\}}\zeta(|\nabla u_k(x)|)[\Hess u_k(x)]_{n-j}\d x\\
	&\le \int_{\R^n\setminus \{u_k \le t_{k,1}\}}\zeta(|\nabla u_k(x)|)[\Hess u_k(x)]_{n-j}\d x.
	\end{align*}
	Therefore, the difference
	\begin{equation}\label{difference}
	\alpha_k:=\int_{\{ u\ge t\}}\zeta(|\nabla u_k(x)|)[\Hess u_k(x)]_{n-j}\d x
	-\int_{\{ u_k\ge t\}}\zeta(|\nabla u_k(x)|)[\Hess u_k(x)]_{n-j}\d x\nonumber
	\end{equation}
	is bounded from below by
	\begin{equation}\label{lower bound}
	\int_{\{ a_k\le u_k\le b_k \}}\zeta(|\nabla u_k(x)|)[\Hess u_k(x)]_{n-j}\d x,
	\end{equation}
	where
	$a_k=\min\{t,t_{k,2}\}$ and $b_k=\max\{t,t_{k,2}\}$.
	Similarly, $\alpha_k$ is bounded from above by
	\begin{equation}\label{upper bound}
	\int_{\{ c_k\le u_k\le d_k \}}\zeta(|\nabla u_k(x)|)[\Hess u_k(x)]_{n-j}\d x,
	\end{equation}
	where $c_k=\min\{t,t_{k,1}\}$ and $d_k=\max\{t,t_{k,1}\}$. Note that the sequences $a_k$, $b_k$, $c_k$ and $d_k$ converge to $t$ as $k\to \infty$. We prove that the integral in \eqref{lower bound} converges to zero as $k\to\infty$. The same, with a similar proof, holds for \eqref{upper bound}. This will complete the proof of this step.
	
	By Proposition \ref{prop:hessian_int_by_parts}, we have
	\begin{align*}
	\int_{\{ a_k\le u_k\le b_k \}}\zeta(|\nabla u_k(x)|)[\Hess u_k(x)]_{n-j}\d x&=
	\int_{\{ a_k\le u_k\le b_k \}}\rho(|\nabla u_k(x)|)\tau_{n-j}(u_k,x)\d x\\
	&\quad-\int_{\{u_k=b_k\}}\eta(|\nabla u_k(x)|)\tau_{n-j-1}(u_k,x)\d\hm^{n-1}(x)\\
	&\quad+\int_{\{ u_k=a_k\}}\eta(|\nabla u_k(x)|)\tau_{n-j-1}(u_k,x)\d\hm^{n-1}(x),
	\end{align*}
	where the functions $\rho$ and $\eta$ are associated to $\zeta$ as in Proposition \ref{prop:hessian_int_by_parts}. For the first term on the right side, we may write, using the continuity of $\rho$ and Lemma \ref{le:tau_1},
	$$
	\left|\int_{\{ a_k\le u_k\le b_k \}}\rho(|\nabla u_k(x)|)\tau_{n-j}(u,x)\d x\right|\le
	\alpha\left(
	V_j(\{u_k\le b_k\})-V_j(\{u_k\le a_k\})
	\right),
	$$
	where $\alpha>0$ depends on $n$, $j$ and $\rho$. By the uniform convergence and the continuity of intrinsic volumes, the right side of the previous inequality tends to 0 as $k\to \infty$. 
	
	Next, we consider  
	\begin{equation*}
	\int_{\{u_k=b_k\}}\eta(|\nabla u_k(x)|)\tau_{n-j-1}(u_k,x)\d\hm^{n-1}(x).
	\end{equation*}
	This integral can be written as
	$$
\binom{n-1}{j} \int_{\R^n}\eta(|\nabla u_k(x)|)\d C_{j}(\{u_k\le b_k\},x),
	$$
	where $C_{j}(\{u_k\le b_k\} ,\cdot)$ is the $j$th curvature measure of $\{u_k\le b_k\}$ (see \cite[Chapter 4]{Schneider:CB2} for the definition of curvature measures). Note that by the convergence of $\{u_k \le b_k\}$ to $\{u\le t \}$ and \cite[Theorem 4.2.1]{Schneider:CB2},
	the curvature measures $C_{j}(\{u_k \le b_k\},\cdot)$ converge weakly to $C_{j}(\{u\le t \},\cdot)$. Moreover, the support of the curvature measures of a convex body is contained in the boundary of the convex body. Hence there exists a compact set $C\subset\R^n$, such that the supports of $C_j(\{u_k\le b_k\},\cdot)$ and of $C_j(\{u\le t\}, \cdot)$ are contained in $C$ for every $k$.   
	Taking the continuity of $\eta$ and the uniform convergence of $\nabla u_k$ to $\nabla u$ on compact sets into account, we deduce that 
	\begin{align*}
	\lim_{k\to\infty}\int_{\{u_k=b_k\}}\eta(|\nabla u_k(x)|)\tau_{n-j-1}(u_k,x)\d\hm^{n-1}(x)=\binom{n-1}{j}\int_{\R^n}\eta(|\nabla u(x)|)\d C_{j}(\{u\le t\},x).
	\end{align*}
	By a similar argument, we may prove that
	\begin{eqnarray*}
		\lim_{k\to\infty}\int_{\{u_k =a_k\}}\eta(|\nabla u_k(x)|)\tau_{n-j-1}(u_k,x)\d\hm^{n-1}(x)
		=\binom{n-1}{j}\int_{\R^n}\eta(|\nabla u(x)|)\d C_{j}(\{u\le t\},x).
	\end{eqnarray*}
	Thus, the integral in \eqref{lower bound} converges to zero as $k\to\infty$, which concludes the proof of this step.
	
	\medskip
	\noindent {(iv)} By the previous step we have for every $t_1,t_2\in T_j(u)$ such that $0<t_1<t_2$,
	\begin{equation*}
	\lim_{k\to\infty} \left|\int_{\{ t_1< u\le t_2\}}\zeta(|\nabla u_k(x)|)[\Hess u_k(x)]_{n-j}\d x
	-\int_{\{t_1<u_k\le t_2\}}\zeta(|\nabla u_k(x)|)[\Hess u_k(x)]_{n-j}\d x\right|=0.
	\end{equation*}
	This, together with \eqref{limit}, concludes the proof of this lemma.
\end{proof}

\begin{lemma}\label{lemma estimate}
	Let $u\in\fconvso\cap C^1(\R^n)$. Then for $t_1,t_2\in T_j(u)$ such that $0<t_1<t_2$, we have 
	\begin{equation*}\label{estimate}
	\Big\lvert\int_{\ \{(x,y)\colon t_1 < u(x)\leq t_2\}} \zeta(|y|) \d \Theta_j^n(u,(x,y))\Big\rvert \leq \alpha\,V_j(\{u\le t_2\}) \Big(\max_{[0,\Lip(u,t_2)]} |\rho| + \max_{[0,\Lip(u,t_2)]}|\eta|\Big),
	\end{equation*}
	where $\alpha$ is a positive constant only depending on $n$ and $j$.
\end{lemma}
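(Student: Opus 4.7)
My plan is to establish the estimate by approximation from the regular case, using precisely the sequence provided by Lemma~\ref{le:sequence_v_int} and passing to the limit on both sides.

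First, I would apply Lemma~\ref{le:sequence_v_int} to produce a sequence $u_k\in\fconvsreg$ epi-converging to $u$ such that
\begin{equation*}
\lim_{k\to\infty}\int_{\{t_1<u_k\le t_2\}}\zeta(|\nabla u_k|)[\Hess u_k]_{n-j}\d x = \int_{\{(x,y)\colon t_1<u(x)\le t_2\}}\zeta(|y|)\d\Theta_j^n(u,(x,y)).
\end{equation*}
Applying Lemma~\ref{estimate smooth case} to each $u_k$ gives a bound for the left-hand side above with $V_j(\{u\le t_2\})$ and $\Lip(u,t_2)$ replaced by $V_j(\{u_k\le t_2\})$ and $\Lip(u_k,t_2)$ and a constant $\alpha$ independent of $k$. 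Taking absolute values and using the displayed convergence, the lemma reduces to
\begin{equation*}
\lim_{k\to\infty}V_j(\{u_k\le t_2\})=V_j(\{u\le t_2\}),\qquad \limsup_{k\to\infty}\Lip(u_k,t_2)\le\Lip(u,t_2).
\end{equation*}

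Both of these facts would follow from the construction in step~(i) of the proof of Lemma~\ref{le:sequence_v_int}, which gives $u_k\to u$ and $\nabla u_k\to\nabla u$ uniformly on compact sets. Since $u(0)=0<t_2$, the sublevel set $\{u\le t_2\}$ has non-empty interior, so the inclusions $\{u\le t_2-\varepsilon\}\subset\{u_k\le t_2\}\subset\{u\le t_2+\varepsilon\}$, valid eventually for any $\varepsilon>0$, force Hausdorff convergence of $\{u_k\le t_2\}$ to $\{u\le t_2\}$; continuity of intrinsic volumes on $\cK^n$ then yields the first limit. For the second, note that $\Lip(u_k,t_2)=\max_{\{u_k\le t_2\}}|\nabla u_k|$ because $u_k\in C^1(\R^n)$ is convex, and combining the Hausdorff convergence with the uniform convergence of $\nabla u_k$ on a fixed compact neighbourhood of $\{u\le t_2\}$ gives the stated $\limsup$ via continuity of $|\nabla u|$. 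Since $\rho,\eta\in C_c([0,\infty))$, the maps $s\mapsto\max_{[0,s]}|\rho|$ and $s\mapsto\max_{[0,s]}|\eta|$ are non-decreasing and continuous, so the right-hand side of the bound for $u_k$ passes to the desired right-hand side.

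The main care point is the Hausdorff convergence of the sublevel sets; this is where the hypotheses $t_2>0$ and $u(0)=0$ enter, and it would be the principal obstacle if either $u$ or the approximating $u_k$ had less regularity, but it is neutralised here by the specific approximation scheme of Lemma~\ref{le:sequence_v_int} (the addition of the term $\frac{1}{k}|x|^2$ and the resulting $C^1$-convergence of the $u_k$). The conditions $t_1,t_2\in T_j(u)$ play no role in the limiting argument beyond that of enabling Lemma~\ref{le:sequence_v_int}.
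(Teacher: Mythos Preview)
Your proposal is correct and follows essentially the same approach as the paper, which simply cites Lemma~\ref{estimate smooth case}, Lemma~\ref{le:sequence_v_int}, and the convergence of Lipschitz constants and sublevel sets under uniform convergence. You have filled in the details the paper leaves implicit, and your treatment of the Hausdorff convergence of $\{u_k\le t_2\}$ and of the $\limsup$ bound on $\Lip(u_k,t_2)$ is accurate.
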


\begin{proof} 
		The validity of the claimed inequality follows from Lemma~\ref{estimate smooth case}, Lemma \ref{le:sequence_v_int}, and the convergence of Lipschitz constants and of sublevel sets under uniform convergence.
\end{proof}

\begin{lemma}
\label{lem:lim_int_sup_lvl_set}
	For every $u\in\fconvso\cap C^1(\R^n)$, the limit
	\begin{equation*}
	\lim_{\begin{subarray}{c}\\ t\to0\\[1pt]t\in T_j(u)\end{subarray}}\,\,\, \int_{\begin{subarray}{c}\\[10pt] \{(x,y) \colon u(x)> t\}\end{subarray}}\!\! \zeta(|y|) \d \Theta_j^n(u,(x,y))
	\end{equation*}
	exists and is finite.
\end{lemma}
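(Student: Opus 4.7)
Writing $I(t):=\int_{\{(x,y)\colon u(x)>t\}}\zeta(|y|)\d\Theta_j^n(u,(x,y))$, my strategy is to show that $I(t)$ is Cauchy as $t\to 0^+$ along $T_j(u)$, exploiting the reduction $\rho(0)=\eta(0)=0$ established at the start of this subsection together with the estimate in Lemma~\ref{lemma estimate}.

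First, I would verify that $I(t)$ is a well-defined real number for every $t>0$. Since $\zeta\in C_b((0,\infty))$ vanishes outside some interval $(0,R]$ and $u\in\fconvso\cap C^1(\R^n)$, a short convexity and super-coercivity argument shows that $|\nabla u|$ is bounded away from zero on $\{u\ge t\}$ and that $\{x\colon |\nabla u(x)|\le R\}$ is bounded. Because $\Theta_j^n(u,\cdot)$ is locally finite and concentrated on $\Gamma_u=\{(x,\nabla u(x))\colon x\in\R^n\}$, the integral defining $I(t)$ is finite.

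Next, for $t_1,t_2\in T_j(u)$ with $0<t_1<t_2$, the disjoint decomposition $\{u>t_1\}=\{u>t_2\}\sqcup\{t_1<u\le t_2\}$ yields
$$I(t_1)-I(t_2)=\int_{\{(x,y)\colon t_1<u(x)\le t_2\}}\zeta(|y|)\d\Theta_j^n(u,(x,y)),$$
and Lemma~\ref{lemma estimate} bounds this by
$$\alpha\,V_j(\{u\le t_2\})\bigl(\max\nolimits_{[0,\Lip(u,t_2)]}|\rho|+\max\nolimits_{[0,\Lip(u,t_2)]}|\eta|\bigr).$$
As $t_2\to 0^+$, the factor $V_j(\{u\le t_2\})$ stays bounded (e.g.\ by $V_j(\{u\le 1\})$), $\Lip(u,t_2)\to 0$ by (\ref{Liplim}), and the continuity of $\rho,\eta$ at $0$ combined with $\rho(0)=\eta(0)=0$ forces both maxima to $0$. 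Thus $|I(t_1)-I(t_2)|\to 0$, showing that $I$ is Cauchy along $T_j(u)\to 0^+$ and hence convergent to a finite limit (comparing with $I(t^*)$ for any fixed $t^*\in T_j(u)$ yields finiteness).

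The heavy lifting lies entirely in Lemma~\ref{lemma estimate}; here it suffices to assemble that estimate with the vanishing of $\rho,\eta$ at $0$ (which is why the reduction at the beginning of the section is crucial) and with $\Lip(u,t)\to 0$ as $t\to 0^+$. The only minor subtlety is the well-definedness of $I(t)$, which rests on super-coercivity of $u$ together with the bounded support of $\zeta$.
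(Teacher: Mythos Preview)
Your proof is correct and follows essentially the same approach as the paper: both arguments use Lemma~\ref{lemma estimate} to bound the difference $I(t_1)-I(t_2)$ and then invoke (\ref{Liplim}) together with the reduction $\rho(0)=\eta(0)=0$ to obtain a Cauchy-type conclusion. You add a useful explicit justification that $I(t)$ is finite for each fixed $t>0$ (using that $|\nabla u|$ is bounded away from zero on $\{u\ge t\}$ and that $\{|\nabla u|\le R\}$ is bounded by super-coercivity), which the paper leaves implicit.
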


\goodbreak

\begin{proof} From Lemma \ref{lemma estimate}, we obtain that for every $t_1, t_2\in T_j(u)$ such that $0<t_1<t_2$, 
	\begin{align*}
	\Big\lvert\int_{ \{(x,y)\colon t_1 < u(x)\leq t_2\}} \zeta(|y|) & \d \Theta_j^n(u,(x,y))\Big\rvert\leq \alpha \,V_j(\{u\le t_2\}) \Big(\max_{[0,\Lip(u,t_2)]} |\rho| + \max_{[0,\Lip( u,t_2)]} |\eta |\Big).
	\end{align*}
	The conclusion now follows from (\ref{Liplim}) and the fact  that both $\eta$ and $\rho$ are continuous on $[0,\infty)$ and  vanish at $t=0$.
\end{proof}

Lemma \ref{lem:lim_int_sup_lvl_set} and the fact that $\my_\lambda u\in\fconvs\cap C^1(\R^n)$ allow us to make the following definition. For $u\in\fconvso$ and $\lambda>0$, we set
\begin{equation}\label{my definition}
\ZZZ{j}{\zeta}{\lambda}(u) := \lim_{\begin{subarray}{c}\\ {t\to0}\\[1pt] {t\in T_j(\my_\lambda u)}\end{subarray}}\,\,\, 
\int_{\!\!\begin{subarray}{c}\\[10pt] \{(x,y) \colon \my_\lambda u(x)> t\}\end{subarray}}\!\!\! \zeta(|y|) \d \Theta_j^n(\my_\lambda u,(x,y)).
\end{equation}
An immediate consequence of this definition and Lemma~\ref{lemma estimate} is the following result.

\begin{lemma}
	\label{le:estimate_int_u_lambda_leq_t}
	Let $u\in\fconvso$ and $\lambda>0$. If $t\in T_j(\my_\lambda u)$, then
\begin{multline*}
	\Big\lvert\ZZZ{j}{\zeta}{\lambda}(u)-\int\limits_{\{(x,y) \colon \my_\lambda u(x)>t\}} \zeta(|y|) \d \Theta_j^n(\my_\lambda u,(x,y))\Big\rvert \\
	\leq \alpha\, V_j(\{\my_\lambda u\le t\}) \big(\max_{[0,\Lip(\my_\lambda u, t)]} |\rho| + \max_{[0,\Lip(\my_\lambda u, t)]} |\eta| \big),
\end{multline*}
	where $\alpha$ is a positive constant only depending on $n$ and $j$.
\end{lemma}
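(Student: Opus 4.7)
The plan is to derive this estimate directly from the definition of $\ZZZ{j}{\zeta}{\lambda}(u)$ in \eqref{my definition} by combining it with the uniform estimate from Lemma~\ref{lemma estimate}. First I observe that since $u\in\fconvso$, we have $\my_\lambda u \in \fconvso\cap C^1(\R^n)$: indeed, the Moreau-Yosida envelope of a function in $\fconvs$ is $C^1$ with full domain, and $\my_\lambda u(0)= \inf_y(u(y)+|y|^2/(2\lambda)) = 0$ while $\my_\lambda u \ge 0$. Hence Lemma~\ref{lem:lim_int_sup_lvl_set} applies to $\my_\lambda u$ and the limit defining $\ZZZ{j}{\zeta}{\lambda}(u)$ exists and is finite.

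Fix $t \in T_j(\my_\lambda u)$. For any $s\in T_j(\my_\lambda u)$ with $0<s<t$, split the integral along the two superlevel sets:
\begin{equation*}
\int\limits_{\{\my_\lambda u > s\}} \zeta(|y|) \d \Theta_j^n(\my_\lambda u,(x,y)) = \int\limits_{\{\my_\lambda u > t\}} \zeta(|y|) \d \Theta_j^n(\my_\lambda u,(x,y)) + \int\limits_{\{s<\my_\lambda u \le t\}} \zeta(|y|) \d \Theta_j^n(\my_\lambda u,(x,y)).
\end{equation*}
Applying Lemma~\ref{lemma estimate} to $\my_\lambda u$ with $t_1=s$ and $t_2=t$ yields
\begin{equation*}
\Big\lvert \int\limits_{\{s<\my_\lambda u \le t\}} \zeta(|y|) \d \Theta_j^n(\my_\lambda u,(x,y)) \Big\rvert \leq \alpha\, V_j(\{\my_\lambda u \le t\}) \Big(\max_{[0,\Lip(\my_\lambda u,t)]} |\rho| + \max_{[0,\Lip(\my_\lambda u,t)]} |\eta|\Big),
\end{equation*}
where crucially the right-hand side depends only on $t$ and not on $s$.

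Letting $s \to 0$ along $T_j(\my_\lambda u)$, the definition of $\ZZZ{j}{\zeta}{\lambda}(u)$ in \eqref{my definition} gives
\begin{equation*}
\ZZZ{j}{\zeta}{\lambda}(u) - \int\limits_{\{\my_\lambda u > t\}} \zeta(|y|) \d \Theta_j^n(\my_\lambda u,(x,y)) = \lim_{\substack{s\to 0\\ s\in T_j(\my_\lambda u)}} \int\limits_{\{s<\my_\lambda u\le t\}} \zeta(|y|) \d \Theta_j^n(\my_\lambda u,(x,y)),
\end{equation*}
and the uniform bound above passes to the limit, yielding the claimed inequality.

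This proof is largely bookkeeping, so I do not anticipate any real obstacle. The only point requiring attention is that $\my_\lambda u$ actually lies in $\fconvso\cap C^1(\R^n)$ so that all the previous lemmas apply to it; this was already noted in the paragraph preceding \eqref{my definition}. The uniformity in $s$ of the estimate in Lemma~\ref{lemma estimate} is exactly what makes the argument work, so no further estimates are needed.
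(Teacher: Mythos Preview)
Your proof is correct and follows exactly the approach the paper indicates: the paper states that the lemma is ``an immediate consequence of this definition and Lemma~\ref{lemma estimate}'', and you have spelled out precisely that argument by splitting the integral at level $t$, applying Lemma~\ref{lemma estimate} to $\my_\lambda u\in\fconvso\cap C^1(\R^n)$ on $\{s<\my_\lambda u\le t\}$, and passing to the limit $s\to 0$ using the uniformity of the bound in $s$.
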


We introduce the following auxiliary functions.  For $r>0$, we define $\zeta_r\colon[0,\infty)\to\R$ as 
	\begin{equation*}
	\zeta_r(t):=
	\left\{
	\begin{array}{lll}
	\zeta(t)\quad&\mbox{for $t\ge r$,}\\[6pt]
	\zeta(r)\quad&\mbox{for $0\le t<r$.}
	\end{array}
	\right.
	\end{equation*}
	Note that $\zeta_r\in C_c([0,\infty))$. We also introduce the corresponding functions $\eta_r, \rho_r\colon[0,\infty)\to\R$, defined as
	$$
	\eta_r(t):=\int_t^{\infty}s^{n-j+1}\zeta_r(s)\d s,\qquad
	\rho_r(t):=t^{n-1}\zeta_r(t)+(n-j)\eta_r(t).
	$$
	Clearly, $\eta_r,\rho_r\in C_c([0,\infty))$. For $t>r$, we have 
	$
	\eta_r(t)=\eta(t),
	$
	and, for $0\le t<r$, 
	\begin{equation*}
		\eta_r(t)=\int_t^{r}s^{n-j-1}\zeta(r)\d s+\int_r^{\infty}s^{n-j-1}\zeta(s)\d s
		=\zeta(r)\,\frac{r^{n-j}-t^{n-j}}{n-j}+\eta(r).
	\end{equation*}
	Hence
	\begin{equation*}
	\eta_r(t)=
	\left\{
	\begin{array}{lll}
	\eta(t)\quad&\mbox{for $t\ge r$,}\\[6pt]
	\zeta(r)\,\dfrac{r^{n-j}-t^{n-j}}{n-j}+\eta(r)\quad&\mbox{for $0\le t<r$.}
	\end{array}
	\right.
	\end{equation*}
	Concerning the function $\rho_r$, we have $\rho_r(t)= \rho(t)$ for $t\ge r$ and
	\begin{equation*}
		\rho_r(t)=t^{n-j}\zeta(r)+(n-j)\zeta(r)\,\frac{r^{n-j}-t^{n-j}}{n-j}+(n-j)\eta(r)
		=r^{n-j}\zeta(r)+(n-j)\eta(r)=\rho(r)
	\end{equation*}
	for $0\le t < r$.
	Hence
	\begin{equation*}
	\rho_r(t)=
	\left\{
	\begin{array}{lll}
	\rho(t)\quad&\mbox{for $t\ge r$,}\\[6pt]
	\rho(r)\quad&\mbox{for $0\le t<r$.}
	\end{array}
	\right.
	\end{equation*}
For every $\delta>0$, we obtain
	\begin{eqnarray}\label{estimates!}
		\max_{[0,\delta]}|\eta_r|&\le&|\eta(r)|+\frac2{n-j}\,|r^{n-j}\zeta(r)|+\max_{[0,\delta]}|\eta|,\nonumber\\[-6pt]
&&\\[-6pt]
		\max_{[0,\delta]}|\rho_r|&\le&|\rho(r)|+\max_{[0,\delta]}|\rho|. \nonumber
	\end{eqnarray}
These auxiliary functions are used in the following two lemmas.

\goodbreak

\begin{lemma}\label{continuity lemma}
	If $u_k\in\fconvso$ epi-converges to  $u\in\fconvso$, then
	$$
	\lim_{k\to\infty} \ZZZ{j}{\zeta}{\lambda}(u_k) = \ZZZ{j}{\zeta}{\lambda}(u)
	$$
	for every $\lambda>0$.
\end{lemma}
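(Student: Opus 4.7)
The plan is an $\varepsilon/3$-argument built around the approximate identity of Lemma~\ref{le:estimate_int_u_lambda_leq_t}. Write $w=\my_\lambda u$, $w_k=\my_\lambda u_k$. First, by the continuity of Moreau-Yosida regularization (Section~\ref{convex functions}), $w_k$ epi-converges to $w$, and since both lie in $\fconvs\cap C^1(\R^n)$, $\nabla w_k\to\nabla w$ uniformly on compact sets. Consequently, $\{w_k\le t\}\to\{w\le t\}$ in the Hausdorff metric at every regular value $t$ of $w$, so $V_j(\{w_k\le t\})\to V_j(\{w\le t\})$ and $\Lip(w_k,t)\to\Lip(w,t)$; in particular these quantities are uniformly bounded in $k$.

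Second, pick $t>0$ in the cocountable set $T_j(w)\cap\bigcap_{k\ge K_0}T_j(w_k)$. By Lemma~\ref{le:estimate_int_u_lambda_leq_t} applied to $\zeta$,
\[
\bigl|\ZZZ{j}{\zeta}{\lambda}(u)-I_t(u)\bigr|\le \alpha\, V_j(\{w\le t\})\Bigl(\max_{[0,\Lip(w,t)]}|\rho|+\max_{[0,\Lip(w,t)]}|\eta|\Bigr),
\]
where $I_t(u):=\int_{\{w>t\}}\zeta(|y|)\d\Theta_j^n(w,(x,y))$, and the same bound with $w_k$ in place of $w$. The reduction $\eta(0)=\rho(0)=0$ carried out at the start of the section, combined with $\Lip(w,t)\to 0$ as $t\to 0^+$ from~\eqref{Liplim} and the uniform bounds from the first step, allows one to make both error bars simultaneously smaller than $\varepsilon/3$ by choosing $t$ small enough and $k\ge K_0$.

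Third, for this fixed $t$ one must show $I_t(u_k)\to I_t(u)$. Since $w$ is super-coercive, $C^1$, and has no critical point in $\{w\ge t\}$ for $t>0$, a standard compactness argument (on a minimizing sequence for $|\nabla w|$ on $\{w\ge t\}$, using super-coercivity to rule out escape to infinity and convexity to rule out a positive accumulation point with vanishing gradient) yields that $|\nabla w|$ has a positive infimum $\delta_t$ on $\{w\ge t\}$; the $C^1$-convergence transfers this to a uniform lower bound $\delta_t/2$ for every $w_k$ with $k$ large. Replacing $\zeta$ by some $\bar\zeta\in C_c([0,\infty))$ that agrees with $\zeta$ on $[\delta_t/2,\infty)$ leaves $I_t(u_k)$ and $I_t(u)$ unchanged, and the integrand $\bar\zeta(|y|)$ is then continuous with compact support in the $y$-variable, so the weak convergence of Hessian measures~\eqref{weak_cont} (which applies because $t\in T_j(w)$) delivers $I_t(u_k)\to I_t(u)$. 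A triangle inequality now yields the conclusion. The delicate point is the third step: isolating the singular region $\{|y|<\delta_t/2\}$ from the level set $\{w\ge t\}$ along the entire sequence requires the $C^1$-convergence rather than mere epi-convergence, and this is precisely why the construction of $\ZZZ{j}{\zeta}{\lambda}$ was routed through the Moreau-Yosida envelope in~\eqref{my definition}.
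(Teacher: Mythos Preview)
Your overall strategy---bound the two endpoint errors by Lemma~\ref{le:estimate_int_u_lambda_leq_t}, exploit the normalization $\eta(0)=\rho(0)=0$, and then show convergence of the tail integral---is the same as the paper's. The paper carries it out with five error terms rather than three: it introduces the truncations $\zeta_r\in C_c([0,\infty))$ and bounds
\[
\bigl|\ZZZ{j}{\zeta}{\lambda}(u)-\ZZZ{j}{\zeta}{\lambda}(u_k)\bigr|\le \beta_0+\beta_1+\beta_2+\beta_3+\beta_4,
\]
where $\beta_1,\beta_3$ are your two endpoint errors, $\beta_0=\bigl|\ZZZ{j}{\zeta_r}{\lambda}(u)-\ZZZ{j}{\zeta_r}{\lambda}(u_k)\bigr|$ is handled by the ready-made continuity of Lemma~\ref{fine zeta} (since $\zeta_r\in C_c$), and $\beta_2,\beta_4$ are the errors introduced by replacing $\zeta$ with $\zeta_r$, again controlled via Lemma~\ref{le:estimate_int_u_lambda_leq_t} together with the estimates~\eqref{estimates!} for $\eta_r,\rho_r$.

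Your third step has a gap. You invoke~\eqref{weak_cont} to get $I_t(u_k)\to I_t(u)$, but~\eqref{weak_cont} is stated for a \emph{fixed bounded} Borel set $B$, whereas $I_t(u_k)$ is the integral over $\{w_k>t\}$, which is both unbounded and varies with $k$. Replacing $\zeta$ by $\bar\zeta\in C_c([0,\infty))$ does let you write $I_t(u_k)=\int_{\R^n\times\R^n}\bar\zeta\,\d\Theta_j^n(w_k)-\int_{\{w_k\le t\}\times\R^n}\bar\zeta\,\d\Theta_j^n(w_k)$, and the first term converges by Lemma~\ref{fine zeta}; but the second term still lives on the moving domain $\{w_k\le t\}$, and~\eqref{weak_cont} alone does not give $\int_{\{w_k\le t\}}\bar\zeta\,\d\Theta_j^n(w_k)\to\int_{\{w\le t\}}\bar\zeta\,\d\Theta_j^n(w)$ without extra work (one must control $\Theta_j^n(w_k,(\{w_k\le t\}\triangle\{w\le t\})\times\R^n)$ via a portmanteau-type argument). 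The paper's detour through $\ZZZ{j}{\zeta_r}{\lambda}$ is precisely designed to sidestep this: the ``middle'' comparison is between two \emph{full} integrals (no truncation in $x$), so Lemma~\ref{fine zeta} applies directly, and the price is the two additional terms $\beta_2,\beta_4$. Once you add those two terms (or supply the varying-domain argument), your proof goes through.
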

\begin{proof} 
	Fix $\lambda>0$ and set $\bar u=\my_\lambda u$ and $\bar u_k=\my_\lambda u_k$. For any fixed $\bar t>0$, there exists a constant $\alpha>0$ (depending on $u$ and $\bar t$) such that
	$$|\nabla \bar u(x)|>2\alpha\qquad\mbox{for every $x$ such that $\bar u(x)\ge \bar t$}.
    $$
	As $u_k$ epi-converges to $u$, the sequence $\bar u_k$ converges to $\bar u$ and the sequence $\nabla\bar u_k$ to $\nabla\bar u$. In both cases, the convergence is uniform on compact sets. Therefore, we have 
	$$
	|\nabla \bar u_k(x)|>\alpha\qquad\mbox{for every $x$ such that $\bar u_k(x)\ge \bar t$}
    $$
    for $k$ sufficiently large.\goodbreak
    
	Set $r=\alpha$. We have
	\begin{eqnarray*}
		\left|\ZZZ{j}{\zeta}{\lambda}(u)-\ZZZ{j}{\zeta}{\lambda}(u_k)
		\right|
		\le \beta_0+\dots+\beta_4,
	\end{eqnarray*}
	where
	\begin{eqnarray*}
		\beta_0&=&\Big|\ZZZ{j}{\zeta_r}{\lambda}(u)-
		\ZZZ{j}{\zeta_r}{\lambda}(u_k)\Big|,\\
		\beta_1&=&\Big|\ZZZ{j}{\zeta}{\lambda}(u) -\int_{\{(x,y)\colon \bar u(x)> \bar t\}}\zeta(|y|)\d\Theta_j^n(\bar u,(x,y))\Big|,\\
		\beta_2&=&\Big|\ZZZ{j}{\zeta_r}{\lambda}(u) -\int_{\{(x,y)\colon \bar u(x)>\bar t\}}\zeta_r(|y|)\d\Theta_j^n(\bar u,(x,y))\Big|,\\
		\beta_3&=&\Big|\ZZZ{j}{\zeta}{\lambda}(u_k) -\int_{\{(x,y)\colon \bar u_k(x)> \bar t\}}\zeta(|y|)\d\Theta_j^n(\bar u_k,(x,y))\Big|,\\
		\beta_4&=&\Big|\ZZZ{j}{\zeta_r}{\lambda}(u_k) -\int_{\{(x,y)\colon \bar u_k(x)> \bar t\}}\zeta_r(|y|)\d\Theta_j^n(\bar u_k,(x,y))\Big|.
	\end{eqnarray*}
	These quantities depend, in general, on $\bar t$ and $k$ (note that $r$ depends on $\bar t$). We fix $\varepsilon>0$  and first consider the terms $\beta_1$ and $\beta_3$. By Lemma \ref{le:estimate_int_u_lambda_leq_t}, the continuity of $\eta$ and $\rho$ at $t=0$, the relations $\eta(0)=\rho(0)=0$, and the uniform convergence of $\bar u_k$ to $\bar u$ on compact sets, we may choose $\bar t_1>0$ so that
	$
	\beta_1,\; \beta_3\le\varepsilon
	$
	for every 
	$$\bar t\in(0,\bar t_1)\cap T_j(\bar u)\cap \bigcap\nolimits_{k\ge k_1}T_j(\bar u_k)$$ 
	with $k_1$ sufficiently large. Note that this set has full measure in $(0,\bar t_1)$.

Next, we deal with the term $\beta_2$. For $t>0$, set
$$
r(t):=\inf\big\{\vert \nabla \bar u(x)\vert: x\  \mbox{ such that}\ \bar  u(x)>t\big\}.
$$
Note that $r(t)$ tends to zero as $t\to0$. By Lemma~\ref{le:estimate_int_u_lambda_leq_t}, we have for $\bar t\in T_j(\bar u)$,
$$
\beta_2\le \alpha\, V_j(\{\bar u\le \bar t\}) \big(\max_{[0,\Lip(\bar u ,\bar t)]} |\rho_{r(\bar t)}| + \max_{[0,\Lip(\bar u ,\bar t)]} |\eta_{r(\bar t)}| \big).
$$
By \eqref{Liplim} and \eqref{estimates!} combined with the conditions on $\rho$ and $\eta$, we deduce that there exists $\bar t_2>0$ so that
	$
	\beta_2\le\varepsilon
	$
for every $\bar t\in(0,\bar t_2)\cap T_j(\bar u)$.  

\goodbreak
We proceed in a similar way for $\beta_4$. For $k\in\N$ and $t>0$, let
$$
r_k(t):=\inf\big\{\vert \nabla\bar u_k(x)\vert: x\  \mbox{ such that}\ \bar  u_k(x)>t\big\}.
$$
As before, $r_k(t)$ tends to zero as $t\to0$. Moreover, for $\bar t\in T_j(\bar u_k)$,
$$
\beta_4\le \alpha\, V_j(\{\bar u_k\le \bar t\}) \big(\max_{[0,\Lip(\bar u_k, \bar t]} |\rho_{r_k(\bar t)}| + \max_{[0,\Lip(\bar u_k,\bar t)]} |\eta_{r_k(\bar t)}| \big).
$$
By \eqref{Liplim} and \eqref{estimates!} combined with the uniform convergence of $\bar u_k$ to $\bar u$ on compact sets, there exists $\bar t_3>0$ such that
$
\beta_4\le\varepsilon
$
for every $\bar t\in(0,\bar t_3)\cap \bigcap_{k\ge k_3} T_j(\bar u_k)$ with $k_3$ sufficiently large. 

Finally, for every fixed $r>0$, and hence for every fixed $\bar t >0$, the term $\beta_0\to0$ as $k\to\infty$, since $\zeta_r\in C_c([0,\infty))$ and by Lemma \ref{le:hessian_val_zeta_cont_comp_supp}. This concludes the proof.
\end{proof}

\begin{lemma}\label{lemma for the valuation property} 
For every $u\in\fconvso$ and $\lambda>0$,
$$
\lim_{r\to0^+}\ZZZ{j}{\zeta_r}{\lambda}(u)=\ZZZ{j}{\zeta}{\lambda}(u).
$$
\end{lemma}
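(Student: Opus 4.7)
The plan is to adapt the region-splitting strategy from the proof of Lemma \ref{continuity lemma}, using Lemma \ref{lemma estimate} uniformly in the parameter $r$. Under the reduction made at the beginning of this subsection we assume $\eta(0)=\rho(0)=0$. Set $\bar u:=\my_\lambda u\in\fconvso\cap C^1(\R^n)$, and note that each $\zeta_r$ lies in $C_c([0,\infty))\cap\Had{j}{n}$. Since $\zeta_r$ is bounded with the support of $\zeta_r(|\cdot|)$ compact in the $y$-variable, and since super-coercivity of $\bar u$ makes $\{x\colon\nabla\bar u(x)\in K\}$ bounded for every compact $K$, the integrand is $\Theta_j^n(\bar u,\cdot)$-integrable; a monotone-convergence argument on the positive and negative parts of $\zeta_r$ identifies
\[
\ZZZ{j}{\zeta_r}{\lambda}(u)=\int_{\{(x,y)\colon\bar u(x)>0\}}\zeta_r(|y|)\,\d\Theta_j^n(\bar u,(x,y)).
\]

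Fix $\varepsilon>0$ and pick a threshold $t^*\in T_j(\bar u)$. I would decompose both $\ZZZ{j}{\zeta_r}{\lambda}(u)$ and $\ZZZ{j}{\zeta}{\lambda}(u)$ at level $t^*$, producing good-region integrals over $\{\bar u>t^*\}$ and bad-region integrals over $\{0<\bar u\le t^*\}$. On the good region the argument at the start of the proof of Lemma \ref{continuity lemma} applies: $C^1$-regularity of $\bar u$, super-coercivity, and the location of its minimum at the origin together produce $\alpha^*>0$ with $|\nabla\bar u(x)|\ge 2\alpha^*$ on $\{\bar u\ge t^*\}$. Since $\Theta_j^n(\bar u,\cdot)$ is concentrated on the subgradient graph $\Gamma_{\bar u}$ and $\bar u\in C^1$, we have $|y|=|\nabla\bar u(x)|\ge 2\alpha^*$ on the support of the restricted measure, so $\zeta_r(|y|)=\zeta(|y|)$ there for every $r<2\alpha^*$ and the two good-region contributions coincide.

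On the bad region I would apply Lemma \ref{lemma estimate} with $\zeta_r$ in place of $\zeta$ and let $t_1\to 0^+$ through $T_j(\bar u)$ (the limit exists since $\zeta_r\in C_c([0,\infty))$), obtaining
\[
\Big|\int_{\{0<\bar u\le t^*\}}\zeta_r(|y|)\,\d\Theta_j^n(\bar u,(x,y))\Big|\le\alpha\,V_j(\{\bar u\le t^*\})\bigl(\max\nolimits_{[0,\Lip(\bar u,t^*)]}|\rho_r|+\max\nolimits_{[0,\Lip(\bar u,t^*)]}|\eta_r|\bigr).
\]
The estimates \eqref{estimates!}, combined with $\eta(0)=\rho(0)=0$ and the $\Had{j}{n}$-condition $\lim_{r\to 0^+}r^{n-j}\zeta(r)=0$, show that $\max|\eta_r|$ and $\max|\rho_r|$ on $[0,\Lip(\bar u,t^*)]$ approach $\max|\eta|$ and $\max|\rho|$ as $r\to 0^+$. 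Choosing $t^*$ small first so that $V_j(\{\bar u\le t^*\})(\max|\eta|+\max|\rho|)$ is small (possible since $\Lip(\bar u,t^*)\to 0$ by \eqref{Liplim} and $\eta,\rho$ are continuous with value $0$ at the origin) and so that Lemma \ref{le:estimate_int_u_lambda_leq_t} controls the good-region remainder, and then taking $r<2\alpha^*$ small enough to absorb the $|\eta(r)|+|r^{n-j}\zeta(r)|+|\rho(r)|$ tail, yields $|\ZZZ{j}{\zeta_r}{\lambda}(u)-\ZZZ{j}{\zeta}{\lambda}(u)|<\varepsilon$. The main obstacle is this joint bad-region control: the family $\{\zeta_r(|y|)\}$ does not vanish near $|y|=0$ and admits no common integrable majorant, so Lemma \ref{lemma estimate} is essential — it expresses the bad-region bound only through $\eta_r,\rho_r$, and the uniform control \eqref{estimates!}, which relies crucially on $\eta(0)=\rho(0)=0$, is what enables the limit.
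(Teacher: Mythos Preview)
Your proof is correct and follows essentially the same strategy as the paper's: split the integral at a small level $t$ of $\bar u=\my_\lambda u$, observe that $\zeta_r$ and $\zeta$ agree on the super-level region once $r$ is below the infimum of $|\nabla\bar u|$ there, and control the remaining sub-level contributions via Lemma~\ref{le:estimate_int_u_lambda_leq_t} together with the uniform bounds \eqref{estimates!} and the reduction $\eta(0)=\rho(0)=0$. The only cosmetic difference is the order of quantifiers: the paper couples the threshold to $r$ by setting $t(r)=\inf\{t>0:|\nabla\bar u|\ge r\text{ on }\{\bar u>t\}\}$ and chooses $\bar t(r)\in T_j(\bar u)$ nearby, so that equality on the good region is automatic and both remainder terms go to zero simultaneously as $r\to 0^+$; you instead fix $t^*$ first and then restrict $r<2\alpha^*$.
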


\begin{proof}
For $r>0$, let
$$
t(r):=\inf\big\{
t>0: \vert \nabla\my_\lambda u(x)\vert\ge r\ 
\mbox{ for all $x$ such that  $\my_\lambda u(x)>t$}
\big\}.
$$
This defines a monotone function of $r$, and
$$
\lim_{r\to0} t(r)=0
$$
since $\my_\lambda u$ is of class $C^1$.

For every $r>0$, let $\bar t(r)\in T_j(\my_\lambda u)$ be such that $t(r)\le \bar t(r)\le t(r)+r$. We have
\begin{eqnarray*}
\Big\vert\ZZZ{j}{\zeta}{\lambda}(u)-\ZZZ{j}{\zeta_r}{\lambda}(u)\Big\vert&\le& \Big\vert\ZZZ{j}{\zeta}{\lambda}(u)-\int_{\{(x,y)\colon\my_\lambda u(x)> \bar t(r)\}}\zeta(\vert y\vert)\d\Theta_j^n(\my_\lambda u,(x,y))\Big\vert\\
&&+\Big\vert\ZZZ{j}{\zeta_r}{\lambda}(u)-\int_{\{(x,y)\colon\my_\lambda u(x)> \bar t(r)\}}\zeta_r(\vert y\vert)\d\Theta_j^n(\my_\lambda u,(x,y))\Big\vert.
\end{eqnarray*}
Let $r\to0^+$. The conclusion follows from Lemma \ref{le:estimate_int_u_lambda_leq_t}, (\ref{Liplim}), and \eqref{estimates!}.
\end{proof}\goodbreak

We extend $\ZZZ{j}{\zeta}{\lambda}$ from  a functional defined on $\fconvso$  to a functional defined on $\fconvs$ in the following way. For every $u\in\fconvs$, there exists $u_0\in\fconvso$ such that $\epi(u_0)$ is a translate of $\epi (u)$. Indeed, if $x_0\in\R^n$ is such that $u$ attains its absolute minimum at $x_0$, it is sufficient to define $u_0$ for $x\in\R^n$ as
$$ 
u_0(x):=u(x+x_0)-\min\nolimits_{\R^n}u.
$$
We note that $u_0$ is not uniquely determined (as $x_0$ is not), but any two functions of this form can be obtained from each other by a translation of the epi-graph. Therefore,
$$
\ZZZ{j}{\zeta}{\lambda}(u):=\ZZZ{j}{\zeta}{\lambda}(u_0)
$$
is independent of the choice of the particular $u_0$. 
Clearly, $\ZZZ{j}{\zeta}{\lambda}$ extended in this way is epi-translation invariant and, as $\ZZZ{j}{\zeta}{\lambda}$ is rotation invariant on $\fconvso$, the same holds for its extension to $\fconvs$.
By Lemma \ref{continuity lemma}, this extension is also continuous.

For every $r>0$, we have $\zeta_r\in C_c([0,\infty))$. Hence, it follows from Lemma \ref{lemma for the valuation property}, that the functional $\ZZZ{j}{\zeta_r}{\lambda}$, defined in (\ref{my definition}), is
$$\ZZZ{j}{\zeta_r}{\lambda}(u) = \int_{\R^n\times\R^n} \zeta_r(|y|) \d \Theta_j^n(\my_\lambda u,(x,y))$$
for $u\in\fconvso$, extends by this representation to $\fconvs$ and defines there an  epi-translation invariant valuation. Hence, using the epi-translation invariance of both $\ZZZ{j}{\zeta}{\lambda}$ and $\ZZZ{j}{\zeta_r}{\lambda}$ on $\fconvs$,  we deduce from Lemma \ref{lemma for the valuation property} that also $\ZZZ{j}{\zeta}{\lambda}$ is a valuation on $\fconvs$. We conclude that $\ZZZ{j}{\zeta}{\lambda}$ is a continuous, epi-translation invariant and rotation invariant valuation on $\fconvs$.

The inclusion $\Had{j}{n}\subset\Had{i}{n}$ for $i\in\{1,\dots,j\}$ ensures that we may repeat this construction replacing $j$ by any $i\in\{1,\dots,j\}$. 
\begin{lemma}\label{my_cerv}
For every $i\in\{1,\dots,j\}$ and  $\lambda>0$, the function $\ZZZ{i}{\zeta}{\lambda}$ is a continuous, epi-translation and rotation invariant valuation on $\fconvs$.
\end{lemma}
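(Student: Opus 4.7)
The plan is to reduce Lemma \ref{my_cerv} to the construction already carried out for the index $j$ earlier in this section, by verifying the inclusion $\Had{j}{n} \subset \Had{i}{n}$ for $1 \le i \le j$ and then observing that every step of the construction transfers verbatim when $j$ is replaced by $i$. In this way no genuinely new argument is needed; one merely re-reads the section with the index shifted.

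The key step is the inclusion itself. Given $\zeta \in \Had{j}{n}$, the pointwise condition $\lim_{s\to 0^+} s^{n-i}\zeta(s) = 0$ is immediate from the factorization $s^{n-i}\zeta(s) = s^{j-i}\bigl(s^{n-j}\zeta(s)\bigr)$, since the second factor tends to $0$ and the first is bounded (tending to $0$ as well when $i<j$). For the integral condition, I would use the associated function $\eta \in C_c([0,\infty))$ from \eqref{eq:def_eta}, which satisfies $\eta'(t) = -t^{n-j-1}\zeta(t)$. An integration by parts on $(s, \infty)$, together with the fact that $\eta$ has bounded support so that the boundary term at infinity vanishes, yields
$$
\int_s^\infty t^{n-i-1}\zeta(t)\,\d t = s^{j-i}\eta(s) + (j-i)\int_s^\infty t^{j-i-1}\eta(t)\,\d t
$$
for every $s>0$. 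Both summands on the right admit finite limits as $s \to 0^+$, since $\eta$ is continuous on $[0,\infty)$ with compact support and $j-i-1 \ge -1$ where the integrand is supported; in the degenerate case $i=j$, the identity is a tautology. Hence $\zeta \in \Had{i}{n}$.

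With the inclusion at hand, the entire chain of results underlying the definition of $\ZZZ{j}{\zeta}{\lambda}$ — namely Lemma \ref{existence in the smooth case}, Proposition \ref{prop:hessian_int_by_parts}, Lemma \ref{estimate smooth case}, Lemma \ref{le:sequence_v_int}, Lemma \ref{lemma estimate}, Lemma \ref{lem:lim_int_sup_lvl_set}, Lemma \ref{continuity lemma}, and Lemma \ref{lemma for the valuation property} — applies verbatim with $i$ in place of $j$, their proofs using nothing about $\zeta$ beyond membership in $\Had{\cdot}{n}$ together with the Hessian-measure machinery of Section \ref{se:hessian_measures}. Thus \eqref{my definition} defines $\ZZZ{i}{\zeta}{\lambda}$ on $\fconvso$; the extension to $\fconvs$ by epi-translation, together with the argument following Lemma \ref{lemma for the valuation property}, produces a continuous, epi-translation and rotation invariant functional, and the valuation property is transferred from $\ZZZ{i}{\zeta_r}{\lambda}$ (which is a valuation by Lemma \ref{le:hessian_val_zeta_cont_comp_supp}) via the limit $r \to 0^+$ supplied by the analogue of Lemma \ref{lemma for the valuation property}. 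The only point requiring any real thought is the integration-by-parts identity underpinning the inclusion $\Had{j}{n}\subset\Had{i}{n}$; once that is in place, the proof is purely a matter of book-keeping.
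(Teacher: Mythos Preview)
Your proposal is correct and follows exactly the paper's approach: the lemma is stated immediately after the sentence ``The inclusion $\Had{j}{n}\subset\Had{i}{n}$ for $i\in\{1,\dots,j\}$ ensures that we may repeat this construction replacing $j$ by any $i\in\{1,\dots,j\}$,'' and that is the whole argument. You have simply supplied the details of the inclusion (via the integration-by-parts identity) that the paper leaves implicit; everything else is, as you say, book-keeping.
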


For $u\in\fconvs \cap C^2_+(\R^n)$, Proposition \ref{existence in the smooth case} shows that
$$
\oZZ{j}{\zeta}(u):=\int_{\R^n} \zeta(|\nabla u(x)|) [\Hess u(x)]_{n-j} \d x
$$
is well-defined. Moreover, as $\Had{j}{n}\subset\Had{i}{n}$ for $i\in\{1,\dots,j\}$, we may also set
$$
\oZZ{i}{\zeta}(u):=\int_{\R^n} \zeta(|\nabla u(x)|) [\Hess u(x)]_{n-i} \d x
$$
for every $i=1,\dots,j$ and $u\in\fconvs \cap C^2_+(\R^n)$.
Since $\my_\lambda u\in \fconvs \cap C^2_+(\R^n)$ for such $u$  and $\lambda >0$, we have 
\begin{equation}\label{tlambda}
\oZZ{i}{\zeta}(\my_\lambda u)=\ZZZ{i}{\zeta}{\lambda}(u)
\end{equation}
by Proposition \ref{existence in the smooth case}.

\goodbreak
For the final step of the proof of Theorem \ref{main one way}, we require the following statement.

\begin{lemma}\label{vandermonde}
	There exist $\alpha_1,\ldots,\alpha_{j+1}\in\R$ such that 
	\begin{equation}\label{expansion}
	\oZZ{j}{\zeta}(u)= \sum_{i=1}^{j+1} \alpha_i \ZZZ{j}{\zeta}{i}(u)
	\end{equation}
for every $u\in\fconvs \cap C^2_+(\R^n)$.
\end{lemma}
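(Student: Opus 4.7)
The plan is to show that $p(\lambda) := \ZZZ{j}{\zeta}{\lambda}(u)$, viewed as a function of $\lambda > 0$ for fixed $u \in \fconvs \cap C^2_+(\R^n)$, is a polynomial in $\lambda$ of degree at most $j$ whose value at $\lambda = 0$ is $\oZZ{j}{\zeta}(u)$. Once this is established, Lagrange interpolation at the nodes $\lambda = 1, 2, \ldots, j+1$ recovers $p(0)$ from $p(1), \ldots, p(j+1)$ with coefficients $\alpha_i$ depending only on the nodes; these are the entries of the first column of the inverse of the associated Vandermonde matrix, which is the origin of the lemma's name.

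The polynomial structure itself follows from combining two ingredients. First, the dilation identity $q_\lambda = \mu \sq q_{\lambda/\mu}$ for $q_\lambda(x) = |x|^2/(2\lambda)$, together with the distributivity $\mu \sq (u \infconv v) = (\mu \sq u) \infconv (\mu \sq v)$, yields $\my_\lambda(\mu \sq u) = \mu \sq \my_{\lambda/\mu} u$. Coupled with \eqref{tlambda} and the epi-$j$-homogeneity of $\oZZ{j}{\zeta}$ on $\fconvs \cap C^2_+(\R^n)$ (immediate from the change of variables $y = x/\mu$ in the integral formula), this gives the scaling
$$\ZZZ{j}{\zeta}{\lambda}(\mu \sq u) = \mu^j \, \ZZZ{j}{\zeta}{\lambda/\mu}(u)$$
for all $u \in \fconvs \cap C^2_+(\R^n)$ and $\lambda, \mu > 0$. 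Second, Theorem~\ref{thm:mcmullen_cvx_functions} applied to the continuous, epi-translation invariant valuation $\ZZZ{j}{\zeta}{\lambda}$ (Lemma~\ref{my_cerv}) produces
$$\ZZZ{j}{\zeta}{\lambda}(\mu \sq u) = \sum_{i=0}^n \mu^i w_i^\lambda(u),$$
where $w_i^\lambda := (\ZZZ{j}{\zeta}{\lambda})_i$ is the epi-homogeneous component of degree $i$. Equating both expressions, fixing $\lambda = 1$, and setting $\nu = 1/\mu$, we arrive at
$$\ZZZ{j}{\zeta}{\nu}(u) = \sum_{i=0}^n \nu^{j-i} w_i(u), \qquad \nu > 0,$$
with $w_i := w_i^1$, valid for every $u \in \fconvs \cap C^2_+(\R^n)$.

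The crux is ruling out the negative powers of $\nu$, i.e.\ showing $w_i(u) = 0$ for $i > j$. The plan is to argue that $\ZZZ{j}{\zeta}{\nu}(u) = \oZZ{j}{\zeta}(\my_\nu u)$ stays bounded, in fact converges to $\oZZ{j}{\zeta}(u)$, as $\nu \to 0^+$. For $u \in \fconvs \cap C^2_+(\R^n)$ the Moreau-Yosida envelopes $\my_\nu u$ remain in $\fconvs \cap C^2_+(\R^n)$ and converge to $u$ in $C^2$ uniformly on compact sets; Lemma~\ref{existence in the smooth case} supplies the integrability needed for a dominated convergence argument that controls the singularity of $\zeta$ near the minimum of $u$. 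Boundedness as $\nu \to 0^+$ forces every coefficient of a strictly negative power of $\nu$ to vanish, so $p(\nu) = \sum_{i=0}^j \nu^{j-i} w_i(u)$ is a polynomial of degree at most $j$ with $p(0) = w_j(u) = \oZZ{j}{\zeta}(u)$.

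Lagrange interpolation at $\nu = 1, 2, \ldots, j+1$ then provides universal constants
$$\alpha_i = \prod_{\substack{1 \le k \le j+1 \\ k \ne i}} \frac{-k}{i-k}$$
for which $p(0) = \sum_{i=1}^{j+1} \alpha_i p(i)$, and substituting $p(i) = \ZZZ{j}{\zeta}{i}(u)$ produces \eqref{expansion}. The main obstacle is the $C^2$-continuity argument for $\oZZ{j}{\zeta}(\my_\nu u)$ at $\nu = 0^+$ in the third paragraph; everything else is formal manipulation.
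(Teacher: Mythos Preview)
Your argument is correct and lands on the same Vandermonde/Lagrange inversion as the paper, but the path to the polynomial structure of $\lambda\mapsto\ZZZ{j}{\zeta}{\lambda}(u)$ is genuinely different. The paper works directly on the dual side: since $(\my_\lambda u)^*=u^*+\lambda\,|y|^2/2$, the elementary algebraic identity
\[
[\Hess u^*(y)+\lambda I]_j=\sum_{i=0}^{j}\binom{n-i}{j-i}\lambda^{j-i}[\Hess u^*(y)]_i
\]
together with \eqref{substitution} gives at once $\ZZZ{j}{\zeta}{\lambda}(u)=\sum_{i=0}^{j}\binom{n-i}{j-i}\lambda^{j-i}\oZZ{i}{\zeta}(u)$, so the polynomiality, the degree bound, and the identification of the constant term with $\oZZ{j}{\zeta}(u)$ are all immediate and explicit. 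You instead combine the scaling identity $\my_\lambda(\mu\sq u)=\mu\sq\my_{\lambda/\mu}u$ with the abstract homogeneous decomposition of Theorem~\ref{thm:mcmullen_cvx_functions} to produce a Laurent expression $\sum_{i=0}^{n}\nu^{j-i}w_i(u)$, and then eliminate the negative powers by proving $\oZZ{j}{\zeta}(\my_\nu u)\to\oZZ{j}{\zeta}(u)$ as $\nu\to 0^+$. That limit is indeed the only substantive step, and your dominated-convergence sketch is valid once one passes to the dual: on $\{|y|\le R\}$ (with $R$ bounding the support of $\zeta$) the eigenvalues of $\Hess u^*(y)+\nu I$ are uniformly bounded for $\nu\le 1$, and $r\mapsto r^{n-1}|\zeta(r)|$ extends continuously to $0$ by the definition of $\Had{j}{n}$, so $|\zeta(|y|)|$ serves as a dominating function. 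The paper's route is shorter and yields the coefficients explicitly; yours trades a one-line expansion of $[A+\lambda I]_j$ for the McMullen decomposition plus a limit argument, which is a legitimate but heavier alternative.
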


\begin{proof} Let $u\in\fconvs \cap C^2_+(\R^n)$ and let $\lambda>0$. As usual, we denote by $u^*$ the conjugate of $u$. Note that
	$$
	(\my_\lambda u)^*(y)=u^*(y)+\lambda\frac{|y|^2}2
	$$
	and that
	$$
	[\Hess (\my_\lambda u)^*(y)]_j=[\Hess(u^*(y)+\lambda \frac{|y|^2}2)]_j=\sum_{i=0}^j \binom{n-i}{j-i} \lambda^{j-i}[\Hess u^*(y)]_i
	$$
	for every $y\in\R^n$. 
		Hence, it follows from (\ref{tlambda}) and (\ref{substitution}) that
	\begin{eqnarray*}
		\ZZZ{j}{\zeta}{\lambda}(u) &=& \int_{\R^n} \zeta(|\nabla (\my_\lambda u)(x)|) [\Hess (\my_\lambda u)(x)]_{n-j} \d x\\
		&=& \int_{\R^n} \zeta(|y|) [\Hess (\my_\lambda u)^*(y)]_{j} \d y\\
		&=& \sum_{i=0}^{j} \binom{n-i}{j-i} \lambda^{j-i} \int_{\R^n} \zeta(|y|) [\Hess u^*(y)]_{i} \d x\\
		&=& \sum_{i=0}^{j} \binom{n-i}{j-i} \lambda^{j-i} \int_{\R^n} \zeta(|\nabla u(x)|) [\Hess u(x)]_{n-i} \d x\\
		&=& \sum_{i=0}^{j} \binom{n-i}{j-i} \lambda^{j-i} \oZZ{i}{\zeta}(u).
	\end{eqnarray*}
Inverting the system that we obtain by writing the previous equation for $\lambda=1,\dots,n+1$, we deduce \eqref{expansion}. We use that the matrix is a Vandermonde matrix and, therefore, invertible.
\end{proof}

\goodbreak
Combined with Lemma \ref{my_cerv}, Lemma \ref{vandermonde} implies that $\oZZ{j}{\zeta}$ continuously extends to $\fconvs$. This completes the proof of Theorem \ref{main one way}.

\subsection{Representation formulas}

As an application of Theorem \ref{dual main one way}, we derive representation formulas for $\oZZ{j}{\zeta}$ and $\oZZ{j}{\zeta}^*$ on convex functions with certain regularity properties.

\begin{lemma}\label{preparatory lemma 4}
	Let $j\in\{0,\dots,n\}$ and $\zeta\in \Had{j}{n}$. We have 
	\begin{equation*}\label{identity 3}
	\oZZ{j}{\zeta}^*(v)=\int_{\R^n}\zeta(\vert x\vert)\d\Phi^n_{j}(v,x)
	\end{equation*}
	for $v\in\fconvfs$.
\end{lemma}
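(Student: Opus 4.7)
The identity is already established for $v \in \fconvf \cap C^2_+(\R^n)$ (as noted in the paragraph immediately after Lemma~\ref{lem:existence_v_smooth_origin}): combining Theorem~\ref{main one way} applied to $u = v^* \in \fconvs \cap C^2_+(\R^n)$ with the substitution formula \eqref{substitution} yields
$$\oZZ{j}{\zeta}^*(v) = \oZZ{j}{\zeta}(v^*) = \int_{\R^n} \zeta(|\nabla v^*(x)|)\,[\Hess v^*(x)]_{n-j}\, \d x = \int_{\R^n} \zeta(|x|)\, \d \Phi_j^n(v,x).$$
It therefore suffices to extend the identity from $\fconvf \cap C^2_+(\R^n)$ to $\fconvfs$ by a suitable approximation.

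Given $v \in \fconvfs$, fix $\delta > 0$ with $v \in C^2(B_\delta)$ (where $B_\delta:=\{x\in\R^n:\vert x\vert<\delta\}$) and define $v_\varepsilon := v * \eta_\varepsilon + \tfrac{\varepsilon}{2}\vert\cdot\vert^2$, where $\eta_\varepsilon$ is a standard non-negative mollifier supported in $B_\varepsilon$. Then $v_\varepsilon \in \fconvf \cap C^2_+(\R^n)$, the sequence $v_\varepsilon$ converges to $v$ uniformly on compact sets (hence epi-converges in $\fconvf$), and for $\varepsilon < \delta/4$ one has $\Hess v_\varepsilon = (\Hess v) * \eta_\varepsilon + \varepsilon I$ on $B_{\delta/2}$, which, by continuity of $\Hess v$ on the slightly larger ball $B_{3\delta/4}$, converges uniformly to $\Hess v$ on $B_{\delta/2}$. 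The continuity of $\oZZ{j}{\zeta}^*$ on $\fconvf$ (which follows from Theorem~\ref{main one way} together with the bijective correspondence between continuous valuations on $\fconvs$ and on $\fconvf$ via conjugation recalled in the introduction) therefore yields $\oZZ{j}{\zeta}^*(v_\varepsilon) \to \oZZ{j}{\zeta}^*(v)$. Since the identity has been verified in the $C^2_+$ case, it remains to establish
$$\int_{\R^n} \zeta(|x|)\, \d \Phi_j^n(v_\varepsilon, x) \longrightarrow \int_{\R^n} \zeta(|x|)\, \d\Phi_j^n(v,x).$$

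Choose a smooth cutoff $\chi \in C^\infty(\R^n)$ with $0 \le \chi \le 1$, $\chi \equiv 0$ on $B_{\delta/4}$, and $\chi \equiv 1$ outside $B_{\delta/2}$, and split each of the integrals above into the pieces weighted by $(1-\chi(x))$ and $\chi(x)$. The $(1-\chi)$-piece is supported in $B_{\delta/2}$, where both $v$ and $v_\varepsilon$ are $C^2$ and the Hessian measures reduce to $[\Hess v]_j\, \d x$ and $[\Hess v_\varepsilon]_j\, \d x$, respectively. The uniform convergence $[\Hess v_\varepsilon]_j \to [\Hess v]_j$ on $B_{\delta/2}$, together with the uniform boundedness of $[\Hess v_\varepsilon]_j$ there and the fact that $\int_0^{\delta/2} r^{n-1}\vert\zeta(r)\vert\, \d r < \infty$ (which follows from $s^{n-j}\zeta(s) \to 0$ for $j \ge 1$; the case $j = 0$ is trivial as both sides then equal $\int_{\R^n} \zeta(|x|)\, \d x$), yields convergence of this piece by dominated convergence. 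For the $\chi$-piece, the map $(t,x,y) \mapsto \chi(x)\zeta(|x|)$ is continuous on $\R \times \R^n \times \R^n$ with compact support in $x$ (as $\zeta \in C_b((0,\infty))$ has bounded support and $\chi$ vanishes near the origin), so Lemma~\ref{le:hessian_val_zeta_cont_comp_supp} applied with index $n-j$ ensures that $v \mapsto \int_{\R^n} \chi(x)\zeta(|x|)\, \d\Phi_j^n(v,x)$ is a continuous valuation on $\fconvf$, which delivers the required convergence. The main delicacy is coordinating a $C^2$-type approximation near the origin (needed to absorb the singularity of $\zeta$) with the weak-convergence behaviour of Hessian measures away from the origin; the mollifier perturbed by $\tfrac{\varepsilon}{2}|\cdot|^2$ is tailored precisely to deliver both simultaneously.
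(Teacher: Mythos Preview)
Your proof is correct and follows essentially the same approach as the paper: approximate $v\in\fconvfs$ by $v_\varepsilon\in\fconvf\cap C^2_+(\R^n)$ via mollification plus a quadratic perturbation, use continuity of $\oZZ{j}{\zeta}^*$ for the left side, and split the right side into a piece near the origin (handled by uniform $C^2$-convergence of the Hessians and the integrability of $r^{n-1}\lvert\zeta(r)\rvert$ near $0$) and a piece away from the origin (handled by weak continuity of Hessian measures). The only cosmetic difference is that the paper uses a hard cutoff at a radius $r$ chosen with $\Phi_j^n(v,\{\lvert x\rvert=r\})=0$ and invokes \eqref{weak_cont}, whereas you use a smooth cutoff $\chi$ and invoke Lemma~\ref{le:hessian_val_zeta_cont_comp_supp} directly; both implement the same idea.
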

\begin{proof} By Theorem \ref{dual main one way} and (\ref{Hessian measures smooth case}), we know that for every $v\in\fconvf\cap C^2_+(\R^n)$, 
\begin{equation*}
\oZZ{j}{\zeta}^*(v)=\int_{\R^n}\zeta(\vert x\vert)\d\Phi^n_{j}(v,x)
=\int_{\R^n}\zeta(\vert x\vert)[\Hess v(x)]_j\d x.
\end{equation*}
Now, let $v\in\fconvfs$. 
There exists a sequence $v_k$ of functions from $\fconvf\cap C^2_+(\R^n)$  that converges to $v$. Indeed, by a standard mollification procedure, we first find a sequence of convex functions $\bar v_k$ of class $C^2(\R^n)$ that converges to $v$ on compact sets. Then we define, for every $x\in\R^n$ and $k\in\N$,
	$$
	v_k(x):=\bar v_k(x)+\frac1k |x|^2.
	$$
	Then clearly $v_k\in\fconvf\cap C^2_+(\R^n)$, and $v_k$ converges to $v$. 
 Moreover, if $\bar r>0$ is such that $v$ is of class $C^2$ in the set $\bar r\Bn$,
then the convergence is in $C^2$-norm in this set. 

Since $\oZZ{j}{\zeta}^*$ is continuous, it is now sufficient to prove that
$$
\lim_{k\to\infty}
\int_{\R^n}\zeta(\vert x\vert)\d\Phi^n_{j}(v_k,x)=\int_{\R^n}\zeta(\vert x\vert)\d\Phi^n_{j}(v,x).
$$
By (\ref{weak_cont}) and since $\zeta$ is continuous on $(0,\infty)$,  we have 
\begin{equation}\label{LA 1}
\lim_{k\to\infty}
\int_{\{x\colon \vert x\vert\ge r\}}\zeta(\vert x\vert)\d\Phi^n_{j}(v_k,x)=\int_{\{x\colon\vert x\vert\ge r\}}\zeta(\vert x\vert)\d\Phi^n_{j}(v,x)
\end{equation}
for $r>0$ with $\Phi^n_j(v,\{v=r\})=0$. Since $\Phi^n_j(v,\cdot)$ is locally finite, (\ref{LA 1}) holds for a.e.\ $r>0$. On the other hand, for every $r<\bar r$ and for every $k$ we have
$$
\int_{\{x\colon \vert x\vert< r\}}\zeta(\vert x\vert)\d\Phi^n_{j}(v_k,x)=
\int_{\{x\colon \vert x\vert< r\}}\zeta(\vert x\vert)[\Hess v_k(x)]_j\d x
$$
and a corresponding relation for $v$. Using the convergence of $v_k$ to $v$ in $\bar r \Bn$ in $C^2$-norm, polar coordinates and the fact that for $\delta>0$, 
$$\int_0^{\delta} r^{n-1}\zeta(r)\d r$$
is finite,  we obtain that for every $\varepsilon>0$ there exists $0<\delta<\bar r$ such that
\begin{equation}\label{LA 2}
\Big|\int_{\{x\colon \vert x\vert< \delta\}}\zeta(\vert x\vert)[\Hess v(x)]_j\d x\Big|, \Big|\int_{\{x\colon \vert x\vert< \delta\}}\zeta(\vert x\vert)[\Hess v_m(x)]_j\d x\Big|
\le\varepsilon
\end{equation}
for every $k\in\N$. The conclusion follows from \eqref{LA 1} and \eqref{LA 2}.
\end{proof}

\goodbreak
An immediate consequence is the following result.

\begin{lemma}\label{preparatory lemma 4 d}
	Let $j\in\{0,\dots,n\}$ and $\zeta\in \Had{j}{n}$. We have 
	\begin{equation*}\label{identity 4 d}
	\oZZ{j}{\zeta}(u)=\int_{\R^n}\zeta(\vert y\vert)\d\Psi^n_{j}(u,y)
	\end{equation*}
	for $u^*\in\fconvfs$.
\end{lemma}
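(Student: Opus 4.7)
The plan is to obtain this lemma by straightforward duality from the previous one (preparatory lemma 4), using equation~\eqref{eq:int_zeta_u_is_int_zeta_v} to convert between the $\Phi$- and $\Psi$-marginals.

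Set $v:=u^*\in\fconvfs$. Since $u^{**}=u$, we have $v^*=u$, and by definition of the dual valuation,
$$\oZZ{j}{\zeta}(u)=\oZZ{j}{\zeta}(v^*)=\oZZ{j}{\zeta}^*(v).$$
Lemma~\ref{preparatory lemma 4} applied to $v\in\fconvfs$ yields
$$\oZZ{j}{\zeta}^*(v)=\int_{\R^n}\zeta(|x|)\d\Phi^n_j(v,x).$$
The integrand is absolutely integrable by Lemma~\ref{lem:existence_v_smooth_origin}, so the hypotheses for applying \eqref{eq:int_zeta_u_is_int_zeta_v} (with $\zeta$ replaced by $x\mapsto\zeta(|x|)$, which is defined on $\R^n\setminus\{0\}$) are satisfied. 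Therefore
$$\int_{\R^n}\zeta(|x|)\d\Phi^n_j(v,x)=\int_{\R^n}\zeta(|x|)\d\Phi^n_j(u^*,x)=\int_{\R^n}\zeta(|y|)\d\Psi^n_j(u,y),$$
and chaining the three equalities proves the claim.

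The only subtlety is checking that \eqref{eq:int_zeta_u_is_int_zeta_v} is genuinely applicable: it requires at least one of the two integrals to exist in the Lebesgue sense. This is precisely what Lemma~\ref{lem:existence_v_smooth_origin} supplies, because $v=u^*\in\fconvfs$ gives absolute integrability of $|\zeta(|x|)|$ against $\Phi^n_j(v,\cdot)$. No further approximation or extension argument is needed, so there is no real obstacle; the proof is essentially two lines once the setup $v=u^*$ is made.
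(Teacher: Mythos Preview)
Your proof is correct and follows exactly the route the paper intends: the paper states this lemma as ``an immediate consequence'' of Lemma~\ref{preparatory lemma 4}, and the intended deduction is precisely the duality $\oZZ{j}{\zeta}(u)=\oZZ{j}{\zeta}^*(u^*)$ combined with \eqref{eq:int_zeta_u_is_int_zeta_v}. One tiny remark: Lemma~\ref{lem:existence_v_smooth_origin} covers only $j\ge 1$, but for $j=0$ both sides are constants (by \eqref{eq:phi_0_n} and the discussion of $\oZZ{0}{\zeta}$), so no issue arises.
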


\goodbreak
\section{The Classification Result}
\label{se:classification_of_valuations}
In the proof of Theorem \ref{thm:hadwiger_convex_functions} and Theorem \ref{dthm:hadwiger_convex_functions}, we follow the original approach by Hadwiger \cite{Hadwiger:V}. First, we establish a classification of valuations on $\fconvs$ that are epi-homogeneous of degree 1. For this, we introduce rotational epi-symmetrization and reduce the classification problem to the solution of an integral equation. We immediately obtain, by duality, a classification of valuations on $\fconvf$ that are homogeneous of degree 1. Next, we  introduce ortho\-gonal cylinder functions on $\fconvs$ and show that if a valuation vanishes on this class of functions, then it has to be epi-homogeneous of degree 1. By duality, we also obtain that if a valuation vanishes on dual orthogonal cylinder functions on $\fconvf$, then it has to be homogeneous of degree 1.  In the final step, we work in the dual setting and use induction on the dimension to establish a representation formula of valuations on dual orthogonal cylinder functions. Combined with the results established in the first part and the classification of one-homogeneous valuations, it completes the proof in the dual setting, that is, the proof of Theorem \ref{dthm:hadwiger_convex_functions}. The result in the primal setting, that is, Theorem \ref{thm:hadwiger_convex_functions}, follows by duality.

\subsection{The Case of Epi-Homogeneity of Degree 1}\label{se:one_hom_case}

Let $n\ge 2$.
A classical result by Hadwiger \cite[\S  4.5.3]{Hadwiger:V} (or see, \cite[Theorem 3.3.5]{Schneider:CB2}) states that for every convex body $K\subset \R^n$, that is at least one-dimensional, there exists a sequence of rotation means of $K$ that converges to a ball. Here a {\em rotation mean} of $K$ is given, for $k\ge 1$ and rotations $\vartheta_1, \dots, \vartheta_k\in \SO(n)$, as
$$\frac1k \left( \vartheta_1\, K+ \dots + \vartheta_k\, K\right).$$
For $u\in\fconvs$ and rotations $\vartheta_1, \dots, \vartheta_k\in \SO(n)$, we call
$$\frac1k \sq \left( u\circ \vartheta_1^{-1} \infconv \cdots \infconv u\circ \vartheta_k^{-1} \right) $$
a \emph{rotation epi-mean} of $u$. A function $u\in\fconvs$ is called {\em radial} if $u=u\circ\vartheta^{-1}$ for every $\vartheta\in\SO(n)$.

For $u\in\fconvs$, define its \emph{rotational epi-symmetrization} $u^\star$ via the support function of its epi-graph for $y\in\R^n$ and $t\in\R$  by
$$h_{\epi u^\star}(y,t):= \int\nolimits_{\SO(n)} h_{\epi u}(\vartheta^{-1} y,t)\d \vartheta,$$
where we integrate with respect to the Haar probability measure on $\SO(n)$.
The following result is obtained by suitably approximating the above integral.

\begin{lemma}
	\label{epi}
	For every $u\in \fconvs$ with at least one-dimensional domain, there is a sequence of rotation epi-means of $u$ epi-converging to its rotation epi-mean $u^\star\in\fconvs$.
\end{lemma}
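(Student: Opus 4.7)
My plan is to pass to conjugates, where rotation epi-means become arithmetic averages. By the conjugation rules in Section~\ref{convex functions}, namely $(u_1\infconv u_2)^*=u_1^*+u_2^*$ (from \eqref{epi add dual}), $(u\circ\vartheta^{-1})^*=u^*\circ\vartheta^{-1}$, and $(\lambda\sq u)^*=\lambda u^*$, a rotation epi-mean
\begin{equation*}
w_k:=\tfrac1k\sq\bigl(u\circ\vartheta_1^{-1}\infconv\cdots\infconv u\circ\vartheta_k^{-1}\bigr)
\end{equation*}
satisfies $w_k^*(y)=\tfrac1k\sum_{i=1}^k u^*(\vartheta_i^{-1}y)$. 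On the other hand, evaluating the defining relation for $h_{\epi u^\star}$ at $t=-1$ and using $h_{\epi u}(y,-1)=u^*(y)$ gives
\begin{equation*}
(u^\star)^*(y)=\int_{\SO(n)} u^*(\vartheta^{-1}y)\d\vartheta.
\end{equation*}
Since $u^*\in\fconvf$ is finite and continuous, this integral is finite, convex, and continuous in $y$, so $(u^\star)^*\in\fconvf$ and, by the duality between $\fconvs$ and $\fconvf$, $u^\star\in\fconvs$.

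The task therefore reduces to approximating the Haar integral by empirical averages. I would choose, for each $k$, rotations $\vartheta_1^{(k)},\dots,\vartheta_k^{(k)}\in\SO(n)$ so that the empirical measures $\nu_k:=\tfrac1k\sum_{i=1}^k\delta_{\vartheta_i^{(k)}}$ converge weakly to Haar measure on $\SO(n)$; this can be achieved by a standard Riemann-sum construction, e.g.\ partition $\SO(n)$ into measurable cells of shrinking diameter with Haar mass close to $1/k$ and pick a representative from each. For such a sequence, the function $\vartheta\mapsto u^*(\vartheta^{-1}y)$ is continuous and bounded on the compact group $\SO(n)$ for each fixed $y$, so weak convergence yields the pointwise limit
\begin{equation*}
w_k^*(y)=\int_{\SO(n)}u^*(\vartheta^{-1}y)\d\nu_k(\vartheta)\longrightarrow (u^\star)^*(y).
\end{equation*}

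To upgrade to uniform convergence on compact subsets of $\R^n$ I would argue by equi-continuity. For any compact $K\subset\R^n$, the orbit $\SO(n)\cdot K$ is compact and $u^*$ is Lipschitz on a neighborhood of it with some constant $L$; since each $\vartheta^{-1}$ is an isometry, the family $\{y\mapsto u^*(\vartheta^{-1}y):\vartheta\in\SO(n)\}$ is uniformly $L$-Lipschitz on $K$, and hence so is $w_k^*$ for every $k$ and the limit $(u^\star)^*$. Pointwise convergence combined with equi-Lipschitz continuity then promotes the convergence to uniform convergence on $K$. By the equivalence recalled in Section~\ref{convex functions}, this is precisely epi-convergence of $w_k^*$ to $(u^\star)^*$ in $\fconvf$, and by the duality of epi-convergence under conjugation between $\fconvs$ and $\fconvf$, this is equivalent to $w_k$ epi-converging to $u^\star$ in $\fconvs$.

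The main technical point is the Riemann-sum construction of the rotations $\vartheta_i^{(k)}$ and the use of local Lipschitz regularity of $u^*\in\fconvf$ to boost pointwise to uniform convergence on compact sets; the rest is bookkeeping via the conjugation dictionary.
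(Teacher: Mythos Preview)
Your argument is correct and is essentially the approach the paper sketches: the paper's one-line justification (``obtained by suitably approximating the above integral'') is a Riemann-sum approximation of the Haar integral defining $h_{\epi u^\star}$, which you have carried out explicitly by passing to conjugates via $u^*(y)=h_{\epi u}(y,-1)$ and using the local Lipschitz regularity of $u^*\in\fconvf$ to upgrade pointwise to locally uniform convergence.
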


Recall that for given $t\ge0$, we have $u_t(x):=t\vert x\vert+\ind_{\Bn}(x)$
for $x\in\R^n$. 

\begin{lemma}\label{ut}
If $u\in\fconvs$ is  radial, non-negative, and such that $u(0)=0$, then there exists a sequence $u_k$ which epi-converges to $u$ and such that each $u_k$ is the finite epi-sum of  functions 	of the form $r\sq u_t$ with $r>0$ and $t\ge 0$. 
\end{lemma}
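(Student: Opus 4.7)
The plan is to pass to conjugates, reducing the problem to a one-variable approximation. Since $u\in\fconvs$ is radial with $u(0)=0\le u(x)$ for every $x\in\R^n$, the conjugate $v:=u^*$ is radial (by $\SO(n)$-invariance of the Legendre transform), non-negative, satisfies $v(0)=-\inf u=0$, and lies in $\fconvf$ by the duality $\fconvs\leftrightarrow\fconvf$. Write $v(y)=\psi(|y|)$ with $\psi\colon[0,\infty)\to[0,\infty)$ convex, non-decreasing, continuous, finite-valued, and $\psi(0)=0$. The building blocks dualize cleanly: by the definition of epi-multiplication,
\[
(r\sq u_t)^*(y)=r\,u_t^*(y)=r\,v_t(y)=r(|y|-t)_+,
\]
and iterating \eqref{epi add dual} yields
\[
\bigl((r_1\sq u_{t_1})\infconv\cdots\infconv(r_N\sq u_{t_N})\bigr)^*=\sum_{i=1}^N r_i\,v_{t_i}.
\]
Thus constructing $u_k$ as required is equivalent to constructing $v_k\in\fconvf$ of the form $v_k(y)=\sum_i r_i^{(k)}(|y|-t_i^{(k)})_+$ with $r_i^{(k)}>0$, $t_i^{(k)}\ge 0$, that converges to $v$ in $\fconvf$. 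Taking conjugates produces $u_k:=v_k^*$ with the required structure, and $u_k$ epi-converges to $u=v^*$ because Legendre conjugation is a homeomorphism $\fconvf\to\fconvs$ for epi-convergence.

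For the approximation, fix a partition $0=t_0^{(k)}<t_1^{(k)}<\cdots<t_{N_k}^{(k)}=k$ of $[0,k]$ with mesh at most $1/k$, and let $\psi_k$ denote the piecewise linear interpolant of $\psi$ at these nodes, extended to $[k,\infty)$ linearly with slope $k\bigl(\psi(k)-\psi(k-1/k)\bigr)$. Convexity and monotonicity of $\psi$ ensure $\psi_k$ is convex, non-decreasing with $\psi_k(0)=0$; continuity of $\psi$ gives $\psi_k\to\psi$ uniformly on compact subsets of $[0,\infty)$. Reading off the slope increments at the breakpoints yields
\[
\psi_k(s)=\sum_i r_i^{(k)}(s-t_i^{(k)})_+
\]
with $r_i^{(k)}>0$ (dropping any zero-increment terms). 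In the degenerate case $\psi\equiv 0$, which corresponds to $u=\ind_{\{0\}}$, the sequence $u_k:=(1/k)\sq u_0=\ind_{(1/k)B^n}$ trivially works. Otherwise, setting $v_k(y):=\psi_k(|y|)$ gives $v_k\to v$ locally uniformly on $\R^n$, hence $v_k\to v$ in $\fconvf$.

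The main routine point is the $N$-fold version of \eqref{epi add dual}, obtained by induction once one observes that each $r\sq u_t$ has bounded domain $rB^n$, so every partial infimal convolution has bounded domain, is proper lower semicontinuous, and lies in $\fconvs$; in particular $(u_1\infconv u_2)>-\infty$ pointwise at each step. Given this, the conclusion follows by taking conjugates of the sequence $v_k$ constructed above and invoking the continuity of the Legendre transform between $\fconvf$ and $\fconvs$ with respect to epi-convergence.
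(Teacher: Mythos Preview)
Your proof is correct. It is the dual version of the paper's argument: the paper works directly with the radial profile of $u$ on $[0,\infty)$, approximates it by convex piecewise affine functions with compact domain, and observes that any such function is a finite epi-sum of pieces $s\mapsto ts+\ind_{[0,r]}(s)$. You instead pass to $v=u^*$, approximate its radial profile $\psi$ by piecewise linear interpolants, decompose these additively as $\sum_i r_i(s-t_i)_+$ via slope increments, and conjugate back. The two constructions correspond to one another exactly through $(r\sq u_t)^*=r\,v_t$ and \eqref{epi add dual}, so the resulting approximating sequences are the same objects described from opposite sides of the duality. Your route makes the building-block decomposition more explicit (an ordinary sum of hockey-stick functions rather than an epi-sum), at the price of invoking the epi-continuity of the Legendre transform, which the paper's direct argument does not need.
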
 

\begin{proof}
Since the function $u$ is radial, this is a one-dimensional problem. Any piecewise affine function $\bar u:[0,\infty)\to[0,\infty)$ with positive derivative at $0$ can be written as an epi-sum of finitely many functions of the form $s\mapsto t\,s +\ind_{[0,r]}(s)$ with suitable $r>0$ and $t\ge 0$. Hence the statement follows from the fact that such piecewise affine functions are dense.
\end{proof}

\goodbreak
The main result of this section is the following result.

\begin{proposition}
	\label{prop:class_1-hom} 
	Let $n\ge 2$.
	If $\,\oZ\colon\fconvs\to\R$ is a continuous, epi-translation  and rotation invariant valuation that is epi-homogeneous of degree 1, then there exists $\zeta \in \Had{1}{n}$ such that
	\begin{equation*}
	\oZ(u)=\oZZ{1}{\zeta}(u)
	\end{equation*}
	for every $u\in\fconvs$. 
\end{proposition}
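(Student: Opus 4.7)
The plan is to reduce the classification to the behavior of $\oZ$ on the one-parameter family $\{u_t : t \ge 0\}$ from \eqref{ut def}. Since $\oZ$ is epi-homogeneous of degree $1$, Lemma~\ref{epi-additve} shows it is epi-additive, so combined with rotation invariance, $\oZ$ takes the value $\oZ(u)$ on every rotation epi-mean $\tfrac{1}{k} \sq (u \circ \vartheta_1^{-1} \infconv \cdots \infconv u \circ \vartheta_k^{-1})$ of $u$. Lemma~\ref{epi} supplies a sequence of such means epi-converging to the rotational epi-symmetrization $u^\star$ (assuming $\dim \dom(u) \ge 1$), so by continuity $\oZ(u) = \oZ(u^\star)$ and $u^\star$ is radial. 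The degenerate case $\dom(u) = \{x_0\}$ is handled separately via $\ind_{\{0\}}$, which is fixed by epi-multiplication and hence sent to $0$ by any $1$-homogeneous valuation.

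After epi-translating so that a radial $u$ satisfies $u(0) = 0 \le u(x)$, Lemma~\ref{ut} expresses $u$ as an epi-limit of finite infimal convolutions $r_1 \sq u_{t_1} \infconv \cdots \infconv r_m \sq u_{t_m}$. Epi-additivity and epi-homogeneity of degree $1$ give
$$\oZ(r_1 \sq u_{t_1} \infconv \cdots \infconv r_m \sq u_{t_m}) = \sum_{i=1}^m r_i\, \phi(t_i),$$
where $\phi(t) := \oZ(u_t)$. Hence $\oZ$ is completely determined on $\fconvs$ by the continuous function $\phi\colon [0,\infty) \to \R$. Note that $\phi(0) = \oZ(\ind_{\Bn})$ is finite and $\phi(t) \to 0$ as $t\to\infty$, since $u_t$ epi-converges to $\ind_{\{0\}}$ and $\oZ(\ind_{\{0\}})=0$.

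To identify $\zeta$, for any candidate $\zeta \in \Had{1}{n}$, combining Lemma~\ref{le:calc_ind_bn_tx_theta_i} with $j=1$ and Lemma~\ref{preparatory lemma 4 d} gives
$$\oZZ{1}{\zeta}(u_t) = n\kappa_n \rho(t) = n\kappa_n\bigl(t^{n-1}\zeta(t) + (n-1)\eta(t)\bigr),$$
with $\eta(t) = \int_t^\infty s^{n-2}\zeta(s)\,\d s$. Imposing $\phi(t) = n\kappa_n\rho(t)$ and using $t^{n-1}\zeta(t) = -t\,\eta'(t)$ converts the identification into the first-order linear ODE $\tfrac{\d}{\d t}\bigl(\eta(t)/t^{n-1}\bigr) = -\phi(t)/(n\kappa_n\, t^n)$, which integrates explicitly to yield $\eta$, and then $\zeta(t) = -\eta'(t)/t^{n-2}$.

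The main obstacle is verifying that the recovered $\zeta$ actually lies in $\Had{1}{n}$. Continuity of $\zeta$ on $(0,\infty)$, the limit $\lim_{s\to 0^+}s^{n-1}\zeta(s)=0$, and the existence and finiteness of $\lim_{s\to 0^+}\eta(s)$ should follow from continuity and boundedness of $\phi$ near $0$. The truly delicate point is the requirement that $\zeta \in C_b((0,\infty))$, i.e., $\zeta$ has bounded support: this forces $\phi$ to vanish for all sufficiently large $t$, which is strictly stronger than $\phi(t) \to 0$. I expect to establish it through a refined continuity/scaling argument exploiting that $\epi(u_t)$ becomes asymptotically one-dimensional as $t \to \infty$, so any $1$-homogeneous contribution must decouple beyond a threshold; Lemma~\ref{lemma Abel} will be used to guarantee uniqueness of the recovered $\zeta$. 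Once $\zeta \in \Had{1}{n}$ is in hand, Theorem~\ref{main one way} constructs $\oZZ{1}{\zeta}$ on $\fconvs$; it agrees with $\oZ$ on every $u_t$ by construction, and hence on all of $\fconvs$ by the two reductions.
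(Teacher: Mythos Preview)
Your overall strategy matches the paper's proof closely: reduce to radial functions via rotation epi-means (Lemma~\ref{epi}), then to the one-parameter family $u_t$ via epi-additivity and Lemma~\ref{ut}, and finally recover $\zeta$ from $\phi(t):=\oZ(u_t)$ by inverting the relation $\oZZ{1}{\zeta}(u_t)=n\kappa_n\rho(t)$. Your ODE formulation for $\eta$ is equivalent to the paper's integral equation.

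The gap is exactly where you flag it: showing that $\phi$ has \emph{compact support}, not merely $\phi(t)\to 0$. Your heuristic that $\epi(u_t)$ becomes ``asymptotically one-dimensional'' is not the right mechanism and does not lead to a proof. The paper's argument is a short scaling contradiction: if $\phi$ does not have compact support, pick $t_k\to\infty$ with (after passing to a subsequence and possibly replacing $\oZ$ by $-\oZ$) $\alpha_k:=\oZ(u_{t_k})>0$. Set $\bar u_k := \tfrac{1}{\alpha_k}\sq u_{t_k} = t_k|x| + \ind_{(1/\alpha_k)\Bn}$. By epi-homogeneity of degree $1$, $\oZ(\bar u_k)=1$ for all $k$. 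But $\bar u_k$ epi-converges to $\ind_{\{0\}}$ (the linear part $t_k|x|$ blows up off the origin regardless of the radius $1/\alpha_k$), and $\oZ(\ind_{\{0\}})=0$, contradicting continuity. Once $\phi\in C_c([0,\infty))$, your explicit formulas for $\eta$ and $\zeta$ automatically have bounded support, and the remaining verifications that $\zeta\in\Had{1}{n}$ go through as you sketch; the paper uses L'Hospital to get $\lim_{t\to 0^+}t^{n-1}\int_t^\infty \phi(r)r^{-n}\,\d r=\phi(0)/(n-1)$, which handles both $\lim_{t\to 0^+}t^{n-1}\zeta(t)=0$ and the finiteness of $\eta(0)$.

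A minor point: Lemma~\ref{lemma Abel} plays no role here. Uniqueness of $\zeta$ is irrelevant to the existence claim; once your constructed $\zeta$ satisfies $\oZZ{1}{\zeta}(u_t)=\oZ(u_t)$ for all $t\ge 0$, the two reductions already give $\oZZ{1}{\zeta}=\oZ$ on all of $\fconvs$.
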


\begin{proof}
	Since $\oZ$ is a continuous valuation that is epi-homogeneous of degree 1, Lemma \ref{epi-additve} implies that it is epi-additive.  Let $u\in\fconvs$ and  let $u_k$ be any rotation epi-mean of $u$, that is,  there are rotations $\vartheta_{1}, \dots, \vartheta_{m_k}$ such that
	$$u_k=\frac1{m_k} \sq \big( u\circ\vartheta_{1}^{-1} \infconv \cdots \infconv u\circ \vartheta_{m_k}^{-1}\big).$$
	Since $\oZ$ is epi-additive, rotation invariant and epi-homogeneous of degree 1, we see that
	$$\oZ(u)= \oZ(u_k).$$
	By Lemma \ref{epi}, there is a sequence of rotation epi-means of $u$ that converges to the rotation epi-symmetral $u^\star$. Since $\oZ$ is continuous, it follows  that
	$$
	\oZ(u)= \oZ(u^\star).
	$$
    Hence we obtain that $\oZ$ is determined by its values on radial functions. 

    Since $\oZ$ is epi-translation invariant, $\oZ$ is even determined by radial functions $u$ that are non-negative with $u(0)=0$. 
Combined with the epi-additivity and the epi-homogeneity of degree 1 of $\oZ$, Lemma \ref{ut} implies that $\oZ$ is already determined by its values on  the one-parameter family $u_t$ for $t\ge 0$.	Thus, the function $t\mapsto\oZ(u_t)$ on $[0,\infty)$ uniquely determines $\oZ$ and  it suffices to show that there is $\zeta\in\Had{1}{n}$ such that
\begin{equation}\label{ut equal}
\oZ(u_t)=\oZZ{1}{\zeta}(u_t)
\end{equation}
for $t\ge 0$.

Note that the continuity of $\oZ$ implies that $t\mapsto\oZ(u_t)$ is continuous in $[0,\infty)$.	Let us prove that it has compact support. Assume that this is not true. Hence, there exists
	a sequence $t_k$ such that $t_k\to\infty$ as $k\to\infty$, and 
	$$
	\alpha_k:=\oZ(u_{t_k})>0
	$$
	for all $k\in\N$.
	Define $\bar u_k\in\fconvs$  by
	$$
	\bar u_k(x):=\frac1{\alpha_k} \sq u_{t_k}(x)=t_k\vert x\vert+\ind_{\frac1{\alpha_k}B^n}(x).
	$$
	By the epi-homogeneity of $\oZ$, we have $\oZ(\bar u_k)=1$ for every $k\in\N$. On the other hand, the sequence $\bar u_k$ epi-converges to
	$\ind_{\{0\}}$, and it is easy to prove, using once more the fact that $\oZ$ is epi-homogeneous of degree 1, that $\oZ(\ind_{\{0\}})=0$. Hence we have 
	a contradiction to the continuity of $\oZ$.

To prove \eqref{ut equal}, we solve an integral equation. Let $u_t^*$ be the dual function of $u_t$ and note that we have $u_t^*\in\fconvfs$. By the definition of $\oZZ{1}{\bar\zeta}$ combined with Lemma \ref{preparatory lemma 4 d}, we can calculate $\oZZ{1}{\bar\zeta}(u_t)$ for $\bar\zeta\in\Had{1}{n}$ by Lemma~\ref{le:calc_ind_bn_tx_theta_i}  and obtain
	\begin{equation*}
	\oZZ{1}{\bar\zeta}(u_t)=\omega_n\Big(\bar\zeta(t)\,t^{n-1}+ (n-1) \,\int_t^\infty r^{n-2} \bar\zeta(r)\d r\Big).
	\end{equation*}
	Hence, we need to solve the integral equation,
	$$\zeta(t)+ \frac{n-1}{t^{n-1}}\int_t^\infty r^{n-2} \zeta(r)\d r =\frac{\oZ(u_t)}{\omega_n t^{n-1}}, $$
	to determine $\zeta$ when $\oZ(u_t)$ is given. We claim that
	\begin{equation}\label{solution}
	\zeta(t): = \frac{\oZ(u_t)}{\omega_n t^{n-1}} - \frac{(n-1)}{\omega_n}   \int_t^\infty \frac{\oZ(u_r)}{r^{n}}\d r
	\end{equation}
	is a solution.  
	Indeed, using integration by parts and that $t\mapsto \oZ(u_t)$ has compact support, we obtain that
	\begin{equation}\label{integral equation}
	(n-1) \int_t^{\infty} r^{n-2} \int_r^{\infty} \frac{\oZ(u_s)}{s^n} \d s \d r = - t^{n-1} \int_t^{\infty} \frac{\oZ(u_r)}{r^n} \d r +  \int_t^{\infty} \frac{\oZ(u_r)}{r} \d r,
	\end{equation}
	and the result follows.
	Since $t\mapsto \oZ(u_t)$ is continuous with compact support, also $\zeta$ is continuous on $(0, \infty)$ and it has bounded support. 
	
	Finally, we show that the solution $\zeta$ defined in (\ref{solution}) is in $\Had{1}{n}$. First, we show that  
	\begin{equation}\label{exists_finite}
    \lim_{t\to 0^+} \int_{t}^{\infty} r^{n-2} \zeta(r) \d r\qquad \mbox{exists and is finite.}
    \end{equation}
    Note, that for $t>0$,
	\begin{align*}
	\omega_n \int_{t}^{\infty}  r^{n-2} \zeta(r) \d r &= \int_t^{\infty} \left(\frac{\oZ(u_r)}{r} - (n-1) r^{n-2} \int_r^{\infty} \frac{\oZ(u_s)}{s^n} \d s \right) \d r
	\end{align*}
	and therefore, by (\ref{integral equation}),
	\begin{equation*}
		\omega_n \int_t^{\infty}  r^{n-2} \zeta(r) \d r = \int_t^{\infty} \frac{\oZ(u_r)}{r} \d r + t^{n-1} \int_t^{\infty} \frac{\oZ(u_r)}{r^n} \d r - \int_t^{\infty} \frac{\oZ(u_r)}{r} \d r= t^{n-1} \int_t^{\infty} \frac{\oZ(u_r)}{r^n} \d r.\\
	\end{equation*}
	Since $r\mapsto \oZ(u_r)$ is in $C_c([0,\infty))$ it follows from L'Hospital's rule that
    \begin{equation}\label{oml}
    \lim_{t\to 0^+} t^{n-1} \int_t^{\infty} \frac{\oZ(u_r)}{r^{n}} \d r =\frac{\oZ(u_0)}{n-1}
    \end{equation}
	and we have shown (\ref{exists_finite}).
	
	Second, since
	\begin{align*}
	\omega_n t^{n-1} \zeta(t) &= \omega_n t^{n-1} \left(\frac{\oZ(u_t)}{\omega_n t^{n-1}}  - \frac{n-1}{\omega_n} \int_t^{\infty} \frac{\oZ(u_r)}{r^n} \d r \right)=\oZ(u_t) - (n-1)t^{n-1} \int_t^{\infty} \frac{\oZ(u_r)}{r^n} \d r,
	\end{align*}
	it follows from \eqref{oml} that
	\begin{align*}
	\lim_{t\to 0^+} t^{n-1} \zeta(t)=0.
	\end{align*}
	Thus, $\zeta\in \Had{1}{n}$ and we have proved \eqref{ut equal}. 
 \end{proof}
 
 \goodbreak
 We immediately obtain the following dual result.
 
 \begin{proposition}
	\label{prop:class_1-hom dual} 
	Let $n\ge 2$.
	If $\,\oZ\colon\fconvf\to\R$ is a continuous, dually epi-translation  and rotation invariant valuation that is homogeneous of degree 1, then there exists $\zeta \in \Had{1}{n}$ such that
	\begin{equation*}\
	\oZ(v)=\oZZ{1}{\zeta}^*(v)
	\end{equation*}
	for every $v\in\fconvf$. 
\end{proposition}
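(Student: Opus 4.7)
The plan is to deduce this statement directly from Proposition~\ref{prop:class_1-hom} via the Legendre transform correspondence already catalogued in the introduction, which identifies continuous, dually epi-translation invariant valuations on $\fconvf$ that are homogeneous of degree $j$ with continuous, epi-translation invariant valuations on $\fconvs$ that are epi-homogeneous of degree $j$.

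First, I would define $\oZ'\colon\fconvs\to\R$ by $\oZ'(u):=\oZ(u^*)$. Since the Legendre transform is an epi-continuous bijection between $\fconvs$ and $\fconvf$, and since the valuation/homogeneity/epi-translation-invariance correspondences are stated explicitly in Section~1, $\oZ'$ is automatically a continuous, epi-translation invariant valuation on $\fconvs$ that is epi-homogeneous of degree $1$. The only point requiring a short verification is that rotation invariance also transfers. This comes down to the identity $(u\circ\vartheta^{-1})^*=u^*\circ\vartheta^{-1}$ for every $\vartheta\in\SO(n)$, which is an immediate change of variables in the sup defining the conjugate, using $\vartheta^{-t}=\vartheta$. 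I expect no obstacle beyond writing this out.

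Once $\oZ'$ is seen to satisfy the hypotheses of Proposition~\ref{prop:class_1-hom}, that proposition yields $\zeta\in\Had{1}{n}$ such that $\oZ'(u)=\oZZ{1}{\zeta}(u)$ for every $u\in\fconvs$. Given $v\in\fconvf$, setting $u:=v^*\in\fconvs$ gives $u^*=v^{**}=v$, and therefore
$$\oZ(v)=\oZ(u^*)=\oZ'(u)=\oZZ{1}{\zeta}(u)=\oZZ{1}{\zeta}(v^*)=\oZZ{1}{\zeta}^*(v),$$
which is the desired representation. Since the construction of $\zeta$ and all analytic content already live on the primal side, the proposition is really a one-line transport along the duality $\oZ\leftrightarrow\oZ^*$; there is no genuine obstacle, only bookkeeping with conjugation.
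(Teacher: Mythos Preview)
Your proposal is correct and matches the paper's approach exactly: the paper states that Proposition~\ref{prop:class_1-hom dual} is obtained ``immediately'' as the dual of Proposition~\ref{prop:class_1-hom}, and your argument is precisely the explicit transport along the Legendre transform that this remark encodes. The verification of rotation invariance via $(u\circ\vartheta^{-1})^*=u^*\circ\vartheta^{-1}$ is the only detail to check, and you handle it correctly.
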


\subsection{Orthogonal Cylinder Functions}
Let $P\subset \R^n$ be an $n$-dimensional convex polytope. We call the $n$-dimensional convex polytopes $P_1,\dots,P_m\subset \R^n$  a {\em dissection} of $P$, if  
$$P=\bigcup_{i=1}^m P_i$$ and
the interiors of the polytopes $P_i$ and $P_j$ are disjoint for $i\ne j$. In this case, we write 
$$P=\Ecup_{i=1}^m P_i.$$
Two $n$-dimensional convex polytopes $P$ and $Q$ are \emph{translatively equi-dissectable}, written $P \sim Q$, if there are dissections  $P=\Ecup_{i=1}^m P_i$ and $Q=\Ecup_{i=1}^m Q_i$ such that $P_i$ is a translate of $Q_i$ for $i=1,\dots, m$.

A valuation $\oZ: \cP^n\to \R$ is \emph{simple} if it vanishes on lower dimensional polytopes. For a simple valuation $\oZ: \cP^n\to\R$, we have
\begin{equation}\label{finite_add}
\oZ\big(\Ecup_{i=1}^{m} P_i\big)= \sum_{i=1}^{m} \oZ(P_i)
\end{equation}
(see, for example, \cite[Section 6.2]{Schneider:CB2}).

An $n$-dimensional simplex $S$ is the convex hull of $(n+1)$ affinely independent points $p_0, \dots, p_n$. Set $x_i= p_i-p_{i-1}$. We say that $S$ is orthogonal if the vectors $x_1, \dots, x_n$ are pairwise orthogonal. We set $x_0=p_0$ and write 
$S=\langle x_0;x_1,\dots,x_n \rangle$. 
For $0<t<1$,  the \emph{canonical simplex dissection} is 
\begin{equation}\label{css}
S=\Ecup_{k=0}^n ( (1-t)\, \underline S_k+ t\, \overline S_{n-k}),
\end{equation}
where 
$$\underline S_k:=\langle x_0;x_1,\dots,x_k\rangle,\qquad\overline S_{n-k}:=\langle x_0+\sum_{i=1}^k x_i;x_{k+1},\dots,x_n\rangle
$$
while $+$ in (\ref{css}) denotes Minkowski addition (see Hadwiger \cite{Hadwiger:V}).
\goodbreak

Let $\cP^n$ denote the set of convex polytopes in $\R^n$. We say that $P\in\cP^n$ is a {\em proper orthogonal cylinder} if there are orthogonal and complementary subspaces $E$ and $F$ with $\dim E, \dim F \ge 1$ and  polytopes $P_E\subset E$ and $P_F\subset F$ such that $P=P_E+P_F$. If $S$ is an orthogonal simplex, then the terms in (\ref{css}) for $1<k<n$ are proper orthogonal cylinders.

\goodbreak
The following result is due to Hadwiger  \cite[Section 1.3.4, Theorem VII]{Hadwiger:V}.

\begin{lemma}\label{ortho}
	For an $n$-dimensional polytope $P\in\cP^n$,  there are orthogonal simplices $S_1,\dots, S_{m}$, $S'_1,\dots, S'_{m'}$ such that
	$$
	P\ecup\Ecup_{i=1}^{m} S_i\sim \Ecup_{j=1}^{m'} S_j'.
	$$
\end{lemma}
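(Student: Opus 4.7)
The strategy is to reduce the statement to the case where $P$ is a single $n$-simplex via triangulation, and then to handle the simplex case by induction on the dimension using a coning construction over the foot of a perpendicular. Throughout I would repeatedly use that $\sim$ is an equivalence relation compatible with disjoint juxtaposition of polytopes, and that auxiliary orthogonal simplices may be freely moved between its two sides.

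First, I would dissect $P = \Ecup_{i=1}^N T_i$ into $n$-simplices, for instance by star-dissecting from an interior point of $P$ over a boundary triangulation. If the desired relation is proven for each $T_i$ individually, then juxtaposing these relations, with suitable translations and moving orthogonal simplices between the two sides of $\sim$, produces the relation $P \ecup \Ecup_i S_i \sim \Ecup_j S'_j$. So it suffices to prove the statement for an $n$-simplex.

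For the simplex case I would induct on $n$. The case $n = 1$ is immediate since every $1$-simplex is already orthogonal. For $n \ge 2$, let $T = \operatorname{conv}(v_0, v_1, \ldots, v_n)$, let $F := \operatorname{conv}(v_1, \ldots, v_n)$ lie in the affine hyperplane $H$, and let $p$ be the orthogonal projection of $v_0$ onto $H$, so $w := v_0 - p$ belongs to $H^\perp$. The key geometric observation is that for any $(n-1)$-orthogonal simplex $\tilde S = \langle p; y_1, \ldots, y_{n-1}\rangle$ in $H$ with orthogonal corner at $p$, the cone $\operatorname{conv}(v_0, \tilde S)$ can be written as $\langle v_0; -w, y_1, \ldots, y_{n-1}\rangle$ and is itself an $n$-orthogonal simplex, because $w$ is orthogonal to $H$ and hence to each $y_i$, while the $y_i$ are pairwise orthogonal in $H$. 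It therefore suffices to establish inside $H$ a \emph{$p$-anchored} relation $F \ecup \Ecup_k \tilde U_k \sim \Ecup_\ell \tilde U'_\ell$ in which every $(n-1)$-orthoscheme $\tilde U_k, \tilde U'_\ell$ has its orthogonal corner at the single prescribed point $p$; coning the entire relation from $v_0$ lifts it to the required $n$-dimensional relation for $T$.

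Producing this $p$-anchored $(n-1)$-dimensional relation is the main obstacle. The naive application of the induction hypothesis returns orthoschemes with \emph{some} orthogonal corner, not necessarily $p$, and the canonical simplex dissection \eqref{css} does not relocate such a corner in a single step. I would handle this by strengthening the inductive hypothesis to assert, for every $(n-1)$-polytope $Q$ in an $(n-1)$-affine space and every prescribed point $p$ in that space, the existence of a $p$-anchored relation $Q \ecup \Ecup_k \tilde U_k \sim \Ecup_\ell \tilde U'_\ell$. The strengthened inductive step is carried out by the same coning construction applied to a chosen vertex of a triangulation of $Q$, combined with an auxiliary step: if $p$ lies outside $Q$, one first enlarges $Q$ to $\tilde Q := \operatorname{conv}(Q \cup \{p\})$, star-dissects $\tilde Q$ from $p$, and moves the $p$-anchored orthoschemes arising from $\tilde Q \setminus Q$ to the left-hand side of $\sim$. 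Applied to $F$ with corner $p$, this yields the required $(n-1)$-dimensional $p$-anchored relation, and coning from $v_0$ then completes the inductive step.
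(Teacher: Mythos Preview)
The paper does not give a proof of this lemma; it simply cites Hadwiger's book. Your outline---triangulate, then inductively decompose each simplex into orthoschemes by dropping a perpendicular from a vertex and coning over a lower-dimensional decomposition---is indeed the classical Schl\"afli--Hadwiger strategy.

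There is, however, a genuine gap in your coning step. You phrase the $(n-1)$-dimensional input as a $\sim$-relation $F \ecup \Ecup_k \tilde U_k \sim \Ecup_\ell \tilde U'_\ell$ and assert that coning from $v_0$ lifts it to an $n$-dimensional $\sim$-relation. But coning from a fixed apex does \emph{not} preserve translative equi-dissectability: if $A$ and $A+t$ are translates in $H$, then a point $(1-s)v_0 + sa$ of $\mathrm{cone}(v_0,A)$ corresponds to $(1-s)v_0 + s(a+t)$ in $\mathrm{cone}(v_0,A+t)$, and the difference $st$ depends on $s$. Hence the two cones are not translates unless $t=0$. So if the $(n-1)$-relation genuinely involves translations of pieces, the coned relation is lost.

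The repair is to strengthen the intermediate hypothesis from $\sim$ to an actual dissection equality: require that the $p$-anchored orthoschemes on both sides form two dissections of \emph{the same} $(n-1)$-polytope, i.e.\ $F \ecup \Ecup_k \tilde U_k = \Ecup_\ell \tilde U'_\ell$ as sets with interior-disjoint pieces. Coning from $v_0$ does preserve such an equality, and the coned pieces are $n$-orthoschemes by your perpendicularity observation. The relation $\sim$ is then invoked only once, at the top level, where translating the resulting orthoschemes apart is harmless. Your auxiliary step for $p\notin Q$ also needs more care in this ``$=$'' formulation: after passing to $\tilde Q=\mathrm{conv}(Q\cup\{p\})$ you must keep the various added orthoschemes interior-disjoint, which can be arranged by carrying out all constructions inside the cones $\mathrm{cone}(p,\cdot)$ over the individual facets, so that pieces attached to different facets do not overlap.
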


Let $\oZ:\fconvs\to\R$ be a valuation. We say that it is \emph{simple} if $\oZ$ vanishes on functions with lower dimensional domain. The following analog of (\ref{finite_add}) is a consequence of the inclusion-exclusion principle established  in \cite{Colesanti-Ludwig-Mussnig-4}. Let $u_1,\dots, u_m\in\fconvs$ be such that the domains of $u_i\vee u_j$ are lower dimensional for $i\ne j$. If $\oZ: \fconvs\to\R$  is a simple valuation, then 
\begin{equation}\label{finite_add_fun}
\oZ\big(\bigwedge_{i=1}^{m} u_i\big)= \sum_{i=1}^{m} \oZ(u_i).
\end{equation}
A function $u\in\fconvs$ is called {\em piecewise affine} if its domain is a convex polytope and $u$ is the pointwise maximum of finitely many affine functions on its domain. Hence there is a dissection of $\dom(u)$ into polytopes $P_1,\dots,P_m$ such that
\begin{equation*}
u=\bigwedge_{i=1}^m(\ell_i+\alpha_i+\ind_{P_i})
\end{equation*}
with linear functions $\ell_i:\R^n \to \R$ and $\alpha_i\in\R$ for $i=1,\dots, m$. If $\oZ:\fconvs\to\R$ is an epi-translation invariant and simple valuation, we obtain by (\ref{finite_add_fun}) that
\begin{equation}\label{affine}
\oZ(u)= \sum_{i=1}^m \oZ(\ell_i+\ind_{P_i}).
\end{equation}
Note that piecewise affine functions are dense in $\fconvs$.

We say that the function $u\in\fconvs$ is an \emph{orthogonal cylinder function} if there are orthogonal and complementary subspaces $E$ and $F$ with $\dim E, \dim F\ge 1$ and functions $u_E\in\fconvs$ with $\dom (u_E)\subset E$ and $u_F\in\fconvs$ with $\dom (u_F)\subset F$ such that $u=u_E\infconv u_F$.

\begin{proposition}\label{onehom}
If $\,\oZ\colon\fconvs\to\R$ is a continuous and epi-translation invariant valuation that vanishes on orthogonal cylinder functions,   
then it is epi-homogeneous of degree 1.
\end{proposition}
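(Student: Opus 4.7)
The plan is to apply the homogeneous decomposition from Theorem~\ref{thm:mcmullen_cvx_functions} to write $\oZ = \oZ_0 + \cdots + \oZ_n$ with each $\oZ_i$ continuous, epi-translation invariant and epi-homogeneous of degree $i$, and then reduce the problem to showing $\oZ_i \equiv 0$ for every $i \neq 1$. Since $\lambda \sq (u_E \infconv u_F) = \lambda \sq u_E \infconv \lambda \sq u_F$ is again an orthogonal cylinder function for every $\lambda > 0$, expanding $0 = \oZ(\lambda \sq (u_E \infconv u_F)) = \sum_i \lambda^i \oZ_i(u_E \infconv u_F)$ as a polynomial in $\lambda$ shows that every $\oZ_i$ already vanishes on all orthogonal cylinder functions.

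Next I would prove that each $\oZ_i$ is simple. For $u_E \in \fconvsE$ with $1 \leq \dim E < n$, set $F = E^\perp$ and let $B$ denote the closed unit ball in $F$. Then each $u_E \infconv \ind_{\mu B}$ with $\mu > 0$ is an orthogonal cylinder function, so $\oZ_i$ vanishes on it; moreover the family is monotone increasing as $\mu \to 0^+$ with pointwise limit equal to the extension of $u_E$ to $\R^n$ by $+\infty$ off $E$, and hence epi-converges to this extension. Continuity of $\oZ_i$ yields $\oZ_i(u_E) = 0$, and epi-translation invariance propagates this to every function with domain in a proper affine subspace.

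The central step is to show $\oZ_i(\ell + \ind_S) = 0$ for every orthogonal simplex $S$, every linear $\ell\colon\R^n\to\R$ and every $i \neq 1$, for which I would use the canonical simplex dissection~(\ref{css}). For each intermediate $1 \leq k \leq n-1$, the piece $(1-t)\underline S_k + t\overline S_{n-k}$ is, after a translation, the Minkowski sum of simplices in orthogonal complementary subspaces $E_k$ and $F_k$; the decomposition $\ell = \ell_{E_k} + \ell_{F_k}$ then exhibits $\ell + \ind_{(1-t)\underline S_k + t\overline S_{n-k}}$ (up to an epi-translation that is harmless by invariance) as an orthogonal cylinder function, so $\oZ_i$ vanishes on it. The extreme pieces for $k=0$ and $k=n$ are translates of $tS$ and $(1-t)S$, and the identity $\lambda \sq (\ell + \ind_S) = \ell + \ind_{\lambda S}$ combined with epi-homogeneity of degree $i$ gives $\oZ_i(\ell + \ind_{tS}) = t^i\,\oZ_i(\ell+\ind_S)$. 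Feeding this into the inclusion-exclusion formula~(\ref{finite_add_fun}) for the simple valuation $\oZ_i$ yields
\[\oZ_i(\ell + \ind_S) = \bigl(t^i + (1-t)^i\bigr)\, \oZ_i(\ell + \ind_S),\]
and since $t^i + (1-t)^i \neq 1$ for suitable $t \in (0,1)$ whenever $i \neq 1$, I conclude $\oZ_i(\ell + \ind_S) = 0$.

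Finally, by Lemma~\ref{ortho} every $n$-dimensional polytope $P$ is, together with finitely many orthogonal simplices, translatively equi-dissectable into orthogonal simplices. Combining this with simplicity and epi-translation invariance propagates the vanishing to $\oZ_i(\ell + \ind_P) = 0$ for every polytope $P$ and every linear $\ell$. Formula~(\ref{affine}) then gives $\oZ_i \equiv 0$ on the dense set of piecewise affine functions in $\fconvs$, and continuity of $\oZ_i$ completes the proof. The main technical obstacle I anticipate is making the epi-convergence argument for simplicity fully rigorous and carefully tracking the translations and $\ell$-decompositions that are needed to identify each intermediate piece of the canonical dissection as a genuine orthogonal cylinder function.
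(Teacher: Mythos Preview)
Your argument is correct and shares the same geometric backbone as the paper --- the canonical simplex dissection (\ref{css}), the observation that the intermediate pieces are orthogonal cylinders, Lemma~\ref{ortho}, and the passage through piecewise affine functions via (\ref{affine}). The genuine difference is in how you extract homogeneity. The paper works directly with $\oZ$: from the dissection it obtains $\otZ(S)=\otZ((1-t)S)+\otZ(tS)$, rewrites this as Cauchy's functional equation $\alpha(r+s)=\alpha(r)+\alpha(s)$ for $\alpha(r)=\otZ(rS)$, and invokes continuity to get linearity. You instead invoke the homogeneous decomposition Theorem~\ref{thm:mcmullen_cvx_functions} at the outset, reduce to epi-homogeneous components $\oZ_i$, and from the dissection obtain $\oZ_i(\ell+\ind_S)=(t^i+(1-t)^i)\,\oZ_i(\ell+\ind_S)$, which kills $\oZ_i$ for $i\neq 1$ without any Cauchy argument. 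Your route trades an elementary functional-equation step for the (heavier) decomposition theorem; it is a bit less self-contained but arguably cleaner once that theorem is available, and it never needs continuity of $P\mapsto\oZ(\ell+\ind_P)$.

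One simplification: your approximation argument for simplicity is unnecessary. If $u\in\fconvs$ has $\dom(u)$ contained (after a translation) in a proper linear subspace $E$, then with $F=E^\perp$ you can write $u=u\infconv\ind_{\{0_F\}}$, which is already an orthogonal cylinder function by the definition preceding the proposition (the requirement is $\dim E,\dim F\ge 1$, with no lower bound on the dimension of $\dom(u_F)$). Hence each $\oZ_i$ is simple immediately from your Step~2, and the epi-convergence of $u_E\infconv\ind_{\mu B}$ can be dropped.
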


\begin{proof}
   	For $y\in \R^n$, set $\ell_y(x)=\langle y,x\rangle$. Define $\tilde\oZ:\cP^n\to \R$  by
	$$
	\otZ(P):= \oZ(\ell_{y}+\ind_P).
	$$ 
	Since $\oZ$ is continuous, so is $\otZ$.
	
	Let $S$ be an $n$-dimensional orthogonal simplex in $\R^n$ and $0<t<1$. In the canonical simplex dissection
	$$
	S=\Ecup\limits_{k=0}^n ((1-t)\underline S_k+t\, \overline S_{n-k}),
	$$
	the simplices $\underline S_k$ and $\overline S_{n-k}$ are orthogonal and lie in orthogonal subspaces. 
	Since $\oZ$ vanishes on ortho\-gonal cylinder functions, it is simple. Consequently, $\otZ_y$ vanishes on orthogonal cylinders and is simple. By (\ref{finite_add}), this implies
	$$
	\otZ(S)= \otZ((1-t)S)+ \otZ(t\,S).
	$$
	Let $r,s>0$. Setting $\alpha(r)=\otZ(r\,S)$ and $t=r/(r+s)$, we obtain 
	$$\alpha(r+s)=\alpha(r)+\alpha(s)$$
	for $r,s>0$. Since $\otZ$ is continuous, so is $\alpha:(0,\infty)\to \R$. Hence, $\alpha$ is a continuous solution of Cauchy's functional equation, and we obtain that
	$$
	\otZ(t \,S)=t\otZ(S)
	$$
	for any $t>0$ and any orthogonal simplex $S$.
	
	\goodbreak
	
	For $P\in\cP^n$,  by Lemma \ref{ortho} there are orthogonal simplices such that
	$$
	P\ecup\Ecup_{i=1}^m S_i\sim \Ecup_{j=1}^{m'} S_j'.
	$$
	Therefore, for every $t>0$,
	\begin{equation*}
	\otZ(t\, P) = \otZ\Big(\Ecup_{j=1}^{m'} t\,S_j'\Big)-\otZ_y\Big(\Ecup_{i=1}^m t\,S_i\Big)
	=t \otZ\Big(\Ecup_{j=1}^{m'} S_j'\Big)-t \otZ_y\Big(\Ecup_{i=1}^m S_i\Big)
	= t\otZ(P).
	\end{equation*}
	Hence, $\otZ$ is homogeneous of degree 1 on polytopes.  Consequently, $\oZ$ is epi-homogeneous of degree 1 on functions of the form
	$$
	u=\ell_{y}+\ind_P
	$$
	for every $y\in\R^n$ and $P\in\cP^n$. Since $\oZ$ is epi-translation invariant and simple, (\ref{affine}) now implies that $\oZ$ is
	epi-homogeneous of degree 1 on piecewise affine functions. Since piecewise affine functions are dense in $\fconvs$, the continuity of $\oZ$ now implies that $\oZ$ is 	epi-homogeneous of degree 1 on $\fconvs$.
\end{proof}

We say that  $v\in\fconvf$ is a \emph{dual orthogonal cylinder function} if there are orthogonal and complementary subspaces $E$ and $F$ with $\dim E, \dim F\ge 1$ as well as  functions $v_E\in\fconvfE$ and $v_F\in\fconvfF$ such that $v=v_E+ v_F$. We use (\ref{infconvEF}) and obtain the following result as an immediate consequence of Proposition \ref{onehom}.

\begin{proposition}\label{onehom dual}
	If $\,\oZ\colon\fconvf\to\R$ is a  continuous and dually epi-translation invariant valuation that vanishes on dual orthogonal cylinder functions, then  
	it is homogeneous of degree 1.
\end{proposition}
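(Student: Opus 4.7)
The plan is to reduce the claim to Proposition \ref{onehom} via the conjugation correspondence between valuations on $\fconvf$ and on $\fconvs$ that was recalled in the introduction. First I would set $\oZ^*\colon\fconvs\to\R$ by $\oZ^*(u):=\oZ(u^*)$. The facts recalled before Theorem \ref{dthm:hadwiger_convex_functions} (drawn from \cite{Colesanti-Ludwig-Mussnig-3}) guarantee that $\oZ^*$ is a continuous valuation on $\fconvs$ whenever $\oZ$ is continuous on $\fconvf$, and that the dual epi-translation invariance of $\oZ$ translates into epi-translation invariance of $\oZ^*$.

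Next I would verify that $\oZ^*$ vanishes on every orthogonal cylinder function. Let $u\in\fconvs$ be such a function, so $u=u_E\infconv u_F$ with $u_E\in\fconvsE$, $u_F\in\fconvsF$, and $E,F$ orthogonal complementary subspaces of positive dimension. Setting $v_E=u_E^*\in\fconvfE$ and $v_F=u_F^*\in\fconvfF$, identity (\ref{infconvEF}) identifies $u^*$ with the sum $v_E+v_F$, which (under the identification fixed in Section \ref{convex functions}) is precisely a dual orthogonal cylinder function in $\fconvf$. The hypothesis on $\oZ$ then yields $\oZ^*(u)=\oZ(u^*)=0$.

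Proposition \ref{onehom} now applies to $\oZ^*$ and shows that $\oZ^*$ is epi-homogeneous of degree $1$. By the same duality principle, epi-homogeneity of degree $1$ of $\oZ^*$ on $\fconvs$ is equivalent to homogeneity of degree $1$ of $\oZ$ on $\fconvf$, giving the desired conclusion. The argument is entirely a translation between the primal and dual settings; the only point requiring care is the identification of orthogonal cylinder functions with dual orthogonal cylinder functions under the Legendre transform, and this has already been carried out in (\ref{infconvEF}). No genuine obstacle arises.
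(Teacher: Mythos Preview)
Your proposal is correct and matches the paper's approach exactly: the paper states the result as ``an immediate consequence of Proposition \ref{onehom}'' obtained via the duality \eqref{infconvEF}, and you have simply spelled out this deduction. Nothing more is needed.
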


\goodbreak
\subsection{Additional Results on Hessian Measures and Valuations}

\begin{lemma}
\label{le:decompose_hessian_measure}
Let $E$ and $F$ be orthogonal and complementary subspaces of $\R^n$ such that $1\le k<n$ with $k=\dim E$. If $v_E\in\fconvfE$ and $v_F\in\fconvfF$, then 
$$\Phi_l^n(v_E+v_F,B)=\sum_{i=0\vee(l+k-n)}^{k\wedge l} \Phi_i^k(v_E,B\cap E) \Phi_{l-i}^{n-k}(v_F,B\cap F)$$
for every $0\leq l \leq n$ and every Borel subset $B\subseteq \R^n$.
\end{lemma}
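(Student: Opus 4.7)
The plan is to exploit the separable structure of $v := v_E + v_F$, viewed as a function on $\R^n = E \oplus F$. Because $v$ depends on $x = x_E + x_F$ only through $v_E(x_E)$ and $v_F(x_F)$, a direct check from the definition of the subgradient (using the orthogonality of $E$ and $F$) gives $\partial v(x) = \partial v_E(x_E) \oplus \partial v_F(x_F)$. Under the canonical identification $\R^n \times \R^n \cong (E \times E) \times (F \times F)$ that pairs the $E$- and $F$-components of the $x$- and $y$-variables block-wise, this factors the graph of the subdifferential as $\Gamma_v = \Gamma_{v_E} \times \Gamma_{v_F}$.

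Fix now Borel sets $A_E \subseteq E \times E$ and $A_F \subseteq F \times F$. The factorization of $\Gamma_v$ yields $\Gamma_v \cap (A_E \times A_F) = (\Gamma_{v_E} \cap A_E) \times (\Gamma_{v_F} \cap A_F)$, and applying the parallel-set construction $x + s y$ coordinate-wise produces the product-set identity $P_s(v, A_E \times A_F) = P_s(v_E, A_E) \times P_s(v_F, A_F) \subseteq E \oplus F$ for every $s \ge 0$. Fubini then gives
\begin{equation*}
\hm^n\bigl(P_s(v, A_E \times A_F)\bigr) = \hm^k\bigl(P_s(v_E, A_E)\bigr)\,\hm^{n-k}\bigl(P_s(v_F, A_F)\bigr).
\end{equation*}
Both sides are polynomials in $s$ by the definition of the Hessian measures, and equating coefficients produces the joint identity $\Theta^n_m(v, A_E \times A_F) = \sum_{a + b = m} \Theta^k_a(v_E, A_E)\,\Theta^{n-k}_b(v_F, A_F)$ for every $m \in \{0, \dots, n\}$.

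Specializing next to $A_E = B_E \times E$ and $A_F = B_F \times F$ converts the $\Theta$-measures into the marginal $\Phi$-measures via the defining relation $\Phi^k_i(v_E, B_E) = \Theta^k_{k-i}(v_E, B_E \times E)$ and its analogue on $F$. Reindexing $i = k - a$ and $j = (n-k) - b$ shows that the resulting sum runs exactly over $i + j = l$ with $0 \le i \le k$ and $0 \le j \le n-k$, which collapses to the stated range $0 \vee (l+k-n) \le i \le k \wedge l$. Hence the claimed identity holds for every rectangular Borel set $B = B_E + B_F \subseteq \R^n$.

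The extension to an arbitrary Borel $B$ is then a standard $\pi$-$\lambda$ argument: rectangles form a $\pi$-system generating the Borel $\sigma$-algebra of $\R^n = E \oplus F$, the left-hand side $\Phi^n_l(v, \cdot)$ is a locally finite Borel measure, and the right-hand side --- read as the locally finite product measure $\sum_{i+j=l} \Phi^k_i(v_E, \cdot) \otimes \Phi^{n-k}_{j}(v_F, \cdot)$ on $E \times F = \R^n$ --- agrees with it on this $\pi$-system, hence on all Borel sets. The main obstacle is the product-set step itself, which has to be executed at the geometric level of the parallel sets $P_s$ and their Hausdorff measures: the tempting smooth-case shortcut $[\Hess(v_E+v_F)]_l = \sum_{i+j=l}[\Hess v_E]_i[\Hess v_F]_j$ is unavailable since $v_E$ and $v_F$ need not be $C^2$, so we must work directly from the subgradient factorization. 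Once the factorization of $\Gamma_v$ and of $P_s$ is in place, the polynomial-coefficient comparison, the marginalization, and the measure-theoretic extension are all routine.
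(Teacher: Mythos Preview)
Your approach is essentially the paper's: factor the subgradient graph as $\Gamma_v = \Gamma_{v_E} \times \Gamma_{v_F}$, factor the parallel sets $P_s$ accordingly, apply Fubini to the Hausdorff measures, and compare polynomial coefficients in $s$. The one difference is that the paper asserts the parallel-set factorization directly for $P_s(v, B \times \R^n)$ with an arbitrary Borel $B \subseteq \R^n$ (which, read literally, only holds for rectangular $B$), whereas you first establish the identity on rectangles $B = B_E + B_F$ and then extend by a $\pi$--$\lambda$ argument, interpreting the right-hand side as the product measure $\sum_i \Phi_i^k(v_E,\cdot) \otimes \Phi_{l-i}^{n-k}(v_F,\cdot)$ on $E \times F$; this product-measure reading is exactly how the lemma is used downstream in the paper, so your extra care is well placed.
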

\begin{proof}
Let $v=v_E+v_F$. Observe that $y\in\partial v(x)$ if and only if $y_E\in\partial v_E(x_E)$ and $y_F \in\partial v_F(x_F)$. Therefore, for any $s\geq 0$ and any Borel subset $B\subseteq \R$,
\begin{align*}
P_s(v,B\times \R^n)&=\{x+sy\colon x\in B, y\in \partial v(x)\}\\
&=\{(x_E,x_F)+s(y_E,y_F)\colon x_E \in B\cap E, x_F\in B\cap F, y_E\in\partial v_E(x_E), y_F\in\partial v_F(x_F)\}\\
&= P_s(v_E,(B\cap E)\times E)\times P_s(v_F,(B\cap F)\times F).
\end{align*}
Thus,
\begin{align*}
\sum_{l=0}^n s^l \Phi_{l}^n(v,B) &=\hm^n(P_s(v,B\times \R^n))\\
&= \hm^k(P_s(v_E,(B\cap E)\times E)) \, \hm^{n-k}(P_s(v_F,(B\cap F)\times F))\\
&= \sum_{i=0}^k s^i \Phi_{i}^k(v_E,B\cap E) \sum_{j=0}^{n-k} s^j \Phi_{j}^{n-k}(v_F,B\cap F)
\end{align*}
and consequently,
$$
\Phi_l^n(v,B) = \sum_{i=0 \vee (l-(n-k))}^{k\wedge l} \Phi_i^k(v_E,B\cap E)\,\Phi_{l-i}^{n-k}(v_F,B\cap F),
$$
which completes the proof.
\end{proof}

We repeatedly evaluate Hessian measures at piecewise affine functions. Fix $\bar{x}=(\bar{x}_1,\ldots,\bar{x}_n)\in\R^n$ and define $\bar{v}\in\fconvf$ as 
\begin{equation}
\label{eq:v_bar}
\bar{v}(x_1,\ldots,x_n):=\frac 12 \sum_{i=1}^n |x_i-\bar{x}_i|.
\end{equation}
Note, that if $\bar{x}_1,\ldots,\bar{x}_n\neq 0$, then $\bar{v}\in\fconvfs$. If $\bar{u}=\bar{v}^*\in\fconvs$, then
$$\bar{u}(x)=\ind_{[-\frac 12,\frac 12]^n}(x)+\langle \bar{x},x\rangle$$
for $x\in\R^n$. It is now easy to see by \eqref{eq:int_zeta_u_is_int_zeta_v} that
$$\int_{\R^n} \zeta(x) \d \Phi_n^n(\bar{v},x)=\int_{\R^n} \zeta(y) \d\Psi_n^n(\bar{u},y) = \int_{[-\frac 12,\frac 12]^n} \zeta(\bar{x}) \d x = \zeta(\bar{x})$$
for every continuous function $\zeta:\R^n\to \R$ with compact support (for the second equality, compare \cite[Section 10.4]{Colesanti-Ludwig-Mussnig-3}). Hence, we conclude that
\begin{equation}
\label{eq:phi_n_n_delta}
\Phi_n^n(\bar{v},\cdot)=\delta_{\bar{x}},
\end{equation}
where $\delta_{\bar{x}}$ denotes the Dirac point measure concentrated at $\bar{x}$.

Next, for $1\leq j < n$, let 
$$v(x_1,\ldots,x_n):=\frac 12 \sum_{i=1}^j |x_i-\bar{x}_i|$$ 
with $\bar{x}_1,\ldots,\bar{x}_j\in\R$. Observe that $v$ is of the form $$v(x_1,\ldots,x_n)=v_1(x_1,\ldots,x_j)+v_2(x_{j+1},\ldots,x_n)$$ 
where $v_1$ is of the same form as \eqref{eq:v_bar} and $v_2\equiv 0$. In particular, $\Phi_i^{n-j}(v_2,A)=0$ for every $1\leq i \leq n-j$ and Borel subset $A\subseteq \R^{n-j}$. Hence, it follows from Lemma~\ref{le:decompose_hessian_measure}, \eqref{eq:phi_0_n} and \eqref{eq:phi_n_n_delta} that
\begin{align}
\begin{split}
\label{eq:phi_j_n_delta}
\d \Phi_j^n(v,(x_1,\ldots,x_n))&=\d\Phi_j^j(v_1,(x_1,\ldots,x_j))\d\Phi_0^{n-j}(v_2,(x_{j+1},\ldots,x_n))\\
&=\d\delta_{(\bar{x}_1,\ldots,\bar{x}_j)}((x_1,\ldots,x_j))\d x_{j+1}\cdots\d x_n
\end{split}
\end{align}
and
\begin{equation}
\label{eq:phi_i_n_zero}
\Phi_i^n(v,B)=0
\end{equation}
for every $j<i\leq n$ and Borel subset $B\subseteq \R^n$.

\begin{lemma}\label{preparatory lemma 1}
	Let $\zeta\in C_b((0,\infty))$  and $j\in\{1,\dots,n\}$. If
	\begin{equation*}
	\int_{\R^n}\zeta(\vert x\vert)\d\Phi^n_{j}(v,x)=0
	\end{equation*}
	for every $v\in\fconvfs$, then $\zeta\equiv0$. 	
\end{lemma}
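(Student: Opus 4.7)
The plan is to test the hypothesis against the explicit one-parameter family $v_t$, $t>0$, from \eqref{vt def}. Each $v_t$ belongs to $\fconvfs$: since $v_t\equiv 0$ on the open ball $\{|x|<t\}$, the function $v_t$ is $C^\infty$ in a neighborhood of the origin. I would separate the argument into the cases $j\in\{1,\dots,n-1\}$ and $j=n$.

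For $j\in\{1,\dots,n-1\}$, the computation carried out in the proof of Lemma~\ref{le:calc_ind_bn_tx_theta_i} uses only that $\zeta$ is bounded and that the radial integral $\int_t^\infty s^{n-j-1}\zeta(s)\,\d s$ is finite, both of which hold automatically for $\zeta\in C_b((0,\infty))$ because $\zeta$ has bounded support. Running that computation verbatim yields, for every $t>0$,
\begin{equation*}
0=\int_{\R^n}\zeta(|x|)\,\d\Phi_j^n(v_t,x)=\kappa_n\binom{n}{j}t^{n-j}\zeta(t)+n\,\kappa_n\binom{n-1}{j}\int_t^\infty s^{n-j-1}\zeta(s)\,\d s.
\end{equation*}
Setting $f(t):=\int_t^\infty s^{n-j-1}\zeta(s)\,\d s$ so that $f'(t)=-t^{n-j-1}\zeta(t)$, this equation rearranges to the linear ODE $t\,f'(t)=(n-j)\,f(t)$ on $(0,\infty)$, whose solutions are $f(t)=C\,t^{n-j}$. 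Since $\zeta$ has bounded support, $f$ vanishes for all sufficiently large $t$, forcing $C=0$. Thus $f\equiv 0$ on $(0,\infty)$, and differentiating gives $\zeta\equiv 0$.

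For $j=n$, the Hessian of $v_t$ on $\{|x|>t\}$ has one zero eigenvalue, so $[\Hess v_t]_n\equiv 0$ there, and $v_t\equiv 0$ on $\{|x|<t\}$ contributes nothing either. Consequently $\Phi_n^n(v_t,\cdot)$ is concentrated on the sphere $t\,\sn$ with total mass $\kappa_n$ (by the same computation as in Lemma~\ref{le:calc_ind_bn_tx_theta_i} with $j=n$). Hence $\int_{\R^n}\zeta(|x|)\,\d\Phi_n^n(v_t,x)=\kappa_n\,\zeta(t)$, and the hypothesis immediately yields $\zeta\equiv 0$.

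There is no real obstacle here; the only point worth checking carefully is that the computation of $\int\zeta(|x|)\d\Phi_j^n(v_t,x)$ in Lemma~\ref{le:calc_ind_bn_tx_theta_i}, which was stated for $\zeta\in\Had{j}{n}$, is in fact valid for any $\zeta\in C_b((0,\infty))$, since the sole inputs are boundedness of $\zeta$ and absolute integrability of $s^{n-j-1}\zeta(s)$ on any interval $[t,\infty)$. Both hold trivially in our setting, so the argument above is complete.
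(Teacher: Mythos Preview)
Your argument is correct, and it takes a genuinely different route from the paper's proof. The paper tests against the piecewise affine functions $v(x)=\tfrac12\sum_{i=1}^j|x_i-\bar x_i|$ with $\bar x_i\neq 0$: for $j=n$ the measure $\Phi_n^n(v,\cdot)$ is the Dirac mass at $\bar x$ and one reads off $\zeta(t)=0$ directly, while for $j<n$ the formula \eqref{eq:phi_j_n_delta} converts the integral into $\omega_{n-j}\int_0^\infty \zeta(\sqrt{r^2+t^2})\,r^{n-j-1}\,\d r$, and the conclusion then comes from the Abel-transform injectivity of Lemma~\ref{lemma Abel}. Your approach instead tests against the radially symmetric family $v_t$ and reduces to the scalar ODE $t\,f'(t)=(n-j)f(t)$ together with the vanishing of $f$ at infinity; this is more self-contained for the present lemma, since it bypasses the Abel-transform machinery entirely. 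On the other hand, the paper's piecewise affine test functions and Lemma~\ref{lemma Abel} are developed because they are reused repeatedly in the later classification arguments (Lemmas~\ref{le:zeta_i_l-i}--\ref{le:zeta^k}), so within the paper that choice amortizes well. Both proofs are clean; yours is the more elementary stand-alone argument.
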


\begin{proof}
	Fix  $\bar{x}_1,\ldots,\bar{x}_j\in\R\backslash\{0\}$, set $t=\sqrt{\bar{x}_1^2+\cdots +\bar{x}_j^2}\neq 0$ and ${v}(x_1,\ldots,x_n)=\frac 12\sum_{i=1}^j  |x_i-\bar{x}_i|$. For $j=n$, the measure $\Phi_{n}^n({v},\cdot)$ is the Dirac point mass concentrated at $(\bar{x}_1,\ldots,\bar{x}_n)$ by \eqref{eq:phi_n_n_delta}. Hence
	$$
	0=\int_{\R^n}\zeta(\vert x\vert)\d\Phi_{j}^n({v},x)=\zeta(t),
	$$
	and the result follows. For $1\leq j <n$, we use \eqref{eq:phi_j_n_delta} and obtain
	\begin{align*}
	0&=\int_{\R^n}\zeta(\vert x\vert)\d\Phi_{j}^n({v},x)\\
	&= \int_{\R^{n-j}} \zeta(\vert(\bar{x}_1,\ldots,\bar{x}_j,x_{j+1},\ldots,x_n)\vert) \d x_{j+1}\cdots \d x_n\\
	&= \omega_{n-j} \int_0^{\infty} \zeta(\sqrt{r^2+t^2})r^{n-j-1} \d r.
	\end{align*}
	Since $\bar{x}_1,\ldots,\bar{x}_j\in\R\backslash\{0\}$ and hence $t>0$ are arbitrary, the result now follows from Lemma~\ref{lemma Abel}.
\end{proof}

\begin{lemma}\label{preparatory lemma 2}
	Let $j\in\{1,\dots,n\}$ and $\zeta_m,\zeta\in \Had{j}{n}$. If
	\begin{equation}\label{identity 2}
	\lim_{m\to\infty}\int_{\R^n}\zeta_m(\vert x\vert)\d\Phi^n_{j}(v_m,x)=\int_{\R^n}\zeta(\vert x\vert)\d\Phi^n_{j}(v,x)
	\end{equation}
	for every $v_m, v\in\fconvfs$ with $v_m$ epi-convergent to $v$, then $\lim_{m\to\infty}\zeta_m(t)=\zeta(t)$ for every $t>0$. 	
\end{lemma}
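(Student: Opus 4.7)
The plan is to test the hypothesis against the family of cone functions $v_t$ from \eqref{vt def} with $t>0$. Each $v_t$ lies in $\fconvfs$ since $v_t\equiv 0$ on the neighborhood $\{|x|<t\}$ of the origin; moreover $v_{t_m}\to v_t$ uniformly on $\R^n$ whenever $t_m\to t>0$, and $v_{t_m}\to 0$ uniformly on compact sets whenever $t_m\to\infty$, so these are epi-convergent sequences in $\fconvfs$. The case $j=n$ is immediate: a direct computation---using that the smooth part of $\Hess v_t$ contributes $[\Hess v_t]_n=0$ and only the singular sphere $t\,\sn$ contributes---gives $\int\zeta(|x|)\d\Phi_n^n(v_t,x)=\kappa_n\zeta(t)$, so applying the hypothesis to the constant sequence $v_m=v_t$ yields $\zeta_m(t)\to\zeta(t)$ for every $t>0$.

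For $1\le j\le n-1$, Lemma~\ref{le:calc_ind_bn_tx_theta_i} translates the hypothesis into convergence of the auxiliary functions $\rho_m,\rho$ from \eqref{eq:def_rho} associated to $\zeta_m,\zeta$. Applied to the constant sequence $v_m=v_t$ it gives $\rho_m(t)\to\rho(t)$ for every $t>0$; applied to $v_m=v_{t_m}$ with $t_m\to t>0$, it gives the continuous convergence $\rho_m(t_m)\to\rho(t)$; and applied to $v_m=v_{t_m}$ with $t_m\to\infty$, using $\Phi_j^n(0,\cdot)=0$ for $j\ge 1$, it gives $\rho_m(t_m)\to 0$.

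A subsequence argument upgrades these to uniform convergence $\rho_m\to\rho$ on $[\delta,\infty)$ for every $\delta>0$. If this failed we could find $\epsilon>0$ and a pair $(m_k,t_k)$ with $t_k\in[\delta,\infty)$ and $|\rho_{m_k}(t_k)-\rho(t_k)|\ge\epsilon$; passing to a further subsequence, either $t_k\to t^*\in[\delta,\infty)$, contradicting $\rho_{m_k}(t_k)\to\rho(t^*)$, or $t_k\to\infty$, contradicting both $\rho_{m_k}(t_k)\to 0$ and $\rho(t_k)\to 0$ (the latter by compact support of $\rho$).

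To conclude, I would invert \eqref{eq:def_rho}: the linear ODE $\rho(t)=-t\,\eta'(t)+(n-j)\eta(t)$ with boundary value $\eta(+\infty)=0$ (from the compact support of $\eta$) yields $\eta(t)=t^{n-j}\int_t^\infty\rho(s)s^{-(n-j+1)}\d s$, and hence
\[
\zeta(t)=\frac{\rho(t)}{t^{n-j}}-(n-j)\int_t^\infty\frac{\rho(s)}{s^{n-j+1}}\d s,
\]
with the analogous identity for $\zeta_m,\rho_m$. For each fixed $t>0$, the estimate
\[
\Bigl|\int_t^\infty\frac{(\rho_m-\rho)(s)}{s^{n-j+1}}\d s\Bigr|\le\frac{\|\rho_m-\rho\|_{L^\infty([t,\infty))}}{(n-j)\,t^{n-j}}
\]
combined with the pointwise limit $\rho_m(t)\to\rho(t)$ yields $\zeta_m(t)\to\zeta(t)$. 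The main obstacle is the uniform tail convergence of $\rho_m$: mere pointwise convergence would not suffice to pass to the limit in the tail integral, and it is precisely the full strength of the hypothesis---its applicability to non-constant sequences $v_m=v_{t_m}$, both for $t_m$ bounded and $t_m\to\infty$---that supplies the continuous convergence and the asymptotic decay needed for the upgrade.
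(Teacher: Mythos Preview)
Your proof is correct and follows essentially the same route as the paper's: test the hypothesis against the cone functions $v_t$, use Lemma~\ref{le:calc_ind_bn_tx_theta_i} to convert the integral identity into convergence $\rho_m\to\rho$, obtain uniform control at infinity from the hypothesis applied to $v_{t_m}$ with $t_m\to\infty$, and invert the relation between $\zeta$ and $\rho$. The only cosmetic differences are that the paper handles $j=n$ with the piecewise-linear test function $\bar v$ (giving a Dirac measure), establishes a uniform support bound for the $\rho_m$ via a homogeneity-scaling trick ($r_m v_{t_m}$) rather than your direct continuous-convergence upgrade to $L^\infty([\delta,\infty))$, and passes through $\eta_m\to\eta$ before recovering $\zeta_m\to\zeta$ instead of using your explicit integral inversion formula.
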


\begin{proof}
	In case $j=n$, we may simply choose $v_m(x)=\bar{v}(x)=\frac 12 \sum_{j=1}^n  |x_j-\bar{x}_j|$ with $\bar{x}_j\in \R\backslash\{0\}$. It now follows from \eqref{eq:phi_n_n_delta} and \eqref{identity 2} that $\lim_{m\to\infty}\zeta_m(t)=\zeta(t)$ for $t=\sqrt{\bar x_1^2 + \dots + \bar x_n^2}>0$.

	So let $1\le j <n$. For $t>0$, we will consider the functions $v_t$ defined in (\ref{vt def}). Note, that $v_t\equiv 0$ on $t \Bn$ and hence $v_t\in\fconvfs$. It follows from Lemma~\ref{le:calc_ind_bn_tx_theta_i} that
	$$
	\int_{\R^n}\zeta_{m}(|x|)\d\Phi^{n}_j(v_{t},x)=\kappa_n\binom{n}{j}\rho_m(t),
	$$
	where $\rho_m\in C_c([0,\infty))$ is defined by
	\begin{equation}
	\label{eq:rho_m_eta_m}
	\rho_m(t):=(n-j)\eta_m(t)-t\eta_m'(t)
	\end{equation}
    and $\eta_m\in C_c([0,\infty))$ is in turn defined by
	$$
	\eta_m(t):=\int_t^{\infty} r^{n-j-1}\zeta_m(r)\d r.
	$$
	We will denote by $\rho$ and $\eta$ the corresponding functions associated to $\zeta$ according to the same relations.
	For every $t> 0$ and for every sequence $t_m$, with $t_m>0$, converging to $t$, we have $v_{t_m} \to v_t$ and hence, by the assumptions of the present lemma,
	$$
	\lim_{m\to\infty}\kappa_n\binom{n}{j}\rho_m(t_m)
=\lim_{m\to\infty}\int_{\R^n}\zeta_{m}(|x|)\d\Phi^{n}_j(v_{t_m},x)=\int_{\R^n}\zeta(|x|)\d\Phi^{n}_j(v_{t},x)=\kappa_n\binom{n}{j}\rho(t).
	$$
	In other words, the sequence of functions $\rho_m$ converges uniformly to $\rho$ on compact subsets of $(0,\infty)$.

	Next, we will show that there exists $\bar t>0$ such that 
	\begin{equation}\label{rhom}
	\rho_m(t)=0\qquad\mbox{for every $t>\bar t$ and $m\in\N$},
	\end{equation}
	which means that the supports of the functions $\rho_m$ are uniformly bounded.
	Assume that there exists a sequence $t_m\to\infty$ such that
	$$
	\rho_m(t_m)>0\qquad\mbox{for every $m\in\N$}.
	$$
	For $m\in\N$, set 
	$$
	r_m:=\rho_m(t_m)^{-\frac1j}
	$$
	and 
	$$
	v_{m}:=r_m v_{t_m}.
	$$
	Since $v_{t_m}(x)=0$ whenever $|x|\leq t_m$, it follows that $v_m$ converges to $v\equiv0\in\fconvfs$ as $m\to\infty$. On the other hand, by homogeneity,
	$$
	\int_{\R^n}\zeta_{m}(|x|)\d\Phi_j^{n}(v_{m},x)=\kappa_n \binom{n}{j} r_m^j\rho_m(t_m) =\kappa_n \binom{n}{j}\qquad\text{for $m\in\N,$}
	$$
	which contradicts the convergence to $v\equiv 0$. Thus (\ref{rhom}) holds.
	
    Let $\bar t>0$ be such that $\rho_m(t)=0$ for every $t\ge\bar t$ and $m\in\N$. Since $\eta_m$ has compact support, there exists $t_m>\bar t$ such that $\eta_m(t_m)=0$. Moreover, $\eta_m$ solves the Cauchy problem, 
	\begin{align*}
	&(n-j)\eta_m(t)-t\eta'_m(t)=0\qquad \mbox{in $(\bar t,\infty)$},\\
	&\eta_m(t_m)=0.
	\end{align*}
	This implies that $\eta_m\equiv0$ in $(\bar t,\infty)$. As $\bar t$ is independent of $m$, we deduce that the supports of the functions $\eta_m$ are uniformly bounded as well.   
	
	Equation \eqref{eq:rho_m_eta_m} implies
	$$
	-\frac{\d}{\d t}\left(
	\frac{\eta_m(t)}{t^{n-j}}
	\right)=\frac{\rho_m(t)}{t^{n-j+1}}\qquad\mbox{for $m\in\N$ and $t>0$}.
	$$
	Therefore,
	$$
	-\frac{\d}{\d t}\left(
	\frac{\eta_m(t)}{t^{n-j}}
	\right)
	$$
	converges uniformly to
	$$
	\frac{\rho(t)}{t^{n-j+1}}
	$$
	on compact subsets of $(0,\infty)$. Set 
	$$
	\bar\eta_m(t):=\frac{\eta_m(t)}{t^{n-j}}
	$$
	for $t>0$ and $m\in\N$. As the supports of the functions $\eta_m$ are uniformly bounded, there exists $t_1>0$ such that $\bar\eta_m(t_1)=0$ and $\rho_m(t_1)=0$ for every $m\in\N$. We deduce that $\bar\eta_m$ converges to a primitive of 
	$$
	-\frac{\rho(t)}{t^{n-j+1}}
	$$
	which vanishes at $t_1$. But since 
	$$
	\frac{\eta(t)}{t^{n-j}}
	$$
	has the same properties, it follows that $\eta_m$ converges to $\eta$ uniformly on compact subsets of $(0,\infty)$.
    We also have, from the definition of $\eta_m$ and $\rho_m$,
    \begin{equation}
    \label{eq:zeta_m_rho_m_eta_m}
    t^{n-j} \zeta_m(t)=\rho_m(t)-(n-j)\eta_m(t)
    \end{equation}
    for every $t>0$. Analogously,
    $$
    t^{n-j}\zeta(t)=\rho(t)-(n-j)\eta(t)
    $$
    for every $t>0$. Hence, passing to the limit as $m\to\infty$ in \eqref{eq:zeta_m_rho_m_eta_m}, we obtain the statement of the lemma. 
\end{proof}

\subsection{Proof of Theorem \ref{dthm:hadwiger_convex_functions}}

For $\zeta_0\in\Had{0}{n}, \ldots, \zeta_n\in\Had{n}{n}$,  we obtain from Theorem~\ref{dual main one way}  that 
$$v\mapsto \oZZ{0}{\zeta_0}^*(v)+\cdots+\oZZ{n}{\zeta_n}^*(v)$$
defines a continuous, dually epi-translation and rotation invariant valuation on $\fconvf$. 

Conversely, let $\oZ:\fconvf\to\R$ be a continuous, dually epi-translation and rotation invariant valuation. We want to show that there are $\zeta_0\in\Had{0}{n}, \ldots, \zeta_n\in\Had{n}{n}$ such that
$$\oZ(v)= \oZZ{0}{\zeta_0}^*(v)+\cdots+\oZZ{n}{\zeta_n}^*(v)$$
for every $v\in\fconvf$. By Theorem~\ref{thm:mcmullen_cvx_functions dual}, there is a homogeneous decomposition of $\oZ$. Therefore, we may assume that $\oZ$ is homogeneous of degree $l$ with $l\in\{0,\dots,n\}$. Since the cases $l=0,1,n$ are settled in Theorem~\ref{thm:class_0-hom dual}, Proposition~\ref{prop:class_1-hom dual} and Theorem~\ref{thm:class_n-hom dual}, respectively,  we may assume that $2\le l\le n-1$.
In particular, we already have a complete classification for the case $n=2$. Therefore, we assume that $n\ge 3$. 

Recall that $v\in\fconvfs$ if $v\in\fconvf$ and $v$ is of class $C^2$ in a neighborhood of the origin. 
This space  is dense in $\fconvf$ and by  Lemma \ref{preparatory lemma 4}, we have for $\zeta\in\Had{l}{n}$,
\begin{equation}\label{new statement}
\oZZ{l}{\zeta}^*(v)=\int_{\R^n} \zeta(|x|) \d\Phi_l^{n}(v,x)
\end{equation}
for every $v\in\fconvfs$.
\goodbreak

We proceed by induction on the dimension $n$. By (\ref{new statement}), we may use the induction hypothesis in the following form. Recall that the statement is true in the two-dimensional case.

\begin{assumption}\label{induction assumption}
	Let $2\le k\le n-1$. If\, $\oZ:\fconvfk\to\R$ is a continuous, dually epi-translation and rotation invariant valuation, then there exist $\zeta_i\in\Had{i}{k}$, for $0\le i \le k$, such that
	$$
	\oZ(v)= \sum_{i=0}^k\int_{\R^n} \zeta_i(|x|) \d\Phi_i^{k}(v,x)
	$$
	for every $v\in\fconvfsk$. 
\end{assumption}

\goodbreak
The main ingredient of our proof is the following result (which we prove using the induction hypo\-thesis).

\begin{proposition}\label{prop:one_zeta}
	If $\,\oZ:\fconvf\to\R$ is a continuous, dually epi-translation and rotation invariant valuation that is homogeneous of degree $l$ with $2\le l\le n-1$, then
	there exists $\zeta\in\Had{l}{n}$ such that 
	$$\oZ(v_E+v_F)=\int_{\R^n} \zeta(|x|)\d\Phi_l^{n}(v_E+v_F,x)$$
	for every pair of orthogonal and complementary subspaces $E,F$ with $1\le \dim E <n$ and every pair of functions $v_E\in\fconvfsE$ and $v_F\in\fconvfsF$.
\end{proposition}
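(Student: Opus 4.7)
The plan is to first identify the function $\zeta$ by specializing to an orthogonal split with a one-dimensional factor, then verify the representation for arbitrary splits. Fix orthogonal complementary subspaces $E, F \subset \R^n$ with $\dim F = 1$ and $\dim E = n-1 \ge 2$ (possible as $n \ge 3$). For each $v_F \in \fconvfF$, the functional $\oZ_{v_F}(v_E) := \oZ(v_E + v_F)$ on $\fconvfE$ is a continuous, dually epi-translation and $\SO(E)$-invariant valuation; the $\SO(E)$-invariance follows from the $\SO(n)$-invariance of $\oZ$ by lifting any $\vartheta \in \SO(E)$ to $\tilde\vartheta \in \SO(n)$ fixing $F$ pointwise. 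By the Induction Hypothesis there exist $\zeta_i^{v_F} \in \Had{i}{n-1}$, uniquely determined for $i \ge 1$ by Lemma \ref{preparatory lemma 1} combined with a scaling argument, such that
\begin{equation*}
\oZ(v_E + v_F) = \sum_{i=0}^{n-1} \int_E \zeta_i^{v_F}(|x|) \d\Phi_i^{n-1}(v_E, x) \quad\text{for every } v_E \in \fconvfsE. \quad(\ast)
\end{equation*}

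I then reduce $(\ast)$ to the contributions at $i \in \{l-1, l\}$. Scaling $v_E, v_F$ simultaneously by $\lambda > 0$ in $(\ast)$ and using homogeneity of $\oZ$ yields $\zeta_i^{\lambda v_F} = \lambda^{l-i}\zeta_i^{v_F}$ for $i \ge 1$; letting $\lambda \to 0^+$ together with continuity of $\oZ$ forces $\zeta_i^{v_F} \equiv 0$ for $i > l$. For $i < l-1$ I apply Corollary \ref{cor:1d} to the valuation $v_F \mapsto \oZ(v_E + v_F)$ on $\fconvfe$ for $\SO(E)$-invariant $v_E$ (reflection invariance in $F$ is obtained by combining the $F$-reflection with a radial-preserving $E$-reflection inside $\SO(n)$); since this valuation has degree at most $1$ in $v_F$, matching $\lambda$-homogeneous components in $(\ast)$ forces the $(l-i)$-homogeneous contributions with $l-i \ge 2$ to vanish. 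For $1 \le i \le l-2$, specializing to $v_E = v_t^E$ from \eqref{vt def} and applying Lemma \ref{le:calc_ind_bn_tx_theta_i} gives $\rho_i^{v_F}(t) = 0$, whose only compactly supported solution $\eta_i^{v_F}$ of the associated ODE is zero, whence $\zeta_i^{v_F} \equiv 0$. For $i = 0$ the same polynomial argument shows that $\int_E \zeta_0^{v_F}(|x|)\d x$ vanishes, which suffices since only this moment enters $(\ast)$. Hence $(\ast)$ reduces to
\begin{equation*}
\oZ(v_E + v_F) = \int_E \zeta_l^0(|x|) \d\Phi_l^{n-1}(v_E, x) + \int_E \zeta_{l-1}^{v_F}(|x|) \d\Phi_{l-1}^{n-1}(v_E, x), \quad(\ast\ast)
\end{equation*}
with $\zeta_l^{v_F} = \zeta_l^0$ independent of $v_F$ and $\zeta_{l-1}^{v_F}$ homogeneous of degree $1$ in $v_F$.

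I identify $\zeta$ via the Abel transform of Section 2.5. Decomposing $\int_{\R^n}\zeta(|z|)\d\Phi_l^n(v_E + v_F, z)$ using Lemma \ref{le:decompose_hessian_measure} and matching with $(\ast\ast)$, the $v_F$-independent piece requires $\zeta_l^0(t) = \int_\R \zeta(\sqrt{t^2+y^2})\d y = 2\Abel\zeta(t)$; Lemma \ref{lemma Abel} in its case $k = 0$ ensures that $\zeta := \tfrac{1}{2}\Abel^{-1}\zeta_l^0$ is uniquely determined, and the conditions defining $\Had{l}{n-1}$ on $\zeta_l^0$ transfer to $\zeta \in \Had{l}{n}$. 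The $v_F$-dependent matching $\zeta_{l-1}^{v_F}(t) = \int_F \zeta(\sqrt{t^2+y^2})\d\Phi_1^1(v_F, y)$ is then verified by comparing $(\ast\ast)$ with Corollary \ref{cor:1d} applied in the $v_F$-direction for $\SO(E)$-invariant $v_E$, specializing to $v_E = v_t^E$, and inverting the resulting Abel-type equation; injectivity of the Abel transform produces the same $\zeta$.

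For arbitrary pairs $(E', F')$ with $\dim F' = 1$, the representation follows by $\SO(n)$-rotation invariance of $\oZ$ and the rotational covariance of $\Phi_l^n$. For pairs with $1 < \dim F' < n-1$, the same scheme applies: applying the Induction Hypothesis to $v_{E'} \mapsto \oZ(v_{E'} + v_{F'})$ yields a decomposition analogous to $(\ast)$ in dimension $\dim E'$, and analogous homogeneity, Corollary \ref{cor:1d}, and ODE arguments reduce the surviving indices to the range $\max(0, l - \dim F') \le i \le \min(\dim E', l)$. Matching each surviving kernel with the already-identified $\zeta$ via the corresponding Abel-type inversions completes the representation (the case $\dim E' = 1$ is handled by swapping $E'$ and $F'$). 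The principal obstacle is this final consistency across all pairs and indices, which is resolved systematically by the injectivity of the Abel transform (Lemma \ref{lemma Abel}) together with the uniqueness of the kernels from Lemma \ref{preparatory lemma 1}.
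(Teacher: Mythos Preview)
Your overall plan—apply the induction hypothesis on the large factor, strip away terms by homogeneity, and identify a single radial kernel—matches the paper's architecture. But two steps are genuine gaps, and they are exactly where the paper invests most of its work.

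\textbf{First gap: you never use a rotation that mixes $E$ and $F$.} Throughout your argument you only invoke $\SO(E)\times\O(F)$-invariance (lifting $\SO(E)$ to $\SO(n)$, and reflection in $F$). That subgroup is strictly smaller than $\SO(n)$, and it cannot force the two surviving pieces in $(\ast\ast)$ to come from a \emph{single} function $\zeta$. Concretely, after you set $\zeta:=\tfrac12\Abel^{-1}\zeta_l^0$, you must show $\zeta_{l-1}^{v_F}(s)=\int_F\zeta(\sqrt{s^2+|x_F|^2})\d\Phi_1^1(v_F,x_F)$. Your sentence ``inverting the resulting Abel-type equation; injectivity of the Abel transform produces the same $\zeta$'' does not do this: Corollary~\ref{cor:1d} only tells you $\zeta_{l-1}^{v_F}(s)=\int_F\psi_s(|x_F|)\d\Phi_1^1(v_F,x_F)$ for \emph{some} $\psi_s\in C_c([0,\infty))$, and nothing you have written links $\psi_s(t)$ to $\zeta(\sqrt{s^2+t^2})$. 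The paper closes this gap with Lemma~\ref{le:int_zeta_i_l-i_zeta_i+1_l-i-1}: it inserts one-dimensional test pieces $\tfrac{\alpha}{2}|x_k-t_\alpha|$ and $\tfrac{\beta}{2}|x_{k+1}-t_\beta|$ on either side of the split and then applies a rotation that swaps these two coordinates. This swap is the only place the full $\SO(n)$-invariance enters, and without an analogue of it your argument cannot succeed. The same omission makes your final paragraph—``the same scheme applies'' for general $(E',F')$ and the kernels match ``via the corresponding Abel-type inversions''—a restatement of the problem rather than a proof; the paper needs Lemmas~\ref{le:zeta_i} and~\ref{le:zeta^k}, plus evaluation on the common test function $\tfrac12\sum_{i=1}^l|x_i-\bar x_i|$, to show the $\zeta_{(k)}$ obtained from different splits all coincide.

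\textbf{Second gap: existence and regularity of $\zeta$.} You define $\zeta:=\tfrac12\Abel^{-1}\zeta_l^0$, but the inverse Abel transform as stated in the paper requires $\zeta_l^0\in C_b^1((0,\infty))$, which you have not established (you only know $\zeta_l^0\in\Had{l}{n-1}$). Even granting existence, the assertion ``the conditions defining $\Had{l}{n-1}$ on $\zeta_l^0$ transfer to $\zeta\in\Had{l}{n}$'' is nontrivial and unproved. The paper sidesteps this entirely: it applies the induction hypothesis a second time in the $F$-direction (Lemma~\ref{le:zeta_i_l-i}) to obtain a two-variable kernel $\zeta_{l-1,1}(s,t)$, shows in Lemma~\ref{le:zeta_i} that $\zeta_{l-1,1}(s,t)$ depends only on $\sqrt{s^2+t^2}$, and then reads off $\zeta(s)=\zeta_{l-1,1}(s,0)$ by evaluating at $v_F(x_F)=\tfrac12|x_F|$. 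Membership in $\Had{l}{n}=\Had{l-1}{n-1}$ is then immediate from the induction hypothesis, with no Abel inversion needed.
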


Before proving this result, we complete the proof of Theorem \ref{dthm:hadwiger_convex_functions}. Define
$$\otZ(v):=\oZ(v)-\oZZ{l}{\zeta}^*(v)$$
for $v\in\fconvf$. Note that $\otZ$ is a continuous, dually epi-translation and rotation invariant valuation. 
From Proposition \ref{prop:one_zeta} and the continuity of $\otZ$, we obtain that
	$$\otZ(v)=0$$
for all dual orthogonal cylinder functions $v\in\fconvf$.
Thus, it follows from Proposition~\ref{onehom dual} that $\tilde{\oZ}$ is homogeneous of degree $1$. Since by construction $\tilde{\oZ}$ is also homogeneous of degree $l>1$, this implies that $\tilde{\oZ}\equiv 0$ and thus,
	$$\oZ(v)=\oZZ{l}{\zeta}^*(v)$$
for every $v\in\fconvf$.   This completes the proof of Theorem \ref{dthm:hadwiger_convex_functions}.

\goodbreak
\subsection{Proof of Proposition \ref{prop:one_zeta}}

Since $\oZ$ is rotation invariant and the roles of $E$ and $F$ can be interchanged, we may assume that $E=\spa\{e_1,\ldots,e_k\}$, $F=\spa\{e_{k+1},\ldots,e_n\}$, and $\lceil \frac n2 \rceil \le k\le n-1$. Since $n\geq 3$, this also implies that $k\geq 2$.

\begin{lemma}
\label{le:zeta_i_v_F}
Let $v_F\in\fconvfF$. There exist $\zeta_{i,v_F}\in \Had{i}{k}$ for $0\leq i \leq k$   such that
\begin{equation}
\label{eq:z_ve_+_vf_=_sum_zeta_i_vf}
\oZ(v_E+v_F)=\sum_{i=0}^k \int_E \zeta_{i,v_F}(|x_E|)\d\Phi_i^k(v_E,x_E)
\end{equation}
for  every $v_E\in\fconvfsE$. Moreover, the map
$$v_F\mapsto \int_E \zeta_{0,v_F}(|x_E|)\d\Phi_{0}^k(v_E,x_E)$$
and the maps $v_F\mapsto \zeta_{i,v_F}(s)$ for $1\leq i \leq k$ and $s>0$ are continuous, dually epi-translation and $\O(n-k)$ invariant valuations for every $v_E\in\fconvfsE$.
\end{lemma}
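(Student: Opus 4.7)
The plan is to reduce the statement to the induction hypothesis by freezing $v_F$ and viewing the resulting functional as a valuation on $\fconvfE$ in dimension $k$. I would define $\oZ_{v_F}\colon\fconvfE\to\R$ by $\oZ_{v_F}(v_E):=\oZ(v_E+v_F)$. Using the pointwise identities $(v_E^1+v_F)\vee(v_E^2+v_F)=(v_E^1\vee v_E^2)+v_F$ and the analogous one for $\wedge$, together with the valuation property of $\oZ$, one sees that $\oZ_{v_F}$ is a valuation on $\fconvfE$. Continuity follows because $v_E^m+v_F$ epi-converges to $v_E+v_F$ whenever $v_E^m\to v_E$ in $\fconvfE$; dually epi-translation and rotation invariance on $E$ transfer to $\oZ_{v_F}$ by extending translations and rotations on $E$ to $\R^n$ via the identity on $F$. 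Since $2\le k\le n-1$, the induction hypothesis then produces $\zeta_{i,v_F}\in\Had{i}{k}$ for $0\le i\le k$ satisfying \eqref{eq:z_ve_+_vf_=_sum_zeta_i_vf} for every $v_E\in\fconvfsE$.

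For the valuation statement, I would invoke Theorem~\ref{thm:mcmullen_cvx_functions dual} applied to $\oZ_{v_F}$: it identifies the $i$-th summand in \eqref{eq:z_ve_+_vf_=_sum_zeta_i_vf} with the $i$-homogeneous component $(\oZ_{v_F})_i(v_E)$, which can be recovered from $\oZ_{v_F}$ by polynomial interpolation of $\lambda\mapsto\oZ_{v_F}(\lambda\sq v_E)$. Transferring the valuation property, continuity and dually epi-translation invariance of $\oZ$ through this interpolation shows that for each fixed $v_E\in\fconvfsE$, the map $v_F\mapsto(\oZ_{v_F})_i(v_E)$ is a continuous, dually epi-translation invariant valuation on $\fconvfF$. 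In particular, Theorem~\ref{thm:class_0-hom dual} forces $(\oZ_{v_F})_0(v_E)$ to be independent of $v_E$, so the map $v_F\mapsto\int_E\zeta_{0,v_F}(|x_E|)\d\Phi_0^k(v_E,x_E)$ inherits all these properties; its $\O(n-k)$ invariance follows by evaluating at $v_E\equiv 0$, which is $\O(E)$-invariant (see the next paragraph).

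For $1\le i\le k$, I would isolate $\zeta_{i,v_F}(s)$ by testing against the rotationally symmetric function $v_s\in\fconvfsE$ defined in dimension $k$ as in (\ref{vt def}). For $1\le i\le k-1$, Lemma~\ref{le:calc_ind_bn_tx_theta_i} applied in dimension $k$ gives
$$
(\oZ_{v_F})_i(v_s)=\kappa_k\binom{k}{i}\rho_{i,v_F}(s),
$$
where $\rho_{i,v_F}(s)=s^{k-i}\zeta_{i,v_F}(s)+(k-i)\eta_{i,v_F}(s)$ and $\eta_{i,v_F}(s)=\int_s^\infty r^{k-i-1}\zeta_{i,v_F}(r)\d r$; the endpoint $i=k$ is handled directly by observing that $\Phi_k^k(v_s,\cdot)$ is concentrated on $s\,\sk^{k-1}$ with total mass $\kappa_k$, giving $(\oZ_{v_F})_k(v_s)=\kappa_k\zeta_{k,v_F}(s)$. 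Since $v_s$ is $\O(E)$-invariant, the $\SO(n-k)$ invariance of $v_F\mapsto(\oZ_{v_F})_i(v_s)$ upgrades to $\O(n-k)$ invariance: given $\vartheta\in\O(F)\setminus\SO(F)$, extend $\vartheta$ to $\tilde\vartheta\in\O(n)$ by the identity on $E$, compose with an orientation-reversing reflection $\sigma\in\O(E)$ (which fixes $v_s$) to obtain $\sigma\tilde\vartheta\in\SO(n)$, and apply rotation invariance of $\oZ$. For $i=k$ this already yields the conclusion. For $1\le i\le k-1$, inverting the first-order linear ODE $-(s^{-(k-i)}\eta_{i,v_F})'=s^{-(k-i+1)}\rho_{i,v_F}$ using the compact support of $\eta_{i,v_F}$ produces $\eta_{i,v_F}(s)=s^{k-i}\int_s^\infty t^{-(k-i+1)}\rho_{i,v_F}(t)\d t$, so that
$$
\zeta_{i,v_F}(s)=s^{-(k-i)}\bigl(\rho_{i,v_F}(s)-(k-i)\eta_{i,v_F}(s)\bigr)
$$
is a linear functional of $\rho_{i,v_F}$, which propagates all four required properties pointwise in $s$. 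The main obstacle I anticipate is obtaining enough uniform control on the supports of the $\rho_{i,v_F^m}$ along an epi-convergent sequence $v_F^m\to v_F$ so that the inversion integral above commutes with the limit; I would address this by an argument parallel to the support-boundedness step in the proof of Lemma~\ref{preparatory lemma 2}.
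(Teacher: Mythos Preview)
Your overall strategy is correct and the first half (freezing $v_F$, applying the induction hypothesis, identifying the summands as the homogeneous components via Theorem~\ref{thm:mcmullen_cvx_functions dual}) matches the paper's proof exactly. One small slip: in $\fconvfE$ the homogeneous decomposition is with respect to ordinary scaling $\lambda v_E$, not epi-multiplication $\lambda\sq v_E$.

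Where you diverge is in the extraction of the properties of $v_F\mapsto\zeta_{i,v_F}(s)$ for $1\le i\le k$. You test against the specific family $v_s$, read off $\rho_{i,v_F}(s)$ via Lemma~\ref{le:calc_ind_bn_tx_theta_i}, and then invert the first-order ODE to recover $\zeta_{i,v_F}$ as a linear functional of $\rho_{i,v_F}$; you correctly note that continuity then hinges on uniform support bounds for $\rho_{i,v_F^m}$ along a convergent sequence, to be handled as in Lemma~\ref{preparatory lemma 2}. This works, but it is essentially an inline reproof of Lemma~\ref{preparatory lemma 2}. The paper takes a shorter route: for the valuation, dually epi-translation and $\O(n-k)$ invariance properties it simply observes that the relevant identity for the \emph{integrals} $\int_E\zeta_{i,v_F}(|x_E|)\d\Phi_i^k(v_E,x_E)$ holds for all $v_E\in\fconvfsE$, and then invokes the injectivity statement of Lemma~\ref{preparatory lemma 1} to deduce the same identity for $\zeta_{i,v_F}(s)$ pointwise; for continuity it notes that $(v_E,v_F)\mapsto\int_E\zeta_{i,v_F}(|x_E|)\d\Phi_i^k(v_E,x_E)$ is jointly continuous and appeals directly to Lemma~\ref{preparatory lemma 2}. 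The paper's route is more economical because it reuses these two preparatory lemmas as black boxes, whereas your route reproduces their content in situ; your argument has the minor advantage of being more explicit about how $\zeta_{i,v_F}(s)$ is actually recovered from the values of $\oZ$.
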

\begin{proof}
    For $v_F\in \fconvfF$, the map
	$$v_E\mapsto \oZ(v_E+v_F)$$
	is a continuous, dually epi-translation and $\SO(k)$ invariant valuation on $\fconvfE$. Since $k\geq 2$, it follows from the induction hypothesis that there exist functions $\zeta_{i, v_F}\in \Had{i}{k}$ for $0\leq i \leq k$, depending on $v_F\in\fconvfF$, such that (\ref{eq:z_ve_+_vf_=_sum_zeta_i_vf}) holds
	for every $v_E\in \fconvfsE$.
	
	Note, that the map $v_E \mapsto \int_{E} \zeta_{i,v_F}(|x_E|)\d\Phi_i^{k}(v_E,x_E)$ is homogeneous of degree $i$. 
Since the map $v_F \mapsto \oZ(v_E+v_F)$ is a continuous,  dually epi-translation and $\SO(n-k)$ invariant valuation on $\fconvfF$ for every $v_E\in \fconvfsE$, it follows from \eqref{eq:z_ve_+_vf_=_sum_zeta_i_vf} and homogeneity that also
	$$v_F \mapsto \int_E \zeta_{i,v_F}(|x_E|) \d \Phi_i^{k}(v_E,x_E)$$
is a valuation with the same properties on $\fconvfF$ for every $v_E\in\fconvfsE$ and $0\leq i \leq k$. This map is actually $\O(n-k)$ invariant, since $\oZ$ is rotation invariant and since \eqref{eq:z_ve_+_vf_=_sum_zeta_i_vf} shows that the map $v_E\mapsto \oZ(v_E+v_F)$ is $\O(k)$ invariant.

	Fix $i$ with $1\leq i \leq k$ and let $v_{F,1},v_{F,2}\in\fconvfF$ be such that also $v_{F,1}\wedge v_{F,2}\in\fconvfF$. By the valuation property, we have
	$$\int_E \left( \zeta_{i,v_{F,1} \vee v_{F,2}}(|x_E|) + \zeta_{i,v_{F,1} \wedge v_{F,2}}(|x_E|) - \zeta_{i,v_{F,1}}(|x_E|) - \zeta_{i,v_{F,2}}(|x_E|) \right) \d \Phi_i^{k}(v_E,x_E)=0$$
	for every $v_E\in\fconvfsE$. Hence, it follows from Lemma~\ref{preparatory lemma 1} that $v_F\mapsto \zeta_{i,v_F}(s)$ defines a valuation on $\fconvfF$ for every $s>0$. Similarly, it can be seen that this valuation is dually epi-translation and $\O(n-k)$ invariant. To prove that it is also continuous, note that the map
	\begin{equation*}\label{added1}
	(v_E,v_F)\mapsto\int_E\zeta_{i,v_F}(|x_E|)\d \Phi_i^{k}(v_E,x_E)
	\end{equation*}
	is jointly continuous in the two variables $v_E\in\fconvfsE$ and $v_F\in\fconvfF$. Hence continuity follows from Lemma \ref{preparatory lemma 2}.
\end{proof}

\goodbreak
In order to avoid an unnecessary distinction of  cases,  we set $\Phi_j^{m} \equiv 0$ if $j>m$ or $j<0$.

\begin{lemma}
    \label{le:zeta_i_l-i}
	For $0\leq i \leq k$ with $(l+k-n) \leq i\le  l$, there exist functions $\zeta_{i,l-i}:(0,\infty)^2\to \R$ such that
	\begin{align*}
	\oZ(v_E+v_F)=&\int_F \int_E \zeta_{0,l}(|x_E|,|x_F|)\d\Phi_0^{k}(v_E,x_E)\d\Phi_l^{n-k}(v_F,x_F)\\
	&+ \sum_{i=1 \vee (l+k-n)}^{k\wedge l} \int_E \int_F \zeta_{i,l-i}(|x_E|,|x_F|)\d\Phi_{l-i}^{n-k}(v_F,x_F) \d\Phi_{i}^{k}(v_E,x_E)
	\end{align*}
	for every $v_E\in\fconvfsE$ and $v_F\in\fconvfsF$. Moreover, $\zeta_{0,l}(\cdot,t)\in \Had{0}{k}$ for $t>0$ and
	$$\int_E \zeta_{0,l}(|x_E|,\cdot) \d\Phi_0^k(v_E,x_E)\in \Had{l}{n-k}$$
for every $v_E\in\fconvfsE$,
	while, for  $i\geq 1$, we have $\zeta_{i,l-i}(s,\cdot)\in \Had{l-i}{n-k}$  for $s>0$ and
	$$
	\int_{F} \zeta_{i,l-i}(\cdot,|x_F|)\d\Phi_{l-i}^{n-k}(v_F,x_F) \in \Had{i}{k}
	$$
for every $v_F\in\fconvfsF$. For $i=l+k-n$ with $i\ge 1$, the statements even hold for every $v_F\in\fconvfF$.
\end{lemma}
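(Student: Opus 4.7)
The plan is to iterate the structure obtained in Lemma~\ref{le:zeta_i_v_F}: first fix $v_F$ and expand $\oZ(v_E + v_F)$ as a sum of $\int_E \zeta_{i,v_F}(|x_E|)\d\Phi_i^k(v_E,x_E)$, then apply the induction hypothesis to the coefficient valuations $v_F\mapsto \zeta_{i,v_F}(s)$ on $\fconvfF$ to peel off the $v_F$-dependence, using the homogeneity of $\oZ$ to control which bihomogeneous terms actually appear.

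Concretely, Lemma~\ref{le:zeta_i_v_F} supplies $\zeta_{i,v_F}\in\Had{i}{k}$ with $\oZ(v_E+v_F)=\sum_{i=0}^k\int_E \zeta_{i,v_F}(|x_E|)\d\Phi_i^k(v_E,x_E)$. Substituting $(\lambda v_E,\lambda v_F)$ and using homogeneity of $\oZ$ of degree $l$ together with $\Phi_i^k(\lambda v_E,\cdot)=\lambda^i\Phi_i^k(v_E,\cdot)$ gives
\[\sum_{i=0}^k \lambda^i \int_E\bigl(\zeta_{i,\lambda v_F}(|x_E|)-\lambda^{l-i}\zeta_{i,v_F}(|x_E|)\bigr)\d\Phi_i^k(v_E,x_E)=0\]
for all $\lambda>0$. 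Further rescaling $v_E$ by $\mu>0$ turns the left-hand side into a polynomial in $\mu$ of degree $\le k$, so each coefficient must vanish and the identity holds separately for each $i$. For $i\ge 1$, Lemma~\ref{preparatory lemma 1} then upgrades this to the pointwise identity $\zeta_{i,\lambda v_F}(s)=\lambda^{l-i}\zeta_{i,v_F}(s)$, so the valuation $v_F\mapsto\zeta_{i,v_F}(s)$ on $\fconvfF$ (which is continuous, dually epi-translation and $\O(n-k)$ invariant by Lemma~\ref{le:zeta_i_v_F}) is homogeneous of degree $l-i$. Letting $\lambda\to 0^+$ and using continuity at the zero function forces $\zeta_{i,v_F}\equiv 0$ whenever $i>l$, and Theorem~\ref{thm:mcmullen_cvx_functions dual} applied on $\fconvfF$ likewise forces $\zeta_{i,v_F}\equiv 0$ whenever $l-i>n-k$, i.e.\ $i<l+k-n$.

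For each surviving index $1\vee(l+k-n)\le i\le k\wedge l$, I would apply the induction hypothesis (or Corollary~\ref{cor:1d} if $n-k=1$) to the valuation $v_F\mapsto\zeta_{i,v_F}(s)$ and extract its degree-$(l-i)$ homogeneous component via Theorem~\ref{thm:mcmullen_cvx_functions dual}, obtaining $\zeta_{i,l-i}(s,\cdot)\in\Had{l-i}{n-k}$ with $\zeta_{i,v_F}(s)=\int_F\zeta_{i,l-i}(s,|x_F|)\d\Phi_{l-i}^{n-k}(v_F,x_F)$ for $v_F\in\fconvfsF$. Substituting back yields the $i$th summand of the claimed formula, and $\int_F\zeta_{i,l-i}(\cdot,|x_F|)\d\Phi_{l-i}^{n-k}(v_F,x_F)=\zeta_{i,v_F}\in\Had{i}{k}$ is automatic. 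The $i=0$ case is handled separately, since $\Phi_0^k=\hm^k$ does not separate integrands: the induction hypothesis applied to the degree-$l$ valuation $v_F\mapsto\int_E\zeta_{0,v_F}(|x_E|)\d x_E$ on $\fconvfF$ produces either $0$ (if $l>n-k$, in which case I set $\zeta_{0,l}\equiv 0$) or $\int_F\tilde\gamma_l(|x_F|)\d\Phi_l^{n-k}(v_F,x_F)$ for some $\tilde\gamma_l\in\Had{l}{n-k}$; in the latter case I set $\zeta_{0,l}(s,t):=f(s)\tilde\gamma_l(t)$ for a fixed $f\in\Had{0}{k}$ with $\omega_k\int_0^\infty s^{k-1}f(s)\d s=1$, so that Fubini gives the identity and the membership $\int_E\zeta_{0,l}(|x_E|,\cdot)\d x_E=\tilde\gamma_l\in\Had{l}{n-k}$ is immediate.

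The final point is the extension from $\fconvfsF$ to $\fconvfF$ claimed for the boundary index $i=l+k-n\ge 1$: here $l-i=n-k$, so $\zeta_{i,l-i}(s,\cdot)\in\Had{n-k}{n-k}\subset C_c([0,\infty))$, and Lemma~\ref{le:hessian_val_zeta_cont_comp_supp} makes the corresponding Hessian-measure integral continuous on all of $\fconvfF$; density of $\fconvfsF$ in $\fconvfF$ together with continuity of $\oZ$ then extends the identity. The main technical obstacle is the homogeneity extraction in the second paragraph --- separating the $i$-degree contributions by the polynomial-in-$\mu$ argument and upgrading the resulting integral equality to a pointwise one via Lemma~\ref{preparatory lemma 1}. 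Once the $\zeta_{i,v_F}$ are known to be homogeneous of degree $l-i$, the remainder reduces to a direct application of the induction hypothesis combined with the homogeneous decomposition.
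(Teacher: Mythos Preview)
Your proof is correct and follows essentially the same strategy as the paper's: start from Lemma~\ref{le:zeta_i_v_F}, apply the induction hypothesis (resp.\ Corollary~\ref{cor:1d} when $n-k=1$) to the valuation in the $v_F$-variable, and use the $l$-homogeneity of $\oZ$ to pin down which bihomogeneous components survive. The only noteworthy difference is one of ordering: the paper applies the induction hypothesis \emph{first} to $v_F\mapsto\zeta_{i,v_F}(s)$ (and to $v_F\mapsto\int_E\zeta_{0,v_F}\d x_E$), obtaining a full double sum over $(i,j)$, and only at the end invokes $l$-homogeneity of $\oZ$ to kill all terms with $i+j\ne l$; you instead establish up front, via the polynomial-in-$\mu$ argument and Lemma~\ref{preparatory lemma 1}, that $v_F\mapsto\zeta_{i,v_F}(s)$ is already homogeneous of degree $l-i$, so the induction hypothesis directly yields a single term. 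Your route avoids carrying the double sum but requires the extra separation step; the paper's route is marginally shorter since the $l$-homogeneity reduction at the end is immediate. One small imprecision: in your final paragraph the density extension for $i=l+k-n$ uses continuity of $v_F\mapsto\zeta_{i,v_F}(s)$ (from Lemma~\ref{le:zeta_i_v_F}), not directly the continuity of $\oZ$.
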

\begin{proof}
    By Lemma~\ref{le:zeta_i_v_F} combined with the induction hypothesis and Corollary~\ref{cor:1d}, there exist functions $\tilde{\zeta}_{i,j,s}\in \Had{j}{n-k}$ for every $1\leq i \leq k$ and $0\leq j \leq n-k$, depending on $s>0$, such that
	$$\zeta_{i,v_F}(s) = \sum_{j=0}^{n-k} \int_F \tilde{\zeta}_{i,j,s}(|x_F|) \d \Phi_j^{n-k}(v_F,x_F)$$
for every $v_F\in\fconvfsF$. Since the function $s\mapsto \zeta_{i,v_F}(s)$ belongs to $\Had{i}{k}$ and each of the summands on the right-hand side is homogeneous of a different degree in $v_F$, it follows that also the functions
	$$s\mapsto \int_F \tilde{\zeta}_{i,j,s}(|x_F|)\d\Phi_j^{n-k}(v_F,x_F)$$
belong to $\Had{i}{k}$ for every $v_F\in\fconvfsF$ and $0\le j\le n-k$. Note, that since both $\zeta_{i,v_F}$ and the last integral for the case $j=n-k$ are well-defined for every $v_F\in\fconvfF$, the last statement also holds for every $v_F\in\fconvf$ in this case. We remark that this also follows from Theorem~\ref{thm:mcmullen_cvx_functions dual} and Corollary~\ref{cor:class_n-hom dual}.
	Hence, there exist functions $\zeta_{i,j}:(0,\infty)^2\to\R$ with $\zeta_{i,j}(s,\cdot)\in \Had{j}{n-k}$ for every $s>0$ and $\int_F \zeta_{i,j}(\cdot,|x_F|) \d\Phi_j^{n-k}(v_F,x_F)\in \Had{i}{k}$ for every $v\in\fconvfsF$ such that
	$$\tilde{\zeta}_{i,j,s}(t)=\zeta_{i,j}(s,t)$$
	for every $s, t>0$. Moreover, $\int_F \zeta_{i,n-k}(\cdot,|x_F|) \d\Phi_{n-k}^{n-k}(v_F,x_F)\in \Had{i}{k}$ for every $v\in\fconvfF$.

	For the case $i=0$, note that
	$$\int_E \zeta_{0,v_F}(|x_E|) \d \Phi_0^{k}(v_E,x_E)=\int_E \zeta_{0,v_F}(|x_E|) \d x_E = \tilde\zeta_{0,v_F}$$
is a constant that is independent of $v_E$. By Lemma~\ref{le:zeta_i_v_F} the map $v_F \mapsto \tilde\zeta_{0,v_F}$ is a continuous, dually epi-translation and $\O(n-k)$ invariant valuation on $\fconvfF$. Therefore, by the induction hypothesis,
there exist functions $\tilde{\zeta}_{0,j}\in\Had{j}{n-k}$ for every $0\leq j \leq n-k$ such that
	$$\tilde\zeta_{0,v_F} = \sum_{j=0}^{n-k}\int_F \tilde{\zeta}_{0,j}(|x_F|) \d \Phi_j^{n-k}(v_F,x_F)$$
	for every $v_F\in\fconvfsF$. It is now possible to find (non-unique) functions $\zeta_{0,j}:(0,\infty)^2\to\R$ such that $\zeta_{0,j}(\cdot,t)\in \Had{0}{k}$ for every $t>0$ and
	$$\tilde{\zeta}_{0,j}(t) = \int_E \zeta_{0,j}(|x_E|,t) \d x_E$$
	for every $t>0$ and $0\leq j \leq n-k$.

	Combining the cases $i>0$ and $i=0$, we obtain that
	\begin{align*}
	\oZ(v_E+v_F) = &\sum_{j=0}^{n-k} \int_F \int_E \zeta_{0,j}(|x_E|,|x_F|)\d \Phi_0^{k}(v_E,x_E)\d\Phi_j^{n-k}(v_F,x_F)\\
	&+\sum_{i=1}^k \sum_{j=0}^{n-k} \int_E \int_F \zeta_{i,j}(|x_E|,|x_F|) \d\Phi_j^{n-k}(v_F,x_F) \d\Phi_i^{k}(v_E,x_E)
	\end{align*}
	for every $v_E\in\fconvfsE$ and $v_F\in\fconvfsF$. As $\oZ$ is homogeneous of degree $l$, and $\Phi_{i}^k(v_E,\cdot)$ and $\Phi_{j}^{n-k}(v_F,\cdot)$ in the sum 
	on the right side of the previous equation are homogeneous of degree $i$ and $j$, respectively, only those terms for which $i+j=l$ may appear. Moreover, we must have
	$$
	i\le l,\quad i\le k,\quad l-i\le n-k
	$$
	which gives the desired representation of $\oZ$.
\end{proof}
\goodbreak

Recall that $\lceil \frac n2 \rceil \le k\le n-1$ and that $E=\spa\{e_1, \dots, e_k\}$ while $F=\spa\{e_{k+1}, \dots, e_n\}$.  
Set $E'=\spa\{e_1, \dots, e_{k-1}\}$ and $F'=\spa\{e_{k+2},\dots, e_n\}$. In particular, for $k=n-1$, we have $F'=\{0\}$ and $\fconvfsFp$ can be identified with $\R$. In order to simplify notation, we write $\int_{F'} \d \Phi_0^{0}(v,x) = 1$ for $v\in\fconvfsFp$.

\begin{lemma}
	\label{le:int_zeta_i_l-i_zeta_i+1_l-i-1}
Let $1\le i< k$ and $a\in \R\backslash\{0\}$. For $v_{E'}\in\fconvfsEp$ and $v_{F'}\in\fconvfsFp$,
	\begin{align*}
	    \int_{F'} \int_{E} &\zeta_{0,l}(|x_E|,|(a,x_{F'})|)\d x_E \d \Phi_{l-1}^{n-k-1}(v_{F'},x_{F'})\\
	    &= \int_{E'} \int_\R \int_{F'} \zeta_{1,l-1}(|(x_{E'},a)|,|(t,x_{F'})|) \d\Phi_{l-1}^{n-k-1}(v_{F'},x_{F'}) \d t \d x_{E'}
	\end{align*}
	and
	\begin{align*}
	\int_{E'} \int_\R \int_{F'}  &\zeta_{i,l-i}(|(x_{E'},s)|,|(a,x_{F'})|) \d\Phi_{l-i-1}^{n-k-1}(v_{F'},x_{F'})\d s \d\Phi_i^{k-1}(v_{E'},x_{E'})\\
	=& \int_{E'} \int_\R \int_{F'} \zeta_{i+1,l-i-1}(|(x_{E'},a)|,|(t,x_{F'})|) \d\Phi_{l-i-1}^{n-k-1}(v_{F'},x_{F'}) \d t \d\Phi_i^{k-1}(v_{E'},x_{E'})
	\end{align*}
where $(l+k-n)\leq i < l$.
\end{lemma}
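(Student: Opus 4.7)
The plan is to apply $\oZ$ to two functions related by a rotation in the $(e_k,e_{k+1})$-plane and then to separate the resulting equation by homogeneity in $v_{E'}$. Fix $v_{E'}\in\fconvfsEp$, $v_{F'}\in\fconvfsFp$, and $a\neq 0$, and set
\begin{align*}
v_1(x)&:=v_{E'}(x_1,\dots,x_{k-1})+\tfrac12|x_{k+1}-a|+v_{F'}(x_{k+2},\dots,x_n),\\
v_2(x)&:=v_{E'}(x_1,\dots,x_{k-1})+\tfrac12|x_k-a|+v_{F'}(x_{k+2},\dots,x_n).
\end{align*}
The rotation $R\in\SO(n)$ sending $e_k\mapsto e_{k+1}$ and $e_{k+1}\mapsto -e_k$ (and fixing the remaining basis vectors) satisfies $v_2\circ R^{-1}=v_1$, so rotation invariance of $\oZ$ yields $\oZ(v_1)=\oZ(v_2)$. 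Because $a\neq 0$, the function $\tfrac12|\cdot-a|$ is smooth near the origin, so the natural splittings $v_1=v_{E'}+\bigl(\tfrac12|x_{k+1}-a|+v_{F'}\bigr)$ and $v_2=\bigl(v_{E'}+\tfrac12|x_k-a|\bigr)+v_{F'}$ give summands in $\fconvfsE$ and $\fconvfsF$ respectively, and Lemma~\ref{le:zeta_i_l-i} applies.

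The next step is to expand $\oZ(v_1)$ and $\oZ(v_2)$ via Lemma~\ref{le:zeta_i_l-i}, using Lemma~\ref{le:decompose_hessian_measure} together with \eqref{eq:phi_n_n_delta} and \eqref{eq:phi_0_n} to write each Hessian measure arising in the formula as a tensor product of a Hessian measure of $v_{E'}$ or $v_{F'}$ with either $\hm^1$ on $\spa\{e_k\}$ or $\spa\{e_{k+1}\}$ or a Dirac mass $\delta_a$ there. Substituting and collecting terms, the integrals containing $\hm^1\otimes\hm^1$ in the $(x_k,x_{k+1})$-variables coincide between $\oZ(v_1)$ and $\oZ(v_2)$ after a relabeling of the two one-dimensional dummy variables, and therefore cancel in $\oZ(v_1)-\oZ(v_2)=0$. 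What remains can be written as
$$A+\sum_{i\geq 1}B_i=\sum_{i\geq 1}D_{i+1},$$
where $A$ is exactly the left-hand side of the first claimed identity (containing $\zeta_{0,l}$ with the $e_{k+1}$-coordinate set to $a$), $B_i$ is the integral of $\zeta_{i,l-i}$ against $\d\Phi_i^{k-1}(v_{E'})\otimes\d x_k\otimes\delta_a(x_{k+1})\otimes\d\Phi_{l-i-1}^{n-k-1}(v_{F'})$, and $D_{i+1}$ is the integral of $\zeta_{i+1,l-i-1}$ against $\d\Phi_i^{k-1}(v_{E'})\otimes\delta_a(x_k)\otimes\d x_{k+1}\otimes\d\Phi_{l-i-1}^{n-k-1}(v_{F'})$.

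Finally, since the identity holds for every $v_{E'}\in\fconvfsEp$ and each of $A$, $B_i$, $D_{i+1}$ has a definite degree of homogeneity in $v_{E'}$ (degree $0$ for $A$, degree $i$ for $B_i$ and $D_{i+1}$), we rescale $v_{E'}\mapsto\lambda v_{E'}$ and equate coefficients of $\lambda^d$ in the resulting polynomial identity. The $\lambda^0$-coefficient gives $A=D_1$, which is the first claimed identity; for $d=i\geq 1$, the $\lambda^i$-coefficient gives $B_i=D_{i+1}$, which is the second claimed identity, valid over the range of $i$ for which $\zeta_{i+1,l-i-1}$ is defined in Lemma~\ref{le:zeta_i_l-i} and the Hessian measures involved do not vanish by the convention $\Phi_j^m\equiv 0$ for $j\notin\{0,\dots,m\}$, i.e., $(l+k-n)\leq i<l$. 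The main technical obstacle is the bookkeeping in the second step: correctly enumerating all terms in the four-fold expansion produced by the two-term decomposition of each of $\Phi^{n-k}_{l-i}(v_F^{(1)},\cdot)$ and $\Phi^k_i(v_E^{(2)},\cdot)$, and identifying precisely the pairs that cancel; once this is carried out, the degree separation in the last step delivers both identities directly.
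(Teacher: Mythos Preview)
Your proposal is correct and follows essentially the same approach as the paper: apply Lemma~\ref{le:zeta_i_l-i} to special test functions built from $v_{E'}$, $v_{F'}$, and an absolute-value piece in the $e_k$/$e_{k+1}$ slot, expand the Hessian measures via Lemma~\ref{le:decompose_hessian_measure} together with \eqref{eq:phi_0_n} and \eqref{eq:phi_n_n_delta}, use rotation invariance in the $(e_k,e_{k+1})$-plane, and separate by homogeneity. The only difference is that the paper inserts absolute-value pieces in \emph{both} slots simultaneously with independent coefficients $\alpha,\beta$ and compares the $\alpha$- and $\beta$-linear parts after swapping $(\alpha,t_\alpha)\leftrightarrow(\beta,t_\beta)$, whereas you use a single piece, compare $\oZ(v_1)$ with $\oZ(v_2)$ directly, and separate purely by the degree in $v_{E'}$; your variant avoids the $\alpha\beta$ cross term and is slightly more streamlined, but the argument is the same in substance.
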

\begin{proof}
		Let $\alpha, \beta\in \R$ and $t_\alpha, t_\beta\in \R\backslash\{0\}$. Set
	$$v_E(x_E)=v_{E'}(x_{E'})+\frac{\alpha}{2} |x_k-t_\alpha|$$
	for $v_{E'}\in\fconvfsEp$ and 
	$$v_F(x_F)=\frac{\beta}{2} |x_{k+1}-t_\beta| + v_{F'}(x_{F'})$$
	for $v_{F'}\in\fconvfsFp$. Using Lemma~\ref{le:decompose_hessian_measure}, \eqref{eq:phi_0_n} and \eqref{eq:phi_n_n_delta}, we obtain
	\begin{eqnarray*}
		\d\Phi_{i}^{k}(v_E,x_E) &=& \d\Phi_i^{k-1}(v_{E'},x_{E'})\d x_k + \alpha \d\Phi_{i-1}^{k-1}(v_{E'},x_{E'})\d\delta_{t_{\alpha}}(x_k),\\
		\d\Phi_{l-i}^{n-k}(v_F,x_F) &=& \d x_{k+1}\d\Phi_{l-i}^{n-k-1}(v_{F'},x_{F'}) + \beta\d\delta_{t_\beta}(x_{k+1}) \d\Phi_{l-i-1}^{n-k-1}(v_{F'},x_{F'})
	\end{eqnarray*}
	for $(l+k-n)\leq i \leq l$. Thus, by Lemma~\ref{le:zeta_i_l-i},
	\begin{align*}
	\oZ&(v_E+v_F)\\
	&= \int_F \int_E \zeta_{0,l}(|x_E|,|x_F|) \d\Phi_{0}^{k}(v_E,x_E)\d\Phi_{l}^{n-k}(v_F,x_F)\\
	&\quad\; + \sum_{i=1 \vee (l+k-n)}^{k\wedge l} \int_E \int_F \zeta_{i,l-i}(|x_E|,|x_F|) \d\Phi_{l-i}^{n-k}(v_F,x_F) \d \Phi_{i}^{k}(v_E,x_E)\\
	&=\int_{F'} \int_{\R} \int_{E}\zeta_{0,l}(|x_E|,|(x_{k+1},x_{F'})|) \d x_E \d x_{k+1} \d\Phi_{l}^{n-k-1}(v_{F'},x_{F'})\\
	&\quad\; + \beta \int_{F'} \int_{E}\zeta_{0,l}(|x_E|,|(t_{\beta},x_{F'})|) \d x_E \d\Phi_{l-1}^{n-k-1}(v_{F'},x_{F'})\\
	&\quad\; +\sum_{i=1 \vee (l+k-n)}^{k\wedge l}\bigg( \int_{E'} \int_\R \int_{F'} \int_\R \zeta_{i,l-i}(|(x_{E'},s)|,|(t,x_{F'})|) \d t \d\Phi_{l-i}^{n-k-1}(v_{F'},x_{F'}) \d s \d\Phi_{i}^{k-1}(v_{E'},x_{E'})   \\
	&\qquad\qquad\qquad +\alpha \int_{E'} \int_\R \int_{F'} \zeta_{i,l-i}(|(x_{E'},t_\alpha)|,|(t,x_{F'})|) \d\Phi_{l-i}^{n-k-1}(v_{F'},x_{F'}) \d t 
\d\Phi_{i-1}^{k-1}(v_{E'},x_{E'})  \\
	&\qquad\qquad\qquad +\beta \int_{E'} \int_\R \int_{F'}  \zeta_{i,l-i}(|(x_{E'},t)|,|(t_\beta,x_{F'})|) \d\Phi_{l-i-1}^{n-k-1}(v_{F'},x_{F'})\d t \d\Phi_i^{k-1}(v_{E'},x_{E'}) \\
	&\qquad\qquad\qquad + \alpha \beta \int_{E'} \int_{F'} \zeta_{i,l-i}(|(x_{E'},t_\alpha)|,|(t_\beta,x_{F'})|) \d\Phi_{l-i-1}^{n-k-1}(v_{F'},x_{F'})\d\Phi_{i-1}^{k-1}(v_{E'},x_{E'}) \bigg).
	\end{align*}
	Here we have split the integral over $E$ with respect to $\d\Phi_i^k(v_E,x_E)$ and the integral over $F$ with respect to $\d\Phi_{l-i}^{n-k}(v_F,x_F)$ into multiple integrals. We are allowed to do so since, for given $i$, each of the integrals so obtained is homogeneous of different degrees in $v_{E'}$ and $v_{F'}$. In particular, each of the integrals in the last expression is well-defined and finite. Since $\oZ$ is rotation invariant and this expression only depends on the absolute values of $t_\alpha$ and $t_\beta$, respectively, we may exchange $(\alpha,t_\alpha)$ and $(\beta,t_\beta)$ while preserving equality. Making this exchange, setting $t_\alpha=t_\beta$, comparing with the original expression and considering those parts which are $i$-homogeneous in $v_{E'}$ and $(l-i-1)$-homogeneous $v_{F'}$ gives
	\begin{align*}
	    \beta&\int_{F'} \int_{E} \zeta_{0,l}(|x_E|,|(t_\alpha,x_{F'})|) \d x_E \d\Phi_{l-1}^{n-k-1}(v_{F'},x_{F'})\\
	    &\quad +\alpha \int_{E'} \int_\R \int_{F'} \zeta_{1,l-1}(|(x_{E'},t_\alpha)|,|(t,x_{F'})|)\d\Phi_{l-1}^{n-k-1}(v_{F'},x_{F'}) \d t \d x_{E'}\\
	    =\alpha&\int_{F'} \int_{E} \zeta_{0,l}(|x_E|,|(t_\alpha,x_{F'})|) \d x_E \d\Phi_{l-1}^{n-k-1}(v_{F'},x_{F'})\\
	    &\quad +\beta\int_{E'} \int_\R \int_{F'} \zeta_{1,l-1}(|(x_{E'},t_\alpha)|,|(t,x_{F'})|)\d\Phi_{l-1}^{n-k-1}(v_{F'},x_{F'}) \d t \d x_{E'}
	\end{align*}
	for $i=0$ and
	\begin{align*}
	\beta&\int_{E'} \int_\R \int_{F'} \zeta_{i,l-i}(|(x_{E'},t)|,|(t_\alpha,x_{F'})|) \d\Phi_{l-i-1}^{n-k-1}(v_{F'},x_{F'}) \d t \d\Phi_i^{k-1}(v_{E'},x_{E'})\\
	&\quad +\alpha \int_{E'} \int_\R \int_{F'} \zeta_{i+1,l-i-1}(|(x_{E'},t_\alpha)|,|(t,x_{F'})|) \d\Phi_{l-i-1}^{n-k-1}(v_{F'},x_{F'}) \d t \d\Phi_i^{k-1}(v_{E'},x_{E'})\\
	=\alpha&\int_{E'} \int_\R \int_{F'} \zeta_{i,l-i}(|(x_{E'},t)|,|(t_\alpha,x_{F'})|)\d\Phi_{l-i-1}^{n-k-1}(v_{F'},x_{F'}) \d t \d\Phi_i^{k-1}(v_{E'},x_{E'})\\
	&\quad +\beta \int_{E'} \int_\R \int_{F'} \zeta_{i+1,l-i-1}(|(x_{E'},t_\alpha)|,|(t,x_{F'})|) \d\Phi_{l-i-1}^{n-k-1}(v_{F'},x_{F'}) \d t \d\Phi_i^{k-1}(v_{E'},x_{E'})
	\end{align*}
	for $i\geq 1$. The desired equality now follows after rearranging, using the fact that $\alpha$ and $\beta$ are arbitrary, and setting $a=t_\alpha$.
\end{proof}

\begin{lemma}
\label{le:zeta_i}
For $1\le i <k$  with $(l+k-n)\le i <l$, there exists $\zeta_i\in C_b((0,\infty))$ such that
$$\zeta_{i,l-i}(a,b)=\zeta_i(\sqrt{a^2+b^2})$$
for every $a,b>0$, and by continuity this extends to all $a,b\ge 0$ with $(a,b)\ne (0,0)$. Moreover, if $k=n-1$, then $\zeta_{l-1}\in \Had{l}{n}$.
\end{lemma}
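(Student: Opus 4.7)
The plan is to specialize the functional equation of Lemma~\ref{le:int_zeta_i_l-i_zeta_i+1_l-i-1} to piecewise-linear test functions with explicit Hessian measures, convert the resulting identity into one about a two-variable Riemann--Liouville fractional integral, and recover the radial symmetry of $\zeta_{i,l-i}$ by inverting that transform. Concretely, for $\sigma,\tau>0$ and $i$ in the admissible range, I would take
$$v_{E'}(x_1,\ldots,x_{k-1})=\tfrac12\sum_{j=1}^{i}|x_j-\bar x_j|,\qquad v_{F'}(x_{k+2},\ldots,x_n)=\tfrac12\sum_{j=1}^{l-i-1}|x_{k+1+j}-\bar x_{k+1+j}|,$$
with all $\bar x_j\neq 0$, $\sum_{j=1}^i \bar x_j^2=\sigma^2$, and $\sum_{j=1}^{l-i-1}\bar x_{k+1+j}^2=\tau^2$. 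By \eqref{eq:phi_j_n_delta}, the relevant Hessian measures factor as a Dirac in the chosen coordinates times Lebesgue in the rest; after passing to polar coordinates on the free variables, the second equation of Lemma~\ref{le:int_zeta_i_l-i_zeta_i+1_l-i-1} reduces, up to positive constants $c_L,c_R$, to
$$c_L\, F(\sigma^2,\tau^2+\alpha) = c_R\, G(\sigma^2+\alpha,\tau^2)\quad\text{for all } \sigma,\tau>0,\ \alpha:=a^2\geq 0,$$
where, with $p:=k-i$ and $q:=n-k-l+i$,
$$F(s,T):=\int_0^\infty\!\!\int_0^\infty \zeta_{i,l-i}(\sqrt{s+\rho^2},\sqrt{T+u^2})\,\rho^{p-1}u^{q-1}\,d\rho\,du,$$
and $G$ is the analogous expression for $\zeta_{i+1,l-i-1}$ with weights $\rho^{p-2}u^{q}$. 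Differentiating this identity in $s$, $t$, and $\alpha$ and comparing the resulting partial-derivative identities shows that both $F$ and $G$ depend only on the sum of their two arguments; in particular, $F(s,T)=\psi(s+T)$ for some continuous $\psi$.

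To recover $\zeta_{i,l-i}$ pointwise, I would substitute $r=s+\rho^2$, $U=T+u^2$ and read $F(s,T)=\psi(s+T)$ as the statement that the two-variable Riemann--Liouville fractional integral, of orders $p/2$ and $q/2$, of $\tilde\zeta_{i,l-i}(r,U):=\zeta_{i,l-i}(\sqrt r,\sqrt U)$ depends only on $s+T$. A direct Beta-function computation shows that applying the same operator to an ansatz $(r,U)\mapsto h(r+U)$ produces the one-variable Riemann--Liouville integral $I^{(p+q)/2}_- h$ evaluated at $s+T$, which is classically invertible by Abel inversion. Injectivity of the two-variable operator on continuous compactly supported data, obtained by two successive one-variable Abel inversions, then forces $\tilde\zeta_{i,l-i}(r,U)=h(r+U)$. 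Setting $\zeta_i(s):=h(s^2)$ yields the claimed identity $\zeta_{i,l-i}(a,b)=\zeta_i(\sqrt{a^2+b^2})$, and continuity of $\zeta_i$ together with boundedness of its support are inherited from $\zeta_{i,l-i}$; the extension to the closure of the positive quadrant minus the origin follows immediately.

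Two boundary issues remain. At the index $i=l+k-n$, where $q=0$ and the weight $u^{q-1}$ is non-integrable, I would bootstrap from the already-established radial symmetry of $\zeta_{i+1,l-i-1}$ via the same functional equation (now with the Dirac factor of $v_{F'}$ saturating all of $F'$) together with the single-variable Abel inversion of Lemma~\ref{lemma Abel}, concluding $\zeta_{i,l-i}(u,\tau)=c\,\zeta_{i+1}(\sqrt{u^2+\tau^2})$. For the moreover claim when $k=n-1$, combining the functional equation at $i=l-1$ with the membership $\int_F\zeta_{l,0}(\cdot,|x_F|)\,d\Phi_0^1(v_F,x_F)\in\Had{l}{n-1}$ from Lemma~\ref{le:zeta_i_l-i}, a fractional-integral manipulation using the semigroup property of $I^\alpha_-$ reduces the equation to a relation of the form $\Abel\zeta_{l-1}\propto \Xi$ for some $\Xi\in\Had{l}{n-1}$; a standard Abel-inversion limit analysis then transfers the defining integrability and vanishing conditions to yield $\zeta_{l-1}\in\Had{l}{n}$. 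The main obstacle throughout is the injectivity of the two-variable Riemann--Liouville operator at the relevant parameters, together with the Abel-inversion limit analysis at the boundary needed to transfer the $\Had{\cdot}{\cdot}$ membership between dimensions.
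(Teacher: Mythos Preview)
Your approach—reducing to a two-variable Riemann--Liouville transform and inverting it—is genuinely different from the paper's, which never sets up such an operator. The paper instead uses Lemma~\ref{preparatory lemma 1} to strip the $\d\Phi_i^{k-1}(v_{E'})$ integral directly, obtaining an identity involving both $\zeta_{i,l-i}$ and $\zeta_{i+1,l-i-1}$; it then eliminates $\zeta_{i+1,l-i-1}$ by the substitution $(a,b)\mapsto(\sqrt{a^2+b^2-\varepsilon^2},\varepsilon)$, which fixes $|(a,b)|$ and hence the $\zeta_{i+1,l-i-1}$ term, and subtracts. A second such substitution and two applications of Lemma~\ref{lemma Abel} yield the scalar identity $\zeta_{i,l-i}(\sqrt{a^2+b^2-\delta^2},\delta)=\zeta_{i,l-i}(a,b)$, whence the radial dependence follows by letting $\delta\to0^+$. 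This is entirely elementary (only one-variable Abel inversion, no two-variable operator theory) at the cost of a slightly tricky algebraic manoeuvre; your route is more systematic but needs more machinery.

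Two genuine gaps in your argument as stated. First, you differentiate the identity $c_L F(s,T+\alpha)=c_R G(s+\alpha,T)$ to conclude that $F$ depends only on $s+T$, but $F$ is a fractional integral of a merely continuous function and need not be $C^1$. This is easy to repair: fixing $s+\alpha=s'$ shows $F(s,T+s'-s)$ is independent of $s\in(0,s')$, which gives the same conclusion without derivatives. Second, and more seriously, your treatment of the case $q=0$ is wrong precisely at $k=n-1$, which is the case needed for the Moreover clause. There the only admissible index is $i=l-1$, and then $q=0$, so your main argument never applies. Your proposed bootstrap uses the ``already-established radial symmetry of $\zeta_{i+1,l-i-1}$'', but here $i+1=l$ and $\zeta_{l,0}$ lies outside the scope of the lemma; nothing has been established for it. The correct fix is to notice that $q=0$ is not a problematic case but a degenerate one: there are no free Lebesgue variables in $F'$, so the second argument of $\zeta_{i,l-i}$ on the left side is exactly $a$, and one obtains $\hat F(s,\alpha)=c\,\tilde G(s+\alpha)$ directly. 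A single one-variable Abel inversion in $s$ then yields the radial dependence, with no bootstrap from $\zeta_{l,0}$ needed.

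For the Moreover clause itself, your fractional-calculus route is unnecessarily indirect, and the ``standard Abel-inversion limit analysis'' you invoke to transfer $\Had{\cdot}{\cdot}$ membership does real work that you have not carried out. The paper's argument is one line: by Lemma~\ref{le:zeta_i_l-i} (the clause for $i=l+k-n$ with $i\ge1$, which holds for \emph{every} $v_F\in\fconvfF$, not just those in $\fconvfsF$), one has $\int_F \zeta_{l-1,1}(\cdot,|x_F|)\,\d\Phi_1^1(v_F,x_F)\in\Had{l-1}{n-1}$ for every $v_F$. Taking $v_F(x_n)=\tfrac12|x_n|$ gives $\Phi_1^1(v_F,\cdot)=\delta_0$ by \eqref{eq:phi_n_n_delta}, hence $\zeta_{l-1}(s)=\zeta_{l-1,1}(s,0)\in\Had{l-1}{n-1}=\Had{l}{n}$.
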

\begin{proof}
	Since $i\ge 1$, it follows from Lemma~\ref{preparatory lemma 1} and Lemma~\ref{le:int_zeta_i_l-i_zeta_i+1_l-i-1} that
	$$
	\int_{\R} \int_{F'} \big(\zeta_{i+1,l-i-1}(|(a,b)|,|(t,x_{F'})|)	- \zeta_{i,l-i}(|(a,t)|,|(b,x_{F'})|)\big) \d\Phi_{l-i-1}^{n-k-1}(v_{F'},x_{F'}) \d t= 0
	$$
	for every $a,b>0$ and $v_{F'}\in\fconvfsFp$. Writing the same equation again but this time replacing $b$ by $\varepsilon$ with $0<\varepsilon<b$ and $a$ by $\sqrt{a^2+b^2-\varepsilon^2}$, we obtain
$$
	\int_{\R} \int_{F'} 
	\big(\zeta_{i+1,l-i-1}(|(a,b)|,|(t,x_{F'})|)	- \zeta_{i,l-i}(|(\sqrt{a^2+b^2-\varepsilon^2},t)|,|(\varepsilon,x_{F'})|)\big) 
	\d\Phi_{l-i-1}^{n-k-1}(v_{F'},x_{F'}) \d t= 0
$$
	for every $a>0$ and $b>\varepsilon>0$. By subtracting the last two equations, we obtain 
$$
	\int_{\R} \int_{F'} 
	\big(\zeta_{i,l-i}(|(\sqrt{a^2+b^2-\varepsilon^2},t)|,|(\varepsilon,x_{F'})|)- \zeta_{i,l-i}(|(a,t)|,|(b,x_{F'})|)\big) 
	\d\Phi_{l-i-1}^{n-k-1}(v_{F'},x_{F'}) \d t= 0	
$$
	for every $a>0$ and $b>\varepsilon>0$. By the properties of $\zeta_{i,l-i}$ we may apply Lemma~\ref{lemma Abel} for fixed $b>\varepsilon>0$ and obtain
	\begin{equation*}	
	\int_{F'} \big(\zeta_{i,l-i}(\sqrt{a^2+b^2-\varepsilon^2},|(\varepsilon,x_{F'})|)	- \zeta_{i,l-i}(a,|(b,x_{F'})|)\big) \d\Phi_{l-i-1}^{n-k-1}(v_{F'},x_{F'}) = 0			\end{equation*}
	for every $a>0$. Similarly to before, we write the last equation again, but this time, we replace $a$ by $\delta$ with $0<\delta<a$. Furthermore, we may replace $b$ by $\sqrt{a^2+b^2-\delta^2}$ since by our choice of $\delta$ we have $\sqrt{a^2+b^2-\delta^2}>b>\varepsilon$. Thus, we obtain
	\begin{equation*}	
\int_{F'} \big(\zeta_{i,l-i}(\sqrt{a^2+b^2-\varepsilon^2},|(\varepsilon,x_{F'})|)	- \zeta_{i,l-i}(\delta,|(\sqrt{a^2+b^2-\delta^2},x_{F'})|)\big) \d\Phi_{l-i-1}^{n-k-1}(v_{F'},x_{F'}) = 0	
\end{equation*}
	for every $a>\delta>0$ and $b>\varepsilon>0$. Subtracting the last two equations yields
	\begin{equation}	\label{eq:int_zeta_i_l-i_a_b_delta_no_eps}
	\int_{F'} \big(\zeta_{i,l-i}(\delta,|(\sqrt{a^2+b^2-\delta^2},x_{F'})|)
	- \zeta_{i,l-i}(a,|(b,x_{F'})|)\big) \d\Phi_{l-i-1}^{n-k-1}(v_{F'},x_{F'}) = 0    
	\end{equation}
	for every $a>\delta >0$ and $b>\varepsilon>0$. Since this expression does not depend on $\varepsilon$ anymore and $\varepsilon>0$ was arbitrary, we conclude that \eqref{eq:int_zeta_i_l-i_a_b_delta_no_eps} holds for every $b>0$.

	We now claim that
	\begin{equation}
	\label{eq:zeta_i_l-i_switch}
	\zeta_{i,l-i}(\sqrt{a^2+b^2-\delta^2},\delta) = \zeta_{i,l-i}(a,b)
	\end{equation}
	for every $a>\delta>0$ and  $b>0$. In case $l-i-1>0$, this is a direct consequence of Lemma~\ref{preparatory lemma 1}. 
In case $l-i-1=0$ and $k=n-1$, we have $F'=\{0\}$ and $\int_{F'} \d\Phi_{l-i-1}^{n-k-1}(v_{F'},x_{F'})= 1$ and the claim is immediate. In the remaining case $l-i-1=0$ and $k<n-1$, we want to emphasize that
	$$\d\Phi_{l-i-1}^{n-k-1}(v_{F'},x_{F'})=\d x_{F'}$$
	and therefore, \eqref{eq:zeta_i_l-i_switch} follows from Lemma~\ref{lemma Abel} after switching to spherical coordinates in \eqref{eq:int_zeta_i_l-i_a_b_delta_no_eps} and considering that $b>0$ is arbitrary.

	Note that the right side of \eqref{eq:zeta_i_l-i_switch} does not depend on the choice of $\delta<a$. Hence, there exists a function $\zeta_i:(0,\infty)\to \R$ such that
	\begin{equation}
	\label{eq:zeta_i_l-i_zeta_i}
	\zeta_{i,l-i}(a,b)=\lim_{\delta\to 0^+} \zeta_{i,l-i}(\sqrt{a^2+b^2-\delta^2},\delta) =:\zeta_i(\sqrt{a^2+b^2})
	\end{equation}
	for every $a,b>0$. Furthermore, it follows from the properties of $\zeta_{i,l-i}$ that $\zeta_i$ is continuous with bounded support. Moreover, for fixed $a>0$ we obtain
	$$\lim_{b\to 0^+}\zeta_{i,l-i}(a,b)=\lim_{b\to 0^+} \zeta_i(\sqrt{a^2+b^2}) = \zeta_i(\sqrt{a^2/2+a^2/2})=\zeta_{i,l-i}(a/\sqrt{2},a/\sqrt{2}).$$
	Thus, \eqref{eq:zeta_i_l-i_zeta_i} continuously extends to all $a,b\ge 0$ with $(a,b)\ne(0,0)$.

	Note that in case $k=n-1$ and $i=l-1$, we have $\zeta_{l-1,1}(s,\cdot)\in \Had{1}{1}$ for every $s>0$ and that $\int_F \zeta_{l-1,1}(\cdot,|x_F|) \d\Phi_1^1(v_F,x_F)\in \Had{l-1}{n-1}$ for every $v_F\in\fconvfF$. Thus, choosing $v_F(x_F)=\frac12 \sum_{i=k+1}^n |x_i|$ we obtain by \eqref{eq:phi_n_n_delta}
	$$\zeta_{l-1}(s)=\zeta_{l-1,1}(s,0)=\int_F \zeta_{l-1,1}(s,|x_F|) \d\Phi_1^1(v_F,x_F)$$
    for every $s>0$. Hence $\zeta_{l-1} \in\Had{l-1}{n-1}=\Had{l}{n}$.
\end{proof}

\begin{lemma}
    \label{le:zeta^k}
For $\lceil \frac n2 \rceil\leq k\leq n-1$, there exists $\zeta_{(k)}\in C_b((0,\infty))$  such that
    \begin{align*}
    \oZ(v_E+v_F)=&\int_F \int_E \zeta_{(k)}(\sqrt{|x_E|^2+|x_F|^2}) \d\Phi_0^k(v_E,x_E) \d\Phi_l^{n-k}(v_F,x_F)\\
    &+\sum_{i=1\vee(l+k-n)}^{k\wedge l} \int_E\int_F \zeta_{(k)}(\sqrt{|x_E|^2+|x_F|^2}) \d\Phi_{l-i}^{n-k}(v_F,x_F)\d\Phi_i^k(v_E,x_E)
    \end{align*}
    for every $v_E\in\fconvfsE$ and $v_F\in\fconvfsF$. Moreover, $\zeta_{(n-1)}\in\Had{l}{n}$.
\end{lemma}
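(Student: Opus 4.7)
The plan is to consolidate the family of functions $\zeta_{i,l-i}$ from Lemma~\ref{le:zeta_i_l-i} into a single function $\zeta_{(k)}\in C_b((0,\infty))$. Lemma~\ref{le:zeta_i} already gives the pointwise identification $\zeta_{i,l-i}(a,b)=\zeta_i(\sqrt{a^2+b^2})$ for every $i$ in the interior range $[\max(1,l+k-n),\min(k-1,l-1)]$; it remains to merge these $\zeta_i$ into a common function and to handle the boundary indices $i=0$ and $i=k\wedge l$, where either pointwise identity or the equivalent integrated identity against the relevant Hessian measure is needed.

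The first step is to show $\zeta_i\equiv\zeta_{i+1}$ for any consecutive pair in the interior range. Substituting the representations from Lemma~\ref{le:zeta_i} into the second identity of Lemma~\ref{le:int_zeta_i_l-i_zeta_i+1_l-i-1} and exchanging the dummy variables $s$ and $t$ on the two sides, the difference reads
\begin{equation*}
\int_{E'}\!\int_{\R}\!\int_{F'}(\zeta_i-\zeta_{i+1})(\sqrt{|x_{E'}|^2+s^2+a^2+|x_{F'}|^2})\,\d\Phi_{l-i-1}^{n-k-1}(v_{F'},x_{F'})\,\d s\,\d\Phi_i^{k-1}(v_{E'},x_{E'})=0.
\end{equation*}
Choosing $v_{E'}$ and $v_{F'}$ of the piecewise-affine form \eqref{eq:v_bar} so that \eqref{eq:phi_j_n_delta} reduces each Hessian measure to a Dirac mass times Lebesgue on the remaining coordinates, and then passing to spherical coordinates in the concatenated Lebesgue integrations, the triple integral collapses to $\int_0^\infty(\zeta_i-\zeta_{i+1})(\sqrt{t^2+r^2})r^{n-l-1}\,\d r=0$ for every $t>0$ (where $t$ combines the Dirac locations with $a$). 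Lemma~\ref{lemma Abel} then forces $\zeta_i\equiv\zeta_{i+1}$; iterating along the range produces the common function $\zeta_{(k)}$.

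Next I would address the boundary $i=0$ via the first identity of Lemma~\ref{le:int_zeta_i_l-i_zeta_i+1_l-i-1}. Substituting $\zeta_{1,l-1}(a,b)=\zeta_{(k)}(\sqrt{a^2+b^2})$ on the right and identifying $(x_{E'},t)\in E'\times\R$ with a point of $E$ via Fubini, the identity becomes
\begin{equation*}
\int_{F'}\!\int_{E}\!\bigl[\zeta_{0,l}(|x_E|,\sqrt{a^2+|x_{F'}|^2})-\zeta_{(k)}(\sqrt{|x_E|^2+a^2+|x_{F'}|^2})\bigr]\,\d x_E\,\d\Phi_{l-1}^{n-k-1}(v_{F'},x_{F'})=0.
\end{equation*}
Applying Lemma~\ref{preparatory lemma 1} to the continuous radial function of $|x_{F'}|$ defined by the inner Lebesgue integral over $E$ yields the integrated identity $\int_E\zeta_{0,l}(|x_E|,b)\,\d x_E=\int_E\zeta_{(k)}(\sqrt{|x_E|^2+b^2})\,\d x_E$ for every $b>0$; since $\d\Phi_0^k(v_E,x_E)=\d x_E$, this is precisely what is needed to replace the $\zeta_{0,l}$ term in the expression for $\oZ(v_E+v_F)$ by the corresponding $\zeta_{(k)}$ integral. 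The upper boundary $i=k\wedge l$ is handled in parallel: if $l\le k$, the second identity at $i=l-1$ (where $\Phi_0^{n-k-1}$ is Lebesgue) combined with Lemma~\ref{preparatory lemma 1} yields the required integrated identity $\int_F\zeta_{l,0}(a,|x_F|)\,\d x_F=\int_F\zeta_{(k)}(\sqrt{a^2+|x_F|^2})\,\d x_F$ for every $a>0$; if $l>k$, the second identity at $i=k-1$ combined with Dirac concentration of $\Phi_{k-1}^{k-1}$ via \eqref{eq:phi_n_n_delta} and a further application of Lemma~\ref{lemma Abel} (after collapsing the remaining Lebesgue integrations in spherical coordinates) extracts the pointwise identity $\zeta_{k,l-k}(a,b)=\zeta_{(k)}(\sqrt{a^2+b^2})$.

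Finally, when $k=n-1$ the interior range of Lemma~\ref{le:zeta_i} collapses to the single index $i=l-1$; setting $\zeta_{(n-1)}:=\zeta_{l-1}$ yields $\zeta_{(n-1)}\in\Had{l}{n}$ directly from the last assertion of Lemma~\ref{le:zeta_i}. The main obstacle is the careful choreography of the test functions in the unification and boundary steps: the piecewise-affine $v_{E'}$ and $v_{F'}$ must be chosen so that, after invoking \eqref{eq:phi_j_n_delta}, the combined Dirac-plus-Lebesgue product measure together with any explicit $\d s$-integration collapses to exactly one radial variable, enabling a single application of Lemma~\ref{lemma Abel}. Some additional care is needed when the interior range is a single index or is empty, in which case the chain of consecutive equalities $\zeta_i\equiv\zeta_{i+1}$ is vacuous and the common $\zeta_{(k)}$ must be pinned down through a boundary identity instead.
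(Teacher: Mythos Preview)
Your proposal is correct and follows essentially the same strategy as the paper's proof: both consolidate the $\zeta_{i,l-i}$ from Lemma~\ref{le:zeta_i_l-i} into a single $\zeta_{(k)}$ by first invoking Lemma~\ref{le:zeta_i} on the interior range, then using the identities of Lemma~\ref{le:int_zeta_i_l-i_zeta_i+1_l-i-1} together with Lemma~\ref{lemma Abel} and Lemma~\ref{preparatory lemma 1} to link consecutive indices and to treat the boundary cases $i=0$ and $i=k\wedge l$. The differences are purely organizational: the paper first peels off the $\Phi_i^{k-1}$ integral via Lemma~\ref{preparatory lemma 1} to obtain the intermediate relation $\zeta_{i+1,l-i-1}(a,b)=\zeta_i(\sqrt{a^2+b^2})$ (which simultaneously handles the top boundary when $l>k$), whereas you substitute both radial representations at once and reduce directly to an Abel identity; and at the $i=0$ boundary the paper uses specific piecewise-affine test functions where you invoke Lemma~\ref{preparatory lemma 1}. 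One point worth making explicit in your write-up is that in the $l>k$ boundary step the free parameter for Lemma~\ref{lemma Abel} is the norm of the Dirac location coming from $v_{F'}$, not the collapsed Lebesgue radius; otherwise the two arguments of $\zeta_{k,l-k}$ do not combine into a single radial variable.
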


\begin{proof}
By Lemma \ref{le:zeta_i_l-i} combined with Lemma \ref{le:zeta_i}, we have
\begin{align*}
	\oZ(v_E+v_F)=&\int_F \int_E \zeta_{0,l}(|x_E|,|x_F|)\d\Phi_0^{k}(v_E,x_E)\d\Phi_l^{n-k}(v_F,x_F)\\
	&+ \sum_{i=1 \vee (l+k-n)}^{(k\wedge l)-1} \int_E \int_F \zeta_{i}(\sqrt{|x_E|^2+|x_F|^2})\d\Phi_{l-i}^{n-k}(v_F,x_F) \d\Phi_{i}^{k}(v_E,x_E)\\
	&+ \int_E \int_F \zeta_{k\wedge l,l-(k\wedge l)}(|x_E|,|x_F|) \d\Phi_{l-(k\wedge l)}^{n-k}(v_F,x_F)\d\Phi_{k\wedge l}^k(v_E,x_E)
\end{align*}
for every $v_{E}\in\fconvfsE$ and $v_{F}\in\fconvfsF$ where $\zeta_i\in C_b((0,\infty))$ for every $i$ and, if $k=n-1$, in addition, $\zeta_{l-1}\in \Had{l}{n}$. We need to show that we can choose $\zeta_{1 \vee (l+k-n)}=\dots=\zeta_{(k\wedge l)-1}=:\zeta_{(k)}$ and that we can also write the first integral (in case it does not vanish) using $\zeta_{(k)}$. Moreover, in case $k<l$, our proof will show that $\zeta_{k,l-k}=\zeta_{(k)}$ and in case $l\leq k$, we will rewrite the last integral using $\zeta_{(k)}$.

	Fix $0\vee(l+k-n) \leq i < k\wedge l$. We will first consider the cases $i>0$ and $l-i-1>0$, and we will treat the cases $i=0$ and $i=l-1$ separately. Since $i>0$ it follows from Lemma~\ref{preparatory lemma 1}, Lemma~\ref{le:int_zeta_i_l-i_zeta_i+1_l-i-1} and Lemma~\ref{le:zeta_i} that
	\begin{equation}
	\label{eq:zeta_i+1_l-i-1_zeta_i_dphi_l-i-1_dt}
	\int_{\R} \int_{F'} \Big(\zeta_{i+1,l-i-1}(a,|(t,x_{F'})|)
	- \zeta_i(|(a,t,x_{F'})|)\Big) \d\Phi_{l-i-1}^{n-k-1}(v_{F'},x_{F'}) \d t= 0
	\end{equation}
    for every $a>0$ and $v_{F'}\in\fconvfsFp$. By \eqref{eq:phi_j_n_delta}, choosing $a>0$ and $v_{F'}(x_{F'})=\frac12(|x_{k+2}-\bar{x}_{k+2}|+\cdots+|x_{k+l-i}-\bar{x}_{k+l-i}|)$ with arbitrary $\bar{x}_{k+2}$,$\ldots$, $\bar{x}_{k+l-i}\in\R\backslash\{0\}$ gives
	\begin{multline*}
	\int_{\R} \int_{\R^{n+i-k-l}} \Big(\zeta_{i+1,l-i-1}(a,|(t,b,x_{k+l-i+1},\ldots,x_n)|)\\
	-\zeta_{i}(|(a,t,b,x_{k+l-i+1},\ldots,x_n)|)\Big) \d x_{k+l-i+1}\cdots \d x_n \d t=0
	\end{multline*}
	for every $b>0$. Hence, by Lemma~\ref{lemma Abel},
	$$\zeta_{i+1,l-i-1}(a,b)=\zeta_{i}(\sqrt{a^2+b^2})$$
	for every $a,b>0$ and similarly as in the proof of Lemma~\ref{le:zeta_i}, for every $a,b\ge 0$ with $(a,b)\ne(0,0)$. Note, that in case $i+1<k\wedge l$ it follows from Lemma~\ref{le:zeta_i} that also $\zeta_{i+1,l-i-1}(a,b)=\zeta_{i+1}(\sqrt{a^2+b^2})$ for every $a,b\ge 0$ with $(a,b)\ne(0,0)$. Thus, we have shown that there exists $\zeta_{(k)}\in C_b((0,\infty))$ such that
	$$\zeta_{(k)}(\sqrt{a^2+b^2}):=\zeta_{j,l-j}(a,b)$$
	for every $a,b \ge 0$ with $(a,b)\ne(0,0)$ and every $1\vee (l+k-n)\leq j \leq k \wedge (l-1)$, except for the cases $l=2$ or $k=n-1$ (for which only $i=0$ and/or $i=l-1$ satisfy the conditions on $i$ above).

	Observe that the case $i=l-1$ can only occur if $l\leq k$. Since $l\geq 2$, we have $i>0$, and therefore equation \eqref{eq:zeta_i+1_l-i-1_zeta_i_dphi_l-i-1_dt} still holds. We now claim that we may replace $\zeta_i=\zeta_{l-1}$ by $\zeta_{(k)}$ in \eqref{eq:zeta_i+1_l-i-1_zeta_i_dphi_l-i-1_dt}. If $k<n-1$ and $l>2$, we have $1\vee (l+k-n)\leq l-2$ and therefore this follows from the previous considerations for the case $i=l-2$. If $k=n-1$ or $l=2$ we simply set $\zeta_{(k)}:=\zeta_{l-1}$ by Lemma~\ref{le:zeta_i} which also shows that $\zeta_{(n-1)}\in\Had{l}{n}$. Thus, we obtain
	\begin{align*}
	0&=\int_\R \int_{F'} \big(\zeta_{l,0}(a,|(t,x_{F'})|)-\zeta_{(k)}(|(a,t,x_{F'})|) \big) \d x_{F'}  \d t\\
	&= \int_F \big(\zeta_{l,0}(a,|x_F|)-\zeta_{(k)}(|(a,x_F)|) \big) \d\Phi_0^{n-k}(v_F,x_F)
	\end{align*}
	for every $a>0$ and $v_F\in\fconvfsF$. In particular, this implies that
	\begin{multline*}
	\int_E \int_F \zeta_{l,0}(|x_E|,|x_F|) \d \Phi_0^{n-k}(v_F,x_F) \d\Phi_l^{k}(v_E,x_E)\\
	= \int_E \int_F \zeta_{(k)}(\sqrt{|x_E|^2+|x_F|^2}) \d \Phi_0^{n-k}(v_F,x_F) \d\Phi_l^{k}(v_E,x_E)
	\end{multline*}
	for every $v_E\in\fconvfsE$ and $v_F\in\fconvfsF$.

	Note that the remaining case $i=0$ can only occur if $l\leq n-k$. Since by our assumptions $l\geq 2$ and $k\geq 2$ it follows from the previous considerations for the case $i=1$ that $\zeta_{1}=\zeta_{(k)}$. Thus, by Lemma~\ref{le:int_zeta_i_l-i_zeta_i+1_l-i-1} and Lemma~\ref{le:zeta_i},
	\begin{align*}
	    \int_{E'}& \int_{\R} \int_{F'} \zeta_{(k)}(|(x_{E'},a,t,x_{F'})|)\d \Phi_{l-1}^{n-k-1}(v_{F'},x_{F'}) \d t \d x_{E'}
	     \\
	    &= \int_{F'} \int_{\R} \int_{E'} \zeta_{0,l}(|(x_{E'},t)|,|(a,x_{F'})|)
	     \d  x_{E'}  \d t \d \Phi_{l-1}^{n-k-1}(v_{F'},x_{F'}) 
	\end{align*}
	for every $a>0$ and $v_{F'}\in\fconvfsFp$. If we set $v_{F'}(x_{F'})=\frac12(|x_{k+2}-\bar{x}_{k+2}|+\cdots + |x_{k+l}-\bar{x}_{k+l}|)$ with arbitrary $\bar{x}_{k+2},\ldots,\bar{x}_{k+l}\in \R\backslash\{0\}$ and consider \eqref{eq:phi_j_n_delta} this becomes
	\begin{align*}
	    \int_{E'}& \int_{\R} \int_{\R^{n-k-l}} \zeta_{(k)}(|(x_{E'},a,t,b,x_{k+l+1},\ldots,x_n)|) \d x_{k+l+1} \cdots \d x_n \d t  \d x_{E'}
	     \\
	    &= \int_{\R^{n-k-l}} \int_{\R} \int_{E'} \zeta_{0,l}(|(x_{E'},t)|,|(a,b,x_{k+l+1},\ldots,x_n)|) \d x_{E'}  \d t \d x_{k+l+1} \cdots \d x_n
	\end{align*}
	for every $a,b>0$. Thus, by renaming the integration variables on the left side and rearranging, we obtain
	\begin{multline*}
	    \int_{\R^{n-k-l}} \int_{\R} \int_{E'}
	    \Big(\zeta_{(k)}(|(x_{E'},a,t,b,x_{k+l+1},\ldots,x_n)|)\\
	    -\zeta_{0,l}(|(x_{E'},t)|,|(a,b,x_{k+l+1},\ldots,x_n)|) \Big)  \d x_{E'}  \d t \d x_{k+l+1} \cdots \d x_n=0
	\end{multline*}
	for every $a,b>0$ and thus, by Lemma~\ref{lemma Abel} if $l<n-k$ and trivially if $l=n-k$,
	\begin{align*}
	    0&=\int_{\R} \int_{E'} \Big(\zeta_{(k)}(|x_{E'},t,a)|)-\zeta_{0,l}(|x_{E'},t)|,a) \Big) \d x_{E'} \d t\\
	    &= \int_{E} \Big(\zeta_{(k)}(|(x_E,a)|)-\zeta_{0,l}(|x_E|,a) \Big) \d \Phi_0^{k}(v_E,x_E)
	\end{align*}
	for every $a>0$ and $v_E\in\fconvfsE$. In particular, as in the case $i=l-1$, this implies that
	\begin{multline*}
	\int_F \int_E \zeta_{0,l}(|x_E|,|x_F|) \d\Phi_0^{k}(v_E,x_E) \d \Phi_l^{n-k}(v_F,x_F)\\
	= \int_F \int_E \zeta_{(k)}(\sqrt{|x_E|^2+|x_F|^2}) \d\Phi_0^{k}(v_E,x_E) \d \Phi_l^{n-k}(v_F,x_F)
	\end{multline*}
	for every $v_E\in\fconvfsE$ and $v_F\in\fconvfsF$.
\end{proof}

\goodbreak
We can now complete the proof of Proposition \ref{prop:one_zeta}.
	Let 
	$$v(x_1,\ldots,x_n)=\frac 12\left(|x_1-\bar{x}_1|+\cdots+|x_l-\bar{x}_l|\right)$$ with $\bar{x}_1,\ldots,\bar{x}_l\in \R\backslash\{0\}$ and let $a=\sqrt{\bar{x}_1^2+\cdots+\bar{x}_l^2}>0$. Note that for every $\lceil \frac n2 \rceil \leq k \leq n-1$ we can write $v$ in the form $v=v_E+v_F$. Furthermore, by \eqref{eq:phi_i_n_zero}, $\d\Phi_i^k(v_E,x_E)=0$ if $i> l$ and $\d\Phi_j^{n-k}(v_F,x_F)=0$ if $j>(l-k)\vee 0$. Thus, for any $k$, it follows from Lemma~\ref{le:zeta^k}, \eqref{eq:phi_n_n_delta} and \eqref{eq:phi_j_n_delta} that
	\begin{align*}
	\oZ(v)&=\int_{\R^{n-l}} \zeta_{(\lceil \frac n2 \rceil)}(|(a,x_{l+1},\ldots,x_n)|)\d x_{l+1}\cdots \d x_n\\
	&= \cdots\\
	&= \int_{\R^{n-l}} \zeta_{(n-1)}(|(a,x_{l+1},\ldots,x_n)|)\d x_{l+1} \cdots \d x_n.
	\end{align*}
	Hence, by Lemma~\ref{lemma Abel}, $\zeta_{(\lceil \frac n2 \rceil)}\equiv \cdots \equiv \zeta_{(n-1)}=:\zeta$ and $\zeta \in \Had{l}{n}$. Thus, by Lemma~\ref{le:zeta^k},
	\begin{align}
	\begin{split}
	\label{eq:rep_z_v_E_v_F}
    \oZ(v_E+v_F)=&\int_F \int_E \zeta(|(x_E,x_F)|) \d \Phi_0^k(v_E,x_E) \d\Phi_l^{n-k}(v_F,x_F)\\
    &+\sum_{i=1\vee(l+k-n)}^{k\wedge l} \int_E\int_F \zeta(|(x_E,x_F)|) \d\Phi_{l-i}^{n-k}(v_F,x_F)\d\Phi_i^k(v_E,x_E)
    \end{split}
    \end{align}
    for every $v_E\in\fconvfsE$ and $v_F\in\fconvfsF$ and every $\lceil \frac n2 \rceil\leq k \leq n-1$. Since Hessian measures are non-negative and since $v_E+v_F\in\fconvfs$ for every $v_E\in\fconvfsE$ and $v_F\in\fconvfsF$, it follows from Lemma~\ref{lem:existence_v_smooth_origin} and Lemma~\ref{le:decompose_hessian_measure} that
    $$\int_F \int_E \big\vert \zeta(|(x_E,x_F)|) \big\vert \d\Phi_0^k(v_E,x_E) \d\Phi_l^{n-k}(v_F,x_F)\leq \int_{\R^n} \big\vert \zeta(|x|)\big\vert \d\Phi_l^n(v_E+v_F,x)<+\infty$$
    for every $v_E\in\fconvfsE$ and $v_F\in\fconvfsF$. Furthermore, since $\zeta\in\Had{l}{n}$ is measurable and has bounded support and since Hessian measures are locally finite, it now follows from the Fubini-Tonelli theorem that we may exchange the order of integration of the first term on the right side in \eqref{eq:rep_z_v_E_v_F}. Together with Lemma~\ref{le:decompose_hessian_measure} we now obtain
    \begin{align*}
    \oZ(v_E+v_F)&=\sum_{i=0\vee(l+k-n)}^{k\wedge l} \int_E\int_F \zeta(|(x_E,x_F)|) \d\Phi_{l-i}^{n-k}(v_F,x_F)\d\Phi_i^k(v_E,x_E)\\
    &=\int_{\R^n} \zeta(|x|) \d\Phi_l^n(v_E+v_F,x)
    \end{align*}
    for every $v_E\in\fconvfsE$ and $v_F\in\fconvfsF$ and every $\lceil \frac n2 \rceil\leq k \leq n-1$, which completes the proof.
    \bigskip
\subsection*{Acknowledgments}
The authors are grateful to Paolo Salani for his valuable suggestions and, in particular,  for pointing out Lemma \ref{lemma BNST}. Fabian Mussnig has received funding from the European Research Council (ERC) under the European Union's Horizon 2020 research and innovation programme (grant agreement No.~770127).

\bigskip
\end{document}